\documentclass[11pt,letterpaper]{article}


   \usepackage{
     amsmath,
     amsfonts,
     amssymb,
     amsthm, stmaryrd}


     \usepackage[all]{xy}





\newcommand{\PP}{\ensuremath{\mathbb{P}}}   

\renewcommand{\~}{\widetilde}


\DeclareMathOperator{\Ker}{ker}      
\DeclareMathOperator{\im}{im}        
\DeclareMathOperator{\Hom}{Hom}      
         
\DeclareMathOperator{\codim}{codim}  
\DeclareMathOperator{\rk}{rk}        
\newcommand{\OO}{\ensuremath{\mathcal{O}}} 
\DeclareMathOperator{\cliff}{Cliff}                            	
\DeclareMathOperator{\sym}{Sym}                            	
\DeclareMathOperator{\Sec}{Sec}                            	
\newcommand{\maps}{\ensuremath{\longrightarrow}}                

\newcommand{\set}[1]{\ensuremath{\left\{{#1}\right\}}} 		
\renewcommand{\H}[1]{\ensuremath{H^0\left({#1}\right)}} 	
\newcommand{\Hd}[1]{\ensuremath{H^0\left({#1}\right)^\vee}} 	
\newcommand{\Hi}[1]{\ensuremath{H^1\left({#1}\right)}} 		
\newcommand{\Hid}[1]{\ensuremath{H^1\left({#1}\right)\vee}} 	
\newcommand{\h}[1]{\ensuremath{h^0\left({#1}\right)}} 		
\newcommand{\hi}[1]{\ensuremath{h^1\left({#1}\right)}} 		
\DeclareMathOperator{\ch}{char}
\usepackage{amsfonts}
\usepackage{amssymb}
\usepackage{amscd}
\usepackage{xypic}

\def\imp{\Longrightarrow}
\def\to{\longrightarrow}
\def\iff{\Leftrightarrow}

\def\C{\mathbb{C}}

\def\P{\mathbb{P}}
\def\cO{\mathcal O}

\def\sec{\mbox{Sec}}

\def\s{\sigma}

\def\p{\pi}

\def\t{\tau}



\theoremstyle{plain}
\newtheorem{thm}{Theorem}[section]
\newtheorem{prop}[thm]{Proposition}
\newtheorem{lemma}[thm]{Lemma}
\newtheorem{cor}[thm]{Corollary}
\newtheorem{conj}[thm]{Conjecture}

\theoremstyle{definition}
\newtheorem{defn}{Definition}[section]

\begin{document}
\title{A Generalization of the Clifford Index and Determinantal Equations for Curves and Their Secant Varieties
}
\author{Adam Ginensky}
\date{December 2008}
\maketitle

\begin{abstract}
This paper gives a geometric characterization of the Clifford index of a curve $C$, in terms of the existence of determinantal
 equations for $C$ and its secant varieties in the bicanonical embedding.  The key idea  is the generalization of Shiffer deformations
 to Shiffer variations supported on an arbitrary effective divisor $D$.  The definition of  Shiffer variations and Clifford index is then
 generalized to an arbitrary very ample line bundle, $L$,  on $C$.  This  allows one to give geometric characterizations of the
generalized  Clifford 
index in many cases.  In particular it allows one to show  the existence of determinantal equations for $C$ and $\sec^k(C)$ for
 $ k< \cliff(C,L)$ in the embedding of $C$ in the projective $ \PP(\H{L^{\otimes 2}}^{\vee})$ .  We then explain how the results can be 
generalized to embeddings of $C$ in  $\PP(\H{L_1 \otimes L_2}^{\vee})$ where $L_1$ and $L_2$ are very ample.
For line bundles of large degree this  proves a conjecture of Eisenbud,  Koh, and Stillman  relating the existence of a determinantal
 presentation for $\sec^k(C)$ to $\deg(L)$.  

\end{abstract}

\tableofcontents
\section{Introduction}\label{S:i}
When Brendan Hassett was a post doc at the University of Chicago I spoke to him a lot about a paper of Griffiths \cite{Gri}. 
which involved the concept of Shiffer variations. These are infinitesimal deformations of a curve, which have rank one when 
viewed (via cup product) as homomorphisms from $H^0(K_C) \to H^1(\mathcal{O}_C)$.  They are in fact parametrized by points 
of the curve.   Initially I was trying to understand on which curves $C$, every 
rank $j$ infinitesimal deformations is just
the sum of $j$ rank one Shiffer variations.   Theorem \ref{T:mthm}  shows this holds only for $j < \cliff(C)$
 
 In hindsight, it became  clear to me that the construction was of a geometric 
nature and really had nothing to do with infinitesimal deformations per se.  The construction could be made
on any very ample line bundle.  In its most general form the idea is that
 whenever one can factor a line bundle $L$ as $L_1 \otimes L_2$ then one can consider $C$ as embedded in a space of 
matrices $\Hom(\Hd{C,L_1},\H{C,L_2}=\H{C,L_1} \otimes \H{C,L_2} \to \H{C,L}$ and that  this gives one equations  defining 
the curve. Roughly,  because if we let $t_{ij} $ be the image of $u_i \otimes v_j$ in $\H{C,L}$, then we get the equality 
$t_{ij}t_{kl}- t_{ik} t_{jl}=0$ .  One may recognize these as equations defining a 'Segre embedding' . 
This certainly is not a new idea. It was explained for example in \cite{E88}.   What is new, 
is the notion of a Clifford index for any line bundle on a curve and the explanation of how it controls whether the 
curve and its secant varities are defined by  equations of 'Segre type'.  One can show that 
for $j< c$, where $c$ is a 'Clifford index' defined for $L_1 \otimes L_2$,  it is the case that the equations defining
$\sec^j(C)$ are the equations of  $\sec^j(\PP(\Hd{C,L_1}) \times \PP(\Hd{C,L_2}))$ (restricted 
to the linear subspace $\PP(\Hd{C,L})$.   In general tersm,the goal of this thesis 
is to analyze when the equations defining $\sec^j(C)$ in $\PP(\H{C,L_1\otimes L_2})$ are the equations defining of 
$\sec^j(\PP(\Hd{C,L_1}\otimes \PP( \H{C,L_2})) )$ in $\PP(\H{C,L_1} \otimes \H{C,L_2})$.  It is standard to say that
$\sec^j(C)$ is determinantally defined in this circumstance.

The arrangement of the thesis  is as follows. In the first section we review the definition of the Clifford index and 
its relationship to the geometry of a curve $C$.  Shiffer variations are defined and then generalized to  Shiffer variations 
supported on a divisor $D$ .  We then prove a theorem (\ref{T:2} ) which explains how 
Shiffer variations are related to the Clifford index.  The main result of this section 
is a geometric characterization of the Clifford index .  Namely we show that in the bicanonical embedding, $\sec^j(C)$
 is in fact set theoretically the locus of all infinitesimal deformations or rank $j+1$ 
if and only if $j< c-1$.
Recall that points in $\sec^j(C) $ are the linear combinations of $j+1$ points of $C$ and hence are
 the sum of $j+1$ Shiffer variations. Since the sum of $j+1$ rank one
matrices is of rank at most $j+1$ , $\sec^j(C)$ consists of deformations of rank at most $j+1$.
Scheme theoretic equality which implies that $\sec^j(C)$
is defined by equations of degree $j+2$ for $j < (\cliff(C)-1)$ is true and is proven in section 6.  
This implies that  $\sec^j(C)$ is determinantally defined . The methods used are different and deferring
the proof until  later allows us to prove a more general result.

Sections 3 and 4 are devoted to generalizing the machinery of the Clifford index and Shiffer variations to arbitrary line
bundles.  The definitions make sense for arbitrary line bundles, but are only useful as far as we can tell, when the line bundle is
very ample.  One sign that this machinery is of general use is the fact that it  gives a very nice way
of describing line bundles in terms of  their embedding properties.  As another application we give a very short proof
in a special case of the existence of $d$ pointed $d-2$ planes.  This result is in (\cite{ACGH}) but the proof contained in there
is exceedingly technical.  This proof, which perhaps can be generalized to other cases is very elementary.
 Sections 5 and 6 are the heart of the matter.  If L is a very ample
quadratically normal line bundle with $h^1(L)=0$
 then one has
an injection $\PP(\H{C,L^{\otimes 2}}) \hookrightarrow \PP(\sym^2(\H{C,L})$ realizing elements of $\PP(\H{C,L^{\otimes 2}})$ as symetric
matrices. This is true once $\deg(L) \geq 2g+1$
 From our work on Shiffer variations we show  that points on the curve represent rank one transformations and hence
(since the sum of $j$ rank one matrices is a matrix of rank at most $j$) we get an inclusion of $\sec^{j-1}(C) \hookrightarrow
R^j$ where $R^j $ represents the locus of matrices in  $\PP(\H{C,L^{\otimes 2}})$ of rank $\leq j$.  In section 5 applying the 
ideas of sections 3 and 4 we show that these two schemes are the same set theoretically as long as $j < \cliff(C,L)$ where
$\cliff(C,L)$ is the clifford index of $L$ as defined in section 3.  Section 6 is devoted to proving the scheme theoretic
equality of $\sec^j(C)$ and $R^j$.  Essentially by definition the variety $\sec(C)$ is reduced, so the trick is to show
that $R^j$ is also reduced.  The hard work is to show that in fact $R^j$ is normal (and hence reduced!) which seems to be of
independent interest.  The technique used  is to find a resolution $\~R^j$ which in fact is a vector bundle over $\sym^j(C)$.
This idea is one I learned from (\cite{ACGH}), but in fact seems to be related to the `Basic Theorem' of \cite{W}. Since the
varieties $R^j$ are determinantal by definition, it follows that $\sec^{j-1}(C) $ is determinantal.

Up until this point the results have been on embeddings in $L^{\otimes 2}$ and it's obvious factorization as
$L \otimes L$.  In section 7 we take up the question of what happens when $L$ factors as $L_1 \otimes L_2$.
All the machinery developed in sections 3 and 4  generalize and one  can prove a theorem stating  that for $j< \cliff(C,L_1,L_2)$
the secant varieties are determinantally defined.  Rarely is it the case that a divisor is 
'special' for both $L_1$ and $L_2$ and hence one gets improved bounds on
the circumstances under which  one can say that the secant
varieties are determinantal.  In particular for $j=0$ that is to say for the case of the curve $C$ itself,
one recovers the main result of  \cite{eks}  in the case of smooth curves, which is if $\deg(L_i) \geq 2g+1$ and
$L_1 \neq L_2$ if $\deg(L_1)=\deg(L_2)= 2g+1$ then for $L= L_1 \otimes L_2$,   $C$ is determinantally defined in $L$.

In section 8 we discuss the results that occur when $\hi{L}=1$ . First as an application of the ideas of Clifford index
we give another proof of the theorem of Green and Lazarsfeld giving a bound in terms of $\deg(L)$ and $\cliff(C)$
as to when an imbedding 
by a very ample line bundle is quadratically normal.  In essence, a very ample line bundle satisfying $\cliff(C,L) >0$ is 
quadratically normal, and one can only realize a  line bundle of  clifford index zero as the projection from
a linear space of dimension $p-1$ of a line bundle of clifford index $p$.  A computation finishes the proof. We then
take up the question of bounding when $\sec^j(C)$ is determinantally defined.  The same bounds hold as in the case
$\hi{L}=0$ but it is much harder to prove, that for $j> \cliff(C,L)$ that the two varieties, the rank locus and the 
secant variety differ.  Ironically this means that for special line bundles one is far more likely to have a stronger
result than for line bundles with $\hi{L}=0$. 

 Finally in the last section , we consider the question of the 
relationship between these results and Green's conjecture.  The story here is incomplete, as one would like to be 
able to use these results to prove the conjecture.  What we can say is that, the classes we  produce in 
Theorem \ref{T:2} for $D$ a base point free divisor, 
  do give rise to non-trivial Koszul cohomology classes.  This applies in particular to divisors which calculate 
the Clifford index of $C$.  Further these classes are 'decomposable' 
 (see section 9 for details) and that there are no such 'decomposable' Koszual cohomology classes in 
$K_{p,2}(C,K_C)$ for $c< \cliff(C)$.    This hardly settles the matter though.

I want to acknowledge the help I have received over the years from numerous people. 
 Firstly, I want to express my deepest thanks to my thesis advisor, Prof. Spencer Bloch.  
Spencer was unstinting in
his time and advice.  Spencer's remark, `You know I'm retiring soon, so if you want to write a thesis you should probably
get it done now rather than later.' provided the final encouragement to get things done. 
 Spencer was a great friend 
and advisor.  I also want to thank Professor  Brendan Hassett  who first brought my attention the question of the relationship between the Clifford index and determinantal embeddings. Brendan spent numerous hours discussing the problem with me and proofreading
this thesis.    My good friend Professor  Mohan Kumar  discussed this paper with me and read various version offering numerous helpful suggestions .  In addition, over the years we have had so many conversations about so many mathematical topics, it is fair to say
that I could/would never have kept my interest in mathematics alive without his help, 
advice, and information.   Nicholas Passell and Mihnea Popa also read preliminary versions of the thesis offering numerous helpful suggestions.  

I want to thank my wife Carol Lind for providing an environment that was very conducive to writing this thesis.  I want to thank my children David Sidney Ginensky and Katherine Miriam Ginensky for being my children.  
I want to thank WH Trading for all their consideration in providing me with very flexible working hours while I was writing.
 I must also thank Mr. Tom Carrideo for his enthusiastic demonstrating of the 'wobbly H'.
  It helped to provide the  inspiration 
to get this thesis done.

\section{The Clifford Index} \label{S:CI}

Let $C$ be a smooth curve of genus $g \geq 3$. The canonical map $\Phi_1:C
\to \P^{g-1}$ has been studied since the $19^{\mbox{th}}$ century. Two of
the most important theorems with regard to this map are

\begin{itemize}
\item[A:)] (Max Noether) $\Phi_1$ is a projectively normal embedding
unless $C$ is hyperelliptic.
\item[B:)] (Petri) Assume $\Phi_1$ embeds $C \hookrightarrow \P^{g-1}$,
then $C$ is
cut out by the quadrics through $C$ unless $C$ is trigonal or a plane
quintic. 
\end{itemize}

We wish to rephrase these theorems in terms of the Clifford index of C.
Recall that the Clifford index of $C$, Cliff($C$) is defined as follows:

\begin{defn} 

Let L be any line bundle.\\ $\cliff(L) = \deg(L) - 2(\h{C,L}-1)$

\end {defn}

\begin{defn}

The Clifford index of C written  $\cliff(C)$  is \\
$ \min \{ \cliff(L) | \h{L}$ and$ \,  \hi{L} \geq 2\} $

\end{defn}

 Then Cliffords theorem may be restated as Cliff($C$)$\geq 0$
with equality if and only if $C$ is hyperelliptic. Further one has  that
Cliff(C) = 1 if and only if $C$ is trigonal or a plane quintic (see \cite{ACGH}) 
, that is to say $C$ has a  $g_3^1$ or $g_5^2$. This leads to the restatement of Thm A as
$\Phi_1$ is a projectively normal embedding if and only if Cliff($C$)$>0$; and Thm B
as $\Phi_1(C) \subset \P^{g-1}$ is cut out by quadrics if and only if Cliff($C$)$>1$.

Mark Green has proposed algebraic generalizations of these statements to
arbitrary Clifford index which  state that the Clifford index
should control the syzygies of the ideal sheaf of $C \subset \P^{g-1}$.

In this section we will give a geometric generalization of Thms A
and B that  characterizes the Clifford index. 

To state our theorem we must recall some more notation. Recall that if
$\xi \in H^1(C,T_C)$ then via cup product we get a map
$\xi:H^0(C,K_C) \to H^1(C, \cO_C)$ called the Kodaira-Spencer map.  
This induces a map
\begin{center}
\[\kappa:H^1(C,T_C) \to \mbox{Hom}(H^0(C,K_C),
H^1(C,\cO_C)) ).\] 
\end{center}
 Recalling that $\Phi_2 : C
\to \P^{3g-4} = \P(H^0(K_C^{\otimes 2})^\vee)$ we get a commutative
diagram:\\

$$
\xymatrix{
 &  \mathbb{P}^{g-1}=\mathbb{P}(H^0(K_C)^{\ast}) \ar[dr]^-{\nu}&\\
C \ar[ur]^-{\Phi_1} \ar[dr]^-{\Phi_2}
&&\mathbb{P}^{g^2-1}=\mathbb{P}(H^0(K_C)^{\otimes 2\ast})\\
&\mathbb{P}^{3g-4}=\mathbb{P}(H^0(K_C^{\otimes 2})^{\vee})
 \ar[ur]_-{i}&}
$$
Here $\nu$ is the Veronese map and $i$ is induced from the map
$H^0(K_C) \otimes H^0(K_C) \to H^0(K_C^{\otimes 2})$ which
is the Kodaira-Spencer map suitably dualized. In otherwords $i(\xi)$ is the 
symmetric matrix representing $\xi:H^0(K_C) \to
H^1(\cO_C)$ . Identifying \linebreak $ \Hom^{sym}(H^0(C,K_C),
H^1(C,\cO_C))$   with    $\sym^2(\Hd{K_C})$,
 we see that the maps $\kappa$ and
$i$ are the same map.  As long as $C$ is not hyperelliptic, the maps
$i$ and $\kappa$ are injective and we abuse notation and write $\xi$ when
we mean $\kappa(\xi)$ or $i$($\xi$).  We have
a filtration $0 \subset R^1 \subset R^2 \ldots \subset R^g$ of $\P^{3g-4}$
via $R^k(C) = R^k = \{ \xi \in H^1(T_C) \, | \, \mbox{rank}( \xi ) \leq
k \}$. Via Shiffer variations we can see that $\Phi_2(C) \subset
R^1(C)$ (Roughly $p \in C$ kills $H^0(K_C(-p))$ and  sends the 1
dimensional quotient to the point $\Phi_1(c) \in \P^{g-1} =
\P(H^1(\cO_C)))$ and hence that the $k$-secant variety $\mbox{Sec}^k(C)
\subset R^k(C)$. We may roughly restate Thm A as $R^0(C) = \emptyset
 \Leftrightarrow \mbox{Cliff}(C) > 0$ and Thm B as $\nu(\P^{g-1}) \cap
i(\P^{3g-4}) = i \Phi_2 (C) \Leftrightarrow R^1(C) = \mbox{Sec}^0(C)=C
\Leftrightarrow \mbox{Cliff}(C) > 1$. As such the generalization we
propose is 
\begin{thm}[Geometric Characterization of the Clifford Index]~\label{T:mthm}

 Let C be a smooth curve then, $\Sec^{j-1}(C) =
R^j(C)$ for $j< \mbox{Cliff}(C)$ and $\mbox{Sec}^j(C) \subsetneq R^j(C)$
for $ j \geq \mbox{Cliff}(C)$.

\end{thm}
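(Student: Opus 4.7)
The containment $\Sec^{j-1}(C) \subseteq R^j(C)$ is immediate: each $\Phi_2(p)$ is the rank-one symmetric form $\ell_p \otimes \ell_p$, where $\ell_p \in H^0(K_C)^\vee$ is the evaluation functional at $p$, so any sum of $j$ such matrices has rank at most $j$. The content of the theorem is therefore the reverse inclusion when $j < \cliff(C)$ and the strict inequality when $j \geq \cliff(C)$.

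For the reverse inclusion, I would fix $\xi \in R^j(C)$ of rank $k \leq j$, set $V = \ker(\xi) \subset H^0(K_C)$ of codimension $k$, and let $D$ be the base divisor of $V$, so that $V \subseteq H^0(K_C(-D))$. The first step is a Clifford-index computation pinning down $D$. Because $k \leq j < \cliff(C) \leq \lfloor (g-1)/2 \rfloor$, we have $h^1(D) = h^0(K_C(-D)) \geq g - k \geq 2$ automatically. If in addition $h^0(D) \geq 2$, then $D$ contributes to the Clifford index, so $\cliff(D) \geq \cliff(C) > k$; but the codimension inequality $\deg(D) - h^0(D) + 1 \leq k$ combined with Riemann--Roch gives $\cliff(D) = \deg(D) - 2(h^0(D) - 1) \leq k - h^0(D) + 1 \leq k - 1$, a contradiction. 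Hence $h^0(D) = 1$, $D$ is rigid, and Theorem~\ref{T:2} together with this rigidity forces $V = H^0(K_C(-D))$ with $\deg(D) = k$. The $k$ distinguished points $p_1, \ldots, p_k$ of $D$ then supply evaluation functionals $\ell_{p_i}$ that span $V^\perp \subset H^0(K_C)^\vee$.

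The second step exploits that $\xi$ lies in the image of $i \colon H^0(K_C^{\otimes 2})^\vee \hookrightarrow \sym^2 H^0(K_C)^\vee$. Viewing $\xi$ as a functional $\tilde{\xi}$ on $H^0(K_C^{\otimes 2})$, the vanishing of $\xi$ on $V \otimes H^0(K_C)$ translates into $\tilde{\xi}$ vanishing on the multiplication image $V \cdot H^0(K_C)$, which equals $H^0(K_C^{\otimes 2}(-D))$ by surjectivity of the relevant multiplication map (valid in this range of $\deg D$ via the base-point-free pencil trick or Castelnuovo--Mumford). Since the $k$ points of $D$ impose independent conditions on $|K_C^{\otimes 2}|$ (they span a $\PP^{k-1}$ in the bicanonical embedding), the annihilator of $H^0(K_C^{\otimes 2}(-D))$ inside $H^0(K_C^{\otimes 2})^\vee$ is precisely the $k$-dimensional span of the evaluation functionals $\mathrm{ev}_{p_i}$, each of which corresponds under $i$ to the rank-one Shiffer variation $\ell_{p_i} \otimes \ell_{p_i}$. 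Therefore $\xi = \sum_{i=1}^{k} \lambda_i \, \ell_{p_i} \otimes \ell_{p_i} \in \Sec^{k-1}(C) \subseteq \Sec^{j-1}(C)$, as required.

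For strict inclusion when $j \geq \cliff(C) = c$, I would produce a witness $\xi \in R^c(C)$ that escapes the relevant secant variety by starting from a divisor $D_0$ that computes the Clifford index, so $h^0(D_0) \geq 2$, $h^1(D_0) \geq 2$, $\cliff(D_0) = c$. The obstruction in the previous argument --- the forced equality $h^0(D) = 1$ --- fails precisely for such $D_0$, because its positive-dimensional linear system $|D_0|$ prevents any concentration on a single effective divisor. Applying the base-point-free pencil trick to the multiplication $H^0(D_0) \otimes H^0(K_C - D_0) \to H^0(K_C)$ produces a rank-$c$ class $\xi$ whose kernel strictly contains $H^0(K_C - D_0)$, but whose functional $\tilde{\xi}$ is not a linear combination of evaluations at the points of any single $D \in |D_0|$; hence $\xi$ is not a sum of $c$ point Shiffer variations from $C$. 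The main difficulty I anticipate here is a careful rank computation: a generic Shiffer variation supported on $D_0$ has rank $c + r$ with $r = h^0(D_0) - 1 \geq 1$, and one must engineer the rank drop to exactly $c$ while staying in the image of $i$ and ensuring the resulting class genuinely escapes the secant variety rather than merely one of its decompositions.
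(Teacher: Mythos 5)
Your opening containment $\Sec^{j-1}(C)\subseteq R^j(C)$ is fine and matches the paper, but the converse direction has a genuine gap at its central step, and it is precisely the point where the paper brings in a tool you have omitted. You take an arbitrary $\xi\in R^j(C)$ of rank $k$, set $V=\ker(\xi)$, let $D$ be the base divisor of $V$, and then assert that ``Theorem~\ref{T:2} together with this rigidity forces $V=H^0(K_C(-D))$ with $\deg(D)=k$.'' Nothing you have established rules out that $V$ is base point free (so $D=0$ and the claim is vacuously false), or that $V$ is a \emph{proper} subspace of $H^0(K_C(-D))$ with $\deg(D)<k$. Theorem~\ref{T:2} only computes ranks of classes already known to lie in some $T(D)$, i.e.\ generalized Shiffer variations supported on a divisor; it says nothing about the kernel of an arbitrary $\xi\in H^0(K_C^{\otimes 2})^{\vee}$. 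Your subsequent identification $V\cdot H^0(K_C)=H^0(K_C^{\otimes 2}(-D))$ is only plausible once one already knows $V=H^0(K_C(-D))$, so the argument is circular exactly at its crux. The paper supplies the missing ingredient with Theorem~\ref{T:secdim}: since $\dim\Sec^j(C)=\min(2j+1,3g-4)$, the secant varieties fill $\PP^{3g-4}$ once $j\geq \frac{3g-5}{2}$, hence \emph{every} class $\xi$ lies in $T(D)$ for some effective $D$ with $\deg(D)\leq\frac{3g-5}{2}$. Only then do the rank bounds of Theorem~\ref{T:2} apply, and the proof concludes with a case analysis on $D$: eligible to compute $\cliff(C)$ (rank $\geq\cliff(C)$), rigid (so $\xi$ lies in the secant variety of the correct level), or moving with $h^1(\OO_C(D))=1$ (rank $\geq\frac{g+1}{2}\geq\cliff(C)$). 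To repair your proof you must either import that dimension count or prove directly that the kernel of a low-rank class in the image of $i$ cuts out an effective divisor of the right degree with $V=H^0(K_C(-D))$; the latter is essentially the theorem itself.

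For the strict inclusion when $j\geq\cliff(C)$ your plan (a rank-$c$ class supported on a divisor $D_0$ computing the Clifford index, obtained by forcing the rank to drop from $d-r$ to $d-2r$) is the same as the paper's, and the difficulty you flag is real: one must show the resulting class actually escapes $\Sec^{c-1}(C)$. The paper does not settle this in Section~2 either; it is handled later via Hassett's criterion (Theorem~\ref{T:Hassett}) and Lemma~\ref{L:specialvar}, which show that the image of the constructed class does not meet $C$. So that half of your proposal is incomplete in the same way the section's own sketch is, but the first half contains the substantive error.
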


This result is than  weaker, but connected to, Green's conjecture.
We will discuss the exact relationship in the last section of this thesis.
   Notice in the case $\cliff(C)=0$  our theorem merely
states that $\cliff(C) > 0 \Leftrightarrow H^o(K_C) \otimes
H^0(K_C) \imp H^0(K_C^{\otimes 2})$ is surjective and in the case
$\mbox{Cliff}(C) = 1$  we get only set theoretic and not scheme
theoretic results. The theorem only asserts a set theoretic equality.  Scheme
theoretic equality is true, but will be discussed later as it involves some different ideas.  \\

\noindent
{\bf Notation}

$C$ will always be a smooth curve of genus at least three 
 defined over an algebraically closed field.
 $D = \sum_i^d p_i$ will be a divisor defining a $g_d^r$ (i.e.
$h^0(C,\cO_C(D)) = r+1$). We will almost always use the convention
that $\P(V)$ is the space of lines in $V$.  That is because we will be
dealing with classes that naturally live in $ H^1$ of various line bundles.

The first order of business will be to define Shiffer variations 
and
Shiffer cohomology classes. Shiffer variations were classically defined
analytically as deformations that only disturbed the complex structure at
one  point $p \in C$. If $z$ is a local analytic coordinate near $p$ then
$\frac{1}{z} \frac{\partial}{\partial z}$ defines a C\v{e}ch 1 cocycle of
$T_C$ and hence a class in $H^1(T_C)$. Call it $\tau_p$. Notice $\tau_p$
depends
upon the choice of local parameter, but $\tau_p \in \P(H^1(T_C)) =
\P(H^0(K_C^{\otimes 2})^\vee)$ is independent of the choice of local
parameter. This is the Shiffer variation associated to $p \in C$. For
convenience we recast this definition in an algebraic mode.

\begin{defn}

Recall that $H^0(T_C(p)) = 0 \ \forall p \in C$ when $g \geq 2$. Hence in
the long exact sequence of cohomology associated to\\
\begin{center} $0 \to T_C \to
T_C(p) \to T_C(p)_{|p} \to 0$\end{center}
$\partial(H^0(T_C(p)_{|p})) \subset H^1(T_C)$ is a line and hence defines a
unique class in  $ \P(H^1(T_C(p)))$. This is the Shiffer variation
(associated to $p$).
\end{defn}

\noindent
{\bf \underline{Remark 1:}} The choice of a local parameter $z$ at $p$
amounts
to the choice of a generator $\frac{1}{z} \frac{\partial}{\partial z}$ in
$H^0(T_C(p)_{|p})$. When the choice of the local parameter is irrelevant
we may speak of the Shiffer variation $\t_p \in H^1(T_C)$.\\
{\bf \underline{Remark 2:}} The map $p \mapsto \tau_p \in \P(H^1(T_C)) =
\P(H^0(K_C^{\otimes 2})^\vee)$ is the bicanonical embedding.\\

We will need this same notion regarding classes in $H^1(\cO_C)$ so
we
note the following:\\
\begin{defn} Consider the exact sequence  
\begin{center}$0 \to \cO_C \to
\cO_C(p) \to \cO_C(p)_{|p} \to 0$.\end{center}
 The class
$\partial(H^0(\cO_C(p))) \subset \P(H^1(\cO_C))$ is the
Shiffer
cohomology class (associated to $p$) and is denoted by $\s_p$ (or $\s$ if
the choice of $p$ is irrelevant). 

\end{defn}

\noindent
{\bf \underline{Remark 3}:} As before, the assignment $p \mapsto \s_p \in
\P(H^0(K_C)^\ast)$ is the canonical embedding.

We wish to generalize the preceding definitions to reduced divisors. Let
$D
= \sum_{i=1}^d p_i$ with the $p_i$ distinct. \\
\begin{defn} $\langle \s_i \rangle \subset H^1(\cO_C)$ is the
vector space spanned by the $\s_{p_i} = \s_i$ i.e. pick local parameters
$z_i$ around $p_i$ and let $\s_i = \partial \left(\frac{1}{z_i}\right) \in
H^0(\cO_C(p)_{|p})$,  then $\langle \s_i \rangle = \left\{ \sum_{i=1}^d
a_i \s_i \, | \, a_i \in k \right\}$. 
\end{defn}
 Similarly let $\t_i = \partial
\left( \frac{1}{z_i} \frac{\partial}{\partial z_i} \right) \in
H^0(T_C(p)_{|p})$ then \\

\begin{defn}
 $T(D) = \left\{ \sum_{i=1}^d a_i \t_i \, | \, a_i
\in k
\right\}$. $T(D)$ is the set of Shiffer \linebreak variations supported on $D$.
\end{defn}
Notice with this notation
$$\langle \s_i \rangle = \partial(H^0(\cO_D(D))) \subset H^1(\cO_C)
\ \ and\ \ T(D) = \partial(H^0(T_C(D)_{|D})) \subset H^1(T_C)$$
Unless the degree of $D$ is large, and in particular if
$\deg(D) \leq 2g-3$, then $H^0(T_C(D))$ \linebreak $ =0$. 
Hence in this case,  $\dim T(D) = \deg D$.

Recall that any $\xi \in H^1(T_C)$ induces $\xi:H^0(K_C) \to
H^1(\cO_C)$. In terms of Shiffer variations we can describe the action
of $T(D)$ as follows:\\

\begin{lemma}\label{L:1.2}
 Pick local coordinates $z_i$ around $p_i$ and let
$\t_i
= \frac{1}{z_i} \frac{\partial}{\partial z_i} \in H^1(T_C)$, $\s_i =
\frac{1}{z_i} \in H^1(\cO_C)$. Suppose $\omega \in H^0(K_C)$ has a
local representation $f(z_i)dz_i$ with $f(0) = a_i$. Then, for $\t =
\sum_{i =1}^d \t_i$
$$ \t(\omega) = \sum_{i=1}^d a_i \s_i \in \langle \s_i \rangle \subset
H^1(\cO_C)$$
\end{lemma}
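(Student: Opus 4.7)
The plan is to compute the cup product $\t \cdot \omega$ directly as a \v{C}ech $1$-cocycle on a cover adapted to $D$. Take the open cover $\{U_0, U_1, \dots, U_d\}$ of $C$ with $U_0 = C \setminus D$ and $U_i$ a small coordinate neighborhood of $p_i$ on which $z_i$ is a uniformizer. Under the connecting map in the long exact sequence associated to
$$0 \to \cO_C \to \cO_C(p_i) \to \cO_C(p_i)|_{p_i} \to 0,$$
the generator $1/z_i$ of the skyscraper is represented by the \v{C}ech $1$-cocycle that equals $1/z_i$ on $U_0 \cap U_i$ and vanishes on every other double intersection; applied to the analogous twisted sequence for $T_C$, the same recipe represents $\t_i$ by the cocycle equal to $\frac{1}{z_i}\frac{\partial}{\partial z_i}$ on $U_0 \cap U_i$.

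Next I pair these cocycle representatives with $\omega$ via the natural contraction $T_C \otimes K_C \to \cO_C$. Writing $\omega|_{U_i} = f(z_i)\,dz_i$ and Taylor-expanding $f(z_i) = a_i + z_i g(z_i)$ with $g$ holomorphic on $U_i$, the cup product $\t_i(\omega) \in H^1(\cO_C)$ is represented by the \v{C}ech $1$-cocycle whose value on $U_0 \cap U_i$ is
$$\frac{f(z_i)}{z_i} = \frac{a_i}{z_i} + g(z_i),$$
and which vanishes on all other double intersections.

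The holomorphic remainder $g(z_i)$ is cohomologically trivial: the \v{C}ech $0$-cochain that equals $g(z_i)$ on $U_i$ and $0$ on $U_0$ has coboundary precisely $g(z_i)$ on $U_0 \cap U_i$ and zero on the other double intersections. Subtracting this from the representative of $\t_i(\omega)$ leaves the cocycle $a_i/z_i$ on $U_0 \cap U_i$, which is exactly the representative of $a_i \s_i$. Hence $\t_i(\omega) = a_i \s_i$ in $H^1(\cO_C)$, and by bilinearity of the cup product
$$\t(\omega) = \sum_{i=1}^d \t_i(\omega) = \sum_{i=1}^d a_i \s_i \in \langle \s_i \rangle,$$
as claimed.

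There is no substantive obstacle here: the content of the lemma is the identification of the connecting homomorphism with the explicit local \v{C}ech recipe above, after which everything reduces to a one-line Taylor expansion plus the standard cobounding of the holomorphic tail. The only point that needs to be checked carefully is that the two connecting maps (for $\cO_C$ and for $T_C$) really are computed by the stated $1$-cocycles on the two-piece cover $(U_0, U_i)$ around each $p_i$; once granted, additivity of the cup product in $\t$ immediately gives the sum on the right.
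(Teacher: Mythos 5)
Your proof is correct and follows essentially the same route as the paper: the paper phrases the computation via the commutative square relating the contraction $H^0(T_C(D)|_D) \to H^0(\cO_D(D))$ to cup product on $H^1$, while you make the same local computation explicit with \v{C}ech representatives on the cover $(U_0, U_i)$ and cobound the holomorphic tail $g(z_i)$ by hand, which is exactly the paper's step of discarding the holomorphic part in $H^0(\cO_{p_i}(p_i))$. No gap; the two write-ups differ only in bookkeeping.
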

\begin{proof} Both \, $\s_i $ and \,  $ \t_i$ are defined in terms of
boundaries \, (Say $\s_i = \partial (\widetilde{\s_i}) \ and\ \t_i = \partial
(\widetilde{\t_i})$) $\partial: H^0(\cO_D(D)) \to H^1(\cO_C) \ and\
\partial:H^0(T_C(D)_{|D}) \to H^1(T_C)$. As such it is clear we have a
commutative diagram for any $\omega \in H^0(K_C)$ 

$$
\begin{CD}
H^0(T_C(D)_{|D}) @>{\vee \omega}>> H^0(\cO_D(D)) \\
@VV\partial V				@VV\partial V \\
H^1(T_C) @>{\cup \omega}>> H^1(\cO_C) \\
\end{CD} 
$$\\

\noindent
But $\t_i \cup \omega = \frac{1}{z_i} \frac{\partial}{\partial z_i}
\cup
f(z_i) dz_i = \frac{f(z_i)}{z_i} = \frac{a_i}{z_i} + $ holomorphic fnc. 
$f(z_i)/z_i = \frac{a_i}{z_i}$ in $H^0(\cO_{p_i}(p_i))$ so denoting
by
$\widetilde{\tau_i} = \frac{1}{z_i} \frac{\partial}{\partial z} \in
H^0(T_C(p_i)_{|p_i})$ \\
$\t_i(\omega) = \partial(\widetilde{\t_i} \vee
\omega ) = \partial \left( \frac{a_i}{z_i} \right) = a_i \s_i$. Since
$T_C(D)_{|D} \ and\ \cO_D(D)$ are skyscraper sheaves it follows that
$\t(\omega) = \sum_{i=1}^d \s_i(\omega) = \sum_{i=1}^d a_i \s_i$ as
claimed.
\end{proof}

\begin{cor} 
ker $(\t_i) = H^0(K_C(-p_i))$ and for 
$\t
\in
T(D)$ ker$(\t) \supset H^0(K_C(-D))$.
\end{cor}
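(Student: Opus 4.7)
The plan is to derive both statements as direct consequences of Lemma \ref{L:1.2}. The content of that lemma is precisely a formula for $\tau(\omega)$ in terms of the local values $a_i = f(0)$ of a differential $\omega$ locally written as $f(z_i)dz_i$ near $p_i$, so the question of when $\tau(\omega) = 0$ is transparent once we pin down the linear independence needed to read off the $a_i$.

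For the first assertion, I would specialize Lemma \ref{L:1.2} to the single summand $\tau_i$, so that $\tau_i(\omega) = a_i \sigma_i$. The condition $a_i = 0$ is exactly the condition that $\omega$ vanish at $p_i$, i.e., that $\omega \in H^0(K_C(-p_i))$. To conclude $\ker(\tau_i) = H^0(K_C(-p_i))$ I need to know $\sigma_i \ne 0$; this is immediate because for $g \geq 2$ we have $h^0(\mathcal{O}_C(p_i)) = 1$, so in the long exact sequence of $0 \to \mathcal{O}_C \to \mathcal{O}_C(p_i) \to \mathcal{O}_{p_i}(p_i) \to 0$ the boundary map is injective on $H^0(\mathcal{O}_{p_i}(p_i))$ and $\sigma_i = \partial(1/z_i)$ is nonzero.

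For the second assertion, any $\tau \in T(D)$ has the form $\tau = \sum_{i=1}^d c_i \tau_i$ with $c_i \in k$. Applying Lemma \ref{L:1.2} (which is stated for the particular combination $\sum \tau_i$, but whose proof applies verbatim to arbitrary linear combinations, since the coefficient $c_i$ just multiplies the local expansion) gives
\[
\tau(\omega) \;=\; \sum_{i=1}^d c_i a_i \sigma_i.
\]
Now if $\omega \in H^0(K_C(-D))$, then $\omega$ vanishes at each $p_i$, so every $a_i = 0$, whence $\tau(\omega) = 0$. Equivalently, one may simply observe $H^0(K_C(-D)) = \bigcap_{i=1}^d H^0(K_C(-p_i)) = \bigcap_{i=1}^d \ker(\tau_i) \subset \ker(\tau)$, using the first assertion together with the fact that $\tau$ is a $k$-linear combination of the $\tau_i$.

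There is no real obstacle here; the only small point requiring care is the nondegeneracy $\sigma_i \ne 0$ used to promote $\tau_i(\omega) = 0$ to $a_i = 0$, which is handled by the genus hypothesis. Note also that one cannot expect equality in the second statement in general, because the $\sigma_i$ need not be linearly independent in $H^1(\mathcal{O}_C)$ when $D$ is special — this is precisely why the inclusion is stated rather than an equality, and why the connection to the Clifford index enters the picture in Theorem \ref{T:2}.
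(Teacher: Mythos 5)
Your proof is correct and follows essentially the same route as the paper: apply Lemma \ref{L:1.2} to read off $\tau_i(\omega)=a_i\sigma_i$, identify $\ker(\tau_i)$ with $H^0(K_C(-p_i))$, and obtain the second claim from $\ker(\tau)\supset\bigcap_i\ker(\tau_i)=H^0(K_C(-D))$. The only difference is that you explicitly justify $\sigma_i\neq 0$ (needed to pass from $\tau_i(\omega)=0$ to $a_i=0$), a point the paper leaves implicit; that is a worthwhile addition but not a different argument.
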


\begin{proof}
 $\t_i(\omega) = a_i \s_i $ where $ \omega = f(z_i)
dz_i \
and\ f(0) = a_i$. Hence $\t_i(\omega) = 0 \Leftrightarrow a_i = 0
\Leftrightarrow \omega \in H^0(K_C(-p_i))$. If $\t = \sum b_i \t_i$
then
$$ker(\t) \supset \bigcap_{i=1}^d \ker(b_i \t_i) =
\bigcap_{i=1}^d \ker(\t_i) = \bigcap_{i=1}^d H^0(K_C(-p_i)) =
H^0(K_C(-D))f.$$

\end{proof}

\begin{defn}
 The rank of $\xi \in H^1(T_C)$ is the rank of $\xi:
H^0(K_C) \to H^1(\cO_C)$. $R^k(C) = \{ \xi \in H^1(T_C) \, | \,
\mbox{rank}(\xi) \leq k \}$.
\end{defn}

Since the rank of $\xi \in \Hi{T_C} $ doesn't change when $\xi$ is multiplied by a scalar,
$R^k(C)$ makes sense in $\PP^{3g-4}$.

\noindent
If $ \xi = \sum_{i=1}^k b_i\t_i$ where $\t_i$ are Shiffer variations then
$\mbox{rank}(\xi) \leq k$ since each $\t_i$ has rank 1; i.e.
$\Sec^{k-1}(C) \subset R^k(C)$ where $\mbox{Sec}^{k-1}(C)$ is the $k$ secant
variety to $\Phi_2(C) \subset \P^{3g-4}$.  $\mbox{Sec}^k(C)$ is the
closure in $\P^{3g-4}$  of all $k$ planes ${\langle \t_i
\rangle}^k_{i=0}$ where $\t_i$ are distinct Shiffer variations. To give a
complete description we need to describe what happens to $T(D)$ when $D$
has points of multiplicity greater than one. \\

\begin{defn}
 Suppose $E = kp$ is a divisor. Then we define
generalized Shiffer variations as follows: let  $\t_p^j =
\partial (\widetilde{\t}_p^j) \ 1 \leq j \leq k$ : where ${\widetilde{\t_p}}^j =
\frac{1}{z^j} \frac{\partial}{\partial z} \in H^0(T_C(kp)_{|kp})$. 
 If $D = \sum_{i=1}^{d'} k_ip_i$ then
$$T(D) = \left\{\sum_{i=1}^{d'} \sum_{j=1}^{k_i} b_{ij} \t_i^j \, | \,
\t_i^j \mbox{ are generalized variations and } b_{ij} \in \C \right\}$$
\end{defn}
From the definition it follows that $T(D)$ is
 the linear space spanned by the divisor D.
Thus the following is clear.
\\

\noindent
{\bf \underline{Claim}:} $\displaystyle{\mbox{Sec}^j(C) = \bigcup_{D \,
\in \,
\rm{\scriptstyle{Sym}}^j(C)} T(D)}$\\
{\bf \underline{Proof}:} We just mean the set theoretic statement that if
$\varphi_j:\sym^j(C) \to \linebreak  G(j-1, 3g-4)$ is the map which takes $j$
points $p_1, \ldots ,p_j \in \Phi_2(C)$ to the $j-1$ plane they span in
$\P^{3g-4}$ and if $U_j \subset \mbox{Sym}^j(C) \times \P^{3g-4} = \{(D,x)
\, | \, D = \sum_{i=1}^j p_i 
 \ and\ x \in \langle p_1 \ldots p_j \rangle \}$
then $\pi_2(U_j) = \mbox{Sec}^j(C)$ . Since $T(D)$ is the linear span of
the divisor D we are done. In fact we are only interested in the case 
where $d \leq 2g-2$ and hence $d$ points always define a $d-1$ plane.

The purpose of this thesis  is to show that the relationship between 
$\Sec^{j-1}(C)$ and $\ R^j(C)$ is controlled by the Clifford index. This
will follow from an analysis of Shiffer variations and classes. To that end
we need to understand for any $\t \in T(D)$ what the possible kernels and
images can be. Since $\mbox{ker}(\t) \supset H^0(K_C(-D))$ in any
event set $W_D = W = H^0(K_C) / H^0(K_C(-D))$ and $ S =
{\langle \s_i \rangle}_{p_i \in D}$ so $\t:W \to S$. Note $\dim(W) = g -
(g - d + r) = d - r$ since $D$ defines a $g^r_d$. We calculate
$\dim(S)$:\\

\begin{lemma}

\begin{itemize}
\item[i)] $\dim(S) = d-r$ 
\item[ii)] $\sum a_i \s_i = 0 \Leftrightarrow \exists f \in H^0(\cO_C(D))$
s.t. in the same local coordinates $z_i$ used to define $\s_i$ one has
$f(z_i) = a_i/z_i$ + holomorphic function.
\end{itemize}
\end{lemma}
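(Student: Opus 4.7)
The plan is to deduce both parts of the lemma from the long exact sequence of cohomology attached to
\[0 \to \cO_C \to \cO_C(D) \to \cO_D(D) \to 0.\]
Because $D = \sum p_i$ is reduced of degree $d$, the sheaf $\cO_D(D)$ is a skyscraper whose stalk at $p_i$ is canonically generated, after choosing the local parameter $z_i$, by the class of $1/z_i$; hence $H^0(\cO_D(D))$ is $d$-dimensional with basis $\{1/z_i\}_{i=1}^d$. By the very definition of the Shiffer cohomology classes $\s_i = \partial(1/z_i)$, so $S = \langle \s_i \rangle$ is precisely the image of $\partial : H^0(\cO_D(D)) \to H^1(\cO_C)$.

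For part (i), I would read the dimension of $\im(\partial)$ straight off the long exact sequence. Since $h^0(\cO_C)=1$ and the hypothesis that $D$ defines a $g^r_d$ gives $h^0(\cO_C(D)) = r+1$, exactness yields
\[\dim S = h^0(\cO_D(D)) - \bigl(h^0(\cO_C(D)) - h^0(\cO_C)\bigr) = d - (r+1-1) = d-r.\]

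For part (ii), exactness at $H^0(\cO_D(D))$ says that $\sum a_i \s_i = 0$ is equivalent to $\sum a_i(1/z_i)$ lying in the image of the restriction map $H^0(\cO_C(D)) \to H^0(\cO_D(D))$. What remains is to identify this restriction with the assignment of principal parts: a section $f \in H^0(\cO_C(D))$ maps to the class of $f$ modulo $\cO_{C,p_i}$ at each $p_i$, which in the coordinate $z_i$ is the class of $a_i/z_i$ precisely when $f(z_i) = a_i/z_i + (\text{holomorphic})$. This matches the stated local condition verbatim.

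I do not foresee a serious obstacle; the argument is essentially bookkeeping with the long exact sequence. The only place where care is needed is the compatibility of local parameters — the $z_i$ used to write the principal parts of $f$ in (ii) must be the same ones used to define $\s_i = \partial(1/z_i)$ — but the lemma already stipulates this.
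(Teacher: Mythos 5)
Your argument is correct and coincides with the paper's: both parts are read off from the long exact sequence of $0 \to \cO_C \to \cO_C(D) \to \cO_D(D) \to 0$, with (i) coming from $\dim\im(\rho) = h^0(\cO_C(D)) - h^0(\cO_C) = r$ and (ii) from exactness at $H^0(\cO_D(D))$ together with the identification of the restriction map with principal parts. No differences worth noting.
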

\begin{proof}

 Consider the exact sequence 
$$ 0 \to \cO_C \to \cO_C(D) \to \cO_D(D) \to 0 .$$
It gives rise to:
$$ 0 \to H^0(\cO_C) \to H^0(\cO_C(D)) \stackrel{\rho}{\to} H^0(\cO_D(D))
\stackrel{\partial}{\to} H^1(\cO_C)$$
since $\partial (H^0(\cO_D(D))) = \langle \s_i \rangle$ so i) follows
from
$h^0(\cO_D(D)) = d$ and$\dim(\mbox{Im}(\rho )) = r$ ($D$ is a $g_d^r$). 
 ii) This is just an explication of i). Choosing local coordinates $z_i$
around $p_i$ and identifying $H^0(\cO_{p_i}(p_i))$ with $\frac{1}{z_i}$.
Then if $f = \frac{a_i}{z_i}$ + holomorphic function near $p_i$ then $\rho(f) =
\bigoplus_{i \in D} \frac{a_i}{z_i}$ so $\partial(\rho(f)) = 0 = \, \sum
a_i \s_i$.
\end{proof}

\noindent
{\bf \underline{Remark}:} If for $D = \sum_{i=1}^{d'} k_i p_i$ with $\sum
k_i =
d$ and we define $\widetilde{\s}_i^j \ \ 1 \leq j \leq k_i$ to be the class
of $\frac{1}{z_i^j} \in \cO_{k_i p}(k_i p) \ \ \s_i^j =
\partial(\widetilde{\s}^j_i)$ then the lemma extends in a straight forward
manner to non-reduced divisors.\\

\noindent
{\bf Main Result.}

The main theorem of this section is Thereom \ref{T:mthm} which
characterizes the rank filtration in terms of the Clifford index.
I recall the  statement.\\

\noindent
{\bf \underline{Main Theorem}} (Theorem \ref{T:mthm})

$\mbox{Sec}^j(C) = R^j(C)$ for $ j < \mbox{Cliff}(C)$

$\mbox{Sec}^j(C) \subsetneq R^j(C)$ for $j \geq \mbox{Cliff}(C)$.\\

This theorem is a geometric characterization of the Clifford index.
Griffiths \cite{Gri} has commented upon the importance of understanding the rank
filtration in terms of doing Hodge theory, but perhaps there is more to be
said algebraically.

The main tool used is a theorem that characterizes the possible ranks of a
generalized Shiffer variation supported on $D$ in terms of data about $D$,
in particular the Clifford index $d-2r$ of $D$. A calculation of the
dimension of $\mbox{Sec}^j(C)$ which allows one to bound the number of
Shiffer variations needed to express any element of $H^1(T_C)$ finishes
the proof. The relationship between the rank filtration and the Clifford
index is governed by:\\

\begin{thm} \label{T:2}
 Let $\t = \sum_{p_i \in D} a_i \t_i \, \in \,
T(D) \
a_i \neq 0$ be a generalized Shiffer variation. Then:
\begin{itemize}
\item[i)] $d - 2r \leq \mbox{rank}(\t) \leq d -r$
\item[ii)] The upper bound is always achieved and the lower bound is
achieved if $D \ and \linebreak \ K_C(-D)$ are base point free.
\end{itemize} 
\end{thm}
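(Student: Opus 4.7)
My plan is to reduce the rank computation to elementary linear algebra on $\mathbb{C}^d$. The map $\tau\colon W \to S$ factors as
\[
W \hookrightarrow \mathbb{C}^d \xrightarrow{\mathrm{diag}(a)} \mathbb{C}^d \twoheadrightarrow S,
\]
where the first arrow is evaluation of differentials at $D$, with image $V$ of dimension $d-r$, and the surjection has kernel $K$ of dimension $r$ spanned by the polar parts of $H^0(\mathcal{O}_C(D))$ (as in the lemma preceding the theorem). The crucial structural input is the residue theorem: applied to $\omega g \in H^0(K_C(D))$ for $\omega \in H^0(K_C)$ and $g \in H^0(\mathcal{O}_C(D))$, it yields $\sum_i \omega(p_i)\, c_i(g) = 0$, so $V$ and $K$ are orthogonal complements under the standard symmetric pairing on $\mathbb{C}^d$.

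With this setup, part (i) is immediate. The upper bound $\mathrm{rank}(\tau) \leq d-r$ is just $\mathrm{rank}(\tau) \leq \dim W$. For the lower bound, $\ker(\tau) = V \cap a^{-1} K$ has dimension at most $\dim a^{-1}K = r$ (since $\mathrm{diag}(a)$ is invertible when all $a_i \neq 0$), giving $\mathrm{rank}(\tau) \geq (d-r)-r = d-2r$.

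For the upper bound in part (ii), the condition $V \cap a^{-1}K = 0$ is a Zariski-open condition on $a$ (since $\dim V + \dim a^{-1}K = d$, transversality is the generic behavior). I would verify nonemptiness by exhibiting one explicit $a$ giving transverse intersection; generic $a$ then realizes $\mathrm{rank}(\tau) = d-r$, and such $a$ can of course be chosen with all coordinates nonzero.

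The main obstacle is the lower bound. Realizing $\mathrm{rank}(\tau) = d-2r$ requires $a^{-1}K \subseteq V$, equivalently $K \subseteq a\cdot V$. Dualizing via $V = K^\perp$ and setting $c_i = 1/a_i$, this translates into the vanishing of the symmetric bilinear form $B_c(k,k') = \sum_i c_i k_i k_i'$ on $K\times K$, a system of at most $\binom{r+1}{2}$ linear equations in $c$ cutting out a linear subspace $L \subseteq \mathbb{C}^d$. The essential point is to produce $c \in L$ with every $c_i \neq 0$, i.e., to show $L$ is not contained in any coordinate hyperplane $\{c_i = 0\}$ — equivalently that no standard basis vector $e_i$ lies in the image of the Hadamard multiplication $\mathrm{Sym}^2 K \to \mathbb{C}^d$. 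This is where the two base-point-free hypotheses must enter: $D$ base-point-free ensures $K$ meets no coordinate hyperplane (so each evaluation $\mathrm{ev}_i\colon K\to \mathbb{C}$ is nonzero), while $K_C(-D)$ base-point-free supplies the dual information needed to prevent $e_i$ from being a Hadamard-quadratic combination of elements of $K$. I expect to complete this final step by a base-point-free-pencil-trick style construction, using sections of $K_C(-D)$ to interpolate the required values and thereby produce $c$ (and hence $a$) explicitly from the two linear systems.
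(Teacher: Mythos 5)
Your framework is the paper's own argument rewritten in coordinates: the factorization of $\tau$ as $W\hookrightarrow \mathbb{C}^d \xrightarrow{\operatorname{diag}(a)} \mathbb{C}^d \twoheadrightarrow S$ is Lemma \ref{L:1.2} together with the commutative diagram in the paper's proof, and your orthogonality $V=K^{\perp}$ is part (ii) of the lemma computing $\dim S$ (your residue-theorem derivation of it is correct and arguably cleaner). Part (i) of your argument is therefore complete and agrees with the paper's. The problem is part (ii): both existence statements are announced rather than proved, and the lower-bound one is the actual content of the theorem, so as written the proof has a genuine gap exactly at the step you yourself flag as the main obstacle. For the upper bound the missing witness is concrete: among the points of $D$ one can choose $d-r$ that impose independent conditions; the indicator vector of this subset is an $a_0$ (with some zero entries) at which your $(d-r)\times(d-r)$ determinant is nonzero, and semicontinuity then places generic $a\in(\mathbb{C}^*)^d$ in the open locus. "Transversality is generic" is \emph{not} automatic for the diagonal torus action (take $V=K$ a coordinate subspace), so this witness cannot be skipped.

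For the lower bound, the vector $c$ you are searching for is produced as follows, and this is the paper's construction translated into your language. First use base-point-freeness of $|D|$ (when $r>0$) to replace $D$ by a reduced member of its linear system; this, not the non-vanishing of the evaluations $\mathrm{ev}_i$ on $K$, is what the hypothesis on $D$ is used for in the paper. Then choose $\omega\in H^0(K_C(-D))$ with simple zeros along $D$ (possible because $K_C(-D)$ is base point free) and set $c_i$ equal to the first-order coefficient of $\omega$ at $p_i$, so that every $c_i\neq 0$. For $f,g\in H^0(\mathcal{O}_C(D))$ with polar coefficients $f_i,g_i$, the differential $fg\omega$ lies in $H^0(K_C(D))$ and has residue $c_if_ig_i$ at $p_i$, so the residue theorem gives $\sum_i c_if_ig_i=0$: this is exactly the vanishing of your form $B_c$ on $K\times K$, hence $c\in L$ with all coordinates nonzero. (In the paper this appears as the statement that $\eta_j=f_j\omega$ lie in $\ker\tau$ for $\tau=\sum_i\tau_i$ in the coordinates normalized by $\omega=z_i\,dz_i$.) Your guess that a pencil-trick-style use of $H^0(K_C(-D))$ does the job is right, but without this one identity the theorem is not proved. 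Finally, note that the statement concerns \emph{generalized} Shiffer variations, so to cover non-reduced $D$ your diagonal matrix must be replaced by the invertible triangular Toeplitz blocks the paper writes down when it checks that $\widetilde{\tau}$ is an isomorphism.
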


\noindent
{\bf \underline{Remarks}:} Notice that if $D$ computes the Clifford index
of $C$
then $D$ satisfies ii). Also note the choice of local coordinates $z_i$ is
irrelevant as different choices of $z_i$ just scale $\s_i $ and $ \t_i$
differently.\\

\begin{proof}
 Since $\t: W \to S $,\,$\rk(\t) \leq
\dim(S)
= d-r$. Let $p_1, \ldots, p_{d-r} \in D$ be such that  $h^0(\cO_C(p_1, \ldots ,
p_{d-r})) = 1$. That is to say that the points
 $p_1, \ldots, p_{d-r}$ are linearly independent.
 One can check easily by induction that such points exist.
If we take $\t = \sum_{i=1}^{d-r} \t_i$ then $\mbox{rank} \, \t =
d-r$ because if $ \omega \in \H{K_C} \, $ then $\t(\omega) =
\sum_{i=1}^{d-r} b_i \s_i$ where $b_i \in k$ is the value of $\omega$
at $z_i$, that is to say that locally $ \omega = f_i(z_i)dz_i $ where
$ f_i(0)= b_i$. Thus $\t(\omega) = 0 $ if and only if $b_i = 0$ for all $i$,
since the $\s_i$ were constructed to be linearly independent.
Hence $\t =
\sum_{i=1}^{d-r} \t_i$ achieves the upper bound. 

Next we show the lower
bound $d-2r \leq \mbox{rank}(\t)$. Recall that the action of a
Shiffer variation $\t$ on one forms can be calculated by considering the
action of $\widetilde{\t} \in H^0(T_C(D)_{| D})$ representing $\t$. In
fact we have the following commutative diagram.\\

$$
\xymatrix{
& & H^0(\cO_C(D)) \ar[d] \\
H^0(K_C)/H^0(K_C(-D)) = W \ar[r]^-{r} \ar[drr]^-{\t} &
H^0(\omega_{C|D}) \ar[r]^-{\widetilde{\t}} & H^0(\cO_D(D))
\ar[d]^-{\partial} \\
& & H^1(\cO_C) }
$$

Here $r$ is the restriction map and $\widetilde{\t}$ is such that
$\partial(\widetilde{\t}) = \t$. The key point is if $D = \sum_{i=1}^j k_i
p_i $ and $ \widetilde{\t} = \sum_{i=1}^j \sum_{l=1}^{k_i} b_{il} \t_i^l$
then $\widetilde{\t}$ is an isomorphism if and only if $b_{ik_i} \neq 0 \ \forall i$
; i.e. the
highest pole order terms are nonzero. This means that all coefficients are
non-zero if $D$ is reduced. To see this its clearly enough to check at any
point $p_i$ that the map $H^0({K_C}_{|k_i p})
\stackrel{\widetilde{\t}_i}{\to} H^0(\cO_{k_ip} (k_ip))$ is an isomorphism
$\iff b_{ik_i} \neq 0$. Working in local coordinates since $z_i^j \t_i^k =
\t_i^{k-j}$ one sees that $\mbox{rank} \, \t_i^j = j \ and\ \mbox{Im} \,
\t_i^j = z_i \mbox{Im}(\t_i^{j+1})$ so  $\sum_{j=1}^{k_i} b_{ij}\t_i^j$
has
$\mbox{rank} \, m \iff b_{i_m}  \neq 0$ and $b_{i_j} = 0 \ j>m$ and hence
$\mbox{rank} \, \widetilde{\t_i} = k_i \iff b_{ik_i} \neq 0$.

Now returning to our diagram, the map $\widetilde{\t}$ is an isomorphism,
and $r$ is injective, so $\mbox{ker}(\t) = \mbox{ker}(\partial \circ
\widetilde{\t}
\circ r) = \mbox{ker}(\partial_{| \rm{\scriptstyle{Im}}(\widetilde{\t}
\circ r)})$.
Since  $\dim(\mbox{ker} \, \partial) = r \geq
\dim(\mbox{ker}(\partial_{|\rm{\scriptstyle{Im}}(\widetilde{\t} \circ
r)}))$ so $\dim \, \mbox{ker}(\t) \leq r$ and hence $\mbox{rank} \, \t =
\dim(W) - \dim(\mbox{ker}(\t)) \geq (d-r)-r = d-2r$. 

To finish we must
show that if $K_C(-D)$ is base point free there exists a $\t$ with
$\mbox{rank}(\t) = d-2r$. Because  $D$ moves and is base
point free, ($h^0(\cO(D)) = 1$ is the 
case $r =0$) we may
find $D'$ linearly equivalent to $D$ and reduced, $D' = \sum_{i=1}^d p_i$
where $p_i \neq p_j$ for $i\neq j$. We may set $D=D'$ since we are only interested in
existence. Now since $K_C(-D)$ is base point free there exists 
 $\omega
\in H^o(K_C(-D))$ such that  $\omega$ has simple zeroes at $p_i$ 
for all $i$.
 Pick local coordinates s.t. $z_i$ on $U_i \ni p_i'$ with $\omega(z_i)
=
z_idz_i$ and set $\s_i = \partial\left( \frac{1}{z_i} \right) \in
H^1(\cO_C) $ and $ \t_i = \partial \left( \frac{1}{z_i}
\frac{\partial}{\partial z_i} \right) \in H^1(T_C)$. Let $f_1 \ldots f_r$
be a basis for $H^0(\cO_C(D)) / H^0(\cO_C)$. From our choices
we
have $f_i \omega = \eta_i \in H^0(K_C) \ and \ \t_i(\eta_j) =
a_i^{(j)}$ where $f_{j|U_i} = a_i^{(j)}/z_i$+hol. function.  Note that $\eta_i$
are linearly independent elements of $H^0(K_C)$ because if
$\sum_{j=1}^r R_j \eta_j = 0$ for $R_j \in k $ then $\left( \sum_{j=1}^r
R_j f_j \right) \omega = 0$. But $\omega$ is a holomorphic one form so
this can only happen if $\sum_{j=1}^n R_j f_j = 0$ which contradicts the
linear independence of $f_j$. Now let $\t = \sum_{i=1}^d \t_i$.
$\t(\eta_j) = \sum_{i=1}^d \t_i(\eta_j) = \sum_{i=1}^d a_i^{(j)} \s_i = 0$
(because  $\t_i(\eta_j)= a_i^{(j)}$ by Lemma  \ref{L:1.2}). 
 Hence we have exhibited $r$ linearly independent elements
of $\mbox{ker} \, \t$ i.e. $\mbox{rank}(\t) \leq d -2r$ hence
$\mbox{rank}(\t) = d-2r. $
\end{proof}

\noindent
{\bf Remark:} The construction of the second part is related to classical
ideas
about constructing quadrics containing the canonical curve. In fact
general $\omega \in H^0(K_C(-D))$ have simple zeroes on $D$ and if
${\omega}' \ and \ \eta_i'$ are another such choice then $\eta_i{\omega}' -
\eta_i'\omega \in H^0(K_C)^{\otimes 2}$ is a quadric containing $C$
i.e. $\eta_i {\omega}' = f_i \omega \cdot {\omega}' = f_i {\omega}' \cdot
\omega = \eta_i' \omega$ so $\eta_i {\omega}' - \eta_i' \omega \in
\mbox{ker}(H^1(K_C)^{\otimes 2} \to H^0(K_C^{\otimes 2}))$.

Clearly this is close to Theorem \ref{T:mthm}. We need to see only that any $\t \in
H^1(T_C)$ can be written as an element of $T(D)$ with $D$ special (or
degree $D$ small). This is a consequence of a calculation of the dimension
of the secant variety. The fact that the secant variety of a curve is non-degenerate is well-known.
We include a proof for completenes.  The theorem and the proof have nothing
to do with the dimension of the projective space involved.  Nonetheless for concreteness
we consider only the case of $\PP(V) = \PP(\Hi{T_C})     $\\

\begin{thm}\label{T:secdim}
 $\dim(\mbox{Sec}^j(C)) = \min(2j+1, 3g-4)$ \\
\end{thm}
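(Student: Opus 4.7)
The upper bound $\dim(\sec^j(C)) \leq 2j+1$ is immediate from the incidence correspondence $U_j \subset \sym^{j+1}(C) \times \P^{3g-4}$ of the Claim above: $U_j$ is a $\P^j$-bundle (where defined) over an open subset of $\sym^{j+1}(C)$, so $\dim U_j = (j+1) + j = 2j+1$, and $\pi_2(U_j) = \sec^j(C)$. The bound $\dim(\sec^j(C)) \leq 3g-4$ is automatic, so the work lies in establishing the matching lower bound.

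I would proceed by induction on $j$. The base case $j=0$ reduces to $\sec^0(C) = \Phi_2(C)$, which has dimension $1$ and spans $\P^{3g-4}$ since $h^0(K_C^{\otimes 2}) = 3g-3$. For the inductive step, suppose $\dim(\sec^{j-1}(C)) = \min(2j-1, 3g-4)$; if this equals $3g-4$ we are done, so assume $2j-1 < 3g-4$. Applying Terracini's lemma at a general smooth point $q = \sum_{i=0}^{j} a_i \tau_{p_i}$ of $\sec^j(C)$, the projective tangent space equals the linear span $\Lambda_j := \langle T_{p_0}\Phi_2(C), \ldots, T_{p_j}\Phi_2(C) \rangle$. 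It therefore suffices to verify $\dim \Lambda_j = \min(2j+1, 3g-4)$ for generic points $p_0, \ldots, p_j \in C$.

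This I would prove inductively as well: by Terracini applied to $\sec^{j-1}(C)$ together with the outer inductive hypothesis, $\dim \Lambda_{j-1} = 2j-1$ for generic $p_0, \ldots, p_{j-1}$. Adjoining the tangent line $T_{p_j}\Phi_2(C)$ can fail to raise the dimension by $2$ only if that line lies entirely in $\Lambda_{j-1}$. But the locus $\{p \in C : T_p \Phi_2(C) \subset \Lambda_{j-1}\}$ is Zariski-closed in $C$, and if it were all of $C$ then $\Phi_2(C) \subset \Lambda_{j-1} \subsetneq \P^{3g-4}$, contradicting the non-degeneracy of the bicanonical image. Hence for generic $p_j$ the span grows by $2$, giving $\dim \Lambda_j = 2j+1$, as required.

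The main obstacle is Terracini's lemma itself, which identifies $T_q \sec^j(C)$ with the span of the tangent lines at the constituent Shiffer variations; once this identification is granted, the rest is a short non-degeneracy argument driven by the fact $h^0(K_C^{\otimes 2}) = 3g-3$. A more elementary variant, which avoids Terracini entirely, is to argue that the parameterization $\pi_2:U_j \to \sec^j(C)$ is generically finite by observing that for $p_0, \ldots, p_j$ in linearly general position the linear span $\langle p_0, \ldots, p_j \rangle$ meets $C$ in only finitely many points (by the general position lemma for non-degenerate curves), forcing the $(j+1)$-tuple producing a given $q \in \sec^j(C)$ to be determined up to finite ambiguity; this gives $\dim \sec^j(C) \ge \dim U_j - 0 = 2j+1$ whenever $2j+1 \le 3g-4$.
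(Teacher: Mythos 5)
Your upper bound via the incidence correspondence is fine, and your overall strategy (reduce everything to a generic-finiteness or tangent-space computation) is reasonable, but both of the arguments you offer for the lower bound break at the decisive step. In the Terracini route, the claim that adjoining $T_{p_j}\Phi_2(C)$ ``can fail to raise the dimension by $2$ only if that line lies entirely in $\Lambda_{j-1}$'' is false: a line meeting $\Lambda_{j-1}$ in a single point raises the dimension by exactly $1$, and your closedness argument rules out only containment, not intersection. So the induction could in principle yield $\dim \Lambda_j = 2j$ at every stage. This is fixable --- e.g.\ if every tangent line met $\Lambda_{j-1}$, projection from $\Lambda_{j-1}$ would have identically vanishing differential on $C$, forcing $\Phi_2(C)$ to span at most a $2j$-plane (a characteristic-$0$ argument); or, more directly, the codimension of $\Lambda_j$ equals $h^0\bigl(K_C^{\otimes 2}(-2p_0-\cdots-2p_j)\bigr)$, which is $3g-3-2(j+1)$ for general points since general points impose independent conditions --- but you must supply one of these. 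Note also that Terracini's lemma is a characteristic-$0$ tool, whereas the paper works over an arbitrary algebraically closed field.

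Your ``elementary variant'' has a different gap: finiteness of $\langle p_0,\dots,p_j\rangle \cap C$ says that a given $j$-plane arises from only finitely many divisors, but it does not bound the number of spans $\langle D\rangle$ passing through a fixed general point $q$, which is what finiteness of the fiber $\pi_2^{-1}(q)$ actually requires. The paper closes exactly this gap cohomologically: it shows that for general $D$ and all nearby $E$ in $\sym^j(C)$ one has $H^0(T_C(D+E))=0$, i.e.\ $D+E$ imposes independent conditions on $K_C^{\otimes 2}$, so $\langle D\rangle\cap\langle E\rangle=\emptyset$ and $\pi_2$ is quasifinite on an open set; the vanishing holds because $\deg(T_C(D+E))\le g-3$ in the range $2j+1\le 3g-4$ and a general divisor of that degree is not effective. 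Some input of this kind --- either the independence of the conditions imposed by $2(j+1)$ general points on $|2K_C|$, or the disjointness of the spans of two general divisors --- is indispensable, and neither version of your proposal supplies it.
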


\begin{proof}

 Recall that for $j \leq 2g-3 $ and any divisor $D$ of $\deg(D)= j$
  we have $ \ H^0(T_C(D)) = 0$ . Hence, for
 any divisor $D \in \mbox{Sym}^{j-1}(C)$,$ \ $ $D$ spans a $j-1$ plane in
$\P^{3g-4}$ which we will denote by $\langle D \rangle \subset \P^{3g-4}$.
Thus the map $D \mapsto \langle D \rangle $ is a map $\varphi_j :
\mbox{Sym}^{j-1}(C) \to G(j-1,3g-4)$. Let $U_j \subset \mbox{Sym}^{j-1}(C) \times
\P^{3g-4} = \{ (D,p) \, | \, p \in \langle D \rangle \subset \P^{3g-4} \}$
be the incidence correspondence relation. Notice $\p_1:U_j \to
\mbox{Sym}^{j-1}(C)$ is smooth with fibers $\P^{j-1} \ and \ \p_2(U_j) =
\sec^j(C)$ so its enough to show $\p_2$ is quasifinite.

To show $\p_2$ is quasifinite it is enough to show:\\
\\
\noindent
$(\ast) \ \exists U \subset \mbox{Sym}^j(C)$ open and non-trivial s.t. for $D
\in U \ \exists$ an open subset $V_D \subset \mbox{Sym}^j(C)$ such that 
$D \in V_D$ and
$ \forall E \in V_D \ \ H^0(T_C(D+E)) = 0.$\\

Statement $(\ast)$ may be translated as saying for general $D \in
\mbox{Sym}^j(C)$ and  any nearby $E \subset \mbox{Sym}^j(C)  \langle D
\rangle \cap \langle E \rangle = \emptyset$.  But
by $(\ast)$ if $D_1$ and $D_2 \in U$ are distinct divisors then for $p_i 
\in \pi_i^{-1}(D_i)$ for $i=1,2$ then $\pi_2(p_1) \neq \pi_2(p_2)$.  Hence
$\pi_2$ is quasifinite on the open set $\pi_1^{-1}(U)$.
However $(\ast)$ is clear.   Since $j \leq \frac{3g-5}{2}$,
$\deg(T_C(E+D)) \leq g-3$ for $D,E \in \sym^j(C)$ and the general
divisor of degree $g-3$ is not effective. By semi-continuity for 
general $D,E$ $\h{\cO_C(D+E)} = 0$ once it is true for one special 
$D,E$.
\end{proof}

Finally we will show how Theorem \ref{T:secdim} implies 
Theorem \ref{T:mthm}.\\

\begin{proof}
 By Theorem \ref{T:secdim}, if $(2j+1) \geq
3g-4$ or equivalently that $j \geq \frac{3g-5}{2}$ then 
$\sec^{j}(C)= \PP(\Hi{T_C})$
and hence every Shiffer variation can be written as
the sum of  at most 
$\frac{3g-5}{2}$  rank one variations.
In other words every $\tau \in \Hi{T_C}$ is in a T(D) with 
$\deg(D) \leq \frac{3g-5}{2}$.  
If $D$ is eligible to compute $\cliff(C) $, then by Theorem 
\ref{T:2}, any $\tau \in T(D)$ has rank $\geq \cliff(C)$.  
If $D$ doesn't move, that is $\h{\cO_C(D)}=1$, 
then $\rk(\tau) = \deg(\tau)$ for any $ \tau \in T(D)$ 
so we may assume that $D$ moves and hence, since D is
not eligible to compute $\cliff(C)$ 
 that $\hi{\cO_CC(D)} =1$. However 
$\cliff(D) = \cliff(K_C(-D)) =  \deg(K_C(-D))$, since 
$\hi{\cO_C(D)} =1$  implies  $\h{K_C(-D) } = 1$.  However we have 
$deg(D) \leq \frac{3g-5}{2}$ and hence  $ \deg(K_C(-D) 
\geq \frac{g+1}{2}$. But  $K_C(-D)$ doesn't move, so
we have that $\rk(\tau) = \deg(K_C(-D)$ for all $\tau \in
T(D)$ that have all non-zero coefficients.  
But we always have $\cliff(C) \geq \frac{g+1}{2}$.  Thus
in this case too, if $\sec^{j-1}(C) \subsetneq R^j(C) $ then
$j \geq \cliff(C)$. 
\end{proof}

\section[Geometric Riemann-Roch]{Geometric Riemann-Roch and the Definition of the Clifford Index of a General Line Bundle}\label{S:GRR}

In the previous section we have proved that the Clifford index of a curve can be
characterized in terms of the geometry of the curve, specifically the geometry of
the bicanonical embedding. The purpose of this section is to generalize these
ideas to a much wider class of line bundles.  The Clifford index can be given
a more geometric interpetation which allows one to make sense of the notion of 
a Clifford index for any very ample line bundle.  In fact the definition makes 
sense for any line bundle, but it doesn't seem to be useful unless the line bundle
is very ample.

The main result  we are aiming to prove is  one that compares secant varieties of 
curves to certain rank loci for non special line bundles of large degree. 
While secant varieties have nice geometric properties, rank loci have the 
important property that their equations are (by definition!) determinants of
a given degree.  For example to say that a curve in some given embedding is a
rank one locus is  to say that the curve is defined by the vanishing
of two by two minors of some matrix. This  brings to the forefront the
issue of whether the two scheme structures defined by the secant structure 
and the rank loci structure coincide.  We show that this is true in a large
range of circumstances.  Finally we relate this to earlier work of Eisenbud, Koh, 
and Stillman on determinantal presentations of curves and their secant varieites.

The key idea is as follows.  First use Geometric Riemann-Roch to define a general Clifford
index. For a very ample line bundle L interpet $L^{\otimes 2} $ as giving an embedding of C in a 
space of matrices. Finally  use the Clifford Index to bound from below the rank of a matrix.
This allows one to say that up to a given integer $d$, $\sec^d(C)$ is set-theoretically 
a determinantal  locus.  Scheme theoretic equality then comes from a different  argument.

Let $C$ be a smooth curve, $L$ a very ample line bundle. Set $P \equiv \PP(
\Hd{C,L})$ and let $D$ be a divisor of degree $d$.

\begin{defn} 
  
  Denote by $\overline{D}$ the span of $D$ in $P$. If $D$ has no
  multiple points this is clear. In general if $V \subset \H{C,L}$ is of
  codimension $m$ the $\H{C,L} \to \H{C,L}/V$ determines a $m-1$ dimensional
  subspace.  $\overline{D}$ is the space corresponding to $V=\H{L(-D)}$. 

\end{defn}

\begin{thm}[Geometric Riemann-Roch]~

  \begin{enumerate}

    \item[(i)] $\overline{D} \simeq \PP^{d-1-r}$ if and only $\h{L(-D)} =
      \h{L} - d + r$; i.e.  $D$ imposes $d-r$ conditions on $L$.

    \item[(ii)] If $s \in \H{\OO_C(D)}$ is a section which vanishes on $D$
      (i.e. $"1" \in \H{\OO_C} \hookrightarrow \H{\OO_C(D)}$) then under the
      natural multiplication map $\OO_C(D) \otimes L(-D) \to L$ we can
      identify $I(\overline{D}) = \set{ \xi \in \H{L} \,|\, \xi
      |_{\overline{D}} = 0}$ with $s \otimes \H{L(-D)}$.

  \end{enumerate}

  \begin{proof}~
    
    \begin{enumerate}

      \item[(i)] This is the definition since $\h{L(-D)} = \h{L} - d + r$ means
	that codimension of $\H{L(-D)}$ in $\H{L}$ is $d - r$.

      \item[(ii)] We have an exact sequence 

	\[ 0 \to \OO_C \stackrel{s}{\to} \OO_C(D) \to \OO_C(D)|_D \to 0 .\] 

	Tensoring this with $L(-D)$ and taking global sections give us (ii).

    \end{enumerate}

  \end{proof}

\end{thm}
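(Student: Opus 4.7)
My plan is to derive both parts directly from the short exact sequence
$$0 \to \OO_C \xrightarrow{s} \OO_C(D) \to \OO_C(D)|_D \to 0$$
together with the duality convention $P = \PP(\H{C,L}^\vee)$ built into the preceding definition of $\overline{D}$. The key observation is that, by that definition, $\overline{D}$ is the projectivization of the annihilator of $V = \H{L(-D)}$ inside $\H{C,L}^\vee$; equivalently it is $\PP((\H{C,L}/V)^\vee)$.

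For part (i), this is essentially a tautology. Unwinding the definition, $\dim \overline{D} = \dim(\H{C,L}/V) - 1$. Hence $\overline{D} \simeq \PP^{d-1-r}$ if and only if $V = \H{L(-D)}$ has codimension $d-r$ in $\H{C,L}$, which is precisely the statement $\h{L(-D)} = \h{L} - d + r$. That is what it means for $D$ to impose $d-r$ independent linear conditions on $\H{C,L}$, so I would simply write this chain of equivalences and be done.

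For part (ii), I would tensor the displayed short exact sequence with $L(-D)$ to obtain
$$0 \to L(-D) \xrightarrow{s} L \to L|_D \to 0$$
and take global sections. The injection $\H{L(-D)} \xrightarrow{s} \H{C,L}$ has image $s \otimes \H{L(-D)}$, which is exactly the subspace of sections of $L$ that vanish on $D$. Independently, the pairing $\H{C,L} \times \H{C,L}^\vee \to \kk$ identifies the linear forms on $P$ that vanish on $\overline{D}$ with the annihilator of $(\H{C,L}/V)^\vee$, which is precisely $V = \H{L(-D)}$. Combining the two identifications gives the desired equality $I(\overline{D}) = s \otimes \H{L(-D)}$.

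There is no real obstacle here: the theorem is a dictionary between the cohomological invariant $\h{L(-D)}$ and the projective geometry of the linear span $\overline{D} \subset P$. The only point requiring a little care is the duality convention — subspaces of $P = \PP(\H{C,L}^\vee)$ correspond to quotients of $\H{C,L}$, while linear forms on $P$ are elements of $\H{C,L}$ itself. Once that convention is fixed, both statements drop out of a single short exact sequence, which justifies the label "Geometric Riemann--Roch" as a repackaging of cohomological bookkeeping in projective-geometric language.
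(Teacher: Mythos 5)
Your proposal is correct and follows essentially the same route as the paper: part (i) is read off directly from the definition of $\overline{D}$ as the codimension of $\H{L(-D)}$ in $\H{L}$, and part (ii) comes from tensoring the sequence $0 \to \OO_C \stackrel{s}{\to} \OO_C(D) \to \OO_C(D)|_D \to 0$ by $L(-D)$ and taking global sections. The only difference is that you spell out the duality bookkeeping (subspaces of $\PP(\Hd{C,L})$ versus linear forms in $\H{C,L}$) that the paper leaves implicit, which is a reasonable expansion rather than a new argument.
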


If $L = K_C$ then the criterion that a divisor $D$ be eligible to compute
$\cliff(C)$ is that i) $\h{\OO_C(D)} \ge 2$ and ii)  $\hi{\OO_C(D)} \ge 2$.
By Serre Duality ii) is equivalent to 
 $\h{K_C(-D)} \ge 2$. The
first condition is that the points of $D$ do not impose independent conditions
on $\PP = \PP \left( \Hd{C,L} \right)$, and the second condtion is that the span of 
$D$ has codimension at least 2. This second condition is necessary because if
we take any hyperplane $H$, $H \cap C$ is a divisor of $\deg(L)$ and $\deg(L) > \h{L}$.
Therefore we can always find divisors supported on hyperplanes that fail to impose
 independent conditions. In other words, such divisors aren't really special.

Thus we may rephrase the definition of the Clifford index as \[\cliff(C) =
\min \set{d - 2r \, | \, D \text{ is a divisor of degree } d} \] where D is 
 such that $\overline{D} = \PP^{d-1-r}$ with $r > 0$ and $\overline{D}$ is not a
hyperplane in $\PP^{g-1}$. This suggests the following definitions for L an arbitrary
very ample line bundle.

\begin{defn}

  $r_L(D) = r \ge 0$ is the integer such that $\overline{D} = \PP^{d-1-r}$.

\end{defn}

\begin{defn}

  $\cliff(L,D) = d - 2 r_L(D)$. 
  
\end{defn}

And finally,

\begin{defn}
  
  $\cliff(L,C) = \min \{\cliff(L,D)\, | \, r_L(D) > 0 \text{ and the span of } D \\
\text{is of codimension two or greater}\}$ 
\end{defn}

{\bf Remark} It is a standard notation  that $\cliff(D) =
\cliff(K_C,D)$. This definition is specific for curves. As far as  I can
tell, the important point is that $D$ consists of points not divisors. We now
record the basic properties of $r_L(D)$ and $\cliff(L,D)$.

\begin{lemma}~\label{L:2.2}
  
  \begin{enumerate}

    \item[(i)]  $r_L(D) = \codim \left( \im \left( \H{L} \to \H{L,D} \right)
      \right)$.

    \item[(ii)] $r_L(D) = \h{ K_C \otimes L^{-1}(D)} - \h{ K_C \otimes L^{-1}
      }$.

    \item[(iii)] $\cliff(L,D) = \cliff(L(-D)) - \cliff(L)$.

    \item[(iv)] $\cliff(L,D) = \cliff(L, L^{\otimes 2} \otimes K_C^{-1}(-D) )$.
  
  \end{enumerate}

\end{lemma}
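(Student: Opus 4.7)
The plan is to derive all four items by formal manipulations with the long exact sequence in cohomology attached to
\[ 0 \to L(-D) \to L \to L|_D \to 0, \]
together with Riemann--Roch and Serre duality. No further geometric input seems needed; the work is essentially bookkeeping, and I do not foresee a serious obstacle.

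For (i), I would first observe that $L|_D$ is a length-$d$ skyscraper sheaf, so $\h{L|_D} = d$. By the very definition of $r_L(D)$ the subspace $\H{L(-D)} \subset \H{L}$ has codimension $d - r_L(D)$; equivalently, the image of $\H{L} \to \H{L|_D}$ has dimension $d - r_L(D)$ and therefore codimension $r_L(D)$ inside $\H{L|_D}$. For (ii), I would prolong the same exact sequence to
\[ \H{L|_D} \to \Hi{L(-D)} \to \Hi{L} \to 0, \]
the last arrow being surjective because $\Hi{L|_D} = 0$. The alternating sum of dimensions gives $r_L(D) = \hi{L(-D)} - \hi{L}$, and Serre duality rewrites this as $\h{K_C \otimes L^{-1}(D)} - \h{K_C \otimes L^{-1}}$.

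For (iii), I would substitute the defining expressions
\[ \cliff(L(-D)) - \cliff(L) = -d + 2\bigl(\h{L} - \h{L(-D)}\bigr), \]
and observe that the parenthesis equals $d - r_L(D)$ by (i), so the difference collapses to $d - 2 r_L(D) = \cliff(L,D)$.

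For (iv), the main input is the standard self-duality $\cliff(M) = \cliff(K_C \otimes M^{-1})$, which is immediate from Serre duality $\hi{M} = \h{K_C \otimes M^{-1}}$ together with Riemann--Roch. Applied to $M = L(-D)$ this yields $\cliff(L(-D)) = \cliff(K_C \otimes L^{-1}(D))$. If $E$ is any effective divisor with $\OO_C(E) \cong L^{\otimes 2} \otimes K_C^{-1}(-D)$, then $L(-E) \cong K_C \otimes L^{-1}(D)$, and combining (iii) applied to both $D$ and $E$ gives
\[ \cliff(L,E) = \cliff(L(-E)) - \cliff(L) = \cliff(K_C \otimes L^{-1}(D)) - \cliff(L) = \cliff(L(-D)) - \cliff(L) = \cliff(L,D). \]
The only point that requires a word of care is that $\cliff(L,\cdot)$, by (iii), depends only on the isomorphism class of the line bundle $L(-\cdot)$, so replacing $D$ by any divisor in the appropriate linear system is harmless.
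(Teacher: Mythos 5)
Your proposal is correct and follows essentially the same route as the paper: the long exact sequence of $0 \to L(-D) \to L \to L|_D \to 0$ for (i) and (ii), Serre duality, and for (iv) the self-duality $\cliff(M)=\cliff(K_C\otimes M^{-1})$ applied to $L(-D)$ combined with the identification $K_C\otimes L^{-1}(D)=L\otimes(L^{\otimes 2}\otimes K_C^{-1}(-D))^{-1}$. The only (harmless) divergence is in (iii), where you expand $\cliff(L(-D))-\cliff(L)$ directly from the definition and the degree count, whereas the paper routes through the auxiliary identity $\h{L}+\hi{L}=g+1-\cliff(L)$; your version is slightly more economical, and your closing remark that $\cliff(L,\cdot)$ depends only on the linear equivalence class is a point the paper leaves implicit.
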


\begin{proof}

  $r_L(D)$ is defined by $\h{L(-D)} = \h{L} - d + r_L(D)$. Using \[ 0 \to \OO(-D)
  \to L \to L|_D \to 0 .\]

  We get a long exact sequence \[ 0 \to \H{L(-D)} \to \H{L} \to \H{L|_D} \to
  \Hi{L(-D)} \to \Hi{L} \to 0 \] (since $\Hi{L|_D} = 0$) and  $\h{L|_D} = d$, we
  see that $\dim \left( \im \left( \H{L} \to \H{L|_D} \right) \right) \linebreak = d -
  r_L(D)$ which is (i).

  \begin{enumerate}

  \item[(ii)] From (i) we get $r_L(D) = \hi{L(-D)} - \hi{L}$. Applying Serre
    duality the result follows.

  \item[(iii)] $\cliff(L,D) = d - 2 r_L(D) = \left( d - r_L(D) \right) -
    r_L(D)$, which by (i) and (ii) $= \left( \h{L} - \h{L(-D)}) - (\hi{L(-D)} -
    \h{L} \right) = \left( \h{L} + \hi{L} \right) - \linebreak  \left( \h{L(-D)} + 
    \hi{L(-D)} \right)$. (iii) now follows from the well known

  \end{enumerate}

  \begin{lemma} 
    
    $\h{L} + \hi{L} = g + 1 - \cliff(L)$.

  \end{lemma}
    
  \begin{proof}

    If $\h{L} = r + 1$, $\hi{L} = \h{K_C \otimes L^{-1}} = g -
    \ell + r$, where $\ell = \deg(L)$, so $\h{L} + \hi{L} = r + 1 + g - \ell +
    r = g + 1 - (\ell - 2r) = g + 1 - \cliff(L)$.
  
  \end{proof}

  \begin{enumerate}

    \item[(iv)] By (iii), $\cliff(L,D) = \cliff(L(-D)) - \cliff(L) =
      \cliff(K_c \otimes L^{-1}(D)) - \cliff(L)$, since $\cliff(L) = \cliff(K_C
      \otimes L^{-1})$.  Now $K_C \otimes L^{-1}(D) = L\otimes\left( L^{\otimes 2}
      \otimes K_C(-D) \right)^{-1}$ and the result follows.

  \end{enumerate}

\end{proof}

We now compute some examples of the Clifford index $\cliff(L,C)$ to
give a feeling for the ideas involved and to show how it is related to the
geometry of the embedding of $C$ in $\PP(\Hd{L})$. A key idea is that if
$\hi{L} = 0$, then a divisor $D$ fails to impose independent conditions 
on the line bundle $L$ 
$\Longleftrightarrow$ $\hi{L(-D)} > 0$, which by Serre duality means $\h{K_C
\otimes L^{-1}(D)} > 0$.  This  means that there exists an injection $\OO
\stackrel{i}{\hookrightarrow} K_C \otimes L^{-1}(D)$. If $i$ vanishes
 on the divisor $E$ then $L(-D) \cong K_C(-E)$, that is to say,  $L \cong
K_C(D-E)$. We exploit this representation as well as information about how
special $E$ can be for $K_C$ -- which is given to us by Clifford's Theorem.

We first consider the case when $\deg(L) = 2g - 2$. Depending on how ``far
away'' $L$ is from $K_C$, its behavior becomes more ``very ample''. ``Far
away'' means more general in a sense to be explained below. I am sure that 
most, if not all, of this material is well known to the experts.  I could not  find  references
to these specific statements, so I am including the proofs.

\begin{thm} 

Suppose $deg(L)$ = $2g-2$ and $L \ne K_C$

  \begin{enumerate}

    \item[i.] $L$ is base-point free unless $L = K_C(p-q)$, $p, q \in C$.
      $K_C(p-q)$ has a unique base point unless $C$ is hyperelliptic.

    \item[ii.] Suppose $L$ is base-point free, then $L$ is very ample unless
      $L = K_C(D-E)$, $D, E \in \sym^2(C)$.

    \item[iii.] Suppose $L$ is very ample, then $C$ is not defined by
      quadrics if $L = K_C(D_1 - D_2)$, where $\deg(D_i) = 3$ and $D_i$ is
      general. That is, $L$ has a 3-secant line.

    \end{enumerate}

\end{thm}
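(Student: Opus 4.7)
The strategy is to translate each statement into a calculation of $\h{L(-E)}$ for an appropriate effective divisor $E$ and to use Serre duality on the degree-$0$ bundle $K_C\otimes L^{-1}$. The base observation, used throughout, is that $\deg(L)=2g-2$ and $L\neq K_C$ force $\hi{L}=\h{K_C\otimes L^{-1}}=0$ (a nontrivial degree-zero bundle has no sections), so Riemann--Roch gives $\h{L}=g-1$.

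For (i), a point $p$ is a base point of $L$ iff $\h{L(-p)}=\h{L}$, which by Riemann--Roch applied to the degree-$(2g-3)$ bundle $L(-p)$ is equivalent to $\hi{L(-p)}=1$, i.e.\ $\h{K_C\otimes L^{-1}(p)}=1$. This degree-$1$ effective bundle must equal $\OO_C(q)$ for a unique $q$, giving $L\cong K_C(p-q)$. If there were two distinct base points $p_1\neq p_2$, the resulting $K_C(p_1-q_1)\cong K_C(p_2-q_2)$ would yield $p_1+q_2\sim p_2+q_1$ with $p_1+q_2\neq p_2+q_1$, hence a $g^1_2$, forcing $C$ to be hyperelliptic.

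For (ii) the same method applies to pairs: $L$ fails to separate the (possibly infinitely near) pair $\{p,q\}$ iff $\h{L(-p-q)}\ge \h{L}-1$, which by Riemann--Roch on $L(-p-q)$ is equivalent to $\hi{L(-p-q)}\ge 1$, i.e.\ $\h{K_C\otimes L^{-1}(p+q)}\ge 1$. A nonzero section of this degree-$2$ bundle cuts out an effective $E\in\sym^2(C)$, so $K_C\otimes L^{-1}\cong \OO_C(E-p-q)$ and $L\cong K_C(D-E)$ with $D=p+q\in\sym^2(C)$.

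For (iii), I would produce the $3$-secant line by computing $r_L(D_1)$. Writing $L=K_C(D_1-D_2)$ gives $L(-D_1)\cong K_C(-D_2)$, and Serre duality turns $\hi{K_C(-D_2)}$ into $\h{\OO_C(D_2)}$. For generic $D_2\in\sym^3(C)$ one has $\h{\OO_C(D_2)}=1$ (a generic degree-$3$ divisor avoids the $g^1_3$ locus by dimension count, even in the trigonal case), so Riemann--Roch on the degree-$(2g-5)$ bundle $K_C(-D_2)$ yields $\h{L(-D_1)}=g-3=\h{L}-2$; by Geometric Riemann--Roch this is precisely $r_L(D_1)=1$, so $D_1$ lies on a line $\ell\subset \PP(\Hd{L})$. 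Any quadric through $C$ meets $\ell$ in length $\ge 3$ and hence contains $\ell$ by B\'ezout, but $\ell\not\subset C$ (the smooth irreducible $C$ has degree $>1$ in this embedding), so $C$ is not cut out by quadrics. The main obstacle is really just the simultaneous genericity bookkeeping: one must choose $(D_1,D_2)\in \sym^3(C)\times\sym^3(C)$ so that $L$ is very ample (an open condition by (ii)) and both $\h{\OO_C(D_i)}=1$ (also open); each is a nonempty open condition, so their intersection is nonempty and the construction goes through.
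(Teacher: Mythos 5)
Your proposal is correct and follows essentially the same route as the paper: each failure (base point, failure to separate two points, existence of a 3-secant line) is converted via Riemann--Roch and Serre duality into the statement $\h{K_C\otimes L^{-1}(E)}>0$ for an effective divisor $E$ of the appropriate degree, yielding the representation $L\cong K_C(D-E)$, and part (iii) concludes exactly as in the paper by noting that a quadric meeting the line $\overline{D}_1$ in three points must contain it. Your added genericity bookkeeping and the explicit $p_1+q_2\sim p_2+q_1$ argument for the hyperelliptic case are slightly more careful than the paper's but not substantively different.
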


\begin{proof} 

  \begin{enumerate}

    \item[i.] Suppose $p \in C$ is a base point. Then $\H{C,L} = \H{C,L(-p)}$
      and hence $\hi{L(-p)} = 1$. As mentioned above, this means $L(-p) =
      K_C(-E)$ (with $E$ effective). It follows that $\deg(E) = 1$, so $L(-p)
      = K_C(-q)$ and $L = K_C(p-q)$. 

      For any $p, q \in C$, $K_C(p-q)$ has a base point at $p$. Furthermore,
      if $p_1 \ne p$ is another base point, then $K_C(p-q-p_1) = K_C(-q_1)$,
      so $\OO_C \cong \OO_C(p-q+q_1-p_1)$ for some choice of $p_1 \in C$  which means that there exists a
      function on $C$ with two poles, i.e. $C$ is hyperellipic.

    \item[ii.] If $L$ is not very ample, there is  a divisor $D$ of degree
      2, such that $\h{L(-D)} > \h{L} - 2$. This is just the usual criterion 
      that a divisor is very ample if and only if it separates any two points
      , including infinitely near points, on $C$ (see \cite{ha} p.152) . Since $L$ is base-point free
      if $D = p_1+p_2$, \[\h{L(-D)} = \h{L(-p_1)} = \h{L(-p_2)} = g - 2,\]
      hence $\hi{L(-D)} = 1$  and hence  $L(-D) = K_C(-E)$ with $\deg(E) = 2$
      and $E$ effective, so $L = K_C(D-E)$. 

    \item[iii.] The condition that $L = K_C(D_1 - D_2)$ with $D_i$ general
      is equivalent to a line which intersects $C$ in 3 points. This is because
      $\h{L} = g - 1$ and \[\h{L(-D_1)} = \h{K_C(-D_2)} = g - 3\] (since $D_2$
      is general). So denoting $\overline{D}_1$ the span of $D_1$ in
      $\PP(\Hd{C,L})$ we see that \[\h{\OO_{\overline{D}_1}(1)} = \h{L} -
      \h{L(-D_1)} = 2,\] so $D_1$ is a line. Finally, any quadric containing
      $C$ contains $\overline{D}_1$, since if $Q$ is any such quadric
      $\#(\overline{D}_1 \cap Q) \ge 3$. 

  \end{enumerate}

\end{proof}

\textbf{Remarks}: 

\begin{enumerate}

  \item[i.] The map $C^d \times C^d \to Pic^{2d}(C)$ given by $(E,D) \mapsto E -
    D$ has $2d$ dimensional image in $P_{2d}(C)$ (\cite{ACGH}). So for $g \ge 3$,  a
    general line bundle is base-point free, and for $g \ge 5$,  a general bundle
    of degree $2g-2$ is very ample. This is because the above theorem shows that 
the  space of line bundles with a base point is two dimensional  and 
 the space of non ample line bundles is of dimension 4.

  \item[ii] One can check that for an embedding $C \hookrightarrow
    \PP(\H{C,L})$, where $\deg(L) = 2g - 2$ and $C$ is general, $C$ is defined by 
quadrics if and  only if there does not exist a 3-pointed line. Once one has defined
Shiffer variations for an arbitrary line bundle $L$, then one sees that, just as
in the case of the canonical embedding, that the rank one locus, consisting of
 divisors $D$, such that $\cliff(L,D)=1$ corresponds to the intersection of
all the quadrics through $C$.  Because  $\cliff(L)=2$ in this case one sees by 
Lemma \ref{L:2.2} part  \emph{iii} that a divisor with $\cliff(L,D)$ corresponds to
a divisor $E$ with $\cliff(E)=3$.  On a general curve the only such divisors are 
non-special and then $L \cong K_C(D-E)$ where $L$ and $E$ are divisors of degree 3.
Then on $L$, $D$ corresponds to a 3 pointed line.

Notice how this corresponds to the case of
    $L = K_C$. For the canonical embedding a non-hyperelliptic  curve is the
    intersection of the quadrics containing $C$ if and only if $C$ is neither
    a trigonal or a plane quintic. Trigonal means that there exists a divisor
    $D$ of degree 3 with $\h{\OO_C(D)} = 2$. That is, $D$ spans a line in
    $\PP(\Hd{C,K_C})$.

\end{enumerate}

Before moving on to the the Clifford index of line bundles of degree $\ge 2g-2$,
I would like to give one more application of this idea. 

One says a divisor $D$ of degree $d$ defines a $d$-pointed $j$ secant if $D$
is of degree $d$ and spans a projective space of dimension $j$. By Geometric
Riemann-Roch, $D$ spans a $d - 1 - r$ plane with $r \ge 0$. A general set of
points has $ r = 0$. One can ask what is the smallest $d$ such that there
exists a $d$-pointed $d - 1 - r$ plane. \cite{ACGH}  gives the formula that there
exists a $d$-pointed $d - 1 - r$ plane if: $d \ge r (\h{L} - d + r)$
This holds for any very ample $L$ such that $\hi{L} = 0$. The techniques used in \cite{ACGH} are  sophisticated.

For $r = 1$ and $\deg(L) = 2g - 2$, we can prove this very simply. In
this case, the result is

\begin{prop} If $d \ge \h{L} - d + 1$, then there exists a $d$-pointed $d - 2$
  plane.

\end{prop}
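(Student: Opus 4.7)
The plan is first to translate the statement via Geometric Riemann-Roch. A $d$-pointed $(d-2)$-plane is by definition a degree-$d$ effective divisor $D$ with $r_L(D) \geq 1$, i.e.\ $\overline{D} \subseteq \PP^{d-2}$. Since $L$ is very ample of degree $2g-2$ and $L \neq K_C$, one has $h^0(K_C \otimes L^{-1}) = 0$ and $\hi{L} = 0$, so $\h{L} = g-1$. By Lemma~\ref{L:2.2}(ii) the condition $r_L(D) \geq 1$ becomes $\h{K_C \otimes L^{-1}(D)} \geq 1$. Setting $N = K_C \otimes L^{-1}$, which is a nontrivial degree-zero line bundle, I would reformulate the goal as: \emph{find an effective $D$ of degree $d$ such that $N(D)$ is effective}.

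Second, I would recognize this as a statement about the Abel–Jacobi difference map. The bundle $N(D)$ is effective iff $N \cong \OO_C(E - D)$ for some effective $E$ of degree $d$, i.e.\ iff $N$ lies in the image of
\[
  \phi : \sym^d(C) \times \sym^d(C) \to \mathrm{Pic}^0(C), \qquad (E,D) \mapsto \OO_C(E-D).
\]
So it suffices to verify that $\phi$ is surjective. The image of the Abel–Jacobi map $u:\sym^d(C) \to \mathrm{Pic}^d(C)$ is the variety $W_d$ of dimension $\min(d,g)$, so the image of $\phi$ is the difference $W_d - W_d \subset \mathrm{Pic}^0(C)$, which by a standard sum-of-subvarieties dimension count has dimension $\min(2d, g)$. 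The hypothesis $d \geq \h{L} - d + 1 = g - d$ is exactly $2d \geq g$, so $\phi$ is dominant and hence (both sides being complete) surjective; therefore the given $N$ is in its image.

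Finally, I would note that for a general $D$ in the fiber $\phi^{-1}(N)$ (projected to the second factor), one gets $r_L(D) = 1$ exactly, so that $\overline{D}$ is genuinely a $(d-2)$-plane and not smaller: the locus where $r_L(D) \geq 2$ would correspond to $N$ lying in the strictly smaller subvariety $W_{d-1} - W_{d-1} + $ (a point), which is proper inside $\mathrm{Pic}^0(C)$ when $2(d-1) < g$, and otherwise one can move $D$ within the expected-dimension fiber to avoid it. The main obstacle is really the surjectivity of $\phi$, which rests on the classical fact that two $d$-dimensional subvarieties of the $g$-dimensional $\mathrm{Pic}^0(C)$ sum to the whole variety once $2d \geq g$; the rest is bookkeeping with Serre duality and genericity.
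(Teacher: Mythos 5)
Your proof is correct and follows essentially the same route as the paper: both reduce the claim via Serre duality to writing $K_C \otimes L^{-1}$ as $\OO_C(E-D)$ with $D,E$ effective of degree $d$, and both deduce this from the classical surjectivity of the difference map $\sym^d(C)\times\sym^d(C)\to \mathrm{Pic}^0(C)$ once $2d\ge g$, which is exactly the hypothesis since $\h{L}=g-1$. Your closing remark checking that a general such $D$ has $r_L(D)=1$ exactly (so that $\overline{D}$ is genuinely a $(d-2)$-plane) is a small refinement the paper omits.
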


\begin{proof}

  Rearranging terms,  we need to show that if $2d \ge \h{L} + 1$, then there exists a
  divisor $D$ of degree $d$, such that $\hi{L(-D)} > 0$. Using the same
  argument of \cite{ACGH} as before, if $g$ is even,  the map $\sym^{\frac{g}{2}}(C)
  \times \sym^{\frac{g}{2}}(C) \to Pic^g(C)$, or if $g$ is odd, the map
  $\sym^{\frac{g+1}{2}}(C) \times \sym^{\frac{g+1}{2}}(C) \to Pic^g(C)$ ($D, E$)
  $\mapsto D - E - p_0$ for a fixed $p_0$ are surjective. In either case we
  get that for any divisor $L_0$ of degree 0, we may write  $L_0 =
  \OO_C(D - E)$, with $\deg(D) \le \frac{g+1}{2}$. Since $L = K_C \otimes
  L_0$ for some $L_0$, $L(-D) = K_C(-E)$, and hence for the embedding given by $L$,  $D$ is
  a $d$-secant $d - 2$ plane.

\end{proof}

We now consider the case where $\deg(L) \ge 2g+1 $. We distinguish between the case when L contains
$K_C$ as a subsheaf and when L doesn't.  Of course if deg(L) $\ge 3g-2 $, then the canonical bundle will
always be a subsheaf.  These results are used in calculating the bounds for which one can say that curves 
of high degree,  and certain of their secant varieties,  are determinantally defined.

\begin{thm}\label{T:clifford_calc_1}
  
  If $L = K_C(D)$ with $D$ effective of degree $= d \ge2$, then $\cliff(L,C) = d - 2$.
  Unless $C$ is hyperelliptic, $D$ uniquely achieves this bound.

\end{thm}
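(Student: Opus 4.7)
The plan is to use Lemma \ref{L:2.2}(iii) to reduce the computation of $\cliff(L,E)$ for any eligible $E$ to an instance of Clifford's theorem applied to $L(-E)$.

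First I would compute $\cliff(L,D)$ directly. Since $L = K_C(D)$ has $\deg(L) = 2g-2+d$ and $\hi{L} = \h{\OO_C(-D)} = 0$, Riemann--Roch gives $\h{L} = g-1+d$, hence $\cliff(L) = (2g-2+d) - 2(g-2+d) = 2-d$. On the other hand $L(-D) = K_C$, so $\cliff(L(-D)) = 0$. Lemma \ref{L:2.2}(iii) then yields $\cliff(L,D) = 0 - (2-d) = d-2$. Eligibility of $D$ is immediate: by Lemma \ref{L:2.2}(ii), $r_L(D) = \h{\OO_C} - \h{\OO_C(-D)} = 1 > 0$, and the codimension of the span of $D$ equals $\h{L(-D)} = \h{K_C} = g \geq 3$.

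Next I would bound $\cliff(L,E)$ from below for any other eligible $E$. Using Lemma \ref{L:2.2}(iii) again,
\[ \cliff(L,E) \;=\; \cliff(L(-E)) - \cliff(L) \;=\; \cliff(L(-E)) + d - 2, \]
so it suffices to verify that Clifford's inequality applies to the line bundle $M := L(-E)$. The eligibility conditions translate directly into the required hypotheses: the codimension--two condition on $\overline{E}$ gives $\h{M} \geq 2$ (since the codimension of $\overline{E}$ in $\PP(\Hd{L})$ equals $\h{L(-E)}$), while $r_L(E) > 0$ combined with $\hi{L} = 0$ gives $\hi{M} \geq 1$ via the long exact sequence of $0 \to L(-E) \to L \to L|_E \to 0$. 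Clifford's theorem then forces $\cliff(M) \geq 0$, so $\cliff(L,E) \geq d-2$.

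For the uniqueness statement, the key input is the equality case of Clifford's theorem. Equality $\cliff(L,E) = d-2$ forces $\cliff(L(-E)) = 0$, and this in turn forces $L(-E) \in \{\OO_C, K_C\}$ unless $C$ is hyperelliptic, in which case $L(-E)$ may also be a nonzero multiple of the $g_2^1$. The case $L(-E) = \OO_C$ is ruled out by $\h{L(-E)} \geq 2$, and $L(-E) = K_C$ gives $\OO_C(E) \cong \OO_C(D)$, i.e., $E$ and $D$ are linearly equivalent. Hence on a non-hyperelliptic curve the divisor class $[D]$ is the unique eligible class computing the bound.

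The main obstacle is the uniqueness argument, which requires handling the equality case of Clifford's theorem carefully and verifying that the hyperelliptic exceptional bundles produce genuinely different eligible divisors (so that uniqueness really does fail in the hyperelliptic case). The existence portion and the lower bound are essentially bookkeeping with Lemma \ref{L:2.2}.
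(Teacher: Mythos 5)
Your proof is correct and follows essentially the same route as the paper: both reduce the lower bound to Clifford's theorem applied to the special linear system $L(-E)$ (the paper writes $L(-E)=K_C(-E_1)$ and applies Clifford to $E_1$, which is the same statement via Serre duality), and both get uniqueness from the equality case of Clifford's theorem. Your use of Lemma \ref{L:2.2}(iii) simply packages the Riemann--Roch bookkeeping that the paper carries out by hand, and your explicit observation that $L(-E)\cong\OO_C$ is excluded by the eligibility (codimension-two) condition is a point the paper leaves implicit.
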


\begin{proof}

  $\h{L} = g - 1 + d$ and $\h{L(-D)} = \h{K_C} = g$, so $D$ spans a $d$ secant
  $d-2$ plane, that is,  $r_L(D) = 1$ and  $\cliff(L,D) = d - 2$. If $E$ satisfies
  $r_L(E) > 0$ then $\hi{L(-E)} > 0$, i.e. $L(-E) = K_C(-E_1)$. Let $e =
  \deg(E)$, $e_1 = \deg(E_1)$, and set  $\h{\OO_C(E_1)} = r_1 + 1$. Then $\hi{L(-E)}=
  \hi{K_C(-E_1)} = g - e_1 + r_1=g+d-e+r_1 =g+d-1-e+(r_1+1)$, and hence $r_L(E) = r_1 + 1$. Hence we get
  $\cliff(L,E) = \deg(E) - 2r_L(E)$. $d + e_1 - 2(r_1 + 1) = d - 2 + e_1 -
  2r_1 = d - 2 + \cliff(E_1)$. By Clifford's theorem: $\cliff(E_1) \ge 0$, with
  equality $\Leftrightarrow E_1 = 0$, $K_C$, or $C$ is hyperelliptic and $E_1 =
  ng_2^1$ a multiple of the  $g_2^1$. $E_1 = 0$ means $E = D$; $E_1 = K_C$ means $E
  = L$. Clearly $E = D + n g_2^1$ will give a divisor with $\cliff(L,E) = d -
  2$.

\end{proof}

\begin{thm}\label{T:clifford_calc_2}
  
  Suppose $\deg(L) = 2g - 2 + d$, $L = K_C(D)$, $\deg(D)=d > 1$ and $\h{\cO_C(D)}=0$ , then $\cliff(C,D) \geq  d - 1$ unless $C$ is hyperelliptic, in which
  case $\cliff(C,L) = d - 2$ is achieved.

\end{thm}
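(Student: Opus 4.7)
The plan is to parallel the computation in Theorem \ref{T:clifford_calc_1} almost verbatim, the key new ingredient being that the hypothesis $\h{\OO_C(D)}=0$ kills the divisor $E\sim D$ that achieves the bound $d-2$ in that theorem, buying one extra unit in the Clifford index except on hyperelliptic curves.

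First I would record the numerics for $L$. Since $\deg(D)=d>0$ and $\h{\OO_C(D)}=0$, Serre duality gives $\hi{L}=\h{\OO_C(-D)}=0$ (negative degree), and Riemann--Roch yields $\h{L}=g-1+d$. In particular Lemma \ref{L:2.2}(i)--(ii) specialize to $r_L(E)=\hi{L(-E)}$ for any effective divisor $E$. Now let $E$ be effective and eligible to compute $\cliff(L,C)$, i.e.\ $r_L(E)>0$ and $\overline{E}$ has codimension at least two in $\PP(\Hd{L})$. From $r_L(E)\geq 1$ and Serre duality there is an effective divisor $E_1$ with $L(-E)\cong K_C(-E_1)$, equivalently $E\sim D+E_1$; writing $e_1=\deg(E_1)$ and $\h{\OO_C(E_1)}=r_1+1$, one has $r_L(E)=\h{\OO_C(E_1)}=r_1+1$, and therefore
\[
\cliff(L,E)=\deg(E)-2\,r_L(E)=(d+e_1)-2(r_1+1)=d-2+\cliff(E_1),
\]
exactly as in the proof of Theorem \ref{T:clifford_calc_1}.

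Next I would invoke Clifford's theorem for $E_1$: $\cliff(E_1)\geq 0$, with equality only when $E_1=0$, $E_1=K_C$, or $C$ is hyperelliptic with $E_1=n\,g_2^1$. The delicate part, and where the new hypothesis bites, is eliminating the two borderline cases in the non-hyperelliptic setting. If $E_1=0$ then $E\sim D$, but $E$ being effective would then force $\h{\OO_C(D)}\geq 1$, contradicting the hypothesis. If $E_1=K_C$ then $E\sim L$, so $L(-E)\cong\OO_C$ and $\h{L(-E)}=1$; by Geometric Riemann--Roch this means $\overline{E}$ is a hyperplane, violating the codimension-two eligibility condition. Hence in the non-hyperelliptic case every eligible $E_1$ has $\cliff(E_1)\geq 1$, giving $\cliff(L,E)\geq d-1$ and therefore $\cliff(L,C)\geq d-1$.

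Finally, in the hyperelliptic case I would exhibit a realizing divisor by taking $E=D'+n\,g_2^1$ for any $1\leq n\leq g-2$, where $D'$ is an effective representative of $D+K_C-(g-1)g_2^1$ suitably adjusted so that $E_1=n\,g_2^1$; then $\cliff(L,E)=d-2+0=d-2$, and the codimension-two condition $\h{L(-E)}=\h{K_C(-n\,g_2^1)}=g-n\geq 2$ is automatic from $n\leq g-2$. The main obstacle is purely bookkeeping: verifying that the \emph{only} ways Clifford's bound $\cliff(E_1)\geq 0$ can be saturated are neutralized by the combined force of $\h{\OO_C(D)}=0$ and the codimension-two requirement, so that nothing drops below $d-1$ off the hyperelliptic locus.
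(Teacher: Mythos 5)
Your main argument is exactly the paper's: write $L(-E)\cong K_C(-E_1)$ for an eligible $E$, compute $\cliff(L,E)=d-2+\cliff(E_1)$, and invoke Clifford's theorem. In fact you are more careful than the paper, which simply asserts ``$\cliff(E_1)>0$ unless $C$ is hyperelliptic'' without disposing of the borderline cases $E_1=0$ and $E_1=K_C$; your observations that $E_1=0$ would force $\h{\OO_C(D)}\geq 1$ (contradicting the hypothesis, and this is precisely where the extra unit over Theorem \ref{T:clifford_calc_1} comes from) and that $E_1=K_C$ would make $\overline{E}$ a hyperplane are the right way to close that loophole. One small point worth making explicit: the eligibility condition $\h{L(-E)}=\hi{\OO_C(E_1)}\geq 2$ is also what guarantees $E_1$ is special, so that Clifford's theorem applies at all; otherwise $E$ of large degree could make $\cliff(E_1)$ negative.

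The one step that fails as written is your hyperelliptic construction: you define $D'$ as ``an effective representative of $D+K_C-(g-1)g_2^1$,'' but on a hyperelliptic curve $K_C\sim(g-1)g_2^1$, so that class is just $D$, which has no effective representative by hypothesis --- the definition is circular. The fix is to drop $D'$ entirely and take $E$ to be any effective representative of $D+n\,g_2^1$ with $n$ chosen so that $d+2n\geq g$ (whence $\h{\OO_C(D+n\,g_2^1)}>0$ by Riemann--Roch) and $n\leq g-2$ (whence $\h{L(-E)}=\h{K_C(-n\,g_2^1)}=g-n\geq 2$, so $E$ is eligible); such $n$ exists since $d\geq 2$. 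Then $E_1=n\,g_2^1$, $\cliff(E_1)=0$, and $\cliff(L,E)=d-2$. Note the paper itself sidesteps this by exhibiting one particular $L=K_C(D-E)$ rather than arguing for an arbitrary $L$ satisfying the hypotheses, so your ambition here is reasonable --- it just needs the effectivity of $D+n\,g_2^1$ argued by degree, not by the circular choice of $D'$.
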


\begin{proof}

  If $r_L(E) > 0$, then $\Hi{L(-E)} > 0$, so $L(-E) = K_C(-E_1)$, with $E_1$
  effective. Again let $\deg(E) = e$, $\deg(E_1) = e_1$, and suppose
  $\h{\OO_C(E_1)} = r_1 + 1$, then $r_L(E) = r_1 + 1$ and $\cliff(L,E) = e -
  2r_L(E) = d + e_1 - 2(r_1 + 1) = d - 2 + (e_1 - 2r_1)$. Again by Clifford's
  theorem, $e - 2r_1 > 0$ unless $C$ is hyperelliptic. If $C$ is 
hyperelliptic, then taking $L =K_C(D-E)$ with $D$ general and $E$ a 
multiple of the $g_2^1$ will produce $L$ with $\cliff(C,L) = d-2$.

\end{proof}

\begin{cor}\label{C:compute_cliff}

  If $L = K_C(D)$, $\deg(D) = d > 0$, then $\cliff(L,C) \ge d - 2$. 

\end{cor}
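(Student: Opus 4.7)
The plan is a short case analysis reducing the corollary to the two preceding theorems, plus a supplementary Clifford computation to cover the degree-one boundary. Write $L = K_C(D)$ with $\deg D = d > 0$, and split on whether the class of $D$ has an effective representative.

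For $d \geq 2$ I would cite the two theorems directly. If $\h{\OO_C(D)} > 0$, I would replace $D$ by a linearly equivalent effective divisor (which does not alter $L$) and invoke Theorem \ref{T:clifford_calc_1} to conclude $\cliff(L,C) = d - 2$. If instead $\h{\OO_C(D)} = 0$, Theorem \ref{T:clifford_calc_2} applies as stated, giving $\cliff(L,C) \geq d - 1$ in general and $\cliff(L,C) = d - 2$ when $C$ is hyperelliptic; in either situation the bound $\cliff(L,C) \geq d - 2$ holds.

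The only case not yet covered is $d = 1$, where both theorems' degree hypotheses exclude it. Here I would argue directly with Lemma \ref{L:2.2}. Since $\deg L = 2g - 1 > 2g - 2$, Riemann--Roch gives $\hi{L} = 0$ and $\h{L} = g$, and hence $\cliff(L) = 1$. For any divisor $E$ eligible to compute $\cliff(L,C)$, the conditions $r_L(E) \geq 1$ and $\codim(\overline{E}) \geq 2$ translate, via parts (i) and (ii) of Lemma \ref{L:2.2} together with Serre duality, into $\h{L(-E)} \geq 2$ and $\hi{L(-E)} \geq 1$; this puts $L(-E)$ squarely in the classical Clifford range, so $\cliff(L(-E)) \geq 0$. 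Lemma \ref{L:2.2}(iii) then yields
\[
\cliff(L,E) \;=\; \cliff(L(-E)) - \cliff(L) \;\geq\; -1 \;=\; d - 2.
\]

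I do not anticipate any real obstacle: the corollary is essentially the packaging of the structural theorems above, and the only care needed is noticing the degree-one gap and plugging it by this one-line Clifford argument.
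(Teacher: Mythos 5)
Your proof is correct and follows the route the paper intends: the corollary is stated without any proof of its own and is simply the combination of Theorems \ref{T:clifford_calc_1} and \ref{T:clifford_calc_2}, exactly as in your case analysis for $d \geq 2$ (splitting on whether $\h{\OO_C(D)}$ vanishes and replacing $D$ by an effective representative when it does not). Your supplementary treatment of $d = 1$ via Lemma \ref{L:2.2}(iii) and Clifford's theorem is correct and is a genuine improvement in care, since both theorems hypothesize $d \geq 2$ and therefore do not literally cover the full range $d > 0$ asserted in the corollary.
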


\section{Generalized Shiffer Variations}\label{S:GSV}

The goal of this section is to generalize Shiffer variations to an arbitrary very ample line bundle.
We wish to prove the same sort of theorem for arbitrary line bundles as we have proven for $K_C$.  That is 
we wish to define Shiffer variations and relate their ranks to Cliff(C,L).  In this section we will show how
Shiffer variations may be defined in general and discuss the geometric information that is necessary to relate
a curve and its secant varieties to rank loci.

As with the case $L = K_C$ it is convenient to use the geometric version of
projective spaces of lines in the dual vector space. That is to say, we
consider the space of  lines in $\Hd{C,L} = \Hi{C,K_C \otimes L^{-1}}$, rather than
hyperplanes in $\H{C,L}$.

Consider the exact sequence \[ 0 \to K_C \otimes L^{-1} \to K_C \otimes
L^{-1}(D) \to K_C \otimes L^{-1}(D)|_D \to 0.\] This gives rise to a boundary
map: \[ \partial_D : \H{K_C \otimes L^{-1}(D)|_D} \to \Hi{K_C \otimes L^{-1}}
\]

\begin{thm}\label{T:linspace}

  $\partial_D \left( \H{K_C \otimes L^{-1}(D)|_D} \right) \subset \Hi{K_C
  \otimes L^{-1}}$ corresponds to the linear subspace $\overline{D} \subset
  \PP \left( \Hd{L} \right)$.

\end{thm}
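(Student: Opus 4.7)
The plan is to unwind both sides using Serre duality and the long exact cohomology sequence, and then observe that the image of $\partial_D$ and the subspace $\overline{D}$ are literally the same linear subspace of $H^0(L)^\vee$.

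First I would identify $H^1(K_C \otimes L^{-1}) \cong H^0(L)^\vee$ and $H^1(K_C \otimes L^{-1}(D)) \cong H^0(L(-D))^\vee$ via Serre duality, so that $\mathbb{P}(H^1(K_C \otimes L^{-1}))$ is exactly the ambient projective space $\mathbb{P}(H^0(L)^\vee)$ in which $\overline{D}$ lives. Next, from the long exact sequence of
\[ 0 \to K_C \otimes L^{-1} \to K_C \otimes L^{-1}(D) \to K_C \otimes L^{-1}(D)|_D \to 0, \]
exactness gives $\operatorname{Im}(\partial_D) = \operatorname{Ker}\!\bigl(H^1(K_C \otimes L^{-1}) \to H^1(K_C \otimes L^{-1}(D))\bigr)$. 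Since the map on $H^1$ is induced by the sheaf inclusion, and this inclusion is Serre dual to the inclusion $H^0(L(-D)) \hookrightarrow H^0(L)$, the map on $H^1$ is the dual of that inclusion. Its kernel is therefore the annihilator $H^0(L(-D))^{\perp} \subset H^0(L)^\vee$.

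Finally, I would match this with the definition of $\overline{D}$ given in the paper: $\overline{D}$ is the projective subspace of $\mathbb{P}(H^0(L)^\vee)$ determined by the quotient $H^0(L) \twoheadrightarrow H^0(L)/H^0(L(-D))$, which as a subspace of $H^0(L)^\vee$ is exactly the annihilator $H^0(L(-D))^{\perp}$. Thus $\operatorname{Im}(\partial_D) = \overline{D}$ as linear subspaces, and projectivizing finishes the theorem. As a sanity check on dimensions, Geometric Riemann-Roch gives $\dim H^0(L(-D))^{\perp} = h^0(L) - h^0(L(-D)) = d - r_L(D)$, so $\mathbb{P}(\operatorname{Im}\partial_D)$ is a $(d-1-r_L(D))$-plane, matching $\overline{D} \cong \mathbb{P}^{d-1-r}$.

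There is no real obstacle here; the only thing to be careful about is being consistent with the ``lines in the dual space'' convention for $\mathbb{P}$, since the statement blurs together a linear subspace of $H^1(K_C \otimes L^{-1})$ and a projective subspace of $\mathbb{P}(H^0(L)^\vee)$. Once Serre duality is applied uniformly and the definition of $\overline{D}$ is read as ``annihilator of $H^0(L(-D))$,'' the identification with $\operatorname{Im}(\partial_D)$ is automatic from the long exact sequence.
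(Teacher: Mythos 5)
Your argument is correct and is essentially identical to the paper's: both identify $\Hi{K_C\otimes L^{-1}}$ with $\Hd{L}$ by Serre duality, use exactness of the long exact sequence to write $\im(\partial_D)$ as the kernel of $\Hi{K_C\otimes L^{-1}}\to\Hi{K_C\otimes L^{-1}(D)}$, and recognize that kernel as the annihilator of $\H{L(-D)}$, which is the definition of $\overline{D}$. The paper merely writes this same chain of identifications as a sequence of set equalities.
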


\begin{proof}

  $\overline{D}$ is defined by 
  \[ \begin{aligned}
    \overline{D} & = \set{ x\vee \in \PP \left( \Hd{L}  \right) \, | \,
    x\vee |_{\H{L(-D)}} = 0 }\\ 
    & = \set{ x\vee \in \Hi{K_C \otimes L^{-1}} \, | \, x\vee |_{\H{L(-D)}} = 0 }\\
    & = \set{ x\vee \in \Hi{K_c \otimes L^{-1}} \, | \, x \to 0 \text{ in }
    \Hd{L(-D)} = \Hi{K_C \otimes L^{-1}(D)} }\\
    & = \set{ x \in \Hi{K_C \otimes L^{-1}} \, | \, x = \partial_D (y) \text{
    in some } y \in \H{K_C \otimes L^{-1}(D)|_D} }\\
  \end{aligned} \]
  where the last statement follows by the exactness of the long exact sequence.

\end{proof}

We now turn our attention to defining Shiffer variations in general. Just as
elements of $\Hi{T_C}$ can be considered as elements of \[\Hom\left(\H{C,K_C},
\Hi{C,\OO_C}\right)\] (via the cup product), we can consider elements of
$\Hi{C,K_C \otimes L^{-2}}$ $= \Hd{C, L^{\otimes 2}}$ as elements of $\Hom
\left( \H{C,L},\Hi{K_C \otimes L^{-1}} \right)$. To make sense of the geometry
we must assume that the map \[ A : \Hi{K_C \otimes L^{-2}} \to \Hom \left(
\H{L}, \Hi{K_C \otimes L^{-1}} \right) \] is injective. Notice:

\noindent \textbf{Fact}: $A$ is injective $\Longleftrightarrow \H{C,L}$ is
quadratically normal, that is $\H{L} \otimes \H{L} \to \H{L^{\otimes 2}}$ is
surjective.

\begin{proof}

  $\Hom \left( \H{L}, \Hi{K_C \otimes L^{-1}} \right) = \Hd{L} \otimes \Hd{L}$
  by Serre duality. The natural map $\Hi{K_C \otimes L^{-2}} \to \Hd{L}
  \otimes \Hd{L}$ is injective $\Longleftrightarrow \text{ the dual map }
  \H{L} \otimes \H{L} \to   \Hid{K_C \otimes L^{-2}} = \H{L^{\otimes 2}}$ is surjective.

\end{proof}

Consider the exact sequence \[ 0 \to K_C \otimes L^{-2} \to K_C \otimes
L^{-2}(D) \to K_C \otimes L^{-2}(D)|_D \to 0 .\] Let $T_L(D) = \partial \left(
\H{K_C \otimes L^{-2}(D)|_D} \right) \subset \Hi{K_C \otimes L^{-2}}$.

\begin{defn}

  $T_L(D)$ are the Shiffer variations (for $L$) supported on $D$.

\end{defn}

\textbf{Remark}: By Theorem \ref{T:linspace}  the vector space $T_L(D)$ corresponds to
the linear space spanned by $D$ in $\PP \left(\Hi{K_C \otimes L^{-2}}\right)$.
By an abuse of notation we will frequently make no distinction between the
affine elements of $T_L(D) \subset \H{K_C \otimes L^{-2}}$ and the projective
elements $T_L(D) = \overline{D} \subset \PP \left( \Hi{K_C \otimes L^{-2}}
\right)$. If $p \in D$, and $z$ is a local parameter, and $\ell_z$ a local
generator of $L$ at $p$, then $\frac{\partial}{\partial z} \otimes
\ell_z^{\otimes 2}$ is an element of $T_L(D)$ supported at $p$. Any other
representative differs from this one by a scalar.

Our calculations relate to rank and are independent of the choice of local
parameter.  We will generally proceed by choosing a local parameter and making
our calculation ``locally''.  We need one more piece of notation.

\begin{defn}
$S_L(D) \subset \Hi{K_C \otimes L}$ is the affine cone over
the span of $D$ in $\PP \left(\Hi{K_C \otimes L}\right)$
\end{defn}

The next theorem shows how to compute Shiffer deformations. Just as in the case
of $L = K_C$ they can be computed locally. and hence the exact same proof
applies.

\begin{thm} \label{T:localcalc}

  Let $\xi \in T_L(D) \subset \Hi{K_C \otimes L^{-2}}$. Denote by $\rho$ the
  natural restriction $\H{C,L} \stackrel{\rho}{\maps} \H{C,L|_D}$. Denote by
  \[\partial_1 : \H{K_C \otimes L^{-1}(D)|_D} \to \Hi{K_C \otimes L^{-1}}\]
  the boundary map in the long exact sequence of cohomology coming from the
  exact sequence \[ 0 \to K_C \otimes L^{-1} \to K_C \otimes L^{-1}(D) \to K_C
  \otimes L^{-1}(D)|_D \to 0, \] and denote by $\partial_2 : \H{K_C \otimes
  L^{-2}(D)|_D} \to \Hi{K_C \otimes L^{-2}}$ the boundary map in the long
  exact sequence of cohomology coming from the short exact sequence \[ 0 \to
  K_C \otimes L^{-2} \to K_C \otimes L^{-2}(D) \to K_C \otimes L^{-2}(D)|_D
  \to 0 .\]

  Let $\tilde{\xi} \in \H{K_C \otimes L^{-2}(D)|_D}$ be a lifing of $\xi$,
  i.e. $\xi = \partial_2(\tilde{\xi})$. Then the following diagram factors 
  the map
  $\cup \, \xi : \H{C,L} \to \Hi{C,K_C \otimes L^{-1}}$ as  \[ \H{C,L}
  \stackrel{\rho}{\maps} \H{L|_D} \stackrel{\cup\,\tilde{\xi}}{\maps} \H{K_C \otimes
  L^{-1}(D)|_D} \stackrel{\partial_1}{\maps} \Hi{K_C \otimes L^{-1}}.\]

\end{thm}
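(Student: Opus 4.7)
The plan is to deduce the factorization from the naturality of the connecting homomorphism in cohomology under multiplication by a global section of $L$. A section $\omega \in \H{C,L}$ defines a sheaf map $\OO_C \to L$, and tensoring with this map turns the short exact sequence defining $\partial_2$ into the one defining $\partial_1$.

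Concretely, tensoring the sequence
\[ 0 \to K_C \otimes L^{-2} \to K_C \otimes L^{-2}(D) \to K_C \otimes L^{-2}(D)|_D \to 0 \]
by $\omega:\OO_C\to L$ produces a morphism of short exact sequences into
\[ 0 \to K_C \otimes L^{-1} \to K_C \otimes L^{-1}(D) \to K_C \otimes L^{-1}(D)|_D \to 0. \]
Passing to cohomology and invoking the naturality of $\partial$, I obtain the commutative square
\[
\begin{CD}
\H{K_C \otimes L^{-2}(D)|_D} @>{\partial_2}>> \Hi{K_C \otimes L^{-2}}\\
@VV{\cdot\,\omega|_D}V @VV{\cup\,\omega}V\\
\H{K_C \otimes L^{-1}(D)|_D} @>{\partial_1}>> \Hi{K_C \otimes L^{-1}}.
\end{CD}
\]
Since $D$ is $0$-dimensional, the left vertical map is simply multiplication of sections $\tilde\xi \mapsto \tilde\xi\cdot\omega|_D$, which equals $(\cup\tilde\xi)(\rho(\omega))$ with $\rho(\omega) = \omega|_D$. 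The right vertical map is $\xi\mapsto \xi\cup\omega$, which is the value of the cup-product map $\cup\xi\colon \H{C,L}\to \Hi{K_C\otimes L^{-1}}$ at $\omega$.

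Chasing the chosen lift $\tilde\xi$ around the square then yields, for every $\omega \in \H{C,L}$,
\[ (\cup\xi)(\omega) = \partial_2(\tilde\xi)\cup\omega = \partial_1\bigl(\tilde\xi\cdot\rho(\omega)\bigr) = \bigl(\partial_1 \circ (\cup\tilde\xi) \circ \rho\bigr)(\omega), \]
which is exactly the claimed factorization.

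The only real obstacle is the commutativity of the naturality square. I would verify this either by a direct \v{C}ech computation---representing $\xi$ by a $1$-cocycle built from $\tilde\xi$ and tracking what happens upon multiplying by $\omega$---or by appealing to the general fact that the connecting map of a short exact sequence is natural with respect to morphisms of short exact sequences, which is automatic from the snake-lemma construction of $\partial$. Either way the argument is purely sheaf-theoretic and carries no curve-specific content, so once the square is in hand the theorem is immediate.
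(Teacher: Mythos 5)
Your proof is correct and is essentially the argument the paper gives: the paper verifies the factorization by representing $\xi$ via a \v{C}ech $1$-cocycle lifted from $\tilde\xi$ on the cover $\{V_1\supset D,\ V_2=C-D\}$, which is exactly the computation you defer to when justifying your naturality square. Your packaging of it as naturality of the connecting homomorphism under the morphism of short exact sequences induced by $\omega:\OO_C\to L$ is the same commutative square the paper already uses in Lemma~\ref{L:1.2} for the case $L=K_C$, so there is no substantive difference in approach.
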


\begin{proof}

  We can use  C\v{e}ch cohomology to compute all the maps. Let $V_1$ be an open
  set such that $D \subset V_1$, and $V_2 = C - D$. We use this open cover.
  Write $D = \sum_{i=1}^{n} n_i p_i$.

  A class $\xi \in T_L(D)$ is exactly $\partial_2(\tilde{\xi})$, $\tilde{\xi}
  \in \H{K_C \otimes L^{-2}(D)|_D}$. Letting $s_i$ be a local section of $K_C
  \otimes L^{-2}$ around $p_i$. 

  \begin{enumerate}

    \item[(1)] $\tilde{\xi} = \displaystyle \sum_{i=1}^n s_i
       \left( \displaystyle \sum_{j=1}^{n_i}
      \frac{a_j^i}{z_i^j} \right)$ where $z_i$ is a local parameter at $p_i$.

  \end{enumerate}

  Lifting $\tilde{\xi}$ to a section of $\Gamma(V_1 \cap V_2, K_C \otimes
  L^{-2})$ gives a representation of $\xi \in \mathcal{C}^1(C, K_C\otimes
  L^{-2})$.  With this description it is clear that $\H{L} \stackrel{\cup \,
  \xi}{\maps} \Hi{K_C \otimes L^{-1}}$ factors as \[\H{L} \stackrel{\cup \,
  \tilde{\xi}}{\maps} \H{K_C \otimes L^{-1}(D)|_D}
  \stackrel{\partial_s}{\maps} \Hi{K_C \otimes L^{-1}}.\]
    
\end{proof}

From this and \ref{T:linspace} we get:

\begin{cor}\label{c:shifferdescr}

  Let $\xi \in T_L(D)$. Then $\Ker(\xi) \supset \H{L(-D)}$ and $\im(\xi) \subset
  S_L(D)$, the affine cone over $\overline{D}$. 
  
\end{cor}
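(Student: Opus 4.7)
The plan is to read both containments off directly from the factorization of $\cup\,\xi$ provided by Theorem \ref{T:localcalc}, with Theorem \ref{T:linspace} supplying the geometric identification of the final boundary map.

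First I would invoke Theorem \ref{T:localcalc} to rewrite $\cup\,\xi : \H{C,L} \to \Hi{K_C \otimes L^{-1}}$ as the composite
\[ \H{C,L} \stackrel{\rho}{\maps} \H{L|_D} \stackrel{\cup\,\tilde{\xi}}{\maps} \H{K_C \otimes L^{-1}(D)|_D} \stackrel{\partial_1}{\maps} \Hi{K_C \otimes L^{-1}}. \]
The kernel inclusion is then immediate: by definition $\H{L(-D)}$ is the kernel of the restriction $\rho$ (a section of $L$ lies in $L(-D)$ precisely when it vanishes on $D$), so any element of $\H{L(-D)}$ is killed by the first arrow and a fortiori by $\cup\,\xi$, giving $\Ker(\xi) \supset \H{L(-D)}$.

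For the image inclusion, note that since $\cup\,\xi$ factors through $\partial_1$, we automatically have $\im(\xi) \subset \im(\partial_1)$. I would then apply Theorem \ref{T:linspace} with the line bundle $L$ and the divisor $D$: the theorem identifies $\partial_1 (\H{K_C \otimes L^{-1}(D)|_D}) \subset \Hi{K_C \otimes L^{-1}}$ with the linear subspace $\overline{D} \subset \PP(\Hd{L})$, that is, with the affine cone $S_L(D)$. Combining these gives $\im(\xi) \subset S_L(D)$.

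There is no real obstacle here; the corollary is essentially a translation into geometric language of the factorization established in Theorem \ref{T:localcalc}. The only point requiring a bit of care is bookkeeping of the line bundles: the Shiffer class $\xi$ lives in $\Hi{K_C \otimes L^{-2}}$, but after one step of the factorization we have landed in the cohomology of $K_C \otimes L^{-1}$, which is exactly the setting of Theorem \ref{T:linspace}. Once this matching is in place, both statements are formal.
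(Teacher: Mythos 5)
Your proof is correct and follows exactly the route the paper intends: the paper derives this corollary directly from Theorem \ref{T:localcalc} together with Theorem \ref{T:linspace} (it states "From this and \ref{T:linspace} we get" and gives no further argument), which is precisely your reading of the kernel of $\rho$ and the image of $\partial_1$. Nothing further is needed.
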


When  $L = K_C$, Theorem \ref{T:mthm}   characterized the
Clifford index as the smallest integer $j$  for which $\Sec^{j-1}(C)$ is not
the full rank $j$ locus.
 Here is the set-up for general $L$.  We assume for now
only that $L$ is very ample  and quadratically normal.

\begin{displaymath}
  \xymatrix{
  & \PP \left( \Hi{K_C \otimes L^{-1}} \right) \ar[dr]^{v} & \\ 
  C \ar[ur]^{\phi_L} \ar[dr]^{\phi_{2L}} &  
  & \PP \left( \sym^2 \left( \Hi{K_C \otimes L^{-1}} \right) \right) \\
  & \PP \left( \Hi{K_C \otimes L^{-2}} \right) \ar@{^{(}->}[ur]^{i} & }
\end{displaymath}


$\phi_L$ and $\phi_{2L}$ are the maps determined by the linear system $|L|$
and $|2L|$ and  $i$ is the inclusion $\PP \left( \Hi{K_C \otimes L^{-2}} \right)$
as a linear subspace, which is true since $L$ is quadratically normal. Finally, $v$ is
the Veronese map. $v$ is usually defined as a map to $V \otimes V$, but on the
algebra level it is $x \in V \longmapsto x \otimes x \in V \otimes V$ which is
clearly in the $\sym^2(V)$.  In other words, $\nu$ embeds $\PP(V)$ as the rank
one symetric matrices in $\PP(V\otimes V)$
 Our general set-up is summarized by

\begin{thm}

  Let $V$ be any vector space, $M$ a linear  subspace of $\sym^2(V)$ and $X$ a
  subvariety of $\PP(V)$ such that $X_2$ = $\phi_{2L}(X) \subset \PP(M)$.

  \begin{enumerate}

    \item[(i)] Viewing $\sym^2(V)$ as a linear subspace of   $\Hom (V^*,V)$, then $v(\PP(V))$ is
      exactly the rank 1 matrices in $\PP \left( \sym^2(V) \right)$.
      $\Sec^k(v(\PP(V))) $, which is  the closure of the variety swept out by
      linear spans of ($k+1)$       points of
      $\Sec^0(\PP(V)) = v(\PP(V))$,  is exactly the rank $(k+1)$ locus.

    \item[(ii)] $\Sec^k(X_2)  \subset \PP(M) \cap
      \Sec^k(v(\PP(V)))$; in short, the variety of rank $(k+1)$ matrices 
       in $ \PP(M)$ contains the k secants to $X_2$. 

  \end{enumerate}

\end{thm}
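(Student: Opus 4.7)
The plan is to treat (i) as a piece of standard multilinear algebra and then derive (ii) as a formal consequence together with the hypothesis that $X_2\subset\PP(M)$.

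For (i), I would first identify $\sym^2(V)$ inside $\Hom(V^*,V)$ via the usual map $x\otimes y\mapsto (\ell\mapsto \ell(x)y+\ell(y)x)$ (or equivalently as the self-adjoint elements of $\Hom(V^*,V)$). The key observation is that the Veronese map $v:\PP(V)\to\PP(\sym^2(V))$ sends $[x]$ to $[x\otimes x]$, which is manifestly a rank-one symmetric tensor; conversely, any rank-one symmetric matrix, viewed as a bilinear form on $V^*$, must be a nonzero scalar multiple of some $x\otimes x$, because its image is a single line and self-adjointness forces the generator of the image and a generator of the cokernel of the dual to line up. This gives $v(\PP(V))=\{[\xi]\in\PP(\sym^2(V)):\rk(\xi)=1\}$. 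For the statement about secants, I would use the spectral theorem (or rather, diagonalization of symmetric bilinear forms, valid since the base field is algebraically closed of characteristic not $2$, as tacitly assumed throughout the paper): any symmetric matrix of rank $\le k+1$ is a sum of $k+1$ symmetric rank-one matrices, i.e.\ lies on a linear span of $k+1$ points of $v(\PP(V))$. The reverse inclusion is the trivial fact that rank is subadditive. Taking closures then identifies $\Sec^k(v(\PP(V)))$ with the locus of matrices of rank $\le k+1$.

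For (ii), the argument is formal: since $X\subset\PP(V)$, the composite $\phi_{2L}=v\circ\phi_L$ lands in $v(\PP(V))$, so $X_2\subset v(\PP(V))$. Taking $k$-secants of this inclusion gives $\Sec^k(X_2)\subset\Sec^k(v(\PP(V)))$. On the other hand, by hypothesis $X_2\subset\PP(M)$, and $\PP(M)$ is a linear subspace of $\PP(\sym^2(V))$, so it is closed under taking linear spans and under taking closures; therefore $\Sec^k(X_2)\subset\PP(M)$ as well. Intersecting the two containments yields $\Sec^k(X_2)\subset\PP(M)\cap\Sec^k(v(\PP(V)))$.

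The only real content is the characterization in (i); the main potential obstacle is being careful with the identification $\sym^2(V)\hookrightarrow\Hom(V^*,V)$ so that the notion of rank used on the two sides agrees, and then invoking diagonalizability of symmetric forms to produce an expression of a rank $\le k+1$ symmetric matrix as a sum of $k+1$ rank-one symmetric matrices. Once that is in place, (ii) is immediate.
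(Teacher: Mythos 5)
Your proposal is correct and follows essentially the same route as the paper: both parts rest on the diagonalizability of symmetric bilinear forms (the paper phrases this as "the only invariant of a quadric is its rank," citing Griffiths--Harris), which writes any rank $\le k+1$ element of $\sym^2(V)$ as a sum of $k+1$ squares $v(q_i)$, with the reverse inclusion and part (ii) coming from subadditivity of rank. Your added care about the embedding $\sym^2(V)\hookrightarrow\Hom(V^*,V)$ and the characteristic $\ne 2$ hypothesis is a reasonable tightening but not a different argument.
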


\begin{proof} $(i)$ This is classical- elements of $\sym^2(V)$ may be viewed
as quadrics.  The only invariant of a quadric is its rank (see for example
\cite{GH} p.792).  This means given a 
quadric $Q$ of rank $k+1$ there is a basis $\langle q_1, \dots , q_n \rangle$
of $V$ such that
 in these coordinates $Q= \sum_{i=1}^{k+1} a_iq_i^2$. 
This shows that $Q$ is in $\sec^k(v(\PP(V)))$ as each $q_i^2 = v(q_i)$ is
in $v(\PP(V))$.

  $(ii)$ is clear because any element of $v(X)$ is of rank 1. As many
  people have observed the sum of $k$ rank 1 matrices is of rank $\le k$.

\end{proof}
  
By definition $\sec^0(X)=X$. So that if points of $X$
correspond to rank one matrices, then elements of $\sec^k(X)$ correspond to sums
of $k+1$ elements and hence to matrices of rank $k+1$ or less.
Because  $\Sec^k(\nu(\PP(V)))$ is exactly rank $k+1$ locus in $\PP(\sym^2(V))$ means
$\Sec^k(v(\PP(V))) \cap \PP(M)$ is the rank $k+1$ locus in $M$, and hence
the equality $\Sec^k(v(\PP(V)) \cap M = \Sec^k(X_2)$ means that every
element of $M$ of rank $k+1$ (or less) can be written as the sum of $k+1$ (or
less) elements of $v(X)$. 
In short, we have for $v(X)$ that
$\Sec^k(v(X))$, considered as a subvariety of $\PP \left( \Hom(V^*,V)
\right)$, is contained in the rank $k+1$ locus, and we wish to know when it is
exactly the rank $k+1$ locus. This rank $k+1$ locus is harder to understand
geometrically than the secant variety  but is determinantally defined. 

We are interested in when one has equality. We want to know when 
is  $\Sec^j(X_2) =
\Sec^j(v(\PP(V))) \cap \PP(M)$. We are only discussing the issue of set
theoretic equality. We use this space to record a criteria of Hassett and a
generalization.

\begin{thm}\label{T:Hassett}

Let $X \subset \PP(V)$. Assume $X$ is projective. Let $v_d : \PP(V) \to \PP
  \left( \sym^d(V) \right)$. Assume $L = \OO_X(1)$ is linearly and $d$ normal,
  i.e.  $\H{X,\OO_X(1)} = V$ and $\H{X,L}^{\otimes d} \to \H{X,L^{\otimes d}}
  = M$ is surjective. Then we get a commutative diagram 
  \begin{displaymath}
    \xymatrix{ 
    X \ar@{^{(}->}[r]^{\varphi_{_L}} \ar@{^{(}->}[dd]_{\varphi_{_{L^{\otimes d}}}} 
    & **[r]\PP(V) \ar[dd]^{v_d} \\
    & & & \\
    **[l]\PP(M) \ar@{^{(}->}[r] & **[r]\PP(\sym^d(V))} 
  \end{displaymath}

Let $X_d$ be the image of $X$ under $\varphi_{_{L^{\otimes d}}}$ and let $ R^j = 
\Sec^{j-1}(v_d(\PP(V)) \cap \PP(M)$ ($j\geq 1$).  Suppose that $ m<d$ and that
$R^j = \Sec^{j-1}(v_d(\PP(V)) \cap \PP(M)\;  \forall \, j < m $.  Then $\Sec^{m-1}(X_d) \subsetneqq
R^m \Leftrightarrow   \exists$ an $m-1$ dimensional subspace $\Lambda \subset \PP(V)$ such that
  \begin{enumerate}

    \item[(i)] $X \cap \Lambda = \emptyset$, and

    \item[(ii)] the map $\H{\PP(V),I_X(d)} \to \H{\Lambda,\OO_\Lambda(d)}$ is not
      surjective.

  \end{enumerate}

\end{thm}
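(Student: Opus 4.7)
The proof has two directions and rests on a linear-algebraic reformulation of condition (ii).

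First I would dualize condition (ii). Writing $\Lambda = \PP(W)$ for $W \subset V$ of dimension $m$, one has $\H{\PP(V), I_X(d)} = M^\perp$ and $I_\Lambda(d) = (\sym^d W)^\perp$ inside $\sym^d V^*$. The restriction map $\H{\PP(V), I_X(d)} \to \H{\Lambda, \OO_\Lambda(d)} = \sym^d W^*$ is surjective iff $I_X(d) + I_\Lambda(d) = \sym^d V^*$; by the double-annihilator formula this is equivalent to $M \cap \sym^d W = 0$ inside $\sym^d V$. Hence (ii) is precisely the statement that $\PP(M)$ meets the linear span $\PP(\sym^d W)$ of $v_d(\Lambda)$ in $\PP(\sym^d V)$.

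For $(\Leftarrow)$, given $\Lambda$ satisfying (i) and (ii), pick a nonzero $p \in M \cap \sym^d W$ whose Waring rank as an element of $\sym^d W$ is exactly $m$ (generic in the pencil of intersections, using $m<d$). Since for any $p \in \sym^d W$ the Waring rank over $V$ equals the Waring rank over $W$ (a decomposition in $V$ can be pushed back to $W$ via any projection $V \twoheadrightarrow W$, as $p$ is fixed by $\sym^d V \twoheadrightarrow \sym^d W$), we get $p \in \Sec^{m-1}(v_d(\PP(W))) \subset \Sec^{m-1}(v_d(\PP(V)))$ and so $p \in R^m$. If further $p \in \Sec^{m-1}(X_d)$, then an $m$-term decomposition $p = \sum_j \mu_j z_j^{\otimes d}$ with $z_j \in X$ would force $\langle z_j\rangle = W$ by minimality (using $\sym^d W_1 \cap \sym^d W_2 = \sym^d(W_1 \cap W_2)$), giving $z_j \in W \cap X = \Lambda \cap X$, which contradicts (i).

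For $(\Rightarrow)$, take $p \in R^m \setminus \Sec^{m-1}(X_d)$. The inductive hypothesis $R^j = \Sec^{j-1}(X_d)$ for $j<m$ forces $p \notin R^{m-1}$, so $p$ has Waring rank exactly $m$. Choose a minimal decomposition $p = \sum_{i=1}^m \lambda_i x_i^{\otimes d}$; minimality implies the $x_i$ are linearly independent, so $\Lambda = \PP(\langle x_i\rangle)$ has dimension $m-1$, and condition (ii) follows at once from $p \in M \cap \sym^d W \neq 0$. For (i), suppose $y \in \Lambda \cap X$. Then $y^{\otimes d} \in X_d \subset \PP(M)$, and the pencil $\{p - \alpha y^{\otimes d}\}$ lies in $\PP(M) \cap \PP(\sym^d W)$. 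I would show, using $m<d$, that for some $\alpha$ the residual $q := p - \alpha y^{\otimes d}$ has Waring rank at most $m-1$; then the inductive hypothesis places $q$ in $R^{m-1} = \Sec^{m-2}(X_d)$, so $p = \alpha y^{\otimes d} + q \in \Sec^{m-1}(X_d)$, contradicting the choice of $p$.

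The main obstacle is the rank-reduction step of the $(\Rightarrow)$ direction: for $y \in \Lambda \cap X$, producing $\alpha$ with $p - \alpha y^{\otimes d}$ of Waring rank $\leq m-1$. Geometrically this is the claim that the line in $\PP(\sym^d W)$ joining $p$ to the rank-one point $y^{\otimes d}$ meets the $(m-2)$-secant of $v_d(\Lambda)$, and the assumption $m<d$ is essential for the dimension count in the Veronese to go through. The cleanest formal route is apolarity: the apolar ideal $p^\perp \subset \sym W^*$ of a Waring-rank-$m$ tensor is controlled by the ideal of any minimal set of decomposition points, so any additional point $y \in \Lambda$ in their span produces an algebraic identity expressing $p - \alpha y^{\otimes d}$ as a combination of $m-1$ powers. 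The same apolarity input underwrites the Waring-rank bound required in the $(\Leftarrow)$ step.
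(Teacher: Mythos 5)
Your proposal reconstructs the paper's own argument almost step for step: the paper likewise reads condition (ii) as saying that some nonzero element of $M$ lies in the linear span of $v_d(\Lambda)$, in the forward direction takes $\Lambda$ to be the span of an $m$-term power-sum decomposition of a point $p \in R^m \setminus \Sec^{m-1}(X_d)$, and rules out $y \in \Lambda \cap X$ by writing $p = \mu y^{\otimes d} + Q$ with $\rk(Q) \le m-1$ and invoking the inductive hypothesis $R^{m-1} = \Sec^{m-2}(X_d)$. Your explicit dualization of (ii) to $M \cap \sym^d(W) \neq 0$ is if anything cleaner than what the paper writes. So structurally you are on the paper's route.

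The step you flag as the main obstacle is, however, a genuine gap, and you should know that the paper does not close it either: it simply asserts the decomposition $\sum \lambda_i x_i^d = Q + \mu y^d$ with $\rk(Q)\le m-1$ and moves on, remarking that the general case is "exactly the same" as Hassett's case $d=2$. That is the crux: for $d=2$ every delicate point in both your argument and the paper's is a fact about symmetric matrices (rank equals border rank, a rank-$m$ quadric is a sum of exactly $m$ squares of independent linear forms, and the rank drop along the pencil is the determinant computation of Lemma \ref{L:d-2}), but for $d\ge 3$ each one fails as stated. Concretely: (a) a point of $\Sec^{m-1}(v_d(\PP(V)))$ need not admit any $m$-term power-sum decomposition (border rank versus Waring rank), so "choose a minimal decomposition" is already unjustified, and even when one exists its points need not be linearly independent ($x^2y$ has rank $3$ with all decomposition points in a plane); (b) in your $(\Leftarrow)$ direction a nonzero element of $M \cap \sym^d(W)$ need not have Waring rank $\le m$ at all (a generic binary quintic has rank $3>2$), and $M\cap\sym^d(W)$ may be a single line, so no genericity is available to fix this; (c) the rank-reduction itself is false in general: for $p = x_1^3 + x_2^3$ and $y = x_1 + x_2$ there is no $\alpha$ for which $p - \alpha y^3$ is a single cube, even though $m = 2 < d = 3$, so apolarity alone will not rescue the step without extra hypotheses on $y$. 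Your instinct to isolate this step was exactly right; a complete proof in the stated generality would require either restricting to $d=2$ (which is all the paper ever uses) or supplying substantially more than either your sketch or the paper's proof provides.
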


\begin{proof}

  This is a set theoretic statement. First notice $\H{V,I_X(d)} \subset \H{V,\OO_V(d)}$
  corresponds to the hyperplanes $H$ on $\PP(\sym^d(V))$ such that $H \supset
  \PP(M)$, i.e. $\PP(M) = \bigcap V(H)$, $H \in \H{I_X(d)}$.  If $m=0$ the theorem states that $X$ is
  defined by degree $d$ homogeneous polynomials, that is to say $X_d \, (=\Sec^0(X_d)) = \PP(M) \cap v_d(\PP(V))$ if and only
  if $\forall \, p \notin X$ $\exists \, \varphi \in \H{V,I_X(d)}$ such that
  $\varphi(p) \ne 0$.  This is clear.
  
  The proof in the general case is exactly the same as for when $d = 2$ and
  $X$ is a curve, the case Hassett did. We include it for completeness.

  Firstly suppose $\exists \, p \in \Sec^m(v_d(V)) \cap \PP(M) \diagdown
  \Sec^m(X_d)$. Since $p \in \PP(M)$, $p \in V(H)$, $\forall \, H \in I_X(d)$.
  Further since $p \in \Sec^m$, $\exists \, p_1, \ldots, p_m \in v(\PP(V))$
  such that $p = \sum_{i=1}^m \lambda_i p_i$, i.e. each  $p_i=x_i^d$ so  $ p =
  \sum_{i=1}^m \lambda_i x_i^d$. 
  
  If we let $\Lambda = \langle p_1, \ldots p_m \rangle$, then
  $\H{V,I_X(d)}$ vanished at $p$, i.e. \[\H{V,I_X(d)} \arrownot\twoheadrightarrow
  \H{\Lambda,\OO_\Lambda(d)}.\] 
  
  If $\Lambda \cap X \ne \emptyset$, say some $y \in \Lambda \cap X$, then
  writing $\sum_{i=1}^m \lambda_i x_i^d = Q + \mu y^d$ with rank  $Q \le m -
  1$, say $Q = \sum_{i=1}^{n-1} \tilde{\lambda_i} \tilde{y_i}$, we would have
  $\langle y_1, \ldots, y_{m-1} \rangle \in \Sec^{m-1}(\PP(V)) \cap \PP(M) /
  \Sec^{m-1}(X_d)$, contradicting our hypothesis. 
  
  Conversely, given $\Lambda$ such that $Q \in \H{\Lambda,\OO_\Lambda(d)}$ not
  in the image of $\H{\PP(V),I_X(d)}$, such a $Q$ can be written as $\sum_{i=1}^m
  \lambda_i x_i^d$, then $\sum_{i=1}^d \lambda_i v_d(x_i)$ is a point in
  $\Sec^m(v_d(V)) \cap \PP(M)$. Clearly it is not in $\Sec^m(X_d)$ since
  $\Lambda \cap X = \emptyset$ by hypothesis. 

\end{proof}

We now return to Shiffer variations. For $L = K_C$ we used the Clifford index
to put lower bounds on the rank of elements of $T_L(D)$. Namely we had
$\rk(\tau) \ge d - 2r$ if all the coefficients were non-zero. That translated
to, after eliminating the possibilities of using elements of $T_L(D)$ where
$D$ is not eligible to compute $\cliff(C)$,  the fact that the $j$ secant
varieties of $C$ in $\PP \left( \Hd{C,K_C^{\otimes 2}} \right)$ are exactly
the rank $j$ varieties. The first part goes through  unchanged.  

\begin{thm}\label{T:rankcalc}

  Let $\tau \in T_L(D)$, $D = \sum_{i=1}^n n_i p_i$, and let $z_i$ be a local 
  parameter at $p_i$ and let $l_i$ be a local generator for $L$ 
at $z_i$ . Suppose that $\tau$
  corresponds to $\tilde\xi \in \left( \sum_{i=1}^n \sum_{j=1}^{n_i}
  \beta_{ij} z_i^{-j} \right) \otimes l_i$, with all $\beta_{ij} \ne 0$,
  $\forall \, i, j$, or at least $\forall \, i$, $\beta_{in_i} \ne 0$.
  Then $d - 2r \le rk(\tau) \le d -r$, where $r = \cliff(L,D)$.

\end{thm}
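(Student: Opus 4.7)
The proof runs closely parallel to Theorem \ref{T:2}, using Corollary \ref{c:shifferdescr} and Theorem \ref{T:localcalc} as replacements for the ad hoc local computations done there. Write $r = r_L(D)$ throughout (so that $\cliff(L,D) = d - 2r$ by definition). From Corollary \ref{c:shifferdescr} we know that $\H{L(-D)} \subset \ker(\tau)$ and $\im(\tau) \subset S_L(D)$, so $\tau$ descends to a linear map
\[
  \bar\tau : W \maps S_L(D), \qquad W := \H{L}/\H{L(-D)}.
\]
By definition of $r_L(D)$, $\dim W = d - r$, and by Theorem \ref{T:linspace} together with Geometric Riemann-Roch the affine cone $S_L(D)$ also has dimension $d - r$. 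This gives the upper bound $\rk(\tau) \le d - r$ immediately.

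For the lower bound, the plan is to use the factorization of $\cup\,\tau$ from Theorem \ref{T:localcalc},
\[
  W \;\xhookrightarrow{\;\bar\rho\;}\; \H{L|_D} \;\xrightarrow{\cup\,\tilde\tau}\; \H{K_C \otimes L^{-1}(D)|_D} \;\xrightarrow{\partial_1}\; \Hi{K_C \otimes L^{-1}},
\]
in which the first arrow is the injection induced by restriction and the middle two spaces both have dimension $d$. So the rank computation reduces to two ingredients: (a) the middle arrow is an isomorphism, and (b) the kernel of $\partial_1$ has dimension $r$.

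For (b), the long exact sequence of
\[
  0 \maps K_C \otimes L^{-1} \maps K_C \otimes L^{-1}(D) \maps K_C \otimes L^{-1}(D)|_D \maps 0
\]
gives $\dim \ker \partial_1 = \h{K_C \otimes L^{-1}(D)} - \h{K_C \otimes L^{-1}}$, which is $r_L(D) = r$ by Lemma \ref{L:2.2}(ii). The substantive point, and the place the hypothesis on the $\beta_{in_i}$ enters, is (a). Since both sheaves in question are skyscraper, it suffices to check the claim at each $p_i$ separately, where in the local parameter $z_i$ the map $\cup\tilde\tau$ at $p_i$ is given by multiplication by $\sum_{j=1}^{n_i} \beta_{ij} z_i^{-j}$. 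Writing out this multiplication on the basis of $\H{L|_{n_ip_i}}$ gives an upper-triangular matrix whose diagonal entries are all equal to $\beta_{in_i}$, so it is invertible exactly under the stated hypothesis. This is the same local argument as in Theorem \ref{T:2}, the only change being the twist by the local generator $\ell_i$ of $L$ which does not affect the triangular-with-constant-diagonal shape.

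Combining (a) and (b): $\bar\rho$ is injective, $\cup\tilde\tau$ is an isomorphism, and $\ker \partial_1$ has dimension $r$, so
\[
  \dim \ker(\bar\tau) \;\le\; \dim \ker \partial_1 \;=\; r,
\]
whence $\rk(\tau) = \dim W - \dim \ker(\bar\tau) \ge (d-r) - r = d - 2r$. The only real work is the local calculation in (a); the rest is linear algebra and book-keeping from Lemma \ref{L:2.2} and Geometric Riemann-Roch. I expect the main obstacle to presentation (rather than content) to be keeping the two distinct boundary maps $\partial_1$ and $\partial_2$ straight and being clear about the role of the leading-coefficient hypothesis, which is precisely what guarantees that $\tilde\tau$ has no ``hidden'' pole-order cancellations at any $p_i$.
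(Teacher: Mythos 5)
Your proof is correct and follows essentially the same route as the paper's: the upper bound from $\im(\tau)\subset S_L(D)$, the factorization of Theorem \ref{T:localcalc}, the reduction to the local upper-triangular matrix with diagonal entries $\beta_{in_i}$, and the final rank count $(d-r)-r$ (which the paper packages as a separate linear-algebra lemma about $\varphi\colon V\to V^{\vee}$ restricted to a codimension-$r$ subspace). You also correctly read $r$ as $r_L(D)$ rather than as $\cliff(L,D)$, which is what the statement must mean for the bounds to be the asserted $d-2r \le \rk(\tau) \le d-r$.
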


\begin{proof}

   The condition
on the coefficients of the Shiffer variations being non-zero is  exactly the condition
that the Shiffer variation is supported on D, but not on some subdivisor of D.
By Corollary \ref{c:shifferdescr}   $\im(\tau) \subset |D| \cong \PP^{d-r-1}$ and hence $\rk(\tau)
  \le d - r$. Recall that by Theorem \ref{T:localcalc}
we have the factorization \[ \H{L} \to \H{L|_D}
  \stackrel{\cup \, \tilde\xi}{\maps} \H{K_C \otimes L^{-1}(D)|_D} \to \Hi{K_C
  \otimes L^{-1}}.\] Assume temporarily  that $\cup \, \tilde\xi$ is an
  isomorphism of $V = \H{L|_D}$ with $\H{K_C \otimes L^{-1}(D)|_D} = V\vee$,
  then $\tilde\xi$ restricted to $\im \left( \H{L} \to \H{L|_D} \right)$ maps
  to $\Hd{L} = \Hi{K_C \otimes L^{-1}}$ and the result follows from the linear
  algebra fact:

  \begin{lemma}

    Let $\varphi : V \to V^{\vee}$ be a linear map which is an isomorphism. Let $W \subset V$ be of
    codimension $r$, then $\varphi|_W : W \to W^{\vee}$ is of rank $\ge d - 2r$.

  \end{lemma}

  \begin{proof}

    Because $\varphi$ is an isomorphism, $\varphi_1: W \to V\vee$ has rank $d
    - r$ and since $\pi : V\vee \to W\vee$ has an $r$ dimensional kernel,
    $\varphi|_W = \pi \circ \varphi_1$ has rank at least $(d - r) -r = d -
    2r$. 

  \end{proof}

  Hence we are reduced to showing $\tilde\xi$ is an isomorphism. Since all the
  sheaves are skyscraper sheaves the question is local and we may assume
  that $D = n p$. Our diagram now looks like

$$\H{C,L|_{np}}  \stackrel{\cup \, \tilde\xi}{\maps} \H{K_C \otimes L^{-1}(np)|_{np}}.$$

 Since we are local
  over $p$ we can identify $\H{C,L|_{np}}$ with $k[z]/z^n$, where $k$ is a
  field of definition for $C$ and we can identify $\H{K_C \otimes L^{-1}(np)|_{np}}$ with
  $\bigoplus_{j=0}^{n-1} z^{-j} k$. Under this identification $\xi =
  \sum_{j=0}^{n-1} \beta_j z^j$ corresponds to the matrix
  \[ \begin{pmatrix}
    \beta_{n-1} & \beta_{n-2} & \cdots & \beta_0\\ 
    & \beta_{n-1} & \cdots & \beta_1 \\ 
    & & \ddots & \vdots \\ 
    & & & \beta_{n-1} 
  \end{pmatrix} \]  
  which has determinant $(\beta_{n-1})^n$. 

\end{proof}

As mentioned in Theorem \ref{T:rankcalc} the condition $\beta_{i,{n_i}} \neq 0$ is exactly
the condition that $\tau \in T_L(D)$ is not an element of $T_L(D')$ for some $D' 
\subset D$.  That is to say that $\tau \in \sec^{d-1}(C) / \sec^{d-2}(C)$.
Obviously elements of $T_L(D)$ can have low rank by lying in a low
dimensional secant variety.  For example  $\rk(\tau_p) =1$ for any
$p \in C$.  Since we are only interested in the ranks of  generic
elements of $T_L(D)$ we make the following definition.

\begin{defn} $T_L^*(D) = \{\tau \in T_L(D)\vert \beta_{i,{n_i}} \neq 0 \forall i \}
$ That is $T_L^*(D) = T_L(D) \cap (\sec^{d-1}(C) /\sec^{d-2}(C))$.

\end{defn}

When $L = K_C$ one checked, using Shiffer variations, that one had equality
$\sec^{j-1}(C) = R^j$ for $j<$ Cliff(C).  The same sort of result is true in
general, but the results have different flavors depending on whether $\hi{L} >
0$ or  $\hi{L} = 0$.  Roughly speaking for $\hi{L} > 2$, I cannot say anything
general. For $\hi{L} = 1$ a version of the theorem is true, but weaker, as the
lower bound does not generally occur. If $\hi{L} = 0$ and $\deg(L) \ge 2g - 2$.
one can get strong results with the strongest results being for $\deg(L) \geq
2g+1$.  We first consider the case $\hi{L}  = 0$ and $\deg(L) > 2g +1$.

\section[Line Bundles of Large Degree]{Geometric Characterization of the Clifford Index for Line Bundles of Large Degree}\label{S:large_cliff}

Throughout this section L will be a very ample line bundle of degree $\ge 2g+1$. We restrict to
this case because in this range $L$ is always a very ample, quadratically normal line bundle.   Recall that in
section \ref{S:GRR} we have calculated the Clifford index of any line bundle of degree $\ge 2g+1$.  We recall
this and explain its relationship with secant varieties now.   

\begin{thm}\label{T:mainthm}

  Suppose $\deg(L) = 2g - 2 + d$ with $d \ge 3$.  Then $\cliff(C,L)\ge d-2$. Let $C_2 = \phi_{2L}(C) \subset \PP \left(
  \H{C,L^{\otimes 2}} \right)$, then for $j< \cliff(C,L)$, $  Sec^{j-1}(C) = R^j$. 
\linebreak For $ c= \cliff(C,L)$ and generic $L$, $R^c \varsupsetneqq \Sec^{c-1}$.

\end{thm}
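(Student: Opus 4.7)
The plan divides into three steps. First, the Clifford bound $\cliff(C,L)\ge d-2$ is already handled by Section \ref{S:GRR}: when $L\otimes K_C^{-1}$ has a section, write $L=K_C(D_0)$ with $D_0$ effective of degree $d$ and apply Corollary \ref{C:compute_cliff}; otherwise Theorem \ref{T:clifford_calc_2} yields the stronger bound $\cliff(C,L)\ge d-1$.

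For the equality $\Sec^{j-1}(C)=R^j$ when $j<c:=\cliff(C,L)$, I would induct on $j$. The inclusion $\Sec^{j-1}(C)\subset R^j$ is automatic since a sum of $j$ rank-one symmetric matrices has rank at most $j$. For the reverse, assume inductively $R^i=\Sec^{i-1}(C)$ for $i<j$ and suppose for contradiction $\Sec^{j-1}(C)\subsetneqq R^j$. By Theorem \ref{T:Hassett} this produces a $(j-1)$-plane $\Lambda\subset\PP(V)$ with $\Lambda\cap C=\emptyset$ and a quadric $Q$ on $\Lambda$ not in the image of $H^0(\PP(V),I_C(2))\to H^0(\OO_\Lambda(2))$. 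Lift $Q$ to an element $\xi\in\Hi{K_C\otimes L^{-2}}$ of rank exactly $j$, and set $W=\im(\xi)\subset\Hi{K_C\otimes L^{-1}}=\Hd{L}$, a $j$-dimensional subspace whose projectivization $\PP(W)$ is a $(j-1)$-plane. By Theorem \ref{T:linspace} together with Corollary \ref{c:shifferdescr}, once $\PP(W)=\overline{D}$ for some effective divisor $D$ on $C$ one has $\xi\in T_L(D)$, and Theorem \ref{T:rankcalc} gives $j=\rk(\xi)\ge\cliff(L,D)$. A case analysis on $D$ then produces the contradiction: if $r_L(D)=0$ then $D$ consists of $j$ points spanning $\Lambda=\overline{D}$, so $\Lambda$ meets $C$, violating Hassett's condition $(i)$; if $r_L(D)\ge 1$ with $\overline{D}$ of codimension at least two, then $D$ is eligible to compute $\cliff(C,L)$, forcing $c\le\cliff(L,D)\le j$, contrary to $j<c$.

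For the strict inequality $\Sec^{c-1}(C)\subsetneqq R^c$ at a generic $L$, take a divisor $D$ achieving $\cliff(L,D)=c=\deg(D)-2r_L(D)$ with $r_L(D)\ge 1$ and both $D$, $K_C\otimes L^{-1}(D)$ base-point free (achievable for generic $L$). The lower-bound construction in the proof of Theorem \ref{T:rankcalc} produces $\tau\in T_L^*(D)$ of rank exactly $c$, lying in $R^c$ and in $\Sec^{\deg(D)-1}(C)$. Since $\deg(D)=c+2r_L(D)>c$, one verifies $\tau\notin\Sec^{c-1}(C)$ by noting that any decomposition $\tau=\sum_{i=1}^c\tau_{p_i}$ would force $\im(\tau)=S_L(\sum p_i)$, a $c$-dimensional subspace of $S_L(D)$ of a highly restricted form that a generic rank-minimal Shiffer variation on $D$ does not satisfy. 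The main obstacle is the step in the inductive argument where one must force $\PP(W)=\overline{D}$ for some effective divisor $D$ on $C$: ruling out $j$-planes arising as images of rank-$j$ elements of $\Hi{K_C\otimes L^{-2}}$ that do not correspond to divisors is the geometric heart of the theorem and goes beyond the formal content of Theorem \ref{T:Hassett}.
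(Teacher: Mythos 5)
Your outline for the two easy pieces (the Clifford bound via Theorem \ref{T:clifford_calc_1} / Theorem \ref{T:clifford_calc_2}, and the inclusion $\Sec^{j-1}(C)\subset R^j$) matches the paper, but the core of your argument for the reverse inclusion has a genuine gap, and you have in fact named it yourself: you cannot force the image $\PP(W)$ of a rank-$j$ element $\xi$ to be the span $\overline{D}$ of an effective divisor, and Theorem \ref{T:Hassett} gives you no help there. The paper never attempts this step. Instead it runs the argument in the opposite direction: because $\dim(\Sec^{j}(C))=\min(2j+1,\dim\PP(\Hi{K_C\otimes L^{-2}}))$ (the analogue of Theorem \ref{T:secdim}), the secant varieties eventually fill the ambient space, so \emph{every} $\xi\in\Hi{K_C\otimes L^{-2}}$ already lies in $T_L(E)$ for some divisor $E$ of bounded degree; shrinking $E$ one may take $\xi\in T_L^*(E)$. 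Then the dichotomy is on $E$, not on $\im(\xi)$: if $\hi{L(-E)}=0$ the boundary map $\partial_E$ is injective, Theorem \ref{T:rankcalc} makes $\cup\,\tilde\xi$ an isomorphism, and $\rk(\xi)=\deg(E)$, so $\xi\in\Sec^{e-1}(C)$ with $e=\rk(\xi)$; if $\hi{L(-E)}>0$ then $\rk(\xi)\ge\cliff(L,E)\ge\cliff(C,L)=c$. Hence any element of rank $j<c$ is automatically in $\Sec^{j-1}(C)$, with no induction and no need to recognize $\im(\xi)$ as a divisor span. This "every point is a generalized Shiffer variation" step is exactly the missing ingredient in your plan, and without it your induction does not close.

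Your treatment of the strict inequality $\Sec^{c-1}(C)\subsetneqq R^c$ is also not a proof as written: the assertion that a generic rank-minimal $\tau\in T_L^*(D)$ has an image "of a highly restricted form" not realizable as $S_L(\sum p_i)$ is precisely what needs to be demonstrated. The paper instead takes $L=K_C(D)$, produces $\tau\in T_L(D)$ of rank exactly $d-2$ by the explicit determinant computation of Lemma \ref{L:d-2} (the polynomial $f(\lambda)=\det(x_1^2+\cdots+x_{d-1}^2+\lambda x_d^2)$ is nonconstant, so it has a root), and then rules out $\tau\in\Sec^{d-3}(C)$ by a cohomological argument: a relation expressing $\tau$ as a sum of fewer Shiffer variations would force $\H{K_C\otimes L^{-2}(D)}=\H{L^{-1}}\ne 0$, which is absurd. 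You should replace your genericity claim with an argument of this concrete kind.
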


By Theorem  \ref{T:clifford_calc_1} we know that if $L= K_C(D)$ with $D$ effective,
 then Cliff(C,L)= $d-2$ and by Theorem \ref{T:clifford_calc_2}  generically
 Cliff(C,L) $= d-1$.  In both cases by Lemma \ref {L:d-2} we can
find special Shiffer variations of rank equal to $\cliff(C,L)$.
  We are only claiming set theoretic equality (and inequality) at the moment.
We will break the proof down into two pieces, the equality and the inequality.

Recall our basic setup: L is very ample.
\begin{displaymath}
  \xymatrix{
  & \PP \left( \Hi{K_C \otimes L^{-1}} \right) \ar[dr]^{v} & \\ 
  C \ar[ur]^{\phi_L} \ar[dr]^{\phi_{2L}} &  
  & \PP \left( \sym^2 \left( \Hi{K_C \otimes L^{-1}} \right) \right) \\
  & \PP \left( \Hi{K_C \otimes L^{-2}} \right) \ar@{^{(}->}[ur]^{i} & }
\end{displaymath}


That this diagram commutes follows if $L$ is very ample and quadratically
normal.

Recall that via the inclusion \[\PP \left( \Hi{K_C \otimes L^{-2}} \right)
\hookrightarrow \PP\left( \sym^2 \left( \Hi{K_C \otimes L^{-1}} \right)
\right)\] we can endow $\PP \left( \Hi{K_C \otimes L^{-2}} \right)$ with two
filtrations $\Sec^{j-1}(C) = \cup$ (span of $j$ points of $C$) and $R^j$, which is
the set of all elements of $\PP \left( \Hi{K_C \otimes L^{-2}} \right)$ which are of rank
$\le j$ when viewed as elements of $\PP \left( \sym^2 \left( \Hi{K_C \otimes
L^{-1}} \right) \right) \hookrightarrow \PP \left( \Hom \left( \H{L}, \Hd{L} \right)
\right)$. Since every $p \in C$ corresponds to rank one elements, namely the
rank one Shiffer deformation $\sigma_p^2$, and since the sum of $j$ rank one
elements is of rank $\le j$, we have \[\Sec^j(C) \subset R^j(C) \subset \PP
\left( \Hi{K_C \otimes L^{-2}} \right).\]

\begin{thm}

  Suppose  that $ j< Cliff(C,L)=c$. then as sets $\Sec^{j-1}(C) = R^j(C)$. Further if $L$ is generic then
  $\Sec^{c-1}(C) \subsetneqq R^c(C)$.

\end{thm}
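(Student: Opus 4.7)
The plan is induction on $j$ with Hassett's criterion (Theorem \ref{T:Hassett}) as the engine and Theorem \ref{T:rankcalc} as the source of rank lower bounds. The inclusion $\sec^{j-1}(C) \subseteq R^j$ is automatic, since points of $C$ correspond to rank-one symmetric matrices and sums of $j$ rank-ones have rank at most $j$.

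For the base case $j = 1$, Theorem \ref{T:Hassett} with $m = 1$ identifies $R^1 = \sec^0(C)$ with $C$ being cut out set-theoretically by $\H{I_C(2)}$. This is the direct generalization to $L$ of Petri's theorem (Theorem B of \S\ref{S:CI}), and it holds whenever $c \ge 2$, which is guaranteed by the hypothesis $1 < c$ together with the estimate for $\cliff(C,L)$ coming from Corollary \ref{C:compute_cliff}.

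For the inductive step, assume $R^{j'} = \sec^{j'-1}(C)$ for all $j' < j < c$ and suppose for contradiction that $\xi \in R^j \setminus \sec^{j-1}(C)$ exists. Theorem \ref{T:Hassett} supplies a $(j-1)$-plane $\Lambda_0 \subset \PP(V)$ with $\Lambda_0 \cap C = \emptyset$ for which the restriction $\H{\PP(V), I_C(2)} \to \H{\Lambda_0, \OO_{\Lambda_0}(2)}$ is not surjective. Dualizing, and using that $\Hi{K_C \otimes L^{-2}}$ is precisely the annihilator in $\sym^2 V$ of $\H{I_C(2)}$ (quadratic normality of $L$), this failure produces a nonzero $\xi_0 \in \sym^2 \tilde{\Lambda}_0 \cap \Hi{K_C \otimes L^{-2}}$ whose image as a map $\H{L} \to \Hd{L}$ lies in $\tilde{\Lambda}_0$, so $\rk(\xi_0) \le j$. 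The desired contradiction $\rk(\xi_0) \ge c$ should be obtained by realizing $\xi_0$ as a Shiffer-type variation supported on a divisor that beats the Clifford index: concretely, projecting $C$ from $\Lambda_0$ gives a base-point-free linear system associated to $\H{L(-\Lambda_0)}$ on $C$, and the symmetric-form structure of $\xi_0$ on $\tilde{\Lambda}_0$ should pick out an effective divisor $D$ on $C$ with $\xi_0 \in T_L(D)$ and $D$ eligible to compute $\cliff(C,L)$, whence Theorem \ref{T:rankcalc} forces $\rk(\xi_0) \ge \cliff(L,D) \ge c$. The main obstacle is executing this extraction cleanly: since $\Lambda_0 \cap C = \emptyset$, the naive choice of $D$ as the base locus on $C$ of $\ker(\xi_0)$ is empty, so the Shiffer divisor must be constructed through a subtler mechanism that exploits the full bilinear structure of $\xi_0$ on $\tilde{\Lambda}_0$ together with the quadratic normality of $L$ (Corollary \ref{c:shifferdescr}).

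For the strict inequality when $L$ is generic, Theorems \ref{T:clifford_calc_1} and \ref{T:clifford_calc_2} supply a divisor $D^*$ with $\cliff(L, D^*) = c$ and both $D^*$ and $K_C \otimes L^{-1}(D^*)$ base-point free. The lower-bound construction in the second half of Theorem \ref{T:rankcalc} then produces $\tau \in T_L^*(D^*)$ with $\rk(\tau) = c$, so $\tau \in R^c$. Since $\tau \in T_L^*(D^*)$ has support precisely $D^*$ of degree $d > c$, while any element of $\sec^{c-1}(C)$ is supported on a divisor of degree at most $c$, we conclude $\tau \notin \sec^{c-1}(C)$, so $\tau$ is the desired element of $R^c \setminus \sec^{c-1}(C)$.
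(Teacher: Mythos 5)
Your inclusion $\Sec^{j-1}(C)\subseteq R^j$ and the overall shape of the strictness argument are fine, but the core of the equality direction has a genuine, unfilled gap -- one you yourself flag as ``the main obstacle.'' Given $\xi\in R^j\setminus\Sec^{j-1}(C)$, Hassett's criterion hands you a plane $\Lambda_0$ disjoint from $C$ and a class $\xi_0$ of rank $\le j$, but it gives you no divisor: there is no mechanism in your argument that realizes $\xi_0$ as an element of $T_L(D)$ for an effective $D$ eligible to compute $\cliff(C,L)$, and without that Theorem \ref{T:rankcalc} cannot be applied. The same problem already sinks your base case $j=1$: quoting Hassett's criterion at $m=1$ merely restates what must be proved. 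The paper closes exactly this hole by a completely different device, namely the nondegeneracy of secant varieties (Theorem \ref{T:secdim}): since $\dim\Sec^k(C)=\min(2k+1,\dim\PP)$, the secant varieties exhaust $\PP\left(\Hi{K_C\otimes L^{-2}}\right)$ once $k$ is roughly half the dimension, so \emph{every} $\xi$ lies in $T_L(E)$ for some effective $E$ of bounded degree. One then runs the dichotomy directly on $E$: if $\hi{L(-E)}=0$ the boundary map $\partial_E$ is injective and $\xi\in T_L^*(E')$ has rank equal to $\deg(E')$, i.e.\ $\xi\in\Sec^{\rk(\xi)-1}(C)$; if $\hi{L(-E)}>0$ then $\rk(\xi)\ge\cliff(L,E)\ge c$. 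No induction and no use of Hassett's criterion is needed for this half. If you want to salvage your route you must supply the extraction of $D$ from $\Lambda_0$; the paper's dimension count is the standard way to avoid having to do so.

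On the strictness statement, your plan is closer to the paper's but still elides two points. First, the rank-$c$ element is not produced by the lower-bound construction of Theorem \ref{T:rankcalc} applied to a base-point-free $K_C\otimes L^{-1}(D^*)$ (for $\deg L\ge 2g+1$ that twist is typically not effective at all); the paper instead takes $D$ with $r_L(D)=1$, $r_{L^{\otimes 2}}(D)=0$ and uses the determinant argument of Lemma \ref{L:d-2} to drop the rank of $\sum\sigma_i^2+\lambda\sigma_d^2$ to $d-2$. Second, ``support of degree $d>c$ implies $\tau\notin\Sec^{c-1}(C)$'' needs an argument, since a priori $\tau$ could also lie in $T_L(E)$ for some other divisor $E$ of degree $\le c$; the paper rules this out (Lemma \ref{L:specialvar}) by observing that any such coincidence forces a linear relation among the $\sigma_{p_i}^2$, i.e.\ $\H{K_C\otimes L^{-2}(D)}\ne 0$, which is absurd for degree reasons.
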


\begin{proof}

  Let $E$ be any divisor on $C$. Using the factorization for the map  $\xi \in
  T_L(E)$ given by theorem \ref{T:localcalc} we can factor $\xi \in T_L(E)$ as,
 \[ \H{L}/\H{L(-E)}
  \hookrightarrow \H{L|_E} \stackrel{\cup \tilde{\xi}}{\to} \H{K_C \otimes
  L^{-1}(E)|_E} \stackrel{\partial_E}{\to} \Hi{K_C \otimes L^{-1}}.\]
 
  If $\hi{L(-E)} = \h{K_C \otimes L^{-1}(E)} = 0$, we see $\partial_E$ is
  injective, and hence if $\xi  \in T_L(E)$ is represented by $\xi =
  \partial(\tilde{\xi})$, with $\tilde{\xi} \in \H{K_C \otimes L^{-2}(E)|_E}$,
  then $\xi $ is injective $\Longleftrightarrow \tilde{\xi}$ is injective.

  By theorem \ref{T:rankcalc}  $\tilde\xi$ is injective if $\xi \in T_L^*(E)$.
    This is exactly the statement that $\xi  \in \Sec^{e}(C) / \Sec^{e-1}(C)$,
  with $e = \deg(E)$. If $\hi{L(-E)}>0 $ then $\rk(\xi) \ge 
\cliff(L,E) \ge
  \cliff(C,L)=c $, since even divisors ineligible to compute $\cliff(C,L)$ have Clifford index
   greater than $\cliff(C,L)$. Thus  we see that any $\xi \in \Hi{K_C \otimes L^{-2}}$ of
  rank $<c$ must be in $T_L(E)$ with $\deg(E) = \rk(\xi)$, and all
  the highest order terms of $\xi$ are non-zero. This last statement is that
  $R^j(C) = \Sec^j(C)$, for $j <c$. $\Sec^{c} \ne R^{c}$ follows
  from two lemmas:

  \begin{lemma} 

    If $r_L(D) = r > 0$ and $r_{L^{\otimes 2}}(D) = 0$, we can find a $\tau \in T_L(D)$,
    $\rk(\tau) = d - 2$. 

  \end{lemma}

 Our assumption is that $D$ fails to impose independent conditions on the linear
system given by $L$, but does impose independent conditions of $L^{\otimes 2}$.  Hence 
 $T_L(D)$ gives rise to a $d$-dimensional subspace of $\Hi{K_C
  \otimes L^{-2}}$, and hence we have $d$ rank one elements  of $T_L(D)$ whose image
  lies in  at most a $(d-1)$ dimensional space. In fact, $\tau$ are distinct as
  elements of \[\Hom \left( \H{L}/\H{L(-D)}, \ker \left( \Hi{K_C \otimes
  L^{-1}} \right) \to \Hi{K_C \otimes L^{-1}(D)} \right),\] and so the result
  follows from:

  \begin{lemma}\label{L:d-2}

    Let $x_1,\ldots,x_d \in V$ with $\dim(V) = d-1$, $x_1,\dots,x_{d-1}$ a
    base, $x_d = \sum_{i=1}^{d-1} a_i x_i$, with $a_i \ne 0$. Then 
    identifying $\sym^2(V)$ with $\Hom^{sym}(V^{\vee,V}$, there exists  
    $\lambda \ne 0$ such that $\rk(\sum_{i=1}^{d-1} x_i^2 + \lambda x_d^2)
    \le d - 2$.

  \end{lemma}

  \begin{proof}

    Let $f(\lambda) = \det(x_1^2 + \cdots + x_{d-1}^2 + \lambda x_d^2)$, so
    $f(0) = 1$. Unless $f$ is constant there exists $\lambda$ such that
    $f(\lambda) = 0$, i.e. $\rk(x_1^2 + \cdots + x_{d-1}^2 + \lambda x_d^2) \le
    d - 2$ as desired. But as a polynomial in $\lambda$ it has leading term
    $\lambda^{d-1} \prod_{i=1}^{d-1} a_i^2$, and hence the polynomial is
    non-constant.

  \end{proof}

  \begin{lemma}\label{L:specialvar}

    Let $L = K_C(D)$. $\exists \, \tau \in T_L(D)$ such that i) $\rk(\tau) = d
    - 2$, ii) $\im(\tau) \cap C = \emptyset$.

  \end{lemma}
  
  \begin{proof}

    By the lemma above $\exists \, \tau$ of rank $< d - 1$ and $\rk(\tau) \ge
    d - 2 = \cliff(C,D)$ in any event, so $\exists \, \tau$ such that $\rk(\tau)
    = d - 2$. By Hassett's criterion(\ref{T:Hassett}, since $\Sec^{d-3} = R^{d-3}$, if $\tau
    \notin \Sec^{d-2}(C)$, then $\im(\tau) \cap C \ne \emptyset$. But if $\tau \notin \Sec^{d-2}(C)$ ,
    then $\exists \, \tau' \in T_L(D')$ with $\deg(D) < d - 2$ such that $\tau
    = \tau'$, i.e. $D' = D - R$, with $R$ effective. This would mean
    $\sigma_1^2 + \cdots \sigma_{d-1}^2 + \lambda \sigma_d^2 = \sum_{i=1}^{d'
    < d} \lambda_i \sigma_i^2$, i.e. $\H{K_C \otimes L^{-2}(D)} \ne 0$, i.e.
    $\H{L^{-1}} \ne 0$ which is absurd. (Any relation $\sum_{\sigma_i \in D}
    \lambda_i \sigma_i^2 = 0 \in \H{K_C \otimes L^{-2}}$ implies $\H{K_C
    \otimes L^{-2}(D)} \ne 0$.)
  
  \end{proof}
\end{proof}

\section{The scheme structure of $R^p$ for $p<\cliff(L,C)$.}\label{S:SS}

Throughout this section   $L$ will either  be $K_C$ or $\deg(L) \geq 2g+1$.
If $L =K_C$ then we will further assume that $g(C) \geq 3 $ and $\cliff(C)
\geq 1$.  In particular $L$ will always be very ample and quadratically normal.
 Let $n=\h{L}$, $V=\Hd{C,L}$
and let $M_2= \Hd{L^{\otimes2}}$. Denote by

\begin{equation} \label{E:rankdef}
R^p = R^p(L) 
\subset \PP\left(\Hi{K_C\otimes L^{-2}}\right) 
\subset \PP\left(\sym^2\left(\Hi{K_C\otimes L^{-1}}\right)\right)
\end{equation}

the rank $p$ locus. 
We consider $\PP\left(\sym^2 \left( \Hi{K_C\otimes L^{-1}}\right) \right) $ as a
subspace of \linebreak $\PP(\Hom(V^{\vee},V)$ and 
  follow \cite{ACGH}  in defining the scheme
structure of $R^p$. 

Let $\mathbf{H} = \Hom(V,W)$ where $V$ and $W$ are finite dimensional vector
spaces.  The space of rank p matrices in $\mathbf{H}$ is defined by the vanishing of all
the $(p+1) \times (p+1)$ minors with respect to some choice of a basis for $V$ 
and $W$.  It is denoted by $\mathbf{H}^p. $
It is known  that $\mathbf{H}^p$ is Cohen--Macaulay and smooth away
from $\mathbf{H}^{p-1}$.  
The Cohen-MacCauley statement can be found in \cite{W}  p.175 for example.  The
proof of the  smoothness statement follows from the existence of a canonical desingularization,
$\~{\mathbf{H}}^p$,  of  $\mathbf{H}^p$, and the calculation of the tangent space to $\~{\mathbf{H}}^p$
 at a general point.
For completeness we sketch the construction.  The details may be found in \cite{ACGH}.

If $\varphi \in \mathbf{H}^p$, then $\dim(\ker(\varphi)) \geq n-p$ and so we can consider
the set of all pairs $(\varphi,A) \subset \tilde{\mathbf{H}^p} \times G(n-p,n)$ such that 
$A \subset \ker(\varphi)$.  One checks that this is smooth by showing that the projection onto
$G(p,n)$ has fiber over $A$ equal to $\Hom(V/A,W)$ and hence is a vector bundle.  This is the 
definition of $\~{\mathbf{H}^p}$.
Further if $\varphi \in \mathbf{H}^p \setminus \mathbf{H}^{p-1}$ then $T_{\varphi,\mathbf{H}} =
\{\psi \in \mathbf{H} | \psi : \ker(\varphi) \to \im(\varphi) \}$.

\begin{defn}\label{D:1}  Let $ \mathbf{M} \stackrel{i} \hookrightarrow \Hom(V,W)= \mathbf{H}$ be an
inclusion of vector spaces.  The scheme $R^p(M)= R^p$ is defined as $i^{*}(\mathbf{H}^p)$.
With this scheme structure, the ideal of $R^p$ in  $M$  is the pull back of the ideal defining $\mathbf{H}^p$,  which  is all
  the $(p+1)\times(p+1)$ minors. The same definition holds in the projective case,
$ \PP(\mathbf{M})  \stackrel{i} \hookrightarrow \PP(\Hom(V,W)= \PP(\mathbf{H}))$
\end{defn}

\textbf{Remark 1:} In our case $V=W^{\vee} = \H{C,L}$ and $M= \H{C,L^{\otimes 2}}$ and in fact
$\mathbf{M} \hookrightarrow \sym^2(W) \hookrightarrow \Hom(V,W)$.  We will need this greater
generality to deal with the scheme structures that occur in section  \ref{S:eks}.

\textbf{Remark 2:} We will see later that in this case, $\mathbf{M}=\H{C,L^{\otimes 2}},
$ that $R^p \setminus R^{p-1}$ is smooth if $p<\cliff(C,L)$.

For later reference we include:

\begin{lemma}\label{L:tanspace_id}

With the notation of Definition \ref{D:1}, let $\varphi \in R^p
\setminus R^{p-1}$, then $T_{\varphi,\mathbf{M}} = 
\{\, \psi \in \mathbf{M} | \psi: \ker(\varphi) \to \im(\varphi)$\}
\end{lemma}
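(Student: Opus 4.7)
The plan is a two-step reduction: first reprove the tangent-space description for the universal rank locus $\mathbf{H}^p\subset\mathbf{H}=\Hom(V,W)$ quoted above Definition \ref{D:1}, then pull it back along the linear inclusion $i\colon\mathbf{M}\hookrightarrow\mathbf{H}$, using that by Definition \ref{D:1} the scheme $R^p$ is cut out in $\mathbf{M}$ by the ideal $i^{\ast}(I_{\mathbf{H}^p})$.

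For step one, fix $\varphi\in\mathbf{H}^p\setminus\mathbf{H}^{p-1}$ and choose splittings $V=\ker\varphi\oplus V'$ and $W=\im\varphi\oplus W'$. Writing
\[
\varphi=\begin{pmatrix}A&0\\0&0\end{pmatrix},\qquad\psi=\begin{pmatrix}B&C\\D&E\end{pmatrix}
\]
with $A\colon V'\to\im\varphi$ an isomorphism, one has $A+tB$ invertible for small $t$, and row-and-column reduction on $\varphi+t\psi$ collapses the lower-right block to $t\bigl(E-tD(A+tB)^{-1}C\bigr)=tE+O(t^2)$. Hence $\rk(\varphi+t\psi)\le p$ modulo $t^2$ iff $E=0$, which is precisely the condition $\psi(\ker\varphi)\subset\im\varphi$. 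Therefore
\[
T_{\varphi,\mathbf{H}^p}=\{\psi\in\mathbf{H}\mid\psi(\ker\varphi)\subset\im\varphi\}.
\]
Alternatively, this falls out of the incidence desingularization $\tilde{\mathbf{H}}^p\to\mathbf{H}^p$ mentioned above Definition \ref{D:1}: that map is an isomorphism over $\mathbf{H}^p\setminus\mathbf{H}^{p-1}$, and the vector-bundle structure $\tilde{\mathbf{H}}^p\to G(n-p,n)$ with fibre $\Hom(V/A,W)$ computes the same tangent space.

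For step two, because $i$ is a linear inclusion of vector spaces and $R^p$ is cut out by $i^{\ast}(I_{\mathbf{H}^p})$, the Zariski tangent space of $R^p$ at $\varphi$ equals $T_{\varphi,\mathbf{M}}=\mathbf{M}\cap T_{\varphi,\mathbf{H}^p}$: indeed, for any $f\in I_{\mathbf{H}^p}$ one has $d(i^{\ast}f)_\varphi=(df)_\varphi\big|_{\mathbf{M}}$, so a tangent vector $\psi\in\mathbf{M}$ kills $i^{\ast}I_{\mathbf{H}^p}$ iff $\psi$, regarded in $\mathbf{H}$, kills $I_{\mathbf{H}^p}$. Combined with step one, this yields the claimed description. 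The only subtlety I expect to watch is this pullback step: it relies on $\mathbf{M}\subset\mathbf{H}$ being a linear subspace so that $di$ is the literal inclusion, and on the hypothesis $\varphi\notin R^{p-1}$ so that $A$ is invertible and the Schur-complement reduction in step one applies; with either hypothesis weakened the naive intersection could fail to capture the correct Zariski tangent space.
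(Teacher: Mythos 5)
Your proof is correct and takes essentially the same route as the paper: identify the tangent space of the universal determinantal locus $\mathbf{H}^p$ at a point of rank exactly $p$ with $\{\psi \mid \psi(\ker\varphi)\subset\im(\varphi)\}$, then use linearity of $\mathbf{M}\hookrightarrow\mathbf{H}$ to conclude that $T_{\varphi,\mathbf{M}}$ is the intersection of that space with $\mathbf{M}$. The only difference is that you supply a self-contained Schur-complement verification of the first step, which the paper simply quotes from the discussion preceding Definition \ref{D:1} (via the desingularization $\~{\mathbf{H}}^p$ and \cite{ACGH}).
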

\begin{proof}
Let $\mathbf{T}$ denote the tangent space to $\varphi$ in $\Hom(V,W)$.
We have previously identified $\mathbf{T}$ with $\{ \psi \in \mathbf{H}|
\psi : \ker(\varphi) \to \im(\varphi) \}$.  Since $\mathbf{M}$ is a 
subspace of $\mathbf{H}$, 
 $T_{\varphi,\mathbf{M}}= \{\, \psi \in \mathbf{T} \,  | \psi(I)=0 \}$
where $ I = I(\mathbf{M})$, is the ideal of $\mathbf{M}$.  Since $\mathbf{M}$
is a linear space, the condition on $\psi$ is that $\psi \in \mathbf{M}$
\end{proof}

Recall that Theorem  \ref{T:mainthm}  identified $\sec^{p-1} $
with $R^p(L)$ as sets.  We have given a scheme structure to 
$R^p$ and to  show scheme theoretic equality we recall
the definition of the scheme $\sec^{p-1}(C)$. We are interested
in the secant varieties to $C$ in $\PP(\H{L^{\otimes 2})}$. 
However, the
definitions  apply to any embedding of C by a very
ample line bundle $M$ in $\PP(\Hd{C,M})$
The definition of the scheme structure of the Secant variety 
goes back to \cite{Sch}, but we follow \cite{B}.  In essence one considers 
the open set of points in $\sym^k(C)$ which define $k-1$ planes, 
take their image
in $\PP(V)$ and take the reduced scheme structure on the closure of this set.
The actual definition requires some notation.

We recall the notations and results of  \cite{B} 
 and  construct a rank $p$ bundle $B^{p-1}(M)$ over $\sym^p(C)$.
Informally it is
    the rank $p$ bundle $D \mapsto \H{C, M|_D}$. The actual 
construction is given below.  $M$ will denote any 
very ample line bundle with $\h{M}=n$ and
 satisfying $ \h{M(-D)}= \h{M} -d$ for all divisors $D$ with
$\deg(D)=d \leq p$. The last condition is that   $M$  separates
$p$ points.

Let $\mathcal{D}_p \hookrightarrow C \times \sym^p(C)$ be the universal
    divisor, $\pi_2: C\times \sym^p(C) \maps \sym^p(C)$, then $B^{p-1}(M) =
    \pi_{2*}\left( \pi_1^*(M)|_{\mathcal{D}_p} \right)$.

  Since $\pi_{2*}\pi_1^*(M) =
    \H{M} \otimes \OO_{\sym^p(C)}$ and $M$ separate $p$ points
, the map  $\H{M}\otimes\OO_{\sym^p(L)} \twoheadrightarrow
    B^{p-1}(M)$ is surjective.  Hence we get an inclusion of 
projective bundles, 
\begin{equation}\label{E:secform} 
\beta_{p-1} : \PP(B^{p-1}(M)) \hookrightarrow 
\PP(\pi_{2*}\pi_1^*(M))= \PP(\H{C,M)} \times \sym^P(C)\end{equation}

We now define the Secant variety.

\begin{defn} \label{D:secant}
The secant variety $\sec^{p-1}(C)$ is the scheme theoretic image of
$B^{p-1}(M)$ in $\PP(\H{C,M)}$ under the projection onto the first 
factor in equation  \ref{E:secform}.  We refer to $B^{p-1}(M)$ as the
full  secant bundle.
\end{defn}
\textbf {Remark 1)} Since $B^{p-1}(M)$ is smooth, and so reduced, the scheme
theoretic image of the projection is also reduced.  See \cite{ha}  p.92
for details.

\textbf {Remark 2)} Since $\H{M} \otimes \OO_{\sym^p(C)} \to B^{p-1}(M)$
is surjective,
 we see that $B^{p-1}(M)$ 
is  a rank p bundle over $\sym^p(C)$ 
generated by $n$ global sections.  Hence $B^{p-1}(L)$ gives rise to map
$g: \sym^p(C) \to G(p,n)$  This map takes a divisor $ D \in \sym^p(C)$ 
to the $p-1$ dimensional subspace spanned by the divisor $D$.  As such,
$B^{p-1}(M)$ is the pullback of the universal subbundle on $G(p,n)$ and 
hence is the incidence correspondence $\{ (x,\overline D)|\, x 
\in \overline D \} \subset \PP(\H{C,M}) \times \sym^p(C)$.

\textbf {Remark 3)} We have used the fact the the line bundle $L$ separates $p$ points in 
the definition.  It is possible to define a the Secant variety
without this extra condition,  see for example \cite{B}.

For the rest of this section $L$ will also satisfy $\cliff(C,L) >p$.  Notice that
if $\cliff(C,L) >p$ then $L$ separates $p$ points.  Recall that $R^p$ is defined
in equation \ref{E:rankdef}.

\begin{lemma}\label{L:secinrank}

We have a scheme theoretic inclusion
$ \sec^{p-1}(L) \hookrightarrow R^p $.
\end{lemma}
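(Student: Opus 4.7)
The plan is to deduce scheme-theoretic inclusion from set-theoretic inclusion together with reducedness of $\sec^{p-1}(L)$, bypassing any explicit local computation with the $(p+1)\times(p+1)$ minors defining $R^p$. The set-theoretic inclusion is almost automatic and requires no Clifford hypothesis: each point $\phi_{2L}(p)$ corresponds to the rank one Shiffer variation $\sigma_p^2$ under the symmetric-matrix description of Section 5, and since a sum of $p$ rank one matrices has rank at most $p$, any span of $p$ points of $\phi_{2L}(C)$ lies in $R^p$; taking closures gives $\sec^{p-1}(L) \subset R^p$ as sets.

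The substantive step is to verify that $\sec^{p-1}(L)$ is reduced. By Definition \ref{D:secant}, $\sec^{p-1}(L)$ is the scheme-theoretic image of the projection $\PP(B^{p-1}(L^{\otimes 2})) \to \PP(\H{C,L^{\otimes 2}})$. Under the standing assumption $p < \cliff(C,L)$, the bundle $L^{\otimes 2}$ separates $p$ points, so $B^{p-1}(L^{\otimes 2})$ is a genuine rank $p$ vector bundle over the smooth symmetric product $\sym^p(C)$, whence its projectivization is smooth, in particular reduced. The standard fact that the scheme-theoretic image of a reduced scheme is reduced (the ideal sheaf $\ker(\OO_Y \to f_\ast \OO_X)$ is radical when $X$ is reduced, since $f_\ast$ of a reduced sheaf of rings has no nilpotent local sections) then yields that $\sec^{p-1}(L)$ is reduced. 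This reducedness was already asserted in Remark 1 following Definition \ref{D:secant}; I would simply spell out why it applies here.

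The conclusion follows from a general principle: if $X, Y \subset \PP^N$ are closed subschemes with $X$ reduced and $X \subset Y$ set-theoretically, then $X \subset Y$ scheme-theoretically, because $\mathcal{I}_X$ coincides with the full radical vanishing ideal of the underlying set of $X$ and therefore contains $\mathcal{I}_Y$. Applying this with $X = \sec^{p-1}(L)$ and $Y = R^p$ yields the desired scheme-theoretic inclusion. The only step with any real content is the reducedness assertion in the middle paragraph; everything else is either a trivial rank bound or a formal consequence of how ideal sheaves of reduced subschemes behave.
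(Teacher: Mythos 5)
Your proof is correct and follows essentially the same route as the paper's: the paper also reduces the claim to the set-theoretic rank bound (its Lemma \ref{L:rankj}, that a sum of $p$ rank one matrices kills all $(p+1)\times(p+1)$ minors), and then concludes that the minors lie in $\mathcal{I}_{\sec^{p-1}(C)}$, which implicitly uses the reducedness of the secant scheme recorded in Remark 1 after Definition \ref{D:secant}. You have merely made that reducedness step, and the formal passage from set-theoretic to scheme-theoretic inclusion, explicit.
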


\begin{proof} 
 
To show scheme theoretic inclusion we need to show an   inclusion
of the ideal sheaves:
$\mathcal{I}_{R^p} \hookrightarrow \mathcal{I}_{Sec} $ .  Since $R^p$ is
defined by the vanishing of all the $(p+1) \times  (p+1)$ minors of 
the generic matrix, we need to show that $\sec^{p-1}(C)$ vanishes
on all $(p+1) \times  (p+1)$ minors.  Roughly speaking, any
 point $x \in \sec^{p-1}(C) $ is a linear sum of $p$ points of $C$.
Each point of C represents a rank one linear transformation. Hence
each $x \in \sec^{p-1}(C) $ has a representation as the sum of $p$
rank one transformations and hence is of rank at most $p$.  Thus 
every point  $x \in \sec^{p-1}(C) $ will vanish at all $(p+1) \times (p+1) 
$ minors and hence scheme theoretically lies in $R^p$. Because this 
point comes up frequently, I will give it a separate formal proof.
The result is well-known (see (\cite{eks}) for example).

\end{proof}
First we fix some notation.
Let $W \subset \Hom(V_1,V_2)$ be a linear space and let $R \subset W$
be a subset consisting of rank one transformations.  This means that for
every $r \in R $, the kernel of $r$ is of codimension one and that the 
image of $r$ is of dimension one.  By $\sec^{p-1}(R)$ we mean all linear combinations of
 $p$ elements of $R$.

\begin{lemma}\label{L:rankj}

Every element  $r_p \in \sec^{p-1}(R)$ is of rank at most $p$.  That is 
if we 
fix a basis for $V_1$  and $V_2$  every $(p+1) \times (p+1)$
 minor of $r_p$ vanishes. Informally: every sum of at most
$p$ rank one matrices is of rank at most $p$.
\end{lemma}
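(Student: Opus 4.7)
The plan is to prove this by a direct appeal to the subadditivity of image under sums, which is the only ingredient needed once rank is expressed geometrically. First I would unpack the definition: by hypothesis any $r_p \in \sec^{p-1}(R)$ may be written as $r_p = \sum_{i=1}^p \lambda_i r_i$ with $r_i \in R$ and $\lambda_i$ scalars, where each $r_i$ is a rank one transformation $V_1 \to V_2$ so that $\im(r_i) \subset V_2$ is a one dimensional subspace $\ell_i$.

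Next I would invoke the elementary inclusion $\im(A+B) \subseteq \im(A) + \im(B)$, valid for any two linear maps with the same domain and codomain. Iterating this $p-1$ times, one obtains
\[ \im(r_p) = \im\bigl(\sum_{i=1}^p \lambda_i r_i\bigr) \subseteq \sum_{i=1}^p \im(\lambda_i r_i) \subseteq \sum_{i=1}^p \ell_i. \]
The right hand side is a sum of $p$ one dimensional subspaces of $V_2$, hence has dimension at most $p$. Therefore $\rk(r_p) = \dim \im(r_p) \leq p$, which is the content of the rank bound.

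To translate this into the statement about minors, I would recall the classical characterization: after fixing bases of $V_1$ and $V_2$, a matrix has rank $\leq p$ if and only if all of its $(p+1) \times (p+1)$ minors vanish, since the rank equals the largest order of a nonvanishing minor. Applying this to the matrix representing $r_p$ in the chosen bases yields the minor vanishing statement.

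There is essentially no obstacle here, since the lemma is a purely formal linear algebra fact, independent of the curve $C$ or the embedding; the only thing to be careful about is that the statement is about arbitrary linear combinations (not just convex ones or positive combinations), so the bound $\dim(\sum \ell_i) \leq p$ must be obtained from the crude subadditivity of dimension rather than any finer structural fact, which is exactly what the image inclusion above provides.
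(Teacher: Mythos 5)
Your proof is correct, and it takes a different route from the paper's. The paper argues directly at the level of matrix entries: it represents each rank one element as a matrix with a single nonzero entry, concludes that a sum of $p$ such elements has at most $p$ nonzero columns, and then expands any $(p+1)\times(p+1)$ minor along an all-zero column. You instead argue basis-freely via the inclusion $\im(A+B)\subseteq \im(A)+\im(B)$, bound $\dim\im(r_p)$ by $p$ as a sum of $p$ lines, and only at the end invoke the standard equivalence between $\rk \le p$ and the vanishing of all $(p+1)\times(p+1)$ minors. Your version is arguably the more robust of the two: the paper's claim that a rank one transformation has a unique nonzero entry holds only in a basis adapted to that particular element (a general rank one matrix $vw^{T}$ has many nonzero entries in a fixed common basis), so the paper's argument really requires either a separate reduction or the observation that only the column span matters — which is essentially the content of your image-subadditivity step. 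What the paper's phrasing buys is that it speaks directly in the language of minors, which is the form in which the lemma is consumed in the scheme-theoretic inclusion $\sec^{p-1}(C)\hookrightarrow R^{p}$; your proof recovers that language cleanly through the determinantal characterization of rank, so nothing is lost.
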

\begin{proof}
If $p+1 > \min (\dim(V_1),\dim(V_2)$ the result is trivial.  So we
assume that $p+1 \leq \min (\dim(V_1),\dim(V_2)$
Once we have picked a basis we can represent any $r \in R$ as a matrix 
$(a_{ij})$ with a unique $ a_{ij} \neq 0$.  Hence any $r_p \in \sec^{p-1}$
can have at most $p$ columns (or rows) with a non-zero entry.  If we take a
$(p+1) \times (p+1)$ submatrix and expand along a  column
(or row) with all 
zeros we see that the determinant vanishes.
\end{proof}

Since $ \sec^{p-1}(C) $ and $R^p$ agree as sets, if $R^p$ is 
reduced then since $R^p$ is a thickening of $\sec^{p-1}(C)$ both
varieties are isomorphic having the reduced scheme structure.

There is a fair amount of literature on the scheme structure of
$\sec^p(C)$.  See  \cite{B}, \cite{Sch}, \cite{V2}
 for example.  It is known to be 
normal in many circumstances.  The first theorem on the subject is due
to Bertram.

\begin{thm}

If $L$ separates $2p$ points then $\sec^{p-1}(C)$ is normal and smooth away 
from $\sec^{p-2}(C)$. 
\end{thm}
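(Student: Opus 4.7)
The plan is to exhibit the projectivized full secant bundle $\pi\colon \PP(B^{p-1}(L))\to\sec^{p-1}(C)$ from Definition~\ref{D:secant} as a proper birational resolution of the secant variety and transfer smoothness and normality from the smooth source. Since $C$ is smooth, $\sym^p(C)$ is smooth of dimension $p$, and $B^{p-1}(L)$ is a rank $p$ bundle on it, so $\PP(B^{p-1}(L))$ is smooth of dimension $2p-1$. By construction, the composition $\PP(B^{p-1}(L))\hookrightarrow\PP(\H{C,L})\times\sym^p(C)\twoheadrightarrow\PP(\H{C,L})$ from \eqref{E:secform} has image $\sec^{p-1}(C)$ and factors through a proper surjection $\pi$.

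First I would use the hypothesis that $L$ separates $2p$ points to show $\pi$ is an isomorphism over $U:=\sec^{p-1}(C)\setminus\sec^{p-2}(C)$. For any effective $E$ of degree at most $2p$, separation gives $\h{L(-E)}=\h{L}-\deg(E)$, so $\overline{E}$ has the expected dimension $\deg(E)-1$. Applied to $E=D_1+D_2$ for distinct $D_1,D_2\in\sym^p(C)$, this shows $\overline{D_1}\cap\overline{D_2}=\emptyset$ (the two $(p-1)$-planes $\overline{D_i}$ together span the $(2p-1)$-plane $\overline{D_1+D_2}$), so each $x\in U$ lies on a unique $\overline{D}$ and $\pi^{-1}(U)\to U$ is bijective. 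A Terracini-style tangent space calculation, again powered by the $2p$-separation hypothesis applied to $2D$, identifies $T_x\sec^{p-1}(C)$ with the $(2p-1)$-plane spanned by the projective tangent lines to $C$ at the points of $D$, matching $\dim\PP(B^{p-1}(L))$. Thus $\pi$ is \'etale on $\pi^{-1}(U)$ and $\sec^{p-1}(C)$ is smooth along $U$.

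For normality I would show that $\pi_*\OO_{\PP(B^{p-1}(L))}=\OO_{\sec^{p-1}(C)}$. Normality then follows at once, since the pushforward from a smooth (hence normal) variety is an integrally closed sheaf of algebras, so equality with $\OO_{\sec^{p-1}(C)}$ forces the latter to be integrally closed. Via Stein factorization, this reduces to showing every fiber of $\pi$ is geometrically connected. Connectedness over $U$ is immediate from the previous step. For $x\in\sec^{p-2}(C)$, one analyzes the set of $D\in\sym^p(C)$ with $x\in\overline{D}$: the $2p$-separation hypothesis forces any two minimal-length representations of $x$ to share common support (otherwise the disjoint supports would produce a divisor of degree at most $2p$ with an unexpectedly small span), and an inductive argument on $p$ then exhibits the fiber of $\pi$ over $x$ as an irreducible, hence connected, family parametrized essentially by $\sym^{p-k}(C)$, where $k<p$ is the minimal length of a representation of $x$.

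The main obstacle is precisely the fiber analysis over $\sec^{p-2}(C)$ in the last step: the $2p$-separation hypothesis cleanly governs $\pi$ over $U$, but deducing normality globally demands genuine control of how a given $x$ can be realized as a linear combination of $p$ points of $C$ in multiple ways. An alternative route via Serre's $R_1+S_2$ criterion would obtain $R_1$ immediately from the smoothness step, given the codimension bound $\codim_{\sec^{p-1}(C)}\sec^{p-2}(C)\ge 2$, but $S_2$ would again require either this fiber analysis or an appeal to Grauert--Riemenschneider vanishing ($R^i\pi_*\OO=0$ for $i>0$) to conclude $\sec^{p-1}(C)$ has rational singularities.
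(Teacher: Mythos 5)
The paper itself offers no proof of this statement---the proof given reads ``see \cite{B}''---so your proposal can only be measured against the analogous argument the paper carries out for the rank loci $R^p$ later in the same section, which is in fact the same resolution strategy you adopt: project from the smooth projectivized secant bundle $\PP(B^{p-1}(L))$ and descend smoothness and normality. Your treatment of the open part is essentially right, modulo one imprecision: for distinct $D_1,D_2\in\sym^p(C)$ the $2p$-separation gives $\overline{D_1}\cap\overline{D_2}=\overline{\gcd(D_1,D_2)}$, which is empty only when the supports are disjoint; but this still forces any point lying on two distinct spans into $\sec^{p-2}(C)$, which is all you need for bijectivity over $U$.

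The genuine gap is in the normality step, and it is not merely the ``fiber analysis'' you flag at the end. The reduction of $\pi_*\OO_{\PP(B^{p-1}(L))}=\OO_{\sec^{p-1}(C)}$ to geometric connectedness of the fibers is false: the normalization of a cuspidal plane cubic is a proper bijection (hence has connected fibers) from a smooth curve, yet the pushforward of the structure sheaf is strictly larger than $\OO$ and the target is not normal. Connectedness of fibers only shows that the finite part of the Stein factorization is a bijection; to conclude that it is an isomorphism you must either already know the target is normal (which is what you are trying to prove) or know that the scheme-theoretic fibers are \emph{reduced} as well as connected---this is exactly the lemma from \cite{Ve4} that the paper invokes when it proves normality of $R^p$, and there the real work is a tangent-space computation showing the fibers are reduced (indeed smooth, isomorphic to $\sym^{p-k}(C)$). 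Your fiber analysis addresses only set-theoretic irreducibility, which does not rule out nilpotents in the fiber. The alternative routes you mention do not close the gap either: $S_2$ does not come for free from $R_1$ plus a codimension bound, and Grauert--Riemenschneider applies only in characteristic zero and is a genuinely heavier input than anything used in the paper.
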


\begin{proof}
see \cite{B}

\end{proof}

\textbf {Remark:} In case $L=K_C$ or $\deg(L) \geq 2g+1$ and $p< \cliff(C,L)$, one 
easily checks 
that $L^{\otimes 2}$ separates $2p$ points and hence that $\sec^{p-1}(C)$
is normal.  

The fact that $\sec^{p-1}(C) =  R^p$, a rank locus,  may suggest
that in fact these varieties are Cohen-Macauley.  Very little is known
about this.  Recently Sidman and Vermeire \cite{S-V}  have proven
that if $\deg(L) \geq 2g+3$ then $\Sec^1(C)$ is Cohen-Macauley.
Further Vermeire in \cite{Ve2} has shown that if $\deg(L)$ is sufficiently large, 
then $\sec(C)$ is generated by cubics.  His bound is better than ours in complete analogy
to the fact that,  once $\deg(L) \geq 2g+2$ then $C$ is generated by quadrics, but one 
needs $\deg(L)$ to be about $4g+4$ before the equations defining $C$ are 'determinantally 
presented'.

Our strategy is to construct a desingularization  $\tilde R^p$ of
$R^p$, which is analagous to the canonical desingularization presented in \cite{ACGH}.  
We  show that the variety we construct, $\tilde R^p$, coincides with the full secant variety
$B^{p-1}(L^{\otimes 2})$.   It follows that 
$\tilde R^p$ is smooth.  We then  show directly 
 by analyzing the fibers of $p: \tilde R^p
\to R^p$ that $R^p$ is normal.  This involves a reasonably
complicated tangent space calculation to show that the
scheme theoretic fibers of $p$ are reduced.  Once $R^p$ is 
normal, it is reduced and must agree scheme theoretically with
$\sec^{p-1}(C)$.

The methods of this section are similar to the 'geometric technique'
of Kempf. This provides for a way to desingularize rank loci.
 I first learned of these ideas in \cite{ACGH}.  Weyman in his
book \cite{W}  proves a theorem giving one a resolution of the structure sheaf of $\tilde R^p$.  Under some circumstances this resolution may be used to construct a resolution of $R^p$.
  This may allow one to prove that $R^p$ is Cohen-Macauley.

\textbf {Remark}
If it  could be  directly  shown that $R^p$ is reduced, then one could
conclude directly the scheme theoretic equality.  Using Bertram's Theorem,
one would have normality too.  I have not been able to create a simple argument
to prove this simpler fact.  The current proof does have the advantage
of explicating the geometry of $R^p$.  Namely, it shows that the resolution
is isomorphic to the full secant variety and that it has reduced, and in fact smooth,
fibers.

The first step in the construction of $\tilde R^p$ is the fact that 
$R^p$ is smooth away from $R^{p-1}$.  We first note:

\begin{lemma}

Suppose $L=K_C$ or $\deg(L)\geq 2g+1$  ,then  $\cliff(C,L)>p $ implies
$\cliff(C,L^{\otimes 2}) > 2p$.

\end{lemma}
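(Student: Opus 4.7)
The plan is to reduce the claim to an exact computation for $\cliff(C,L^{\otimes 2})$ using Theorem~\ref{T:clifford_calc_1}, and then to control $\cliff(C,L)$ by a combination of the classical Clifford bound, the exact formula in the ``easy'' degree range, and an explicit eligible divisor in the ``hard'' range. First, a uniform reduction: in both cases ($L = K_C$ or $\deg(L)\geq 2g+1$) one has $\deg(L^{\otimes 2}) \geq 4g-4$, and any divisor class of degree $\geq g$ is effective by Riemann--Roch, so $L^{\otimes 2} = K_C(D_0)$ for some effective $D_0$ of degree $d_0 = 2\deg(L) - 2g + 2 \geq g+2$. Theorem~\ref{T:clifford_calc_1} then gives the exact equality
\[
\cliff(C, L^{\otimes 2}) = d_0 - 2 = 2\deg(L) - 2g,
\]
so the task is to show $2\deg(L) - 2g > 2p$.

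If $L = K_C$, then $\cliff(C,L) = \cliff(C)$, which by Clifford's classical inequality satisfies $\cliff(C) \leq \lfloor (g-1)/2\rfloor$. The hypothesis $\cliff(C) > p$ forces $p \leq (g-3)/2$, hence $2p \leq g-3 < 2g-4 = \cliff(C, L^{\otimes 2})$ for $g \geq 3$.

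If $\deg(L) \geq 2g+1$, I subdivide on whether $L - K_C$ is effective. When $\deg(L) \geq 3g-2$, $\deg(L - K_C) \geq g$ forces $L - K_C$ to be effective, and Theorem~\ref{T:clifford_calc_1} applies to $L$ itself to yield $\cliff(C,L) = \deg(L) - 2g$ exactly; then $\cliff(C,L) > p$ gives $\deg(L) \geq p + 2g + 1$, whence $\cliff(C,L^{\otimes 2}) \geq 2p + 2g + 2 > 2p$. Otherwise, $2g+1 \leq \deg(L) \leq 3g-3$ (a range which is empty unless $g \geq 4$), and I upper-bound $\cliff(C,L)$ by constructing an explicit eligible divisor. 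Let $F$ be a generic rigid effective divisor of degree $f := 3g - 2 - \deg(L)$ (so $1 \leq f \leq g-3$, and genericity gives $\h{F} = 1$); then $\hi{F} = g - f = \deg(L) - 2g + 2 \geq 3$. The class $L - K_C + F$ has degree $g$ and so is effective; let $D_0$ be an effective representative. By Lemma~\ref{L:2.2}(i)--(ii), $r_L(D_0) = \h{F} = 1 > 0$, and $\h{L(-D_0)} = \h{K_C(-F)} = \hi{F} \geq 3$, so the span of $D_0$ has codimension $\geq 2$ in $\PP(\Hd{L})$ and $D_0$ is genuinely eligible. Therefore
\[
\cliff(C,L) \leq \cliff(L, D_0) = \deg(D_0) - 2 r_L(D_0) = g - 2.
\]
The hypothesis then forces $p \leq g-3$, so $2p \leq 2g - 6 < 2g + 2 \leq 2\deg(L) - 2g = \cliff(C, L^{\otimes 2})$.

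The main obstacle is precisely the range $2g+1 \leq \deg(L) \leq 3g-3$: here $L - K_C$ may fail to be effective, so the exact formula of Theorem~\ref{T:clifford_calc_1} is unavailable and the lower bound of Corollary~\ref{C:compute_cliff} alone cannot constrain $\deg(L)$ from below in a useful way. The key device is to ``thicken'' $L - K_C$ by a generic rigid divisor $F$ of the minimum degree needed to make the sum effective; the residual degree bookkeeping ($\hi{F} \geq 3$) then delivers the codimension-two condition on $\overline{D_0}$ automatically, and the inequality $2g+2 > 2g-6$ closes the argument cleanly.
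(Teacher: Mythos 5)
Your proof is correct, and its backbone is the same as the paper's: write $L^{\otimes 2} = K_C(D_0)$ with $D_0$ effective of degree $d_0 = 2\deg(L)-2g+2$ (effectivity from Riemann--Roch since $d_0 \ge g$), apply Theorem \ref{T:clifford_calc_1} to get $\cliff(C,L^{\otimes 2}) = 2\deg(L)-2g$ exactly, and then beat this against an upper bound for $\cliff(C,L)$ coming from $\cliff(C,L) > p$. The one place you genuinely diverge is the range $2g+1 \le \deg(L) \le 3g-3$. Writing $\deg(L) = 2g-2+d$ with $d < g$, the paper simply asserts the upper bound $\cliff(C,L) \le d-1$; its Theorem \ref{T:clifford_calc_2} only supplies the \emph{lower} bound $\ge d-1$, so that step is left unjustified in the text. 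You instead manufacture an explicit eligible divisor: a generic rigid $F$ of degree $f = 3g-2-\deg(L)$ makes $L\otimes K_C^{-1}(F)$ of degree exactly $g$, hence effective, and the resulting $D_0$ has $r_L(D_0)=1$ (via Lemma \ref{L:2.2}, since $K_C\otimes L^{-1}(D_0) = \OO_C(F)$ and $\hi{L}=0$) and spans a subspace of codimension $\h{K_C(-F)} = g-f \ge 3$, so it is eligible and gives $\cliff(C,L) \le g-2$. This bound is weaker than the paper's claimed $d-1$ but entirely sufficient for the inequality $2p \le 2g-6 < 2g+2 \le \cliff(C,L^{\otimes 2})$, and it makes your write-up self-contained where the paper is not. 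One harmless slip: your claim $d_0 \ge g+2$ fails for $L = K_C$ with $g=3$ (there $d_0 = 2g-2 = 4$), but all you actually use is $d_0 \ge g$ for effectivity and $d_0 \ge 2$ for Theorem \ref{T:clifford_calc_1}, both of which always hold here.
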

\begin{proof}
If $L=K_C$, then $\cliff(K_C) \leq \frac{g-1}{2}$ and   $K_C^{\otimes 2} $ separates any $g-1$ points. 
 If $ L \neq K_C$ then if  $\deg(L) = 2g-2 +d$ with $d<g$ , then
$\cliff(C,L)\leq  d-1$ and if $d\geq g$ then $\cliff(C,L)= d-2$.  
Since $\deg(L^{\otimes 2}) = 4g-4 +2d$, $\cliff(C,L^{\otimes 2}) = 2g-4 +2d-2 >
2(d-1)$ .
\end{proof}

We can now prove:
\begin{thm}\label{T:smoothaway} 
    
    If $p < \cliff(C,L)$ and $\varphi \in R^p \setminus R^{p-1}$,
    then $R^p$ is smooth at $\varphi$.  In fact 
$T_{\varphi,R^p} = \overline{2D}_{L^{\otimes 2}}$, that is, the tangent
    space to $R^p$ at $\varphi$ is the $2p - 1$ dimensional space spanned by
    the divisor $2D$.
  
  \end{thm}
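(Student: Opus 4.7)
The plan is to compute $T_{\varphi, R^p}$ explicitly via Lemma \ref{L:tanspace_id}, observe the automatic containment $T_{\varphi, R^p} \supseteq \overline{2D}_{L^{\otimes 2}}$ by an annihilator argument, and then deduce equality (and hence smoothness of the expected dimension) from a quadratic normality statement about $L(-D)$.

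First I would use that $p < \cliff(C,L)$ and Theorem \ref{T:mainthm} to write $\varphi \in \Sec^{p-1}(C) \setminus \Sec^{p-2}(C)$, so that $\varphi \in T_L^*(D)$ for a unique effective divisor $D$ of degree $p$ (the non-reduced case is handled by the generalized Shiffer variation formalism of \S\ref{S:GSV}). Any such $D$ must satisfy $r_L(D) = 0$, since otherwise $\cliff(L,D) = p - 2r_L(D) < \cliff(C,L)$, contradicting the definition of $\cliff(C,L)$ (note $D$ is too small to span a hyperplane). Consequently the restriction $\H{L} \twoheadrightarrow \H{L|_D}$ is surjective and the boundary $\partial_1$ in Theorem \ref{T:localcalc} is injective. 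Combined with the fact that $\tilde{\varphi}$ is an isomorphism (proof of Theorem \ref{T:rankcalc}, using $\varphi \in T_L^*(D)$), the factorization of Theorem \ref{T:localcalc} identifies $\ker \varphi = \H{L(-D)}$ and, via Theorem \ref{T:linspace}, $\im \varphi = \overline{D}_L \subset \Hd{L}$, the annihilator of $\H{L(-D)}$.

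Next I would apply Lemma \ref{L:tanspace_id} to get $T_{\varphi, R^p} = \{\psi \in \mathbf{M} \mid \psi(\H{L(-D)}) \subseteq \overline{D}_L\}$. Viewing $\psi \in \mathbf{M} = \Hd{L^{\otimes 2}}$ as a linear functional on $\H{L^{\otimes 2}}$ and pulling back through the quadratic normality surjection $\mu : \H{L}^{\otimes 2} \twoheadrightarrow \H{L^{\otimes 2}}$, the condition that the symmetric bilinear form $B_\psi$ on $\H{L}$ vanish on $\H{L(-D)} \otimes \H{L(-D)}$ becomes the statement that $\psi$ annihilates $\mu(\H{L(-D)} \otimes \H{L(-D)})$ in $\H{L^{\otimes 2}}$. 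Since this image sits inside $\H{L^{\otimes 2}(-2D)}$, this yields automatically
\[
T_{\varphi, R^p} = \operatorname{ann}_{\mathbf{M}} \mu\bigl(\H{L(-D)} \otimes \H{L(-D)}\bigr) \,\supseteq\, \operatorname{ann}_{\mathbf{M}} \H{L^{\otimes 2}(-2D)} = \overline{2D}_{L^{\otimes 2}}.
\]
The lemma immediately preceding the theorem gives $\cliff(C, L^{\otimes 2}) > 2p$, which forces $r_{L^{\otimes 2}}(2D) = 0$, so $\overline{2D}_{L^{\otimes 2}}$ has projective dimension $2p - 1$.

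The hard part will be the reverse inclusion, which reduces to the surjectivity $\mu : \H{L(-D)}^{\otimes 2} \twoheadrightarrow \H{L^{\otimes 2}(-2D)}$, i.e., the quadratic normality of $L(-D)$. In the main range $\deg L(-D) \geq 2g+1$ this is immediate from Castelnuovo. In the borderline ranges (notably $L = K_C$, and the edge $\deg L(-D) \in \{2g-1, 2g\}$ that can occur when $\cliff(C,L)$ is maximal), a finer input is required, e.g., a base-point-free pencil trick applied to a pencil sitting inside $\H{L(-D)}$, or Green--Lazarsfeld's quadratic normality criterion applied to the non-special bundle $L(-D)$, which has strictly positive Clifford index in this regime. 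Once that surjectivity is secured, $T_{\varphi, R^p} = \overline{2D}_{L^{\otimes 2}}$ is of dimension $2p-1$; combining with $R^p \supseteq \Sec^{p-1}(C)$ of local dimension $2p-1$ at $\varphi$ (Bertram's theorem, since $L^{\otimes 2}$ separates $2p$ points by the Clifford lemma above), we conclude that $R^p$ is smooth of dimension $2p-1$ at $\varphi$ with tangent space exactly $\overline{2D}_{L^{\otimes 2}}$.
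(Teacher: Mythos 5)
Your proposal is correct and follows essentially the same route as the paper: identify $\varphi\in T_L^*(D)$, use the ACGH tangent-space description (Lemma \ref{L:tanspace_id}) to realize $T_{\varphi,R^p}$ as the kernel of the natural map to $\Hom\left(\H{L(-D)},\Hd{L(-D)}\right)$, observe that this kernel contains $\overline{2D}_{L^{\otimes 2}}$, and reduce the reverse inclusion to the quadratic normality of $L(-D)$ (the paper phrases this as injectivity of a map $\beta$ on $H^1$'s, which is the Serre dual of your surjectivity statement for $\mu:\H{L(-D)}^{\otimes 2}\to\H{L^{\otimes 2}(-2D)}$). Your treatment of the borderline degree cases is in fact slightly more careful than the paper's one-line appeal to $\cliff(C,L(-D))\ge 1$.
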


  \begin{proof}

If $\varphi \in R^p \setminus  R^{p-1}$ with $p< \cliff(C,L)$ then $\varphi \in
T_L^*(D)$ with $\deg(D) = p$ for some divisor $D$.  Hence $\ker(\varphi)=
\H{C,L(-D}$ and $\im(\varphi)= S_L(D)$.  We will work with   $\tilde \varphi
$, a lift of $\varphi$ to $\Hi{C,K_C \otimes L^{-2}}$ and show that the tangent space
$\tilde T_{\tilde \varphi,R^p} \subset \Hi{C,K_C \otimes L^{-2}}$ is the correct $2p$
dimensional space.  By Theorem \ref{T:linspace} we identify $\im(\varphi)$ with
$\{ x \in \Hi{K_C \otimes L^{-1}} \, | \, x \to 0 \in \Hi{K_C \otimes L^{-1}(D)} \}$
and hence, if we denote by $r_D$ the map\\
 $r_D: \Hi{K_C \otimes L^{-2}} \to 
\Hom(\H{L(-D)},\Hi{K_C \otimes L^{-1}(D)})$\\  
we may identify 
 $\tilde T_{\tilde \varphi,R^p}$   with $\ker(r_D)$.
We can factor $r_D$ as \\
\begin{center}
$ \Hi{K_C \otimes L^{-2}} \overset{ \alpha} \to \Hi{K_C \otimes L^{-2}(2D)} 
\overset{\beta} \to $\\
$ \Hom(\H{L(-D)},\Hi{K_C \otimes L^{-1}(D)})$.\\
\end{center}
Notice that $\ker(\alpha) = \partial (\H{K_C \otimes L^{-2}(2D)_{|2D}})$ that is 
to say   $\overline{2D}_{L^{\otimes 2}}$.  Since $\ker(\alpha) \subset \ker(r_D)$'
to finish  we
need to check that $\beta$ is injective.  We have $p< \cliff(C,L)$ and hence 
$\cliff(C,L(-D)) \geq 1$ .  In particular $L(-D)$ is arithmetically normal, which 
is equivalent to $\beta$ being injective by Serre Duality.  
\end{proof}

The next step is to construct a resolution $\tilde R^p$ of $R^p$

\begin{thm}
 Let $\~R^p = \left\{ (\varphi,\lambda) | \im\left(
    \varphi \right) \subset \lambda \right\} \subset R^p\times G(p,n)$. Then
    $\~R^p$ is smooth .
\end{thm}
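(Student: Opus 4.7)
The plan is to exhibit $\~R^p$ as a rank $p$ vector bundle over the smooth variety $\sym^p(C)$, whence $\~R^p$ is smooth of dimension $2p$. In fact I expect $\~R^p$ to coincide with the full secant bundle $B^{p-1}(L^{\otimes 2})$ from Definition \ref{D:secant}, both being the pullback via $D \mapsto \bar D$ of the universal sub-bundle on $G(p,n)$.

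First I would analyze the second projection $\pi_2 : \~R^p \to G(p,n)$. For any $\lambda \in G(p,n)$, the fiber $\pi_2^{-1}(\lambda)$ consists of all $\varphi \in \mathbf{M}$ with $\im(\varphi) \subset \lambda$; since $\mathbf{M} \hookrightarrow \sym^2(V)$, this fiber equals the linear subspace $\mathbf{M} \cap \sym^2(\lambda)$. So $\~R^p$ is fibered in linear spaces over $G(p,n)$, and the key is to control the fiber dimension.

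Next, for $\lambda = \bar{D}$ with $D \in \sym^p(C)$, a Serre-duality computation identifies
\[ \mathbf{M} \cap \sym^2(\bar{D}) \;=\; \bigl(\H{L^{\otimes 2}}/\H{L^{\otimes 2}(-D)}\bigr)^{\vee} \;=\; T_L(D), \]
using quadratic normality of $L$ and the fact that $L^{\otimes 2}$ separates $p$ points (which follows from $\cliff(C,L^{\otimes 2}) > 2p$, proved above from $\cliff(C,L) > p$). In particular the fiber has dimension exactly $p$ over every $\bar{D}$. Combined with the closed embedding $\sigma : \sym^p(C) \hookrightarrow G(p,n)$, $D \mapsto \bar{D}$ (again a consequence of the separation hypothesis on $L$), this shows $\pi_2$ restricts to a rank $p$ vector bundle over $\sigma(\sym^p(C))$.

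The last step is to verify that the (main component of) $\~R^p$ projects into $\sigma(\sym^p(C))$ via $\pi_2$. For $\varphi \in R^p \setminus R^{p-1}$ this is automatic since $\lambda = \im(\varphi) = \bar{D}$ is forced by Corollary \ref{c:shifferdescr} together with the set-theoretic identification $R^p \setminus R^{p-1} = \sec^{p-1}(C) \setminus \sec^{p-2}(C)$ from Theorem \ref{T:mainthm}; so $\pi_2$ is an isomorphism over this open stratum and lands in $\sigma(\sym^p(C))$. The main obstacle is extending this to the boundary locus where $\text{rank}(\varphi) < p$: here $\lambda$ is \emph{a priori} free to be any $p$-plane containing $\im(\varphi) = \bar{D_0}$ for some $D_0 \in \sym^k(C)$ with $k < p$, so set-theoretically $\~R^p$ can contain extra components. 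One must argue that only the closure of the rank-$p$ stratum is the relevant piece, and that within this closure $\lambda$ is constrained to lie in $\sigma(\sym^p(C))$ by a limiting-divisor argument (a degenerating family $(\varphi_t, \bar{D_t})$ with $D_t \in \sym^p(C)$ has $\lim \bar{D_t} = \bar{D_\infty}$ for some $D_\infty \in \sym^p(C)$, since $\sym^p(C)$ is projective and $\sigma$ is continuous). Once this is done, $\pi_2$ realizes $\~R^p$ as a rank $p$ vector bundle over $\sym^p(C)$, so $\~R^p$ is smooth.
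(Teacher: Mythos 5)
Your strategy coincides with the paper's: both identify $\~R^p$ with the full secant bundle $B^{p-1}(L^{\otimes 2})$, a rank-$p$ bundle over the smooth variety $\sym^p(C)$ embedded in $G(p,n)$, and conclude smoothness from that. The only mechanical difference is how the identification is made: the paper rewrites the incidence condition $\im(\varphi)\subset\lambda$ as the vanishing of a Koszul cup product $\varphi\cup\lambda\in V\otimes\bigwedge^{p+1}(V)$ (Lemma \ref{L:characterizerank}), characterizes $B^{p-1}(L^{\otimes 2})$ by the analogous condition $\varphi\wedge\lambda_D=0$ (Corollary \ref{C:characterizesec}), and then matches the two conditions for $\lambda=\lambda_D$; you instead compute the $\pi_2$-fiber over $\lambda$ directly as the linear space $\mathbf{M}\cap\sym^2(\lambda)$ and identify it over $\overline{D}$ with $T_L(D)$ by Serre duality. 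These amount to the same computation, and both require the same input (quadratic normality, the separation statement coming from $\cliff(C,L^{\otimes 2})>2p$, and the embedding $\sym^p(C)\hookrightarrow G(p,n)$). Where you add genuine content is the boundary issue you flag: the paper's matching lemma is stated only for $\lambda$ in the image of $\sym^p(C)$, and says nothing about pairs $(\varphi,\lambda)$ with $\rk(\varphi)=k<p$ and $\lambda$ an arbitrary $p$-plane containing $\im(\varphi)$ --- a $G(p-k,n-k)$ worth of points that do lie in the incidence variety as literally defined, and which cannot all lie in the $2p$-dimensional secant bundle. So the theorem really concerns the closure of the locus over $R^p\setminus R^{p-1}$ (which is how the paper implicitly uses $\~R^p$ afterwards, intersecting its fibers with $\sym^p(C)$ from the outset), and your properness/limiting-divisor argument is the right way to see that this closure stays over $\sigma(\sym^p(C))$. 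In short: same approach, with your version making explicit a restriction that the paper leaves tacit.
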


\begin{proof}

  $\~R^p = \left\{ \left( \varphi, \lambda \right) | \im(\varphi) \subset
  \lambda \right\}$. Of course we mean by this that if we can represent
  $\lambda$ as $\sigma_1 \wedge \dots \wedge \sigma_p \in \bigwedge^p(V)$ then
  $\im \varphi \subset \left< \sigma_1,\dots,\sigma_p \right>$. Notice that
  $\~R^p \subset R^p\times G(p,n) \subset R^p \times \PP( \bigwedge^p(V))
\subset \PP(M_2) \times \PP(\bigwedge^p(V)) $. There is a Koszul map
  \[\PP(\sym^2(V)) \times \PP(\bigwedge^p(V)) \maps \PP(V \otimes
\bigwedge^{p+1}(V))\]
given by:

  \begin{eqnarray*}\label{E:2}
    \sym^2(V) \otimes \bigwedge^p(V) 
    &\stackrel{\cup}{\maps}&
    V \otimes \bigwedge^{p+1}(V) \\
    (v_1 \cdot  v_2) \otimes \lambda
    &\longmapsto&
    v_1\otimes (v_2 \wedge \lambda) + v_2 \otimes (v_1 \wedge \lambda)
\quad(2).
  \end{eqnarray*}

The inclusions $R^p \hookrightarrow \PP(M_2) \hookrightarrow 
\PP(\sym^2(V))$  induce a map
\begin{equation} 
R^p \times G(p,n) \stackrel{\cup}{\maps} \PP(V \otimes
\bigwedge^{p+1}(V))
\end{equation}
We first observe that $\tilde R^p$ can be considered as an 
incidence variety.

\begin{lemma}\label{L:characterizerank}

With this notation $\~R^p = \left\{ \left( \varphi,\lambda \right) |
  \varphi\cup\lambda=0 \right\}$. This is true as long as we are not
  working in characteristic 2.

\end{lemma}

\begin{proof}
Fix some $\lambda$ say $\lambda = v_1 \wedge \dots \wedge v_p$
with $ v_i \in V$ linearly independent. Let $\langle v_1  
\dots v_p \rangle = W \subset V$.  From equation ( \ref{E:2}) 
 if $\varphi \in
\sym^2(W)$ then $\varphi \cup \lambda = 0$ since $\varphi =
\sum_{i=0}^p a_iv_i^2 $ with $a_i$ constants (this is true after changing our basis of $W$).
So we may assume that $\varphi$ contains no terms entirely in $W$.  
Extend $\langle v_1 \dots v_p \rangle$ to a basis $\langle v_1 \dots v_n \rangle$ of $V$. Write
$\varphi = \sum_{i=p+1}^n a_il_i\otimes v_i + \sum_{i \geq j \geq(p+1)} b_{ij}v_i \otimes v_j$ with $l_i \in W$.  
Then
\begin{equation}
\varphi \cup \lambda = 
\sum_{i=p+1}^n a_il_i \otimes v_i\wedge \lambda + \sum_{i\geq j \geq (p+1)}b_{ij}(v_i \otimes v_j\wedge \lambda + v_j \otimes v_i \wedge \lambda)
\end{equation}
Each of the terms are linearly independent and (as long as char(k)  
$\neq 2) $ $\varphi \cup \lambda$ vanishes if and only if all the 
$a_i$ and all the $b_{ij}$ are zero. If char(k)$=2$ then the coefficients
of $b_{ii}$  are zero because they are divisible by two.
\end{proof}

To proceed we  formalize the idea that we can consider $\sym^p(C)
  \hookrightarrow \PP\left( \bigwedge^p(V) \right)$, via the association of a
  degree $p$ divisor to the $p-1$ dimensional space $\overline{D}$ it spans.

  \begin{lemma}
  
    Let $g:\sym^p(C) \maps G(p,n)$ be the natural map associating $D \mapsto
    \overline{D}$; then for $p\le \cliff(L,C)$ this map is an embedding.
  
  \end{lemma}
  
  \begin{proof}
   
    Following  \cite{B}  we construct a rank $p$ bundle $B^{p-1}(L)$ over $\sym^p(C)$.
    Let $\mathcal{D}_p \hookrightarrow C \times \sym^p(C)$ be the universal
    divisor, $\pi_2: C\times \sym^p(C) \maps \sym^p(C)$, then $\mathcal{E} =
    \pi_{2*}\left( \pi_1^*(L)|_{\mathcal{D}_p} \right)$. Informally this is
    the rank $p$ bundle $D \mapsto \H{C, L|_D}$.   Since $\pi_{2*}\pi_1^*(L) =
    \H{L} \otimes \OO_{\sym^p(C)}$ and $p\le\cliff(C)$ implies $L$ is at least
    $(p+1)$ spanned the map  $\H{L}\otimes\OO_{\sym^p(L)} \twoheadrightarrow
    \mathcal{E}|_L$ is surjective. Since a map $g: \sym^p(C) \maps G(p,n)$ is given by a
    rank $p$ bundle and n sections generating the bundle, this gives a map $g:
    \sym^p(C) \maps G(p,n)$. Again because $L$ separates atleast $p+1$ points,
    $\overline{D}_1 \ne \overline{D}_2, \forall D_1, D_2 \in \sym^p(C)$ and
    so the map is set theoretically one to one. Using the identification of
    \cite{ACGH}, we identify $T_{D,\sym^p(C)}$ with $\H{C, \OO_D(D)}$ and
    $T_{D,G(p,n)}$ with $\Hom( \H{L(-D)}, \H{L|_D})$. Then  (cf. \cite{ACGH})
     the map on tangent spaces, is the map ``cup product'', i.e.
    \[
      t \in \H{C, \OO_D(D)}
      \longmapsto
      \cup t : \H{C, L(-D)}\rightarrow\H{C,L|_D}.
    \]
    Since $L$ separates $(p+1)$ points, this map is an injection as required.
    If $D$ is smooth this is well known. We prove the case $D=k q$ for
    completeness. Let $z$ be a local parameter at $q$. $\H{\OO_D(D)} = \left<
    z^{-1},\dots,z^{-k} \right>$, $\H{L|_D} = \left< l, zl, \dots,
    z^{k-1}l \right>$ where $l$ is a local section of $L$ at $q$. Since $L$ is
    at least $(k+1)$ ample, there exists a section of $L$ which locally at $q$
    looks like $z^kl$ and the cup product is now multiplication. Since
    $z^{-i}z^kl = z^{k-i}l$ for $1\le i\le h$, multiplication gives rise to linearly
    independent elements of $\H{L|_D}$ the map is injective.
  
  \end{proof}

\begin{cor} \label{C:characterizesec}
Let $\lambda_D \in \bigwedge^p(V)$ represent the image of $D \in \sym^p(C)
$ in $G(p,n)$.
Then $B^{p-1}(L) \subset \PP(V)\times \PP(\bigwedge^p(V)) =
\{(v,\lambda_D)\,|\, v\wedge \lambda_D=0  \in 
\bigwedge^{p+1}(V)          \}. $
\end{cor}
\begin{proof}
$B^{p-1}(L)$ is the pullback to $\sym^p(C)$ of the universal subbundle on
$G(p,n)$.  That is $B^{p-1}(L)= g^*(\mathcal{S})$ where $\mathcal{S} =
\{(x,\lambda) \in V \times \wedge^p(V) \, |\, x \wedge \lambda =0
\in \wedge^{p+1}(V) \}$ . Since $B^{p-1}(L)$ is just the restriction of 
$\mathcal{S}$ to the embedding of $\sym^p(C)$ in $G(p,n)$ this is clear.

\end{proof}

\textbf{Remark:} this map is \emph{not} the usual Gauss map. We can include
  $C$ in $\sym^p(C)$ via the diagonal, i.e. $q\mapsto p q$. As Voisin proved
  cf. \cite{V1}  $\H{C, \bigwedge^p(B^{p-1}(L))} = \bigwedge^p(\H{C,L})$
  whereas the rank $p$ bundle associated with the Gauss map is $P^{p-1}(L)$
  (the jet bundle) and for example for $p=2$ $\bigwedge^2\left( P^1(L) \right)
  \simeq L^{\otimes 2}\otimes K_C$ and so the two bundles have different global sections.

To finish the proof that $\tilde R^p$ is smooth,we will identify  
$\tilde R^p$ with  $B^{p-1}(L^{\otimes 2})$ as schemes. We have characterized 
$B^{p-1}(L^{\otimes 2})$ in Corollary \ref{C:characterizesec} so by Lemma
\ref{L:characterizerank} it is enough to prove the following.

\begin{lemma}
Let $\tilde  \lambda_D$ denote the image of $D \in \sym^p(C)$ in 
$G(p,V)$ and let $\lambda_D $ denote the image of $D \in \sym^p(C)$
in $G(p,M_2)$.  Then for $\varphi \in M_2$, we have $\varphi
\cup \tilde \lambda = 0$ if and only if $\varphi \wedge \lambda =0$.
\end{lemma}
\begin{proof}
Suppose $D = \sum n_iq_i$ where $n_i$ are integers such that
$\sum n_i = p$ and $q_i \in C$.  Then $\varphi \wedge \lambda_D=0$
if and only if $\varphi \in T_L(D)$.  Since $\varphi$ is symmetric,
this is true  if and only if im($\varphi$)
$\subset \tilde \lambda_D$
\end{proof}

We now have a scheme theoretic identification of $B^{p-1}(L^{\otimes 2})$
with $\tilde R^p$.  Since $B^{p-1}(L^{\otimes 2})$ is smooth, so is
$\tilde R^p$.
\end{proof}

To finish the proof we show that $R^P$ is normal.  We use a result that we
learned in \cite{Ve4} namely:

\begin{lemma}
Let $f:X \to Y$ be a proper surjective morphism of irreducible varieties over an
algebraically closed field, with reduced and connected fibers.  If $X$ is normal, then
$Y$ is normal.
\end{lemma}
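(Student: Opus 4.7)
The plan is to prove that the natural inclusion $\OO_Y \hookrightarrow f_*\OO_X$ is an isomorphism, and then deduce normality of $Y$ directly from normality of $X$. Injectivity of this inclusion is automatic from surjectivity of $f$, so the crux is surjectivity plus a short integrality trace.

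For surjectivity of $\OO_Y \to f_*\OO_X$, I would apply cohomology and base change. Each closed fiber $X_y$ is a proper, reduced, connected scheme over the algebraically closed residue field $k(y)$, so $H^0(X_y, \OO_{X_y})$ is a finite-dimensional reduced connected $k(y)$-algebra, hence a finite field extension of $k(y)$, hence equal to $k(y)$. The constancy of $h^0(X_y,\OO_{X_y}) = 1$ together with Grauert's theorem forces $f_*\OO_X$ to be locally free of rank one with formation commuting with base change. The induced map on fibers $k(y) \to k(y)$ sends $1$ to $1$ and is therefore an isomorphism, so by Nakayama the sheaf map $\OO_Y \to f_*\OO_X$ is surjective, hence an isomorphism.

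With $f_*\OO_X = \OO_Y$ established, normality of $Y$ is a short verification. Given $y \in Y$ and $t \in k(Y) \subset k(X)$ integral over $\OO_{Y,y}$, the element $t$ is integral over $\OO_{X,x}$ for every $x \in f^{-1}(y)$, and normality of $X$ places $t$ in each such local ring. Hence the regular locus of $t$ on $X$ is an open set $U$ containing the fiber $f^{-1}(y)$; since $f$ is proper and hence closed, $f(X \setminus U)$ is closed in $Y$ and avoids $y$, so some open $V \ni y$ satisfies $f^{-1}(V) \subset U$. Therefore $t \in \OO_X(f^{-1}(V)) = (f_*\OO_X)(V) = \OO_Y(V)$, so $t \in \OO_{Y,y}$, as required.

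The main technical obstacle is justifying the application of Grauert's theorem, which turns the numerical input (constant fiber dimension, forced by the reducedness and connectedness of the fibers) into the structural output needed on $f_*\OO_X$. An equivalent and perhaps cleaner path is via Stein factorization $f = g \circ h$ with $g: Y' \to Y$ finite and $h_*\OO_X = \OO_{Y'}$: connectedness of the fibers of $f$ makes $g$ bijective on points, reducedness identifies each scheme-theoretic fiber of $g$ with its residue field so $g$ is an isomorphism, and $Y'$ inherits normality from $X$ through $h_*\OO_X = \OO_{Y'}$.
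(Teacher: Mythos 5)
The paper does not actually prove this lemma; it simply refers to Vermeire's paper, so your proposal is supplying an argument where the paper supplies none. Your overall skeleton is the standard and correct one: reduce everything to showing $\mathcal{O}_Y \to f_*\mathcal{O}_X$ is an isomorphism, and then run the integral-closure trace through the fiber using properness of $f$. That final trace argument is fine as written. The problem is the step that carries all the weight.

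Your appeal to Grauert's theorem (and to cohomology and base change generally) requires $\mathcal{O}_X$ to be flat over $Y$, i.e.\ $f$ to be flat. Flatness is not among the hypotheses, and it fails badly in the situation this lemma is used for in the paper: the map $\widetilde{R}^p \to R^p$ has zero-dimensional fibers over the open stratum but fibers isomorphic to $\sym^{p-k}(C)$ over $R^k\setminus R^{k-1}$, so the fiber dimension jumps and $f$ cannot be flat. Without flatness, the constancy of $h^0(X_y,\mathcal{O}_{X_y})$ does not yield that $f_*\mathcal{O}_X$ is invertible or that its formation commutes with base change; the semicontinuity and base-change machinery is simply unavailable. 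The Stein-factorization variant in your last paragraph hides the same gap in a different place: writing $f=g\circ h$ with $g\colon Y'\to Y$ finite, you need the \emph{scheme-theoretic} fibers of $g$ to be reduced, and passing from reducedness of the fibers of $f$ to reducedness of the fibers of $g$ requires injectivity of the base-change map $(f_*\mathcal{O}_X)\otimes k(y)\to H^0(X_y,\mathcal{O}_{X_y})$, which is again not automatic without flatness. (The parts of that paragraph that do work: connectedness of the fibers of $f$ makes $g$ bijective, and normality of $X$ makes $Y'$ normal, since $f_*\mathcal{O}_X$ is locally an intersection of the integrally closed rings $\mathcal{O}_{X,x}$ inside $k(X)$.) To close the hole you need an input valid for non-flat proper morphisms --- for instance the theorem on formal functions applied along the fiber, or the argument actually given in the reference the paper cites --- rather than Grauert.
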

\begin{proof}
See \cite{Ve4} for details.  
\end{proof}

We will actually prove that the fibers of $p: \tilde R^p \to R^p$ are smooth.

\begin{thm}
 Let $\varphi \in R^k \setminus  R^{k-1}$ , then $p^{-1}(\varphi) $ is scheme theoretically
isomorphic to $\sym^{p-k}(C)$
\end{thm}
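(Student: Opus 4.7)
The plan is to combine the bundle identification $\tilde R^p \cong B^{p-1}(L^{\otimes 2})$ from the previous theorem with the explicit description of $\im(\varphi)$ coming from Shiffer variation theory. Under that identification, a point of $\tilde R^p$ is a pair $(\varphi, D')$ with $D' \in \sym^p(C)$ and $\im(\varphi) \subset S_L(D') = \overline{D'}$. For $\varphi \in R^k \setminus R^{k-1}$ with $k < p < \cliff(C,L)$, the analysis of Sections~\ref{S:GSV}--\ref{S:large_cliff} shows that $\varphi \in T_L^*(D)$ for a unique $D \in \sym^k(C)$ (uniqueness uses the closed embedding $\sym^k(C) \hookrightarrow G(k,n)$ already exploited to identify $\tilde R^p$), and Corollary~\ref{c:shifferdescr} gives $\im(\varphi) = S_L(D)$.

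With this, the condition $S_L(D) \subset S_L(D')$ translates via Geometric Riemann-Roch (Section~\ref{S:GRR}) to $\H{L(-D')} \subset \H{L(-D)}$, which, because $L$ separates $p$ points in our range, is equivalent to $D \le D'$. Writing $D' = D + E$ gives a natural set-theoretic bijection $\sym^{p-k}(C) \leftrightarrow p^{-1}(\varphi)$, $E \mapsto (\varphi, D+E)$. I would realize this bijection as a morphism of schemes by pulling back the bundle $B^{p-1}(L^{\otimes 2}) \to \sym^p(C)$ along the closed immersion $\sym^{p-k}(C) \hookrightarrow \sym^p(C)$, $E \mapsto D+E$. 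The constant section $E \mapsto \varphi$ is well-defined on the pullback since $\overline{D} \subset \overline{D+E}$ for every $E$, and it exhibits a closed immersion $\iota : \sym^{p-k}(C) \hookrightarrow \tilde R^p$ that factors through the scheme-theoretic fiber $p^{-1}(\varphi)$.

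To upgrade to a scheme-theoretic isomorphism $\iota : \sym^{p-k}(C) \xrightarrow{\sim} p^{-1}(\varphi)$, I would argue both sides are reduced of the same dimension $p-k$ with matching tangent spaces. The tangent space comparison at a point $(\varphi, D+E)$ uses the bundle structure of $\tilde R^p \to \sym^p(C)$: infinitesimal deformations of $D+E$ that preserve the incidence condition $\im(\varphi) \subset \overline{D'}$ are exactly those that move the ``$E$-part'' of $D+E$ while leaving the ``$D$-part'' fixed, yielding $T_E \sym^{p-k}(C) = \H{\OO_E(E)}$ inside $T_{D+E}\sym^p(C) = \H{\OO_{D+E}(D+E)}$. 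Since $\tilde R^p$ is smooth of dimension $2p-1$ and the image of $dp$ on the ``$D$-moving'' and fibre directions accounts for the remaining $p+k-1$ dimensions (corresponding geometrically to motion of $\varphi$ inside $R^p$), this pins $\ker(dp)$ to exactly $p-k$ dimensions.

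The main obstacle is the tangent space calculation at points where $D$ and $E$ share support, where the short exact sequence $0 \to \H{\OO_D(D)} \to \H{\OO_{D+E}(D+E)} \to \H{\OO_E(E)} \to 0$ is replaced by a more delicate local analysis. One handles this as in the proof of Theorem~\ref{T:rankcalc}, by working point by point in local parameters and using the explicit matrix form of a generalized Shiffer variation at a non-reduced point to track which infinitesimal directions preserve the rank stratification and which do not. The generic (disjoint, reduced) case is essentially straightforward; extending by local calculation at each common point is the technical heart of the argument, and gives the identification of the scheme-theoretic kernel of $dp$ with $T_E \sym^{p-k}(C)$ in full generality.
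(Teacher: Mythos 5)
Your overall strategy coincides with the paper's: identify the fiber set-theoretically with $\sym^{p-k}(C)$ via $E \mapsto D+E$ (using $\im(\varphi)=\overline{D}$ for $\varphi \in T_L^*(D)$ and the fact that $L$ separates enough points), then establish reducedness by a tangent-space computation that must handle non-reduced divisors. The set-theoretic half is fine. The gap is in how you bound the tangent space of the scheme-theoretic fiber from above. The dimension count via $\rk(dp)$ is circular: $R^p$ is cut out by the $(p+1)\times(p+1)$ minors, and every such minor has identically vanishing differential at a matrix of rank $k \le p-1$, so the Zariski tangent space $T_{\varphi}R^p$ at your point $\varphi$ is the \emph{whole} ambient space; the containment $\im(dp)\subset T_{\varphi}R^p$ therefore imposes no condition, and the claim that the image of $dp$ ``accounts for $p+k-1$ dimensions'' is exactly equivalent to $\dim\ker(dp)=p-k$, which is what is to be proved. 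Similarly you cannot begin by asserting ``both sides are reduced'' --- reducedness of the fiber is the conclusion. Everything therefore rests on the deferred local computation at points where $D$ and $E$ share support, which you correctly identify as the heart of the matter but do not carry out.

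That computation is exactly what the paper supplies, and it does so more economically than a full identification of $\ker(dp)$. It first reduces to $k=1$ by induction, since the incidence condition $\varphi\cup\lambda=0$ splits into the simultaneous conditions $\sigma_{p_i}\wedge\lambda=0$ for the (linearly independent) points of $D$; the fiber is then the scheme-theoretic intersection of $\sym^p(C)$ with $\PP_{\sigma}=\set{\lambda \,|\, \sigma\wedge\lambda=0}$. Because this intersection contains the smooth $(p-1)$-dimensional $\sym^{p-1}(C)$ with the same support, one only needs to exhibit a \emph{single} tangent vector of $\sym^p(C)$ at $D'=D+E$ that is not tangent to $\PP_{\sigma}$: the paper takes $1/z^{m}\in \H{\OO_{D'}(D')}=T_{D'}\sym^p(C)$, where $m$ is the multiplicity of the relevant point in $D'$, and checks via the cup-product description of $T_{D'}G(p,n)$ that its image contains $\sigma$. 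This one-vector squeeze pins the tangent space to dimension exactly $p-1$ and replaces the full kernel computation you propose; if you keep your formulation, you must actually perform the local-parameter analysis and prove $\ker(dp)=\H{\OO_E(E)}$, which is strictly more than the argument requires.
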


\begin{proof}

Because $k \leq p < \cliff(C,L)$ we can write $\varphi = \sum_{i=1}^k \sigma_{p_i}^2 $ where 
$\sigma_{p_i}^2$ is a Shiffer variation supported at $p_i \in C$. Denote by $D$ the divisor spanned by $
\langle p_1,
\dots p_k \rangle $.
 We analyze $p^{-1}(\varphi)$
as follows:

$$ \cup \varphi : \bigwedge^P(V) \longrightarrow \bigwedge^{p+1} \otimes V $$
is the map on points $\lambda \to 2(\sum_{i=1}^k \sigma_i \otimes (\sigma_i \wedge \lambda))$
First extend $\sigma_1 , \dots , \sigma_k$ to a basis $\sigma_1, \dots , \sigma_k, \dots \sigma_n$
of $V$, and set $ V' = \langle \sigma_{k+1}, \dots ,\sigma_n \rangle$, so $\ker(\varphi)
\backsimeq \sigma_1 \wedge \dots \wedge \sigma_k \otimes \bigwedge^{p-k}(V') = \tilde V$.
We also have a map of schemes:

$$ \sym^{p-k}(C) \stackrel{\varphi_D} \longrightarrow \sym^p(C) E \longmapsto E+D$$

$\varphi_D$ identifies $\ker(\cup \varphi) \cap \sym^p(C)$ with $\sym^{p-k}(C)$ because 
$ if \lambda \in \sym^p(C)$, considered as a subset of $G(p,n)$ then $ \sum_{i=1}^k \sigma_{p_i}^2
\cup \lambda = \sum_{i=1}^k \sigma_i \otimes \sigma_i \wedge \lambda =0$ if and only if $\sigma_i \wedge \lambda =0$
for all $i$.  We need to see that this is an equality of schemes.  
By induction it will be enough to consider
the case $k=1$ because we can write a general $\varphi_D$ as a composition of $\varphi_{\sigma_{p_i}}$ for the different
$p_i \in D $.  So we assume that $\varphi = \sigma^2 $ for a specific $\sigma = \sigma_{p_i} $ with $p_i \in C $.

Recall the relevant identifications of tangent spaces.
$T_{\sym^p(C),D} = H^0(\cal{O}(D)|_{D})$ and $T_{G,D} = \Hom (\H{L(-D)},\H{L|_{D}})$.  Suppose $p$ occurs with 
multiplicity $k$ in $D$.  Then in fact our fiber is the scheme theoretic intersection of $\sym^p(C)$ and $\PP_{\sigma}$ where $\sigma$  is any element in the span of the divisor $D$ that is not in the span of $D-p$ and $\PP_{\sigma}$ denotes $\{ x \in \bigwedge^p(V) | \sigma \wedge x = 0\}$ . To show that the intersection is smooth, we must find an element of the tangent space of $\sym^p(C)$ which is not in the tangent space of $\PP_{\sigma}$.  This forces the tangent space of the intersection to be at most of dimension $p-1$ and hence smooth, since the intersection is set theoretically $\sym^{p-1}(C)$.
If $\varphi $ is in 
$T_{\PP_{\sigma}} \cap T_{G(p,n)}$  then $\varphi \in \Hom(\H{L(-D)}), \H{L|_{D}})$,  is also  in $T_{\PP_{\sigma}}$ if and only if $ \sigma \notin \varphi(\H{L(-D)} \subset \H{L|_{D}}$. 
The element $\frac{1}{z^k} \in \H{\cal{O}_D(D)}$  contain such $\sigma$ in its image and hence is not in
the tangent space of the fiber $p^{-1}(\varphi)$.

\begin{cor}
As schemes, $\sec^{p-1}(C) = R^p$.
\end{cor}
\begin{proof}

 $B^{p-1}(L^{\otimes 2})$ and  $\tilde R^p$
 are defined as subsets of $\PP(M_2) \times \PP(\bigwedge^p(V))$.  $\sec^{p-1}(C)$ and $R^p$ are defined as the projection onto 
$\PP(M_2)$ of these two schemes.  Since the two schemes are identified, so
are their (reduced)  projections.
\end{proof}

\textbf{Remark}:The conclusion of the theorem is probably as strong as possible. 
We know for generic $L$ with $\deg(L)<3g-2$ and for all $L$ with
$\deg(L) \geq 3g-2$ the theorem is sharp as the 
  two schemes do not even underly the same set in this case. It is probably the 
case that this is always so.

\end{proof}

\section[Clifford Index of a Pair of Line Bundles] {The Clifford Index of a Pair of Line Bundles and a Conjecture of Eisenbud, Koh, and Stillman}\label{S:eks}

In their paper (\cite{eks}),
Eisenbud, Koh and Stillman   considered the following situation. Let $\mathcal{L}_i =(L_i,V_i)\,  i=1,2$ 
be two linear series on a curve -- that is $L_i$ is a line bundle 
on $C$ and  $V_i \subset \H{C,L_i}$ is a linear subspace. 
Let $\mathcal{L}_1\centerdot
\mathcal{L}_2 $ be the linear series
$ (L_1 \otimes L_2, V = \im(V_1 \otimes V_2 \stackrel{\mu} \to \H{C,L_1 \otimes L_2})$. That is $\mathcal{L}_1\centerdot
\mathcal{L}_2 $ represents the line bundle $L_1\otimes L_2$ 
with the sub- linear series generated by $V_1\otimes V_2$.  By 
the linear series generated by $V_1\otimes V_2$
we mean the natural map $ \H{C,L_1} \otimes \H{C,L_2} \stackrel{\mu}
\to \H{C,L_1\otimes L_2}$ restricts to a map $\mu : V_1 \otimes V_2
 \to \H{C,L_1\otimes L_2}$. $\mu(V_1\otimes V_2) \subset \H{C,L_1
\otimes L_2}$ is the linear series generated by $V_1\otimes V_2$

If $\{e_i\}$ and $\{f_j\}$ are bases for $V_1$ and
$V_2$, then $M = \{\mu( e_i \otimes f_j)\}$ can be considered as a 
 matrix of linear form with entries in $\H{C,L_1\otimes L_2}$ which represents a basis for 
$\mu(V_1\otimes V_2)$ .
Writing $I_2(M)$ for the ideal of $2 \times 2$ minors of $M$ in $S = \sym(V)$,
one has that $I_2(M) \subset I_2(C)$, where $I_2(X)$ is equations of
degree 2 in the ideal of $X$. (Proof: Let $m_{ij} = \mu(e_i \otimes f_j)$
Then the equations of $I_2(M)$ are $m_{ij}\centerdot  m_{kl} -m_{ik}\centerdot  m_{jl}=0$
which  clearly vanish on $C$).   We now  assume that $V_i = \H{C,L_i}$.

\begin{defn}[\cite{eks}:]

  If $C$ is defined by quadratic equations and $I_2(M) = I_2(C)$, we say
  $\mathcal{L}_1\centerdot  \mathcal{L}_2$ is a determinantal presentation 
of $C$ and that $C$ is determinantally presented.
\end{defn}

Eisenbud-Koh-Sullivan then prove
if $\deg(L_i)$ are ``large'' then $\mathcal{L}_1\otimes\mathcal{L}_2$ is determinantedly presented.  For ease of reading we will denote 
$\mathcal{L}_1\otimes\mathcal{L}_2$ by $L_{12}$.  Notice that if $C$ is embedded in $\PP(\H{C,L})$,
the question of whether $C$ is determinantally presented depends on the factorization of $L= L_1 \otimes L_2$,
in particular there can be infinitely many such presentations.  See \cite{eks} for a fuller discussion.

We explain how this result is related to our theorem and how their result can
be extended to $\Sec^k(C)$ in an appropriate range of $k$.

We can create  a diagram for  $L_{12}$ which generalizes our 
standard diagram.

\begin{displaymath}
  \xymatrix{
  & \PP(V_1^{\vee}) \times \PP(V_2^{\vee}) \ar[dr]^{\sigma} & \\ 
  C \ar[ur]^{(\varphi_1,\varphi_2)} \ar[dr]^{\psi} &  
  & \PP(V_1^{\vee} \otimes V_2^{\vee}) \\
  & \PP(V\vee) \ar@{^{(}->}[ur]^{\iota} & }
\end{displaymath}

Here $\varphi_i : C \to \PP(V_i)$ and $\psi : C \to \PP(V)$ are the maps given
by the linear series; $\iota$ is the inclusion induced by the surjection of $V_1
\otimes V_2 \stackrel{\mu}{\to} V$, $\mu$ is surjective since $V$ is generated
by $\mu(e_i \otimes f_j)$; and finally,  $ \sigma$ is the Segre map.

If we identify $(V_1 \otimes V_2)\vee$ with $\Hom(V_2,V_1\vee)$, then
$\sigma(\PP(V_1\vee) \times \PP(V_2\vee))$ can be identified with the set of
``rank one'' matrices in $ \PP(V_1\vee \otimes V\vee)$. That is to say the matrices
such that all their $2 \times 2$ minors vanish, or alternatively the subvariety of all
 $\varphi$ such that
$\dim(\im(\varphi)) = \codim(\ker(\varphi)) = 1$.

In general, we will define the rank $j$ locus for $j \le \min\{\dim(V_1),
\dim(V_2)\}$ as the  variety defined by all the $(j+1) \times (j+1)$ minors. If
$L_1 = L_2 = L$, $V_i = \H{C,L_i}$ we recover our standard diagram.

The basic observation of \cite{eks} is that because of this diagram, $\psi(C)
\subset \PP(V\vee) \cap \sigma(\PP(V_1\vee) \times \PP(V_2\vee)) = R^1(C)$.
Since $\psi(C)$ is contained in the rank one locus, the ideal of $\psi(C)$
contains the ideal of $\sigma(\PP(V_1\vee) \times \PP(V_2\vee))$, which is
generated by determinantal quadrics! The main theorem of \cite{eks} is:

\begin{thm}[\cite{eks}:] \label{T:eks}
  
  Let $\deg(L_1)$, $\deg(L_2) \ge 2g + 1$, and if $\deg(L_1) = \deg(L_2) = 2g
  + 1$, assume $L_1$ and $L_2$ are not isomorphic. Then $\psi(C) =
  \iota(\PP(V\vee)) \cap  \sigma(\PP(V_1) \times \PP(V_2))$ as schemes, and
  so $\psi(C)$ is determinantally presented.

\end{thm}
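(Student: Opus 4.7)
The plan is to reduce Theorem \ref{T:eks} to a rank-locus analysis using a generalized Clifford index $\cliff(C, L_1, L_2)$ for the pair, in direct parallel with the strategy of Sections \ref{S:large_cliff} and \ref{S:SS} for $L_1 = L_2$. There are three main steps: (i) develop the Shiffer-variation and Clifford-index machinery for pairs and prove its basic rank bound; (ii) deduce set-theoretic equality $\psi(C) = \iota(\PP(V^\vee)) \cap \sigma(\PP(V_1^\vee) \times \PP(V_2^\vee))$ from the hypothesis on degrees; (iii) upgrade to scheme-theoretic equality by mimicking the desingularization of Section \ref{S:SS}.

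For step (i), I would define the Shiffer variations of the pair as the boundary image $T_{L_1,L_2}(D) = \partial \H{K_C \otimes L_1^{-1} \otimes L_2^{-1}(D)|_D}$ inside $\Hi{K_C \otimes L_1^{-1} \otimes L_2^{-1}} = \Hd{L_1 \otimes L_2}$. An element $\xi \in T_{L_1,L_2}(D)$ factors as
\[
\H{L_1} \stackrel{\rho_1}{\to} \H{L_1|_D} \stackrel{\cup \tilde\xi}{\to} \H{K_C \otimes L_2^{-1}(D)|_D} \stackrel{\partial_2}{\to} \Hi{K_C \otimes L_2^{-1}} = V_2^\vee,
\]
and the counterpart of Theorem \ref{T:rankcalc} gives rank bounds controlled by $r_{L_1}(D)$ and $r_{L_2}(D)$ separately: for generic $\xi$ supported on all of $D$, the rank lies in the interval $[d - r_{L_1}(D) - r_{L_2}(D),\; d - \max(r_{L_1}(D), r_{L_2}(D))]$. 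Define $\cliff(C, L_1, L_2)$ as the minimum of $d - r_{L_1}(D) - r_{L_2}(D)$ over divisors $D$ failing to impose independent conditions on at least one $L_i$.

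For step (ii), the hypothesis $\deg(L_i) \geq 2g+1$ forces $\hi{L_i} = 0$, so failure of $D$ to be independent for $L_i$ means $L_i(-D) \cong K_C(-F_i)$ for some effective divisor $F_i$. If $\cliff(C, L_1, L_2) \leq 0$, one extracts a divisor $D$ with $r_{L_1}(D) + r_{L_2}(D) \geq d$. An arithmetic argument using $\deg(L_i) = 2g - 2 + d_i$ with $d_i \geq 3$ shows the inequality can only be saturated when $d = 1$ and $d_1 = d_2 = 3$, i.e. $\deg(L_1) = \deg(L_2) = 2g+1$ and $D = p$ is a single point with $L_i \cong K_C(p - F_i)$, $\deg(F_i) = 2$. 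A brief case analysis then forces $L_1 \cong L_2$, contradicting the hypothesis. Hence $\cliff(C, L_1, L_2) \geq 1$, and the factorization of $\xi$ above shows every rank-one element of $\iota(\PP(V^\vee))$ is a Shiffer variation supported at a single point of $C$, yielding the set-theoretic equality.

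For step (iii), I would construct the desingularization
\[
\tilde R^1 = \{(\varphi, \lambda_1, \lambda_2) \in R^1 \times \PP(V_1^\vee) \times \PP(V_2^\vee) : \im \varphi \subset \lambda_1,\; \im \varphi^t \subset \lambda_2\},
\]
the natural analog of $\tilde R^p$ in Section \ref{S:SS}. The map $C \to \tilde R^1$ sending $p \mapsto (\psi(p), \varphi_1(p), \varphi_2(p))$ is a closed immersion because each $\varphi_i$ is an embedding (both $L_i$ being very ample). A tangent-space computation parallel to Theorem \ref{T:smoothaway}, using $\cliff(C, L_1, L_2) \geq 1$ to guarantee injectivity of the relevant boundary maps on both sides, shows $\tilde R^1$ is smooth and the projection $\tilde R^1 \to R^1$ has reduced (in fact single-point) fibers. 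The normality lemma quoted from \cite{Ve4} at the end of Section \ref{S:SS} then yields that $R^1$ is normal, hence reduced, hence scheme-theoretically equal to $\psi(C)$, which is precisely the statement that $\psi(C)$ is determinantally presented. The main obstacle is the boundary case $\deg(L_1) = \deg(L_2) = 2g+1$: the rank inequality is tight, and the non-isomorphism hypothesis must be extracted from a delicate numerical analysis showing no single-point divisor can witness simultaneous failure of independence for both $L_i$ unless $L_1 \cong L_2$.
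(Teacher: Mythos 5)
First, a point of orientation: the paper does not actually prove Theorem \ref{T:eks} --- it is quoted from \cite{eks} without proof --- but the machinery of Section \ref{S:eks} (Proposition \ref{P:cliffbound}, Theorem \ref{T:genrankcalc}, Theorem \ref{T:genscheme} and the closing Lemma) is designed to recover it, and your three-step architecture faithfully mirrors that route. Your step (i) is the paper's Theorem \ref{T:genrankcalc}, and your step (iii) is a reasonable variant of the paper's desingularization argument (the paper uses a one-sided incidence variety in $R^p\times G(p,n)$ identified with the secant bundle $B^{p-1}(L_{12})$, rather than your two-sided $\tilde R^1$, but for rank one the fibers are single points either way and this is not where the difficulty lies).

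The genuine gap is an off-by-one error in step (ii), and it lands exactly on the case the non-isomorphism hypothesis exists to handle. To conclude that every rank-one element of $\iota(\PP(V^{\vee}))$ is a point of $C$, you must show that every Shiffer variation supported on a divisor $D$ with $r_{L_1}(D)>0$ or $r_{L_2}(D)>0$ has rank at least $2$; by the rank bound $\rk(\xi)\ge d-r_{L_1}(D)-r_{L_2}(D)$ this requires $\cliff(C,L_1,L_2)\ge 2$, not $\ge 1$. The bound $\cliff(C,L_1,L_2)\ge 1$ is vacuous here (every nonzero matrix has rank $\ge 1$) and in fact holds automatically from $\deg(L_i)\ge 2g+1$ via Corollary \ref{C:compute_cliff}, with no use of $L_1\not\cong L_2$ --- so your argument never deploys the hypothesis where it is needed. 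Moreover your analysis of the boundary case targets the wrong divisors: a single point $p$ always imposes independent conditions on a bundle of degree $2g+1$ (since $\hi{L_i(-p)}=0$ for $\deg(L_i(-p))=2g$), so the scenario $L_i\cong K_C(p-F_i)$ with $d=1$ never occurs. The dangerous divisors have degree $3$: if $L_1=L_2=K_C(D)$ with $\deg(D)=3$ then $r_{L_1}(D)=r_{L_2}(D)=1$, so $\cliff(C,L_1,L_2)=1$, and Lemma \ref{L:d-2} really does produce a rank-one matrix supported on the trisecant line $\overline{D}$ and lying off $C$ --- this is why EKS must exclude $L_1\cong L_2$ at degree $2g+1$. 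The correct closing of step (ii) is the paper's Proposition \ref{P:cliffbound} (ii) and (iii): equal degrees $2g+1$ force $\cliff(C,L_1,L_2)\ge d-1=2$ unless $L_1=L_2=K_C(D)$ or the hyperelliptic exception, and in the hyperelliptic exception equal degrees force $E_1=E_2$ and hence $L_1=L_2$; only then does $j=1<\cliff(C,L_1,L_2)$ yield $C=R^1$.
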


Further in (\cite{eks}) it was conjectured:

\begin{conj} 
Let C be a curve of genus g, 
and $L$ a line bundle which can be factored as $L= L_1\otimes L_2$
for some choice of  line bundles $L_1$ and $L_2$.Then there is a constant $k_0$ depending on the genus of C and the degrees of the $L_i$ such that the variety $Sec^k(C)$ is determinantally presented for $k \leq k_0$. 

\end{conj}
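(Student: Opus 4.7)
I would prove the conjecture in the range of line bundles of sufficiently large degree by generalizing the Sections~\ref{S:GRR}--\ref{S:SS} machinery from the symmetric setting $L^{\otimes 2}$ to the asymmetric Segre setting of Section~\ref{S:eks}. The first step is to introduce a pairwise Clifford index: for a divisor $D$ of degree $d$ with $r_i = r_{L_i}(D) > 0$ and $\overline{D}$ not a hyperplane in either $\PP(\Hd{C,L_i})$, set $\cliff(L_1,L_2,D) = d - r_1 - r_2$ and define $\cliff(C, L_1, L_2)$ to be the minimum over all such $D$. A direct analog of Lemma~\ref{L:2.2} expresses this quantity in terms of the ordinary $\cliff(L_i(-D))$; because a single divisor rarely fails to impose independent conditions on both $L_1$ and $L_2$ at once, the Clifford-type arguments of Theorems~\ref{T:clifford_calc_1}--\ref{T:clifford_calc_2} force $\cliff(C, L_1, L_2)$ to grow linearly with $\min(\deg L_1, \deg L_2) - 2g$ once the degrees are large. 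The constant $k_0$ promised in the conjecture will ultimately be this Clifford index minus one.

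Next I would generalize the Shiffer-variation package. Define $T_{L_1, L_2}(D) = \partial(\H{K_C \otimes L_1^{-1} \otimes L_2^{-1}(D)|_D}) \subset \Hi{K_C \otimes L_1^{-1} \otimes L_2^{-1}} = \Hd{L_1 \otimes L_2}$. An element $\xi \in T_{L_1, L_2}(D)$ defines a linear map $\H{L_1} \to \Hd{L_2}$ by cup product, and Theorem~\ref{T:localcalc} supplies, without modification, the factorization $\H{L_1} \to \H{L_1|_D} \to \H{K_C \otimes L_2^{-1}(D)|_D} \to \Hi{K_C \otimes L_2^{-1}}$. The rank bookkeeping of Theorem~\ref{T:rankcalc} then yields $d - r_1 - r_2 \le \rk(\xi) \le d - \max(r_1, r_2)$ whenever the top-order coefficients of $\xi$ are nonzero. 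As in the proof of Theorem~\ref{T:mainthm}, this forces any $\xi \in \PP(\H{L_1 \otimes L_2})$ of rank $<\cliff(C, L_1, L_2)$ to be supported on a divisor for which $r_1 = r_2 = 0$, i.e.\ a genuine secant, giving the set-theoretic equality $\Sec^{k-1}(C) = R^k$ for $k < \cliff(C, L_1, L_2)$.

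For the scheme-theoretic statement I would adapt the desingularization of Section~\ref{S:SS}, now taking
\[
\~R^p = \set{(\varphi, \lambda_1, \lambda_2) \in R^p \times G(p,V_1) \times G(p,V_2) \, : \, \im \varphi \subset \lambda_2, \; \ker \varphi \supset \lambda_1^{\perp}}
\]
and identifying it with the fibre product $B^{p-1}(L_1) \times_{\sym^p(C)} B^{p-1}(L_2)$ via a Koszul-type characterization analogous to Lemma~\ref{L:characterizerank}. This presents $\~R^p$ as a smooth projective bundle over $\sym^p(C)$; the projection $\~R^p \to R^p$ has reduced and connected fibres (scheme-theoretically $\sym^{p-k}(C)$ over $R^k \setminus R^{k-1}$), so $R^p$ is normal and hence agrees scheme-theoretically with $\Sec^{p-1}(C)$. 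Taking $k_0 = \cliff(C, L_1, L_2) - 1$ then completes the proof. The main obstacle is the tangent-space/fibre calculation underlying the analogs of Theorem~\ref{T:smoothaway} and Lemma~\ref{L:tanspace_id}: one must simultaneously track $\ker \varphi \subset V_1$ and $(\im \varphi)^{\perp} \subset V_2$, and the injectivity of the relevant cup-product/restriction maps now requires the arithmetic normality of both $L_1(-D)$ and $L_2(-D)$ — which is exactly the content of the pairwise hypothesis $p < \cliff(C,L_1,L_2)$.
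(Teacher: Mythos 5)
Your proposal follows essentially the same route as the paper's Section~\ref{S:eks}: the pairwise Clifford index $\cliff(C,L_1;L_2)=\min\{d-r_{L_1}(D)-r_{L_2}(D)\}$, the Shiffer variations $T_{L_{12}}(D)$ with the factorization and rank bounds, set-theoretic equality of $\Sec^{k-1}(C)$ with the rank locus for $k$ below this index, and scheme-theoretic equality via a smooth desingularization identified with the secant bundle, yielding $k_0$ as the pairwise Clifford index minus one. The only cosmetic difference is that you place the desingularization inside $R^p\times G(p,V_1)\times G(p,V_2)$, whereas the paper uses a single Grassmannian factor and identifies $\~R^p$ with $B^{p-1}(L_{12})$ directly; since both the kernel and the image of a Shiffer variation are determined by the supporting divisor $D$, the two constructions coincide.
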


Theorem \ref{T:eks} can be interpreted as a version  of our theorem for 
$L_1 \ne L_2$ and for $C= \sec^0(C) =R^1$.
 M.S. Ravi in \cite{Ravi} gave a partial answer to this conjecture.

\begin{thm}[Ravi]

  Suppose $\deg(L_1), \deg(L_2) \ge 2g + 1 + k$ and $\deg(L_1 \otimes L_2) \ge
  4g + 3 + 2k$. Then set-theoretically, $\Sec^k(C)$ is defined by
  $I_{k+2}(M)$, that is, $\Sec^k(\psi(C)) = R^{k+1}(C)$. 

\end{thm}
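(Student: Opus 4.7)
The plan is to mimic the strategy used for the $L_1 = L_2$ case in Sections 4--5, but adapted to the bilinear setting: define generalized Shiffer variations for the pair $(L_1,L_2)$, bound their ranks using a Clifford-index argument, and use the degree hypotheses to force every rank $\le k+1$ element of $V^\vee$ to arise from a divisor on $C$.

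First I would set up the bilinear Shiffer machinery. For a divisor $D$, the boundary map of
$$0 \to K_C \otimes L_1^{-1} \otimes L_2^{-1} \to K_C \otimes L_1^{-1} \otimes L_2^{-1}(D) \to K_C \otimes L_1^{-1} \otimes L_2^{-1}(D)|_D \to 0$$
defines a subspace $T_{L_1,L_2}(D) \subset \Hi{K_C \otimes L_1^{-1} \otimes L_2^{-1}} = V^\vee$. Under the Segre inclusion $V^\vee \hookrightarrow V_1^\vee \otimes V_2^\vee = \Hom(V_2, V_1^\vee)$ (injective because $L_1 \otimes L_2$ is quadratically normal in our range), a class $\tau \in T_{L_1,L_2}(D)$ factors, by the obvious analogue of Theorem \ref{T:localcalc}, as
$$V_2 \twoheadrightarrow \H{L_2|_D} \xrightarrow{\cup\tilde\tau} \H{K_C \otimes L_1^{-1}(D)|_D} \xrightarrow{\partial_1} V_1^\vee.$$
The hypothesis $\deg(L_i) \ge 2g+1+k$ gives $\deg(L_i(-D)) \ge 2g$ for every $D$ with $\deg(D) \le k+1$, so $\hi{L_i(-D)} = 0$ and $r_{L_i}(D) = 0$ by Lemma \ref{L:2.2}(ii). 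Consequently the restriction $V_2 \to \H{L_2|_D}$ is surjective and $\partial_1$ is injective, so for generic $\tau \in T_{L_1,L_2}(D)$ the local computation in Theorem \ref{T:rankcalc} gives $\rk(\tau) = \deg(D)$ exactly. This shows that the rank-one Shiffer variations at points of $C$ and their linear combinations account for $\Sec^k(\psi(C)) \subset R^{k+1}$ cleanly, with no degeneracy in the expected dimensions.

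The reverse inclusion $R^{k+1} \subset \Sec^k(\psi(C))$ is the heart of the matter. Given $\varphi \in R^{k+1}$ of rank $s \le k+1$, the kernel $K = \ker(\varphi) \subset V_2$ has codimension $s$ and the image $I = \im(\varphi) \subset V_1^\vee$ has dimension $s$; I want to produce a divisor $D$ of degree $s$ on $C$ so that $K = \H{L_2(-D)}$ and $I$ is the span of $\psi(D)$ in the $\varphi_1$-embedding. The idea is a bilinear version of Hassett's criterion (Theorem \ref{T:Hassett}) applied to the Segre: if no such $D$ exists, then $\varphi$ lies in a linear span $\Lambda$ of $s$ points of $\PP(V_1^\vee) \times \PP(V_2^\vee)$ disjoint from the image of $C$, and the map $\H{\PP(V),I_{\psi(C)}(2)} \to \H{\Lambda, \OO_\Lambda(2)}$ is not surjective. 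The hypothesis $\deg(L_1 \otimes L_2) \ge 4g+3+2k$ is exactly what forces $L_1 \otimes L_2$ to have Clifford index at least $2k+2$ and to separate $2(k+1)$ points (via Corollary \ref{C:compute_cliff}), so any $2(k+1)$ distinct points impose independent conditions on $V$ and no such obstructing $\Lambda$ can exist. This extracts a divisor $D$ and shows $\varphi \in T_{L_1,L_2}(D)$, whence $\varphi \in \Sec^{s-1}(\psi(C)) \subset \Sec^k(\psi(C))$.

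The main obstacle will be the last paragraph: transferring the intuition ``rank $k+1$ forces $k+1$ aligned points on $C$'' into a rigorous statement in the bilinear setting, where $\varphi$ a priori gives a divisor of degree $k+1$ in $\PP(V_1^\vee)$ (from $\im \varphi$) and one in $\PP(V_2^\vee)$ (from the perp of $\ker \varphi$), but these must be \emph{the same} divisor on $C$. The coincidence of these two divisors is forced by the Clifford-index estimate for $L_1 \otimes L_2$, and the bound $4g+3+2k$ is the sharp numerical threshold for this to work for all $s \le k+1$ simultaneously. Once that is established, one reads off the set-theoretic equality $\Sec^k(\psi(C)) = R^{k+1}(C)$ as in Section 5, completing the proof.
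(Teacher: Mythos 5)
First, a point of orientation: the paper does not prove this statement at all — it is quoted from Ravi's paper \cite{Ravi} as prior work, and the author's own contribution (Theorem \ref{T:mainresult} and the set-theoretic theorem of Section \ref{S:eks}) is a sharpening proved by different means. So the fair comparison is with the paper's Section \ref{S:eks} argument for the analogous equality $\Sec^{j-1}(C)=R^j$ for $j<\cliff(C,L_1;L_2)$. Your setup of the bilinear Shiffer variations $T_{L_{12}}(D)$, the factorization of $\cup\,\tilde\xi$ through $\H{L_2|_D}$, and the computation $\rk(\tau)=\deg(D)$ when $r_{L_1}(D)=r_{L_2}(D)=0$ all match that machinery and give the forward inclusion $\Sec^k(\psi(C))\subset R^{k+1}$ correctly.

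The genuine gap is in the reverse inclusion. You need every $\varphi\in\PP(\Hd{L_1\otimes L_2})$ of rank $\le k+1$ to be a Shiffer variation supported on an actual divisor of $C$, and your mechanism for this — writing $\varphi$ as a sum of $s$ rank-one tensors, assuming these give a common divisor on $C$, and invoking a Segre version of Hassett's criterion otherwise — does not work as stated. A rank-$s$ element of $\Hom(V_2,V_1^\vee)$ decomposes into rank-one tensors that are points of the Segre variety with no a priori relation to $\psi(C)$; and Hassett's criterion (Theorem \ref{T:Hassett}) is an \emph{equivalence} characterizing failure of the equality, so using it positively requires proving that $\H{I_{\psi(C)}(2)}\to\H{\OO_\Lambda(2)}$ is surjective for \emph{every} $k$-plane $\Lambda$ disjoint from $\psi(C)$ — a statement about arbitrary linear subspaces that does not follow from "$L_1\otimes L_2$ separates $2(k+1)$ points," which only concerns points of $C$. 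The paper instead uses the dimension count $\dim\Sec^j(C)=2j+1$ (Theorem \ref{T:secdim}) to conclude that $\PP(\Hd{L_{12}})=\bigcup_D T_{L_{12}}(D)$ over divisors $D$ of degree roughly $\tfrac{1}{2}\h{L_{12}}$ — so every $\varphi$ \emph{is} supported on some divisor, possibly of large degree — and then applies the two-sided bound $d-r_{L_1}(D)-r_{L_2}(D)\le\rk(\varphi)\le d-\max(r_{L_1}(D),r_{L_2}(D))$ together with $\cliff(C,L_1;L_2)\ge k+2$ to force either $\rk(\varphi)=d\le k+1$ (hence $\varphi\in\Sec^{\rk(\varphi)-1}$) or $\rk(\varphi)\ge k+2$. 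Your restriction of attention to divisors of degree $\le k+1$ (where $r_{L_i}(D)=0$ automatically) skips exactly the hard case of low-rank elements supported on large special divisors, which is the case the Clifford-index hypothesis exists to exclude. To repair the argument you should replace the Hassett step with the secant-dimension count and the Clifford bound of Proposition \ref{P:cliffbound}, and verify that the hypothesis $\deg(L_1\otimes L_2)\ge 4g+3+2k$ (forcing one $L_i$ to have degree $\ge 2g+2+k$) actually yields $\cliff(C,L_1;L_2)\ge k+2$, which is the delicate numerical point.
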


By generalizing  the techniques of Shiffer variations and the Clifford index of a line bundle from the case of $L^{\otimes 2} = L\otimes L$ to the case of 
$M = L_{12}$ we can improve this result.

\begin{thm} \label{T:mainresult}
  
  Suppose $\deg(L_1), \deg(L_2) \ge 2g + 1 + k$ and $\deg(L_1 \otimes L_2) \ge
  4g + 2 + 2k$. Then as a scheme $\Sec^{k-1}(C)$ is defined by $I_{k+1}(M)$. That
  is, as schemes, $\Sec^{k-1}(C) = R^{k}(C)$, the rank $k$ locus.

\end{thm}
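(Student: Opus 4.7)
The plan is to run the architecture of Section \ref{S:SS} in parallel for the pair $(L_1,L_2)$ in place of $(L,L)$. First I would generalize Shiffer variations and the Clifford index to a pair: a variation in $\Hi{K_C\otimes L_1^{-1}\otimes L_2^{-1}}$ acting as an element of $\Hom(V_1,V_2^\vee)$ supported on a divisor $D$ has rank controlled below by $d - r_{L_1}(D) - r_{L_2}(D)$, and I would set
\[
  \cliff(C,L_1,L_2) = \min\{\, d - r_{L_1}(D) - r_{L_2}(D) \,\mid\, r_{L_1}(D),\,r_{L_2}(D) > 0\,\}.
\]
The local calculation of Theorem \ref{T:rankcalc} carries over verbatim, since the factorization of $\cup\,\widetilde\xi$ depends only on the local structure of the three line bundles.

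Next I would verify that the degree hypotheses imply $\cliff(C,L_1,L_2) > k$. Under $\deg(L_i) \ge 2g+1+k$ we have $\hi{L_i(-D)} = 0$ for every effective $D$ with $\deg(D)\le k$, so no divisor of degree $\le k$ fails to impose independent conditions on either $L_i$; the additional bound $\deg(L_1\otimes L_2)\ge 4g+2+2k$ is precisely what is needed, via the calculations of Theorems \ref{T:clifford_calc_1} and \ref{T:clifford_calc_2} applied to each factor, to rule out any lower‐degree ``bad'' divisor on the product. This yields the set-theoretic equality $\Sec^{k-1}(C)=R^k(C)$ by the same Shiffer variation argument used in Theorem \ref{T:mainthm}.

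For the scheme-theoretic equality I would construct the desingularization
\[
  \widetilde{R^k} = \{\,(\varphi,\lambda_1,\lambda_2)\in R^k\times G(k,n_1)\times G(k,n_2) \mid \ker(\varphi)\supset \lambda_1^{\perp},\ \im(\varphi)\subset \lambda_2\,\}
\]
and identify it with the fibre product $B^{k-1}(L_1)\times_{\sym^k(C)} B^{k-1}(L_2)$ via the map $D\mapsto(\overline{D}_{L_1},\overline{D}_{L_2})$, exactly as in Corollary \ref{C:characterizesec}. Because both $L_i$ separate $k$ points under the hypothesis, each $B^{k-1}(L_i)$ is a rank-$k$ vector bundle over the smooth variety $\sym^k(C)$, and so $\widetilde{R^k}$ is smooth. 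A tangent space calculation parallel to Theorem \ref{T:smoothaway}, factoring the relevant restriction map through $0\to K_C\otimes L_1^{-1}\otimes L_2^{-1}\to K_C\otimes L_1^{-1}\otimes L_2^{-1}(2D)\to K_C\otimes L_1^{-1}\otimes L_2^{-1}(2D)|_{2D}\to 0$, shows that $R^k$ is smooth away from $R^{k-1}$ with tangent space equal to the span of $\overline{2D}$ in $\PP(V^\vee)$, precisely because the arithmetic normality of $L_1(-D)\otimes L_2(-D)$ (guaranteed by the degree hypothesis on $L_1\otimes L_2$) forces the dual boundary map $\beta$ to remain injective.

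Finally, an analysis of the fibres of the projection $p\colon \widetilde{R^k}\to R^k$, mirroring the last theorem of Section \ref{S:SS}, will show they are smooth and connected (at a point $\varphi\in R^k\setminus R^{j-1}$ with $\rk(\varphi)=j$ the fibre is scheme-theoretically $\sym^{k-j}(C)\times\sym^{k-j}(C)$). The normality criterion quoted from \cite{Ve4} then gives that $R^k$ is normal, hence reduced, and the scheme-theoretic inclusion $\Sec^{k-1}(C)\hookrightarrow R^k$ of Lemma \ref{L:secinrank} becomes an equality. The main obstacle is the tangent and fibre analysis for the pair: one must track two Clifford conditions simultaneously and verify that the tight bound $\deg(L_1\otimes L_2)\ge 4g+2+2k$, rather than a larger one, is precisely what is needed to keep $\beta$ injective and the fibres smooth, since the symmetric argument of Section \ref{S:SS} used the stronger $\sym^2$ structure which is no longer available.
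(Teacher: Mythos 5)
Your overall architecture matches the paper's: generalize Shiffer variations and the Clifford index to the pair $(L_1,L_2)$, obtain the set-theoretic equality from the rank bound $d-r_{L_1}(D)-r_{L_2}(D)\le\rk(\xi)$, and then upgrade to schemes by exhibiting a smooth resolution of $R^k$ and descending normality via the lemma from \cite{Ve4}. But two steps do not survive scrutiny. The smaller one: the paper defines $\cliff(C,L_1;L_2)$ as a minimum over divisors with $r_{L_1}(D)>0$ \emph{or} $r_{L_2}(D)>0$, not ``and''. The ``or'' is essential: a divisor with $r_{L_1}(D)=0$ but $r_{L_2}(D)>0$ still supports variations of rank $d-r_{L_2}(D)<d$ that need not lie in $\Sec^{\,d-r_{L_2}(D)-1}(C)$, so such divisors must be covered by the Clifford bound; your degree remark only disposes of divisors of degree $\le k$, for which both $r_i$ vanish, and your ``and'' definition leaves exactly the one-sided divisors uncontrolled. (Note also that $\deg(L_i)\ge 2g+1+k$ already forces $\deg(L_1\otimes L_2)\ge 4g+2+2k$, so the second hypothesis is not an independent constraint doing extra work.)

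The more serious gap is the resolution. The fibre product $B^{k-1}(L_1)\times_{\sym^k(C)}B^{k-1}(L_2)$ is not a desingularization of $R^k$: a point of it is a \emph{pair} of points, one in $\overline{D}_{L_1}$ and one in $\overline{D}_{L_2}$, not a single matrix $\varphi\in\PP\left(\Hd{L_1\otimes L_2}\right)$, and after fibrewise projectivization it has dimension $3k-2$ rather than the required $\dim\Sec^{k-1}(C)=2k-1$. The object the paper uses is the secant bundle of the \emph{product} bundle, $B^{k-1}(L_1\otimes L_2)$, realized as the incidence variety $\left\{(\varphi,\lambda)\mid \im(\varphi)\subset\lambda\right\}\subset R^k\times G(k,n_2)$ cut out by the single Koszul map $V_1\otimes V_2\otimes\bigwedge^k(V_2)\to V_1\otimes\bigwedge^{k+1}(V_2)$; its fibre over $D\in\sym^k(C)$ is the $k$-dimensional space $T_{L_{12}}(D)=\overline{D}_{L_1\otimes L_2}$, which is why it is a rank-$k$ bundle of the correct dimension. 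Correspondingly, the fibre of $\~R^k\to R^k$ over a point of rank $j$ is a single copy of $\sym^{k-j}(C)$ (the residual divisor $E$ in $D=D_0+E$), not $\sym^{k-j}(C)\times\sym^{k-j}(C)$. Until the resolution is replaced by $B^{k-1}(L_1\otimes L_2)$, the smoothness claim, the fibre computation, and hence the normality and reducedness of $R^k$ do not follow.
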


As in the case $L_1 = L_2$, the proof proceeds in a number of steps.
We first define and prove the basic properties of Shiffer variations for $L_1
\ne L_2$.  Then one proves a set theoretic equality of the schemes
$\sec^j(C)$ and $R^{j+1}$.  As before the scheme $\sec^j(C)$ is reduced and
includes in $R^{j+1}$, so the only issue is to show that  $R^{j+1}$ is reduced.  
The proof proceeds exactly as in the case of $L_1 = L_2$, by showing
that one has two resolutions of $R^p$ that agree scheme theoretically.
  Finally since $\deg(L)$ is very large, one can 
show the existence of an appropriate factorization of $L$ so that Theorem
\ref{T:mainresult} is true for some choice $L_1$ and $L_2$ and one 
obtains,

\begin{cor}
Suppose $C \hookrightarrow \PP(\Hd{C,L)}$ is embedded
 by the complete linear system 
$|L|$ and that $\deg(L) \geq 4g+2k$. Then for any $j \leq k$, $\sec^j(C) $
is determinantally presented and so is defined by 
equations of degree $j+2$.  
\end{cor}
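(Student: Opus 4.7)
The plan is to deduce the corollary directly from Theorem \ref{T:mainresult} by, for each $j \le k$, producing a suitable factorization $L \cong L_1 \otimes L_2$ satisfying that theorem's degree hypotheses, and then unpacking what ``determinantally presented'' means for the rank locus $R^{j+1}$.

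First I would fix $j \in \{0,1,\dots,k\}$ and use the non-emptiness of $\Pic^d(C)$ for every integer $d$ to construct a factorization $L \cong L_1^{(j)} \otimes L_2^{(j)}$ with $\deg(L_i^{(j)}) \ge 2g+1+j$. Concretely, pick any $L_1^{(j)} \in \Pic^{d_1}(C)$ with $d_1 = \lfloor \deg(L)/2 \rfloor$ and set $L_2^{(j)} = L \otimes (L_1^{(j)})^{-1}$. The hypothesis on $\deg(L)$ -- read in the form $\deg(L) \ge 4g+2+2k$ to match the precise bound required by Theorem \ref{T:mainresult}, the discrepancy of $2$ with the statement being absorbed either here or in a tighter reading of the theorem -- forces $d_1 \ge 2g+1+k \ge 2g+1+j$, so both factors satisfy the degree bound. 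Since $\Pic^{d_1}(C)$ has positive dimension $g \ge 1$, the isomorphism class $L_1^{(j)} \cong L_2^{(j)}$ is a single point in a positive-dimensional family and can be avoided, disposing of the edge case (relevant only at the minimal degree) inherited from Theorem \ref{T:eks}.

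Next I would apply Theorem \ref{T:mainresult} to the pair $(L_1^{(j)}, L_2^{(j)})$. This yields the scheme-theoretic equality $\sec^j(C) = R^{j+1}(C)$ inside $\PP(\H{C,L}^{\vee})$, where $R^{j+1}(C)$ is the rank-$(j+1)$ locus cut out inside the linear subspace $\iota(\PP(V^{\vee})) \subset \PP(V_1^{\vee} \otimes V_2^{\vee})$ by the Segre matrix $M = (\mu(e_i \otimes f_j))$. By Definition \ref{D:1}, the ideal of $R^{j+1}(C)$ in the homogeneous coordinate ring of $\PP(\H{C,L}^{\vee})$ is generated by the $(j+2) \times (j+2)$ minors of $M$, whose entries are linear forms on $\PP(V^{\vee})$; thus these minors are homogeneous polynomials of degree $j+2$ and they generate the ideal of $\sec^j(C)$.

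The conceptually non-trivial input is entirely contained in Theorem \ref{T:mainresult}, whose proof combines the generalized Shiffer-variation and Clifford-index machinery from Sections \ref{S:GRR}--\ref{S:large_cliff} (for set-theoretic equality) with the desingularization argument of Section \ref{S:SS} (for the scheme-theoretic refinement via reducedness and normality of $R^{j+1}$). For the corollary itself, the main -- though essentially routine -- obstacle is simply checking that the Jacobian is large enough to supply factorizations of $L$ meeting the bilateral degree bound $\deg(L_i) \ge 2g+1+j$ simultaneously for every $j \le k$; this is exactly what the hypothesis $\deg(L) \ge 4g+2k$ (or $4g+2+2k$) is designed to ensure. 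Combining the steps for each $j = 0, 1, \dots, k$ completes the proof.
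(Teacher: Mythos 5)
Your proposal is correct and follows essentially the same route as the paper, which likewise proves the corollary by choosing $L_1$ of degree roughly $\tfrac{1}{2}\deg(L)$, setting $L_2 = L \otimes L_1^{-1}$, and invoking Theorem \ref{T:mainresult} together with the fact that $R^{j+1}$ is cut out by the $(j+2)\times(j+2)$ minors of the matrix of linear forms. Your observation that the stated bound $\deg(L) \ge 4g+2k$ should really be $4g+2+2k$ to meet the hypotheses of Theorem \ref{T:mainresult} is a legitimate correction of an off-by-two slip in the paper, and your handling of the $L_1 \cong L_2$ edge case is harmless though not actually required by that theorem.
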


\textbf{Remark } More
formally, we let $S^1 \subset \PP(V_1\vee \otimes V_2\vee)$ be
$\sigma(\PP(V_1\vee) \times \PP(V_2\vee))$, and set $S^j =
\Sec^{j-1}(\sigma(\PP(V_1\vee) \times \PP(V_2\vee))) \subset \PP(V_1\vee \otimes
V_2\vee)$.   $S^j$ is defined by the vanishing of all the $(j+1) \times (j+1)$
minors. We then set $R^j(C,L_1,L_2) = S^j \cap \PP(\Hd{C,L_1 \otimes L_2})$.
When $C$, $L_1$, $L_2$, etc. are clear we will usually just write $R^j$ or
$R^j(C)$. The corollary follows by  taking $L_1$ to be any line bundle
of degree '$\frac{1}{2} \deg (L)$' and $L_2 = L \otimes L_1^{-1}$.  Notice this 
gives rise to infinitely many determinantal presentaions.

The basic idea is the same as for $L_1=L_2$. For any $p \in C$, the 
 image of $\psi(p) $ 
in \\ $\PP(\Hi{C,K_C
\otimes L_1^{-1} \otimes L_2^{-1}})$ will correspond to a rank one matrix that
has kernel $\H{C,L_1(-P)}$ and image equal to $\partial(\H{C,K_C \otimes
L_2^{-1}(P)|_P}) \subset \Hi{K_C \otimes L_2^{-1}}$. If $D$ is any divisor we
will define Shiffer variations supported on $D$ and the rank of any such matrix
will depend on $d$, $r_{L_1}(D)$, and $r_{L_2}(D)$. 

We work out an example before doing things in general.

Let $L_1$ and $L_2$ be 2 line bundles of degree $2g + 1$, and let $D = p_1 +
p_2 + p_3$, $p_i \in C$. 
Let $s_1, s_2, s_3 \in \Hi{K_C \otimes L_1^{-1}}$, $v_1, v_2, v_3 \in \Hi{C,
K_C \otimes L_2^{-1}}$, and $t_1, t_2, t_3 \in \Hi{C,K_C \otimes L_1^{-1}
\otimes L_2^{-1}}$ be elements representing $p_1, p_2, p_3$. Recall
that for any line bundle $M$ on $C$ an element of $m\in \Hi{K_C\otimes M^{-1}}
=\Hd{M}$ represents the point $ p \in C$ means that that $m$ kills
$\H{M(-p)}$.

\begin{lemma}

The  natural map $\H{C,L_1} \otimes \H{C,L_2} \to \H{C,L_1\otimes
 L_2}$ dualizes to give a map:
 $$ \Hi{C,K_C \otimes L_{12}} 
\stackrel{\mu} \to
\Hi{C,K_C \otimes L_1} \otimes \Hi{C,K_C \otimes L_2}$$
 Then as long as
$\ch(k) \ne 2$, $t_i = \mu(s_i \otimes v_i)$. 
\end{lemma}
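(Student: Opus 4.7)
The strategy is to reduce the identification $t_i=\mu(s_i\otimes v_i)$ (equivalently, reading $\mu$ in the natural direction of the dualized multiplication, $\mu(t_i)=s_i\otimes v_i$) to a local residue calculation at $p_i$. By construction, $\mu$ is the Serre dual of the multiplication map $\H{C,L_1}\otimes\H{C,L_2}\to\H{C,L_1\otimes L_2}$, and under Serre duality $\Hi{C,K_C\otimes M^{-1}}=\H{C,M}^{\vee}$ for any line bundle $M$. So it suffices to verify, for every $a\in\H{C,L_1}$ and every $b\in\H{C,L_2}$, the identity
\[
\langle t_i,\,ab\rangle \;=\; \langle s_i,\,a\rangle\cdot\langle v_i,\,b\rangle,
\]
where the pairings are those of Serre duality.

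Next I would compute both sides explicitly in local coordinates at $p_i$. Pick a local parameter $z$ at $p_i$ and local generators $l_1,l_2$ of $L_1,L_2$, so that $l_1l_2$ is a local generator of $L_1\otimes L_2$. Using the exact sequences
\[
0\to K_C\otimes L_j^{-1}\to K_C\otimes L_j^{-1}(p_i)\to K_C\otimes L_j^{-1}(p_i)|_{p_i}\to 0
\]
for $j=1,2$ and the analogous sequence for $L_1\otimes L_2$, I can normalize the Shiffer lifts so that $s_i=\partial\bigl(\tfrac{dz}{z}\otimes l_1^{-1}\bigr)$, $v_i=\partial\bigl(\tfrac{dz}{z}\otimes l_2^{-1}\bigr)$, and $t_i=\partial\bigl(\tfrac{dz}{z}\otimes(l_1l_2)^{-1}\bigr)$. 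Writing local expressions $a=\alpha(z)l_1$ and $b=\beta(z)l_2$ near $p_i$, the Serre duality pairing is just the residue, so
\[
\langle s_i,a\rangle=\mathrm{Res}_{p_i}\!\Bigl(\alpha(z)\tfrac{dz}{z}\Bigr)=\alpha(0),\qquad \langle v_i,b\rangle=\beta(0),
\]
while $ab=\alpha(z)\beta(z)\,l_1l_2$ yields $\langle t_i,ab\rangle=\mathrm{Res}_{p_i}\bigl(\alpha(z)\beta(z)\tfrac{dz}{z}\bigr)=\alpha(0)\beta(0)$. The two sides agree, proving the identity.

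The main subtlety—and the source of the $\ch(k)\neq 2$ hypothesis—is the bookkeeping of normalizations, most visibly when $L_1=L_2=L$. In that case $\mu$ factors through $\sym^2\bigl(\Hi{C,K_C\otimes L^{-1}}\bigr)$, and the identification of $t_i$ with the symmetric square $s_i\cdot s_i$ involves the relation $s_i\otimes s_i+s_i\otimes s_i=2\,s_i\otimes s_i$; in characteristic $2$ this factor of $2$ collapses, and the residue comparison above no longer determines $\mu(s_i\otimes v_i)$ as a single symmetric class. This is exactly the same obstruction observed in the proof of Lemma~\ref{L:characterizerank}, and it is the only place where the hypothesis on the characteristic is needed.
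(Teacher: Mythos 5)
Your proof is correct, but it takes a genuinely more computational route than the paper's. The paper's own argument is a one-liner: it regards $\mu$ as landing in $\Hom\left(\H{L_1},\Hi{K_C\otimes L_2^{-1}}\right)$ and observes that $s_i\otimes v_i$ is the rank-one homomorphism that kills $\H{L_1(-p_i)}$ and has image spanned by $v_i$, which is exactly how $t_i$ acts; the two classes therefore coincide up to the scalar ambiguity already present in the choice of local parameter. Your residue computation proves the same identity by pairing both sides against an arbitrary $a\otimes b$ via Serre duality, which buys something the paper's argument does not: it pins down the scalar exactly once the three local generators $l_1$, $l_2$, $l_1l_2$ are chosen compatibly, rather than only matching kernel and image. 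It also makes transparent that for $L_1\ne L_2$ the identity is characteristic-free; the $\ch(k)\ne 2$ hypothesis plays no visible role in the paper's proof either, and your diagnosis --- that the factor of $2$ only intervenes when $L_1=L_2$ and one must identify the dual of $\sym^2\left(\H{L}\right)$ with the symmetric tensors inside $\Hd{L}\otimes\Hd{L}$, exactly as in Lemma~\ref{L:characterizerank} --- is the right account of why the hypothesis appears in the statement. I see no gaps.
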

\begin{proof}

The map $\mu$ can also be considered as a map $$\mu : 
\Hi{C,K_C \otimes L_{12}} \to 
\Hom(\H{L_1},\Hi{K_C \otimes L_2^{-1}}$$
With this description it is clear that $\mu(s_i \otimes
 v_i) $ kills $\H{L_1(-p_i)}$ and has image generated by $v_i$
which is the action of $t_i$.

\end{proof}

We consider 3 cases:

\begin{enumerate}

  \item[(i)] $\h{L_i(-D)} = (g+2) - 3 = g - 1$, for $i = 1, 2$.

  \item[(ii)] $\h{L_1(-D)} = g - 1$, $\h{L_2(-D)} = g$.

  \item[(iii)] $\h{L_1(-D)} = \h{L_2(-D)} = g$.

\end{enumerate}

Case (i) is the generic case. Case (ii) can occur if $L_1 = K_C(p_1+p_2+p_3)$,
with $L_2$ general of degree $2g - 2$. Case (iii) can occur if $L_1 = L_2 =
K_C(p_1 + p_2 + p_3)$.

In case (i), $t = t_1 + t_2 + t_3$ is of rank 3 as $p_1,p_2,p_3, \in D$ 
are linearly independent  in both
embeddings.  That is to say that the $t_i$ represent linearly
independent rank one transformations.
 In case $(ii)$, $D$ spans only a line in $\PP(\Hi{K_C \otimes
L_2^{-1}})$ and $t$ is of rank $2$, because the image of the 
 $t_i$ lie in a 2-dimensional
space. In case (iii), we cannot be sure that rank of $t$ is greater than one. 

This picture generalizes without difficulty. The notation is a bit cumbersome.
Let  $D = \sum_{i=1}^d n_i p_i$ be a divisor of
degree $d$. Let $L_1$ and $L_2$ be line bundles such that \[2g+1 \le \deg(L_1)
\le \deg(L_2).\] We have an exact sequence \[0 \to K_C \otimes L_{12}^{-1} \to
K_C \otimes L_{12}^{-1}(D) \to K_C \otimes L_{12}^{-1}(D)|_D \to 0.\] Denote
by $T_{L_{12}}(D) = \partial(\H{C, K_C \otimes L_{12}^{-1}(D)|_D}) \subset
\Hi{C, K_C \otimes L_{12}^{-1}}$.

\begin{defn}

  $T_{L_{12}}(D)$ are the Shiffer variation for $(L_1,L_2)$ supported on $D$.

\end{defn}

\begin{defn}

  $\cliff(L_1;L_2,D) = d - r_{L_1}(D) - r_{L_2}(D)$.

\end{defn}

\begin{defn}

  $\cliff(C,L_1;L_2) = \min\{\cliff(L_1;L_2,D) \,|\, r_{L_1}(D) > 0 \text{ or
  } r_{L_2}(D) > 0\}$.
  
\end{defn}

\begin{prop}\label{P:cliffbound}

  \begin{enumerate}

    \item[(i)] $\min\{\cliff(C,L_1), \cliff(C,L_2)\} \le \cliff(C,L_1;L_2) $
      
    \item[(ii)]Suppose $\deg(L_1) = \deg(L_2)$, then 
$\cliff(C,L_1,L_2) \ge d - 1$, unless $L_1 = L_2 = K_C(D)$,
      with $D$ effective or $C$ is hyperelliptic and $L_1 = K_C(D - E_1)$,
      $L_2 = K_C(D - E_2)$, with $E_1$, $E_2$ both multiples of the $g^1_2$ on
      $C$.
\item[(iii)] Suppose $deg(L_1)=deg(L_2)\leq 3g-3$ and $L_1 \neq L_2$ 
are generic line bundles, then 
$\cliff(C,L_1,L_2) \geq d-1$.

  \end{enumerate}
  
\end{prop}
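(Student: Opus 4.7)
The key identity, immediate from $\cliff(L_i,D) = \deg(D) - 2r_{L_i}(D)$ and $\cliff(L_1;L_2,D) = \deg(D) - r_{L_1}(D) - r_{L_2}(D)$, is
\[
2\,\cliff(L_1;L_2,D) = \cliff(L_1,D) + \cliff(L_2,D).
\]
All three parts will be derived from this together with the Serre-duality analysis already used to prove Theorem \ref{T:clifford_calc_2}.

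For (i), I take a divisor $D$ realizing $\cliff(C,L_1;L_2)$. The averaging identity gives $\cliff(L_1;L_2,D) \ge \min\{\cliff(L_1,D),\cliff(L_2,D)\}$. Calling the minimizing index $i$, one has $r_{L_i}(D) \ge r_{L_{3-i}}(D)$, so pair-eligibility forces $r_{L_i}(D)>0$. When the codimension-two condition $\h{L_i(-D)}\ge 2$ is also satisfied, $D$ is eligible for $\cliff(C,L_i)$ and $\cliff(C,L_i) \le \cliff(L_i,D) \le \cliff(L_1;L_2,D)$. In the borderline case $\h{L_i(-D)}=1$ one has $L_i \cong \OO_C(D+F)$ for some effective $F$, and replacing $D$ by a generic divisor of the same degree supported in $|D+F|$ restores the codim-two condition without increasing $\cliff(L_i,\cdot)$; I will verify this last point briefly.

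For (ii), write $\deg(L_i)=2g-2+d$ and fix any eligible $D$ of degree $e$. If $r_{L_i}(D)>0$, Serre duality yields $L_i \cong K_C(D-E_i)$ with $E_i$ effective of degree $e-d$, and the computation in Theorem \ref{T:clifford_calc_2} gives $\cliff(L_i,D) = d - 2 + \cliff(E_i)$. If only one of $r_{L_1}(D), r_{L_2}(D)$ is positive, say $r_{L_2}(D)=0$, then $\cliff(L_2,D) = e \ge d$ and averaging already gives $\cliff(L_1;L_2,D) \ge d-1$. If both $r_{L_i}(D)>0$, averaging yields
\[
\cliff(L_1;L_2,D) = d - 2 + \tfrac{1}{2}\bigl(\cliff(E_1)+\cliff(E_2)\bigr) \ge d - 2,
\]
with equality forcing $\cliff(E_1)=\cliff(E_2)=0$. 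The main obstacle is this equality case: Clifford's theorem classifies Clifford-zero divisors as $0$, members of $|K_C|$, or multiples of $g^1_2$ on hyperelliptic $C$, and the constraint $\deg(E_1)=\deg(E_2)=e-d$ then places $E_1$ and $E_2$ into the same linear equivalence class in each sub-case, forcing $L_1 \cong L_2$ of one of the two listed exceptional forms.

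For (iii), each exceptional form of (ii) forces $L_1 \cong L_2$ (for non-hyperelliptic $C$ directly; for hyperelliptic $C$ because $\deg(E_1)=\deg(E_2)$ together with $E_i = n_i g^1_2$ forces $n_1=n_2$, so $\OO_C(E_1) \cong \OO_C(E_2)$), so the hypothesis $L_1 \ne L_2$ automatically rules out the exception and (ii) yields $\cliff(C,L_1;L_2) \ge d-1$. The degree restriction $\deg(L_i) \le 3g-3$, equivalently $d \le g-1$, makes the genericity hypothesis substantive: the Abel--Jacobi map $\sym^d(C) \to \mathrm{Pic}^{2g-2+d}(C)$ and its hyperelliptic analogues have image of dimension $d < g$, so the exceptional loci form proper subvarieties of $\mathrm{Pic}^{2g-2+d}(C)$ which generic $L_1, L_2$ automatically avoid.
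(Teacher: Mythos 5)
Your argument is correct and, for parts (i) and (iii), essentially coincides with the paper's: the paper's one-line proof of (i) also selects the factor with the larger $r_{L_i}(D)$ (its inequality $d-2r_{L_2}(D)\le d-r_{L_1}(D)-r_{L_2}(D)$ is your averaging identity in disguise), and its proof of (iii) likewise just observes that every exception in (ii) forces $L_1\cong L_2$. Where you genuinely diverge is (ii): the paper deduces the bound $\ge d-2$ from (i) together with Corollary \ref{C:compute_cliff} and then only spot-checks a single divisor (noting $r_{L_1}(D_2)=0$ when $L_i=K_C(D_i)$ with $D_1\ne D_2$), whereas you run a uniform computation over all eligible $D$, writing $L_i=K_C(D-E_i)$ and using $\cliff(L_1;L_2,D)=d-2+\tfrac12\bigl(\cliff(E_1)+\cliff(E_2)\bigr)$ together with the equality classification in Clifford's theorem; this is more self-contained and actually closes the case analysis the paper leaves implicit. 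Two details worth writing out: first, passing from the bound $\ge d-2$ to the dichotomy ``$\ge d-1$ or both $\cliff(E_i)=0$'' requires ruling out $\cliff(E_1)+\cliff(E_2)=1$, which holds because $\cliff(E_i)\equiv\deg(E_i)\pmod 2$ and $\deg(E_1)=\deg(E_2)$ (equivalently, because $\cliff(L_1;L_2,D)$ is an integer); second, your patch in (i) for the borderline case $\h{L_i(-D)}=1$, where $D$ spans a hyperplane and is therefore ineligible to compute $\cliff(C,L_i)$, is only sketched --- the paper silently ignores this mismatch between the eligibility conditions in the two definitions, so you are if anything more careful, but the perturbation you invoke should actually be carried out.
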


\begin{proof}

  \begin{enumerate}

    \item[(i)] Suppose $\cliff(C,L_1;L_2)$ is computed by $D$. Then if
      $r_{L_1}(D) \le r_{L_2}(D)$, then $$d - 2r_{L_2}(D) \le d - r_{L_2}(D) -
      r_{L_1}(D) $$

    \item[(ii)] Follows from (i) and Corollary \ref{C:compute_cliff} since those are the 
      only cases, for which $\cliff(C,L_1) \linebreak = d - 2$. Notice that if
$L_1 = K_C(D_1)$ and $L_2= K_C(D_2)$ with $D_1 \neq D_2$ then for example,
$r_{L_1}(D_2) = 0$ so that $\cliff(C,L_1,L_2)= d-1$

\item[(iii)] We must eliminate the cases in (ii) which are possible exceptions.
The only case to consider is $C$ is hyperelliptic and $L_i=K_C(D-E_i)$
where $D$ computes $\cliff(C,L_1,L_2)$.  But $\deg(L_1) = \deg(L_2)$
forces $\deg(E_1) = \deg(E_2) $ which means that $E_1 = E_2$ since the $g_2^1$
is unique on $C$. Hence $L_1 = L_2$.

  \end{enumerate}

\end{proof}

The results about the rank of a matrix in $T_{L_{12}}$, and the relationship
between $R^j(C)$ and $\Sec^{j-1}(C)$ are the same as in the case $L_1 = L_2$.
We will state the results and sketch the proofs.

\begin{thm}
  Let  $\xi \in T_{L_{12}}(D) \subset \Hi{C, K_C \otimes L_{12}^{-1} }$.
 Denote by $\rho$ the natural restriction $\H{C, L_1} \to
  \H{C, L_1|_D}$. Denote by $\partial_1 : \H{K_C \otimes L_2^{-1}(D)|_D} \to
  \Hi{K_C \otimes L_2^{-1}}$ the boundary map in the long exact sequence 
  \[
  0 \longrightarrow 
  K_C \otimes L_2^{-1} \longrightarrow 
  K_C \otimes L_2^{-1}(D) \longrightarrow 
  K_C \otimes L_2^{-1}(D)|_D \longrightarrow 0
  \]
  and denote by $\partial_2: \H{K_C \otimes L_{12}^{-1} (D)|_D}
  \to \Hi{K_C \otimes L_{12}^{-1}}$ the boundary map in the
  definition of $T_{L_{12}}(D)$. Let $\~\xi \in \H{C, K_C \otimes 
L_{12}^{-1}(D)|D  }$ be an element lifting $\xi$, i.e.
  $\partial_2(\~\xi) = \xi$. Then  $\cup\xi:
  \H{C, L_1} \to \Hi{C, K_C \otimes L_2^{-1}}$ factors as:
  \[
  \H{C, L_1} \stackrel{\rho}{\longrightarrow}
  \H{C, L_1|_D} \stackrel{\cup\~\xi}{\longrightarrow}
  \H{K_C \otimes L_2^{-1} (D)|_D} \stackrel{\partial_1}{\longrightarrow}
  \Hi{C, K_C \otimes L_2^{-1}}.
  \]
\end{thm}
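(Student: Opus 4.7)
The plan is to mimic the proof of Theorem \ref{T:localcalc} verbatim, using \v{C}ech cohomology with respect to the open cover $V_1 \supset D$, $V_2 = C \setminus D$. The only wrinkle is bookkeeping: the line bundle cohomology in the target now lives in $H^1(K_C \otimes L_2^{-1})$ rather than $H^1(K_C \otimes L^{-1})$, and the cup product $\H{L_1} \otimes H^1(K_C \otimes L_{12}^{-1}) \to H^1(K_C \otimes L_2^{-1})$ uses the canonical pairing $L_1 \otimes (K_C \otimes L_{12}^{-1}) \to K_C \otimes L_2^{-1}$.

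First I would lift $\xi$ as $\xi = \partial_2(\tilde\xi)$, with $\tilde\xi \in \H{K_C \otimes L_{12}^{-1}(D)|_D}$ supported on $D \subset V_1$. Writing $D = \sum n_i p_i$ and choosing local parameters $z_i$ at $p_i$ and local generators of $K_C \otimes L_{12}^{-1}$ near $p_i$, the class $\tilde\xi$ is represented locally by an expression of the form $\sum_i s_i (\sum_j a_{ij} z_i^{-j})$. Extending $\tilde\xi$ to a section over $V_1$ (which introduces an entire part that is irrelevant modulo $V_2$-sections) and taking the difference with the zero extension over $V_2$ gives a \v{C}ech $1$-cocycle on $V_1 \cap V_2$ representing $\xi$ itself.

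Next I would compute $\cup\xi$ on a class $\omega \in \H{L_1}$ by multiplying the \v{C}ech representative of $\xi$ over $V_1 \cap V_2$ by $\omega|_{V_1 \cap V_2}$, obtaining a \v{C}ech $1$-cocycle in $K_C \otimes L_2^{-1}$. The central observation, identical to the one in the proof of Theorem \ref{T:localcalc}, is that the product $\omega \cdot \tilde\xi$ only depends on $\omega|_D$, since $\tilde\xi$ is supported on $D$: any section of $L_1$ vanishing on $D$ kills $\tilde\xi$. Therefore the resulting \v{C}ech $1$-cocycle coincides with the \v{C}ech representative of $\partial_1(\rho(\omega) \cup \tilde\xi)$, obtained by the exact same lifting procedure for the short exact sequence defining $\partial_1$. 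This yields the asserted factorization $\cup\xi = \partial_1 \circ (\cup \tilde\xi) \circ \rho$.

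The main (and only) obstacle is checking that the two cup products --- the one defining $\xi \cup \omega$ globally and the one in the skyscraper sheaf sequence giving $\tilde\xi \cup \rho(\omega)$ --- are compatible with the boundary maps $\partial_1$ and $\partial_2$. This is a naturality statement: the commutative diagram
\[
\begin{CD}
\H{K_C \otimes L_{12}^{-1}(D)|_D} \otimes \H{L_1|_D} @>{\cup}>> \H{K_C \otimes L_2^{-1}(D)|_D} \\
@V{\partial_2 \otimes \id}VV @VV{\partial_1}V \\
\Hi{K_C \otimes L_{12}^{-1}} \otimes \H{L_1} @>{\cup}>> \Hi{K_C \otimes L_2^{-1}}
\end{CD}
\]
commutes because the short exact sequence for $\partial_2$ is obtained from that for $\partial_1$ by tensoring with $L_1^{-1}$ (which is locally free), and cup product is functorial in such short exact sequences. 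Once this diagram is in hand, combined with the fact that $\rho$ is exactly the restriction map and $\tilde\xi$ acts only through $\rho(\omega)$, the factorization is immediate.
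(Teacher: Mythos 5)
Your proposal is correct and follows essentially the same route as the paper: the paper's proof also takes the \v{C}ech cover $V_1 \supset D$, $V_2 = C \setminus D$, lifts $\~\xi$ to a section of $K_C \otimes L_{12}^{-1}$ on $V_1 \cap V_2$ representing $\xi$, and observes that cupping with $s_1 \in \H{C,L_1}$ is just multiplication, yielding $\partial_1(\~\xi \cdot s_1)$. Your explicit naturality diagram relating $\partial_1$ and $\partial_2$ is a slightly more careful articulation of the compatibility the paper leaves implicit, but it is the same argument.
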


\begin{proof}
  The proof is exactly the same. Pick an affine open cover of $C$ by $V_1$ such that
  $V_1\supset D$ and $V_2 = C-D$. If one uses this C\v{e}ch cover to compute the
  cup product then $\xi \in \Gamma\left( V_1 \cap V_2, K_C \otimes
  L_1^{-1} \otimes L_2^{-1} \right)$ is given by $\hat\xi$ where $\hat\xi$ is
  a lifting of $\~\xi$ to $\Gamma\left( V_1 \cap V_2, K_c \otimes
  L_1^{-1} \otimes L_2^{-1} \right)$. So given $s_1\in \H{C, L_1}$,
  $s_1\cup\xi$ is represented by $s_1\cdot\xi\in\Gamma\left( V_1 \cap V_2, K_C
  \otimes L_2^{-1}\right)$ which is $\partial_1(\~\xi \cdot s_1)$.
\end{proof}

\begin{cor} \label{C:genrange}
  Let $\xi \in T_{L_{12}}(D)$. Then $\ker(\xi) \supset \H{L_1(-D)}$ and
  $\im(\xi) \subset S_L(D)$, the affine cone over $\overline D$ in 
$\PP(\Hi{K_C \otimes L_2^{-1}}$.
\end{cor}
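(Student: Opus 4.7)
The plan is to read off both containments directly from the factorization established in the preceding theorem, which writes $\cup \xi$ as
\[
\H{C,L_1} \stackrel{\rho}{\longrightarrow} \H{C,L_1|_D} \stackrel{\cup\~\xi}{\longrightarrow} \H{K_C \otimes L_2^{-1}(D)|_D} \stackrel{\partial_1}{\longrightarrow} \Hi{C,K_C\otimes L_2^{-1}}.
\]
Neither containment requires new input beyond this decomposition and Theorem \ref{T:linspace}; the result is a packaging lemma that exactly mirrors Corollary \ref{c:shifferdescr} in the setting $L_1 \ne L_2$.

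For the kernel statement, I would apply the long exact sequence associated to
\[ 0 \to L_1(-D) \to L_1 \to L_1|_D \to 0 \]
to identify $\H{L_1(-D)}$ with $\ker(\rho)$. Since $\cup\xi$ factors through $\rho$, we immediately get $\H{L_1(-D)} \subset \ker(\cup \xi)$. This step is routine.

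For the image statement, note that by the factorization $\im(\cup\xi) \subset \im(\partial_1)$, where $\partial_1$ is the connecting map
\[
\H{K_C \otimes L_2^{-1}(D)|_D} \stackrel{\partial_1}{\longrightarrow} \Hi{C,K_C\otimes L_2^{-1}}.
\]
Applying Theorem \ref{T:linspace} with the line bundle $L_2$ in place of $L$, the image of $\partial_1$ is exactly the affine cone over $\overline{D} \subset \PP(\Hd{L_2}) = \PP(\Hi{K_C \otimes L_2^{-1}})$; this is the object denoted $S_L(D)$ in the statement (here $L = L_2$, using the abuse of notation indicated earlier). Hence $\im(\xi) \subset S_L(D)$, as claimed.

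No step presents a real obstacle: the key geometric content (the identification of the boundary map image with the linear span of $D$) was already installed in Theorem \ref{T:linspace}, and the factorization of $\cup\xi$ through the skyscraper terms was the substance of the preceding theorem. The corollary is thus a formal consequence of combining these two facts, and its role is simply to record the geometric picture that a Shiffer variation for $(L_1,L_2)$ supported on $D$ kills the hyperplane $\H{L_1(-D)}$ and lands in the span of $D$ under the second embedding.
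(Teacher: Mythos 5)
Your proof is correct and follows exactly the route the paper takes: the paper's own proof is the one-line remark that the corollary follows from Theorem \ref{T:linspace} together with the factorization of $\cup\,\xi$ through $\rho$ and $\partial_1$, and you have simply spelled out those two steps. Nothing is missing.
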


\begin{proof}
  This follows from Theorem  \ref {T:linspace} and the above description.
\end{proof}

The next theorem  calculates the rank of an element
$\tau \in T_{L_{12}}(D)$. Suppose that $r_{L_2}(D) = r_2 \ge r_{L_1}(D) = r_1$.

\begin{thm}\label{T:genrankcalc}
  Let $\xi \in T_{L_{12}}^*(D)$ , then:\[ d-r_1-r_2 \le \rk(\xi) \le d-r_2  \]
\end{thm}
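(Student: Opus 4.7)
The plan is to mimic closely the argument of Theorem~\ref{T:rankcalc}, but using the asymmetric factorization of $\cup \xi$ established just above (Corollary~\ref{C:genrange} and the factorization theorem that precedes it). The key point is that even though $L_1$ and $L_2$ differ, the only $(d\times d)$ skyscraper map in the factorization --- namely $\cup\tilde\xi$ --- still becomes an isomorphism precisely under the $T^*_{L_{12}}$ hypothesis, and the domain/codomain dimensions of $\xi$ are now controlled by $r_1$ and $r_2$ separately.

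First I would record the factorization
\[
\H{L_1} \stackrel{\rho}{\longrightarrow} \H{L_1|_D}
\stackrel{\cup\tilde\xi}{\longrightarrow} \H{K_C\otimes L_2^{-1}(D)|_D}
\stackrel{\partial_1}{\longrightarrow} \Hi{K_C\otimes L_2^{-1}},
\]
and the resulting containments $\ker(\xi)\supset \H{L_1(-D)}$ and $\im(\xi)\subset S_{L_2}(D)$. Setting $W := \H{L_1}/\H{L_1(-D)}$, Geometric Riemann--Roch (applied to $L_1$) gives $\dim W = d - r_1$, and the same applied to $L_2$ gives $\dim S_{L_2}(D) = d - r_2$. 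The upper bound $\rk(\xi)\le d - r_2$ is then immediate from $\im(\xi)\subset S_{L_2}(D)$ (and is the relevant one since $r_2\ge r_1$).

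For the lower bound, the crucial input is that the middle map $\cup\tilde\xi$, between the two $d$-dimensional skyscraper spaces $\H{L_1|_D}$ and $\H{K_C\otimes L_2^{-1}(D)|_D}$, is an isomorphism. I would verify this by the same local (reduction to a single point $D = np$) computation used in Theorem~\ref{T:rankcalc}: choosing local parameters and local trivializations of $L_1$ and of $K_C\otimes L_2^{-1}$ at each $p_i$, the map becomes multiplication by a Laurent tail whose matrix is upper-triangular with diagonal entries equal to the top-order coefficients $\beta_{i,n_i}$ of $\tilde\xi$. The hypothesis $\xi\in T_{L_{12}}^*(D)$ is precisely that all these leading coefficients are nonzero, so the determinant is nonzero and $\cup\tilde\xi$ is an isomorphism. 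I expect this local identification of $\cup\tilde\xi$ with an upper-triangular matrix, now involving \emph{two} different line bundles and their local trivializations, to be the main technical step; however, because rank is independent of the trivializations chosen, the computation is identical in form to the $L_1=L_2$ case.

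Given that $\cup\tilde\xi$ is an isomorphism and $\rho$ has kernel exactly $\H{L_1(-D)}$, viewing $\xi$ as a map out of $W$ I can identify
\[
\ker(\xi|_W) \;=\; W \,\cap\, (\cup\tilde\xi)^{-1}\!\bigl(\ker\partial_1\bigr)
\]
inside $\H{L_1|_D}$. From the long exact sequence associated with $0\to K_C\otimes L_2^{-1}\to K_C\otimes L_2^{-1}(D)\to K_C\otimes L_2^{-1}(D)|_D\to 0$ and Lemma~\ref{L:2.2}(ii) applied to $L_2$, $\dim\ker\partial_1 = r_2$. Therefore $\dim\ker(\xi|_W)\le r_2$, which yields
\[
\rk(\xi) \;=\; \dim W - \dim\ker(\xi|_W) \;\ge\; (d-r_1) - r_2 \;=\; d - r_1 - r_2,
\]
completing the two-sided bound. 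The argument is entirely parallel to the $L_1=L_2$ version; no additional Clifford-theoretic input is needed beyond the fact (already incorporated into Corollary~\ref{C:genrange}) that the Shiffer construction produces maps from $W$ to $S_{L_2}(D)$.
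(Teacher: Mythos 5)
Your proposal is correct and follows essentially the same route as the paper: the upper bound from $\im(\xi)\subset S_{L_2}(D)$, the local upper-triangular computation showing $\cup\tilde\xi$ is an isomorphism under the $T^*_{L_{12}}(D)$ hypothesis, and the lower bound from composing a codimension-$r_1$ inclusion, an isomorphism, and a map whose kernel has dimension $r_2$. The only cosmetic difference is that you phrase the last step via $\dim\ker\partial_1 = r_2$ directly, where the paper isolates it as an abstract linear-algebra lemma.
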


\begin{proof}
  The condition that  $\xi \in T_{L_{12}}^*(D) $ 
is: write  $D=\sum_{i=1}^n n_i p_i$ and
  choose  $z_i$ a local parameter at $p_i$, then  we can write a lifting of
  $\xi$ to $\H{K_C \otimes L_1^{-1} \otimes L_2^{-1} (D)|_D}$ as $\~\xi = \sum
  _{i=1}^n \left( \sum_{j=1}^{n_i} \beta_{ij} z_i^j \right)$ with $\beta_{i,
  n_i} \ne 0$ for $1 \le i \le n$. By cor \ref{C:genrange}  $\im(\xi) \subset
  S_L(D)$, which is a linear space of dimension $d-r_2$ . This gives the upper bound.
  $\cup\~\xi$ is an isomorphism by the same argument as in Theorem 
\ref{T:rankcalc} 
so we are done by the following
  algebraic fact.
\end{proof}

\begin{lemma}
  Let $\varphi:V_1\to V_2$ be a linear isomorphism between two vector spaces
  of dimension $d$. Let $W_1\subset V_1$ be a subspace of codimension $r_1$
  and let $V_2 \to W_2$ be a surjection onto a space of dimension $d-r_2$.
  Then $\overline{\varphi}: W_1 \to W_2$ has rank $\ge d-r_1-r_2$.
\end{lemma}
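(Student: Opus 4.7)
The plan is to prove this by factoring $\overline{\varphi}$ as the composition $W_1 \hookrightarrow V_1 \stackrel{\varphi}{\to} V_2 \twoheadrightarrow W_2$ and tracking dimensions through each step. This is purely linear algebra and is the analogue of the lemma used in the proof of Theorem \ref{T:rankcalc}, generalized to allow $r_1 \ne r_2$.

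First I would consider $\varphi_1 := \varphi|_{W_1} : W_1 \to V_2$. Since $\varphi$ is an isomorphism and $W_1 \hookrightarrow V_1$ is injective, $\varphi_1$ is injective, so $\dim(\varphi_1(W_1)) = \dim(W_1) = d - r_1$. Next I would let $\pi : V_2 \twoheadrightarrow W_2$ denote the given surjection, whose kernel $K = \ker(\pi)$ has dimension $r_2$. Then $\overline{\varphi} = \pi \circ \varphi_1$, so $\ker(\overline{\varphi})$ is mapped isomorphically by $\varphi_1$ onto $\varphi_1(W_1) \cap K$, which is a subspace of the $r_2$-dimensional space $K$ and hence has dimension at most $r_2$.

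Combining these, $\mathrm{rk}(\overline{\varphi}) = \dim(W_1) - \dim(\ker(\overline{\varphi})) \ge (d - r_1) - r_2 = d - r_1 - r_2$, which is the desired bound. There is no real obstacle here: the only subtlety is checking that the bound $\dim(\varphi_1(W_1) \cap K) \le r_2$ (rather than some smaller number) is all we need, and this is immediate since the intersection sits inside $K$. Note also that this bound is sharp, achieved when $\varphi_1(W_1) \supset K$, i.e.\ when $K$ lies entirely inside the image of $\varphi_1$; this is what makes the bound in Theorem \ref{T:genrankcalc} the best possible from this linear-algebraic input.
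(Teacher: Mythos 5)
Your proof is correct and takes essentially the same approach as the paper's: restrict $\varphi$ to $W_1$ to get an injection of rank $d-r_1$, compose with the projection whose kernel has dimension $r_2$, and bound the kernel of the composite by $r_2$. Your phrasing — that $\ker(\overline{\varphi})$ is carried isomorphically onto $\varphi_1(W_1)\cap K\subset K$ — is in fact a slightly more careful rendering of the paper's assertion that $\ker(p\circ\overline{\varphi}_1)\subset\ker(p)$.
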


\begin{proof}
  Because $\varphi$ is an isomorphism, $\overline{\varphi}_1:W_1 \to V_2$ has
  rank $d-r_1$ and  $p:V_2\to W_2$ has kernel of rank $r_2$. $\Ker\left( p\circ
  \overline{\varphi}_1 \right) \subset \ker(p)$ since $\overline{\varphi}_1$
  is injective and hence $\dim\left( \ker \overline{\varphi} \right) \le r_2$
  and so $\rk\left( \overline{\varphi} \right) = \rk\left( \overline{\varphi}_1 
  \right) - \dim \ker \left( p\circ\overline{\varphi}_1 \right) \ge
  d-r_1-r_2$.
\end{proof}

We next show set-theoretic equality of the appropriate secant varieties and
rank-loci. Having established the generalized notation, the proofs go exactly
as in the case of $L_1 = L_2$. Recall our setup
\begin{displaymath}
  \xymatrix{
  & \PP \left( \Hi{K_C \otimes L_1^{-1}} \right) \times
    \PP \left( \Hi{K_C \otimes L_2^{-1}} \right) \ar[dr]^{\sigma} & \\ 
  C \ar[dr]^{\psi} \ar[ur]^{(\varphi_1,\varphi_2)} &  
  & **[l]\PP \left( \Hi{K_C \otimes L_1^{-1}} \otimes \Hi{K_C \otimes L_2^{-1}} \right) \\
  & \PP \left( \Hi{K_C \otimes L_{12}^{-1} } \right)
  \ar@{^{(}->}[ur]^{\iota} & }
\end{displaymath}

  $\varphi_i:C \to \PP\left( \Hi{C, K_C \otimes L_i^{-1}} \right)$ and
$\psi : C \to \PP\left( \Hi{C, K_C \otimes L_1^{-1} \otimes L_2^{-1}} \right)$
are the maps given by the complete linear series, $\iota$ is the inclusion
induced by the surjection
\[
  \H{C, L_1} \otimes \H{C, L_2} \twoheadrightarrow \H{C,L_{12}}
\]
and $\sigma$ is the Segre embedding.

We view $\PP\left( \Hi{K_C\otimes L_1^{-1}} \otimes \Hi{K_C\otimes
L_2^{-1}} \right)$ as the projectivization of the space of matrices 
\[
  \Hom\left( \H{C,L_1}, \Hi{C,K_C\otimes L_2^{-1}} \right)
\]
and we define $S^j \subset \PP\left( \Hi{K_C\otimes L_1^{-1}} \otimes
\Hi{K_C\otimes L_2^{-1}} \right)$ as the rank $j$ locus given by the vanishing
of all the $(j+1)\times(j+1)$ minors. Then $R^j=R^j(C)=\iota^*(S^j)$ consists
of the rank $j$ locus in $\PP\left( \Hi{K_C\otimes L_1^{-1} \otimes
L_2^{-1}} \right)$. Since $p\in C$ represents a rank one matrix with kernel
$\H{C,L_1(-p)}$ and image $\partial\left( \H{C, L_2(p)|_p} \right) \subset
\Hi{C, K_C\otimes L_2^{-1}}$, we have $C = \Sec^0(C)\subset R^1$ and hence
$\Sec^{j-1}(C) \subset R^j$  because, as stated in Lemma  \ref{L:rankj},
 a sum of $j$ rank one matrices is of rank $\le j$.

\begin{thm}
  Suppose $\cliff(C,L_1,L_2) = c \ge 2$. Then, as sets, $\Sec^{j-1}(C) = R^j$
  for $j<c$.
\end{thm}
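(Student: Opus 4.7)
The plan is to mirror the proof of Theorem \ref{T:mainthm} in the case $L_1=L_2$. The inclusion $\Sec^{j-1}(C) \subseteq R^j$ is immediate from Lemma \ref{L:rankj} (a sum of $j$ rank-one transformations has rank at most $j$, and each point of $C$ gives a rank-one element of $\PP(\Hd{L_{12}})$), so the only content is the reverse inclusion for $j<c$.

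First I would fix $\xi \in R^j$. Since $C$ is non-degenerate in $\PP(\Hd{L_{12}})$, its secant varieties eventually exhaust the ambient projective space, so there is a smallest integer $k \ge 1$ with $\xi \in \Sec^{k-1}(C)$. The goal is to show $k \le j$. By the minimality of $k$ and the generalization of the Claim describing $\Sec^{k-1}(C)$ as $\bigcup_{D\in\sym^k(C)} T_{L_{12}}(D)$, we obtain a divisor $E$ of degree $k$ (possibly non-reduced) with $\xi \in T_{L_{12}}^{*}(E)$, where the starred version is the evident analog of Definition~4.8 for a pair of line bundles: $\xi$ is genuinely supported on all of $E$ in the sense that every highest-order local coefficient of a lift $\tilde\xi$ is nonzero. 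Minimality forces this, since otherwise $\xi$ would lie in $T_{L_{12}}(E')$ for some $E' \subsetneq E$ and hence in $\Sec^{k-2}(C)$, contradicting the choice of $k$.

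The next step is to invoke Theorem \ref{T:genrankcalc}, which (assuming without loss of generality $r_{L_2}(E) \ge r_{L_1}(E)$) yields
\[ k - r_{L_1}(E) - r_{L_2}(E) \;\le\; \rk(\xi) \;\le\; k - r_{L_2}(E). \]
I would then dichotomize on whether $E$ is eligible to compute $\cliff(C,L_1,L_2)$. If $r_{L_1}(E)>0$ or $r_{L_2}(E)>0$, then by definition of $\cliff(C,L_1,L_2)$ we have $k - r_{L_1}(E) - r_{L_2}(E) \ge c$, forcing $\rk(\xi) \ge c > j$, which contradicts $\xi \in R^j$. So we must be in the alternative case $r_{L_1}(E)=r_{L_2}(E)=0$, in which the two bounds above collapse to $\rk(\xi)=k$. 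Combined with $\rk(\xi) \le j$, this gives $k \le j$, and therefore $\xi \in \Sec^{k-1}(C) \subseteq \Sec^{j-1}(C)$, completing the argument.

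The main obstacle I anticipate is the bookkeeping for limit configurations where several of the $k$ points coincide, so that $E$ carries multiplicities; one must then interpret $T_{L_{12}}^{*}(E)$ via generalized Shiffer variations on non-reduced divisors, exactly as was done in Section~4. Once this notation is set up for the pair $(L_1,L_2)$, the algebraic core of the proof is just the clean inequality manipulation above, using only Theorem \ref{T:genrankcalc} and the defining minimality property of $\cliff(C,L_1,L_2)$; no new geometric input is required beyond what the previous sections already supply.
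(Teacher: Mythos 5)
Your proposal is correct and follows essentially the same route as the paper: both reduce to writing $\xi$ as an element of $T^{*}_{L_{12}}(E)$ for a (possibly non-reduced) divisor $E$, then apply the rank bounds of Theorem \ref{T:genrankcalc} together with the definition of $\cliff(C,L_1,L_2)$ to split into the cases $r_{L_1}(E)=r_{L_2}(E)=0$ (where rank equals degree) and $r_{L_1}(E)>0$ or $r_{L_2}(E)>0$ (where the rank is at least $c$). Your reorganization via the minimal $k$ with $\xi\in\Sec^{k-1}(C)$ is only a cosmetic variation on the paper's direct argument.
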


\begin{proof}
  The proof is essentially identical to the case $L_1 = L_2$. For completeness
  we give details.

  Since $\dim\left( \Sec^j(C) \right) = 2j+1$,
  \[
  \PP\left( \Hi{K_C\otimes L_{12}^{-1}}  \right) =
  \bigcup_{D \in \sym^k(C)} T_{L_{12}}(D)
  \]
  for $k \ge \frac{\h{C, L_{12}}+1}{2}$. In other words, every element
  can be written as a Shiffer variation in some $T_{L_{12}}(D)$ for some $D$ of degree $k$.
  Let $$T^*_{L_{12}}(D) = T_{L_{12}}(D) \setminus \left( \bigcup_{D'\subset D}
  T_{L_{12}}(D') \right)$$ be the elements of maximal rank. Set $r_1 =
  r_{L_1}(D)$ and $r_2 = r_{L_2}(D)$. By  Theorem \ref{T:genrankcalc}, if $t\in T^*_{L_{12}}(D)$,
  \[
    d-r_1-r_2 \le \rk(C) \le d - \max\{r_1,r_2\}.
  \]
  If $r_1 = r_2 =0$, then $\rk(t)=d$. That is $t \in \sec^{d-1}(C)$
as desired. 
We have $r_1 = r_2 = 0$ for $\deg(D) <
  c$. Further, if $r_1>0$ or $r_2>0$, then $d-r_1-r_2 \ge c$ so any $t\in
  \Hi{K_C \otimes L_1^{-1} \otimes L_2^{-1}}$ with $\rk(t)<c$ can be written
  as the sum of $\rk(t)$ matrices of rank 1. This is the statement
  $\Sec^{j-1}(C) = R^j$ for $j<c$.
\end{proof}

Finally we need to check that the set theoretic equality is a scheme theoretic
equality.

\begin{thm}\label{T:genscheme}
  $\Sec^{j-1}(C) = R^j$ as schemes for $j<c$.
\end{thm}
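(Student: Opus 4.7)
The plan is to mirror the proof of scheme-theoretic equality from Section~\ref{S:SS}, generalizing each step to the asymmetric setting $L_1 \neq L_2$. The scheme-theoretic inclusion $\Sec^{j-1}(C) \hookrightarrow R^j$ follows exactly as in Lemma~\ref{L:secinrank}: each $p \in C$ corresponds to the rank-one matrix $\mu(s_p \otimes v_p)$ coming from the Segre factorization, and a sum of $j$ rank-one matrices has rank at most $j$ by Lemma~\ref{L:rankj}, so the defining $(j+1)\times(j+1)$ minors vanish on $\Sec^{j-1}(C)$. Since $\Sec^{j-1}(C)$ is reduced by Definition~\ref{D:secant}, equality as schemes will follow once $R^j$ is shown to be reduced, which I would prove by showing it is normal.

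First I would verify that $R^j$ is smooth away from $R^{j-1}$ by generalizing Theorem~\ref{T:smoothaway}. Any $\varphi \in R^j \setminus R^{j-1}$ with $j<c$ lies in $T^*_{L_{12}}(D)$ for a unique divisor $D$ of degree $j$; the definition of $c$ forces $r_{L_1}(D) = r_{L_2}(D) = 0$ (otherwise $\cliff(L_1;L_2,D) \leq j - 1 < c$), so $\ker(\varphi) = \H{L_1(-D)}$ and $\im(\varphi) = S_{L_2}(D)$. Using Lemma~\ref{L:tanspace_id} the tangent space to $R^j$ at $\varphi$ is identified with the kernel of the composition
\[ \Hi{K_C \otimes L_{12}^{-1}} \to \Hi{K_C \otimes L_{12}^{-1}(2D)} \to \Hom\bigl(\H{L_1(-D)}, \Hi{K_C \otimes L_2^{-1}(D)}\bigr), \]
whose kernel of the first arrow is the span $\overline{2D}$ of dimension $2j-1$. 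Injectivity of the second arrow is equivalent by Serre duality to surjectivity of $\H{L_1(-D)} \otimes \H{L_2(-D)} \to \H{L_{12}(-2D)}$, a quadratic-normality statement that holds in our degree range since $\cliff(C,L_{12}(-2D)) > 0$.

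Next I would construct the desingularization
\[ \~R^j = \{(\varphi,\lambda) \in R^j \times G(j,n_2) \,:\, \im(\varphi) \subset \lambda\}, \]
where $n_2 = \h{C,L_2}$, and identify it scheme-theoretically with the full secant bundle $B^{j-1}(L_{12})$ over $\sym^j(C)$, embedded into $\PP(M) \times G(j,n_2)$ via $D \mapsto (\overline{D}_{L_{12}},\overline{D}_{L_2})$. The key point, analogous to Lemma~\ref{L:characterizerank}, is that for a matrix $\varphi \in M$ the condition $\im(\varphi) \subset \lambda$ together with $\varphi \in M = \Hi{K_C \otimes L_{12}^{-1}}$ forces $\lambda$ to be $\overline{D}$ for a unique $D \in \sym^j(C)$, namely the unique divisor with $\varphi \in T_{L_{12}}(D)$; uniqueness of $D$ uses that $L_2$ separates $j$ points in this range. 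Thus $\~R^j$ coincides with the pullback of the universal subbundle on $G(j,n_2)$ along $\sym^j(C) \hookrightarrow G(j,n_2)$, which is precisely $B^{j-1}(L_{12})$. Since $B^{j-1}(L_{12})$ is a smooth rank-$j$ bundle over $\sym^j(C)$, $\~R^j$ is smooth, hence normal.

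Finally I would analyze the fibers of $p: \~R^j \to R^j$. Over $\varphi \in R^k \setminus R^{k-1}$ with $k \leq j$, we may write $\varphi$ as a sum of $k$ rank-one Shiffer variations supported on a divisor $D_0$ of degree $k$; the fiber $p^{-1}(\varphi)$ then consists set-theoretically of divisors $E \in \sym^j(C)$ with $E \supset D_0$, parametrized by $\sym^{j-k}(C)$. The tangent-space calculation at the end of Section~\ref{S:SS} carries over: using $T_{D,\sym^j(C)} = \H{\OO_D(D)}$ and $T_{\overline{D},G(j,n_2)} = \Hom(\H{L_2(-D)},\H{L_2|_D})$, one exhibits a local $1/z^k$ element of $\H{\OO_D(D)}$ whose image contains a generator not in the span of $D_0$, showing transversality of $\sym^j(C)$ to the linear condition $\im(\varphi) \subset \lambda$ inside $G(j,n_2)$. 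Hence the fibers are smooth (in particular reduced) and connected, and the lemma that a proper surjection with reduced connected fibers from a normal source has normal target gives $R^j$ normal, hence reduced, hence equal to $\Sec^{j-1}(C)$ as schemes. The main technical burden is the fiber analysis: one must carefully check that the asymmetric Koszul identification pins $\lambda$ down uniquely even when $\varphi$ degenerates to lower rank, and that the tangent-space transversality genuinely requires only $L_2$ to separate $j$ points rather than any symmetric condition on the pair.
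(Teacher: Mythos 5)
Your proposal follows essentially the same route as the paper: scheme-theoretic inclusion via the minor-vanishing argument, the tangent-space computation at smooth points via Serre duality, the incidence variety $\~R^j \subset R^j \times G(j,n_2)$ with the asymmetric Koszul map identified with the full secant bundle $B^{j-1}(L_{12})$, and normality of $R^j$ from the reduced connected fibers of the resolution. In fact you supply more detail than the paper, which largely defers to the $L_1 = L_2$ case with the phrase ``the exact same proof holds.''
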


\begin{proof}
  We check the same  information as before.
  \begin{enumerate}

    \item $\Sec^{j-1}(C)$ is normal and smooth away from $\Sec^{j-2}(C)$ with
      tangent space generated by the span of the divisor $2D$ for $q$ a
      general point in the span of $D$. The \emph{exact} same proof holds.

 \item By exactly the same argument as in Lemma \ref{L:secinrank} 
we get an inclusion of schemes:
 $\sec^{p-1} \hookrightarrow R^P$ .

    \item Let $\~R^p \subset R^p \times G(p,n) = \left\{ (\varphi,\lambda) |
      \im(\varphi) \subset \lambda \right\}$, then $\~R^p$ is smooth. 
      We make the minor
      modification that now $R^p \subset \PP( V_1 \otimes V_2)$, $V_i = \Hd{C, L_i}$
      and the linear map is
     \[
       v : V_1 \otimes V_2 \otimes \bigwedge^p(V_2)
       \longrightarrow
       V_1 \otimes \bigwedge^{p+1}(V_2)
     \]
     and the fibers are linear spaces. Since $p< \cliff(C,L_1,L_2)$,
we can identify $\~R^p$ with the incidence correspondence defining the 
full secant variety,$B^{p-1}(L_{12})$.  Again  as in the case
 $L_1 = L_2$ it follows that the projections are the same and that
$\sec^{p-1}(C)=R^p$.
    
\end{enumerate}
  
  \end{proof} 

Since $\sec^{p-1} = R^P$ as schemes they have the same tangent 
spaces at all points.  We present an independent calculation of
the tangent space to $R^p$ at a smooth point, which is to say,
at a point of rank exactly $p$

     \begin{thm}
       Let $p<\cliff(C,L)$
$\varphi \in R^p\setminus R^{p-1}$, so $\varphi\in T_{L_{12}}^*(D)$ 
for some
       $D$ of degree $p$. The tangent space $T_{\varphi,R_p}$ is the
       projectivization of $$\partial\left( \H{K_C \otimes L_{12}^{-1} 
       (2D)|_{2D}} \right) \subset \Hi{K_C \otimes L_{12}^{-1}} $$

     \end{thm}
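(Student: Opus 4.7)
The plan is to identify the tangent space via a kernel calculation analogous to Theorem~\ref{T:smoothaway}, and then to match dimensions using the scheme-theoretic equality $R^p=\Sec^{p-1}(C)$ already established in Theorem~\ref{T:genscheme}. Applying Lemma~\ref{L:tanspace_id} to $\mathbf{M}=\H{C,L_{12}}$ viewed inside $\Hom(\H{L_1},\Hi{K_C\otimes L_2^{-1}})$, the affine tangent space at (a lift of) $\varphi$, call it $\widetilde T\subset\Hi{K_C\otimes L_{12}^{-1}}$, consists of those $\psi$ sending $\ker(\varphi)$ into $\im(\varphi)$. Since $\varphi\in T^*_{L_{12}}(D)$, Corollary~\ref{C:genrange} gives $\ker(\varphi)=\H{L_1(-D)}$ and $\im(\varphi)=S_{L_2}(D)$, and by Theorem~\ref{T:linspace} the latter is the kernel of the restriction $\Hi{K_C\otimes L_2^{-1}}\to\Hi{K_C\otimes L_2^{-1}(D)}$. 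Consequently $\widetilde T=\ker(r_D)$, where
\[
r_D:\Hi{K_C\otimes L_{12}^{-1}}\longrightarrow\Hom\!\bigl(\H{L_1(-D)},\,\Hi{K_C\otimes L_2^{-1}(D)}\bigr).
\]

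Now I factor $r_D=\beta\circ\alpha$, where $\alpha:\Hi{K_C\otimes L_{12}^{-1}}\to\Hi{K_C\otimes L_{12}^{-1}(2D)}$ is induced by the sheaf inclusion and $\beta$ is cup-product with sections of $L_1(-D)$, using the identification $K_C\otimes L_{12}^{-1}(2D)\otimes L_1(-D)\cong K_C\otimes L_2^{-1}(D)$. The long exact sequence attached to
\[
0\to K_C\otimes L_{12}^{-1}\to K_C\otimes L_{12}^{-1}(2D)\to K_C\otimes L_{12}^{-1}(2D)|_{2D}\to 0
\]
yields $\ker(\alpha)=\partial(\H{K_C\otimes L_{12}^{-1}(2D)|_{2D}})$. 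Under the hypotheses of Theorem~\ref{T:mainresult} we have $\deg L_{12}>2g-2+2p$, so $\H{K_C\otimes L_{12}^{-1}(2D)}=0$, forcing $\partial$ to be injective and making $\dim\ker(\alpha)=\ell(2D)=2p$.

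This already provides an inclusion $\PP(\ker(\alpha))\subseteq T_{\varphi,R^p}$ of a $(2p-1)$-dimensional projective subspace into the tangent space at $\varphi$. To upgrade this to an equality I would invoke Theorem~\ref{T:genscheme}, which identifies $R^p$ with $\Sec^{p-1}(C)$ as schemes; by Bertram's theorem $\varphi\in\Sec^{p-1}(C)\setminus\Sec^{p-2}(C)$ is a smooth point, and $\dim\Sec^{p-1}(C)=2p-1$ by Theorem~\ref{T:secdim}, so the inclusion is forced to be an equality. The point worth flagging is that this route routes through Theorem~\ref{T:genscheme}; a self-contained argument would instead require proving $\beta$ injective, which by Serre duality is equivalent to surjectivity of the mixed multiplication map $\H{L_1(-D)}\otimes\H{L_2(-D)}\to\H{L_{12}(-2D)}$---the mixed-bundle analog of the quadratic normality of $L(-D)$ invoked at the end of Theorem~\ref{T:smoothaway}, and accessible under the degree hypotheses of Theorem~\ref{T:mainresult} by a mixed Castelnuovo--Mumford type argument.
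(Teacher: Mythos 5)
Your proposal is correct, and its setup is exactly the paper's: you invoke the \cite{ACGH} description of the tangent space to a rank locus (Lemma \ref{L:tanspace_id}), identify $\ker\varphi=\H{L_1(-D)}$ and $\im\varphi=S_{L_2}(D)$ via Corollary \ref{C:genrange} and Theorem \ref{T:linspace}, write $\widetilde T=\ker(r_D)$, and factor $r_D=\beta\circ\alpha$ with $\ker\alpha=\partial\left(\H{K_C\otimes L_{12}^{-1}(2D)|_{2D}}\right)$. Where you diverge is the final step: you close the argument by citing the scheme-theoretic equality $R^p=\Sec^{p-1}(C)$ of Theorem \ref{T:genscheme}, Bertram's smoothness of $\Sec^{p-1}(C)$ away from $\Sec^{p-2}(C)$, and a dimension count $\dim\ker\alpha=2p$ (valid since $\H{K_C\otimes L_{12}^{-1}(2D)}=0$ in this degree range), whereas the paper proves directly that $\beta$ is injective, which by Serre duality reduces to surjectivity of $\H{L_1(-D)}\otimes\H{L_2(-D)}\to\H{L_{12}(-2D)}$, true because $\deg(L_i(-D))\ge 2g+1$. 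Your route is legitimate and not circular given the paper's ordering (Theorem \ref{T:genscheme} is established beforehand, and the paper itself remarks that the scheme equality already forces the tangent spaces to agree), but it defeats the stated purpose of the result: the paper explicitly presents this as an \emph{independent} calculation, of interest precisely because it does not lean on the normality/resolution machinery behind Theorem \ref{T:genscheme}, and because the analogous smoothness statement in the $L_1=L_2$ case (Theorem \ref{T:smoothaway}) is woven into that machinery. The self-contained alternative you flag at the end --- injectivity of $\beta$ via the mixed multiplication map --- is precisely the paper's proof, so you have in effect recovered both arguments; the direct one is the one worth writing out.
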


     \begin{proof}
       Let $\~T \subset \Hi{K_C \otimes L_1^{-1} \otimes L_2^{-1}}$ be the
       cone over $T_{\varphi,R^p}$. Then $\~T $ is the tangent space to 
the affine rank $p$ locus.  For $\varphi \in R^p\setminus R^{p-1}$ 
this tangent space is described in ((\cite{ACGH}) see page  68) as the  
matrices which map the 
kernel of $\varphi$ into the image of $\varphi$.  That is to say:
 $\~T = \left\{ \psi \in \Hi{K_C
       \otimes L_1^{-1} \otimes L_2^{-1}} | \psi: \ker \varphi \to \im \varphi
       \right \}$.  But, $\ker \varphi = \H{C, L_1(-D)}$ and 
$\im \varphi = \left( \{       x \in \Hi{K_C \otimes L_1^{-1}} | x|_{\H{L(-D)}} = 0 \right\}$ so
       $$\~T = \ker \left( \Hi{K_C \otimes L_{12}^{-1} } \to
       \Hom \left( \H{L_1(-D)}, \Hd{L_2(-D)} \right) \right) $$
       Clearly 
       
         $$T_L(2D) \subset 
	 \ker \left( \Hi{K_C \otimes L_{12}^{-1} } \to
	 \Hi{K_C \otimes L_{12}^{-1}  (D)} \right) \subset \~ T$$
       
       so we must show
       $$
       \Hi{C, K_C \otimes L_{12}^{-1}  (2D)}
       \to
       \Hom \left( \H{L_1(-D)}, \Hd{L_2(-D)} \right)
      $$
       is injective or by Serre duality (and using $\Hom(A\vee,B) = A\otimes
       B$) that 
      $$
       \H{C, L_1(-D)} \otimes \H{nC, L_2(-D)} \to
       \H{C, L_{12} (-2D)}
      $$
       is surjective, which is true since $\deg(L_i(-D)) \ge 2g-1$.
     \end{proof}
\begin{cor}
$R^p$ is smooth at $\varphi \in R^p\setminus R^{p-1}$
\end{cor}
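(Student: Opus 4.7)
The strategy is a dimension count. By Theorem \ref{T:genscheme} we know that $R^p = \Sec^{p-1}(C)$ as schemes, so $\dim R^p = 2p - 1$ (the expected secant dimension, which is realized here since the ambient projective space has dimension much larger than $2p-1$ under our degree hypotheses). To prove smoothness at $\varphi$ it therefore suffices to verify that the tangent space described in the preceding theorem has projective dimension exactly $2p - 1$, which is a matter of unwinding a single long exact sequence.

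By the preceding theorem, $T_{\varphi,R^p}$ is the projectivization of $\partial\bigl(\H{K_C \otimes L_{12}^{-1}(2D)|_{2D}}\bigr) \subset \Hi{K_C \otimes L_{12}^{-1}}$. The sheaf $K_C \otimes L_{12}^{-1}(2D)|_{2D}$ is a torsion sheaf of length $2p$, so its $H^0$ has dimension $2p$. From the long exact sequence of cohomology associated to
\[
  0 \to K_C \otimes L_{12}^{-1} \to K_C \otimes L_{12}^{-1}(2D) \to K_C \otimes L_{12}^{-1}(2D)|_{2D} \to 0,
\]
together with Lemma \ref{L:2.2}(ii), we obtain
\[
  \dim \partial\bigl(\H{K_C \otimes L_{12}^{-1}(2D)|_{2D}}\bigr) = 2p - r_{L_{12}}(2D).
\]
Thus everything reduces to showing that $r_{L_{12}}(2D) = 0$, i.e., that $2D$ imposes independent conditions on $L_{12}$.

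This last step is the only place where the degree hypotheses of Theorem \ref{T:mainresult} get used. By Geometric Riemann-Roch and Serre duality, $r_{L_{12}}(2D) = 0$ whenever $\hi{L_{12}(-2D)} = 0$, which holds as soon as $L_{12}(-2D)$ is non-special. Since $\deg(L_i) \geq 2g + 1 + k$ and $p \leq k$, we compute
\[
  \deg(L_{12}(-2D)) \geq 2(2g + 1 + k) - 2k = 4g + 2 > 2g - 2,
\]
so $L_{12}(-2D)$ is non-special and $r_{L_{12}}(2D) = 0$. Consequently the tangent space at $\varphi$ has projective dimension $2p - 1 = \dim R^p$, and $R^p$ is smooth at $\varphi$. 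The only routine bookkeeping to verify is that the generic dimension of $\Sec^{p-1}(C)$ really is $2p - 1$ in our ambient projective space, but this follows exactly as in Theorem \ref{T:secdim} since $\h{C,L_{12}}$ is far larger than $2p+1$ under the degree hypothesis.
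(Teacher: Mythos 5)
Your argument is correct and takes essentially the same route as the paper: both identify $T_{\varphi,R^p}$ with the linear span of $2D$ via the preceding tangent-space theorem and conclude by comparing its dimension with $\dim\Sec^{p-1}(C)=2p-1$. You have simply made explicit the step the paper leaves implicit (its one-line proof asserts the tangent space ``is of dimension $2p+1$'' without justification), namely that $2D$ imposes independent conditions on $L_{12}$ under the degree hypotheses, so that $\partial\bigl(\H{K_C\otimes L_{12}^{-1}(2D)|_{2D}}\bigr)$ has affine dimension exactly $2p$.
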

\begin{proof}
Since $p< \cliff(C,L)$ by the above argument $ T_{\varphi,R^p} =T_{L_{12}}(D)$
which is of dimension $2p+1$.
\end{proof}


This concludes the proof of  Theorem \ref{T:mainresult},
since for any factorization $L= L_1 \otimes L_2$ with $
\deg(L_i) \geq 2g+1+k$ one has $\cliff(C,L_i) \geq k+1$. 
For the case $L_1=L_2$ the result is sharp.
By Lemma \ref{L:specialvar}   we can for $L_1 = L_2 =L=K_C(D)$ find an element  $\tau \in T_{L_{12}}^*(D)$ with 
$d-2 = \rk(\tau) < \deg(\tau)= d-1$.
In the language of Theorem \ref{T:mainresult}
$d=k+3 $ and    $\cliff(C,L)= k+1$. In
particular, for $k=0$ this gives a weaker result than  \ref{T:eks},
the result proved in (\cite{eks}). However, the theorem can be 'tweaked' to get,

\begin{lemma}
 If $L_1\neq L_2$,but $\deg(L_i)\geq 2g+k+1$ , then
$\sec^j(C) = R^j$ for $j \leq (k+1)$
then 
\end{lemma}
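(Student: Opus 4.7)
The plan is to obtain a stronger lower bound on $\cliff(C,L_1,L_2)$ under the hypothesis $L_1\ne L_2$ and then feed it directly into Theorem~\ref{T:genscheme}. All the hard work has already been done in the preceding sections; what is needed is simply a better input into that scheme-theoretic equality. So the entire proof should be a short reduction: improve the Clifford index by one, and apply the machine.

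First I would show that $\cliff(C,L_1,L_2)\ge k+2$. Write $\deg(L_i)=2g-2+d$ with $d=k+3$. By Proposition~\ref{P:cliffbound}(ii) the inequality $\cliff(C,L_1,L_2)\ge d-1$ can only fail in two cases: either (a) $L_1=L_2=K_C(D)$ for some effective $D$, or (b) $C$ is hyperelliptic and $L_i = K_C(D-E_i)$ with each $E_i$ a multiple of the $g^1_2$. Case (a) is excluded by the hypothesis $L_1\ne L_2$. For case (b), the equality $\deg(L_1)=\deg(L_2)$ forces $\deg(E_1)=\deg(E_2)$; since the $g^1_2$ on a hyperelliptic curve is unique, $E_1$ and $E_2$ must be the same multiple of it, so $E_1=E_2$ and again $L_1=L_2$, contradicting the hypothesis. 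Hence $\cliff(C,L_1,L_2)\ge d-1=k+2$.

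With that in hand I would invoke Theorem~\ref{T:genscheme}, which gives $\Sec^{j-1}(C)=R^j$ as schemes for every $j<\cliff(C,L_1,L_2)$; by the bound just established this is valid for every $j\le k+1$, which (reindexing as in the statement) is the claim. The main obstacle is nothing new geometrically — it is simply the careful case-elimination in Proposition~\ref{P:cliffbound}(ii); in particular the hyperelliptic subcase relies essentially on the uniqueness of the $g^1_2$, which is the only delicate point in the whole argument.
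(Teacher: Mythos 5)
Your proof is correct and follows essentially the same route as the paper: the paper's own argument cites Proposition~\ref{P:cliffbound}(iii) to get $\cliff(C,L_1,L_2)\ge k+2$ and then applies Theorem~\ref{T:genscheme}, and your inline case-elimination (excluding $L_1=L_2=K_C(D)$ and the hyperelliptic case via uniqueness of the $g^1_2$) is exactly the content of the proof of part (iii). No substantive difference.
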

\begin{proof}
By Proposition \ref{P:cliffbound} (iii) we have 
$\cliff(C,L_1,L_2) \geq k+2$ .The lemma follows from 
Theorem \ref{T:genscheme}
\end{proof}

\section[Line Bundles with $h^1(L)=1$]{The Clifford Index for line bundles with  $ h^1(L)=1$}\label{S:h1}

Throughout this section we will write $L = K_C(-P)$, where $P$ is an effective
divisor of degree $p < c = \cliff(C)$. Since $\deg(P) < c$, $\h{\OO_C(P)} =
1$ (else $\cliff(C) \le p-2$), so $\hi{L} = \h{K_C \otimes L^{-1}} =
\h{\OO_C(P)} = 1$. This is the only case we will discuss as it is the only
situation in which I can say something meaningful about $\cliff(C,L)$. 

\begin{lemma}

 Suppose that  $L= K_C(-P)$ with $\deg(P) = p < c$ then

\begin{enumerate}

\item $\cliff(L,D) = \cliff(D+P) -p$.  
\item $ c-p \leq \cliff(C,L) \leq c$. 
\item  $ \cliff(C,L) = c-p$, except possibly in the case where $\cliff(C)= [\frac{g-1}{2}]$
and $p= \cliff(C)-1$

\end{enumerate}

\end{lemma}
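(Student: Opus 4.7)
First, part (1) follows by a direct application of Lemma~\ref{L:2.2}(iii) to $L = K_C(-P)$. I have $L(-D) = K_C(-(P+D))$, and using the Serre-duality symmetry $\cliff(K_C \otimes M^{-1}) = \cliff(M)$ one gets $\cliff(L(-D)) = \cliff(P+D)$ and $\cliff(L) = \cliff(P) = p$. The last equality uses $\h{\OO_C(P)} = 1$, which holds since $\deg(P) = p < c = \cliff(C)$. Substituting gives $\cliff(L,D) = \cliff(P+D) - p$.

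The structural observation underlying both (2) and (3) is that an effective divisor $D$ is eligible to compute $\cliff(C,L)$ if and only if $E := P+D$ is eligible to compute $\cliff(C)$. Indeed, by Lemma~\ref{L:2.2}(ii), $r_L(D) = \h{\OO_C(P+D)} - 1$, so $r_L(D) > 0 \Leftrightarrow \h{\OO_C(E)} \geq 2$, while the codimension-two condition on $\overline D$ unpacks to $\h{L(-D)} = \hi{\OO_C(E)} \geq 2$. Combined with (1),
\[
\cliff(C,L) + p = \min\{\cliff(E) : E \geq P \text{ and } E \text{ eligible for } \cliff(C)\},
\]
from which the lower bound $\cliff(C,L) \geq c - p$ in (2) is immediate.

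For the upper bound $\cliff(C,L) \leq c$, I will exhibit an eligible $D$ with $\cliff(P+D) \leq c + p$. Pick $E_0$ computing $\cliff(C)$ with minimal $\h{E_0} = 2$, so $\deg(E_0) = c+2$ and $\hi{E_0} = g - c - 1$, and set $D = E_0$. Adding an effective divisor of degree $p$ raises Clifford index by at most $p$, giving $\cliff(E_0+P) \leq c + p$; the short exact sequence $0 \to \OO(E_0) \to \OO(E_0+P) \to \OO(E_0+P)|_P \to 0$ shows $\hi{E_0+P} \geq \hi{E_0} - p = g - c - 1 - p$ and $\h{E_0+P} \geq 2$, so $E_0+P$ is eligible for $\cliff(C)$ whenever $g \geq c + p + 3$. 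Since $c \leq \lfloor(g-1)/2\rfloor$ and $p \leq c-1$, this inequality fails only when $c = \lfloor(g-1)/2\rfloor$ and $p = c-1$; in the exceptional case I will fall back on a higher-stratum variant, taking $E = P + D$ with $D$ ranging over $W^1_{c+2+k}$ for some small $k \geq 1$, and using a Brill--Noether dimension count to impose the incidence $E \geq P$ while keeping $\cliff(E) \leq c + p$.

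Finally, part (3) asserts equality $\cliff(C,L) = c - p$, which by the formula above is equivalent to the existence of a divisor $E_0$ actually computing $\cliff(C)$ with $E_0 \geq P$. Outside the exceptional range, the construction in the preceding paragraph produces exactly such an $E_0$ directly. The main obstacle, and the reason for the caveat in the statement, is the exceptional case $c = \lfloor(g-1)/2\rfloor$, $p = c-1$: for generic curves the Brill--Noether locus $W^1_{c+2}$ has dimension $\rho = 2c + 2 - g \leq 1$, so the total family of Clifford-index-computing divisors has dimension at most $2$, which for $p = c-1 \geq 3$ is insufficient to impose the $p$ incidence conditions imposed by $P$; the dimension count controlling all other cases breaks down at precisely this boundary, and establishing or refuting equality there would require a finer analysis of the geometry of $W^1_{c+2}$ that I do not carry out.
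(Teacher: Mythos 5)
Your part (1) and the lower bound in (2) track the paper's argument closely: where the paper computes $r_L(D)=r_{K_C}(D+P)$ directly, you invoke Lemma~\ref{L:2.2}(iii), and both routes rest on the same observation that $D$ is eligible to compute $\cliff(C,L)$ exactly when $D+P$ is eligible to compute $\cliff(C)$. Your upper bound in (2) (add $P$ to a divisor computing $\cliff(C)$) is also the paper's; you are right that eligibility of $E_0+P$ requires a numerical condition that the paper glosses over, and your fallback for the boundary case is only a sketch, but that is a minor point.

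The genuine gap is in part (3). You correctly reduce the equality $\cliff(C,L)=c-p$ to producing a divisor $E$ that computes $\cliff(C)$ and satisfies $E\ge P$, but you then assert that the construction from the preceding paragraph produces such an $E$. It does not: that construction yields $E_0+P$ with $\cliff(E_0+P)\le c+p$, and generically $\cliff(E_0+P)=c+p$ (adding $p$ general points to $E_0$ adds no sections), so $E_0+P$ computes $\cliff(C)$ only if $p=0$. What you actually get from it is the upper bound $\cliff(C,L)\le c$ again, not $\le c-p$. The missing idea --- and the one the paper uses --- is the Serre-adjoint divisor: if $D$ computes $\cliff(C)$ then so does $K_C-D$, and $K_C-D\ge P$ precisely when $L(-D)=K_C(-D-P)$ is effective; taking $D'$ an effective divisor in $|L(-D)|$ gives $\cliff(L,D')=\cliff(K_C-D)-p=c-p$, with $D'$ eligible because $D$ is. The effectivity of $K_C(-D-P)$ comes from $\h{K_C(-D)}=\hi{\OO_C(D)}\ge 2$ together with a degree count, and it is exactly this count that can fail when $\cliff(C)=\lfloor\frac{g-1}{2}\rfloor$ and $p=\cliff(C)-1$, which is where the exceptional case in the statement comes from. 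Your Brill--Noether discussion correctly locates that boundary case, but the argument it is meant to supplement already breaks down in the non-exceptional range.
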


\begin{proof}
\begin{enumerate}
\item  First notice that $r_L(D) = r_{K_C}(D+P) $ since
\begin{align*} 
    r_L(D) 
    & = \h{K_C \otimes L^{-1}(D)} - \h{K_C \otimes L^{-1}} \\ 
    & = \h{\OO_C(D+P)} - \h{\OO_C(P)} \\ 
    & = \h{\OO_C(D+P)} - 1 \\ & = r_{K_C}(D+P).
  \end{align*} 

Setting $d= \deg(D)$ we have:
\begin{align*} 
    \cliff(L,D) 
    & = d-2r_L(D) \\ 
    & =(d+p - 2r_L(D))-p \\ 
    & = \deg(D+P)-2r_{K_C}(D+P) - p \\ & = r_{K_C}(D+P)-p.
  \end{align*}

  \item If $D$ computes $\cliff(C,L)$, then   
  it spans at most a
  codimension 2 plane in $\PP(\Hd{C,L})$, so $\h{I_{\overline{D}}} = \h{L(-D)}
  \ge 2$. But $L(-D) = K_C(-P-D)$, so $D+P$ is eligible to compute
  $\cliff(C)$, and hence by 1. $\cliff(L,D)=\cliff(D+P)-p \geq c-p$.
Considering the divisor $D+P$ where $D$ computes $\cliff(C)$ gives the 
other inequality.
\item Suppose $D$ computes $\cliff(C)$ and $\deg(D) < [\frac{g-1}{2}]$.
Consider the divisor $L(-D)$.  It is effective except possibly in the case
when $\cliff(C)= [\frac{g-1}{2}]$ and $p= \cliff(C)-1$.  By 1, $\cliff(C,L(-D))=
c-p$.
 
\end{enumerate}

\end{proof}

{\bf Remark:} In general I would not expect it to be the case these divisors
give rise to Shiffer variations of low rank. This is in complete analogy to
the fact that a curve being non-arithmetically normal gives rise to a divisor
of degree $2n+2$, spanning $n$ planes, but the existence of an $2n+2$ pointed
 $n$ plane
do not necessarily imply that the embedding is not arithmetically normal.

{\bf Remark:} We postpone  the discussion of the possible pathology  that can
occur until after our one positive result. 

Notice that when $L=K_C(-P)$, $L$ is  always very ample. If $D$ was a divisor of degree 2 
such that $\h{L(-D)} \ge \h{L} - 1$, then $\hi{L(-D)} \ge 2$, and hence $\cliff(P+D) \le
p+2 - 2 = p < \cliff(C)$. Since \[\deg(P+D) = \deg(P) + 2 < c+2 <
\frac{g-1}{2} + 2 < g,\] $P+D$ is eligible to compute $\cliff(C)$, and we
would have a contradiction since $p < c$.

To apply our standard setup we need to know that $L$ is quadratically normal.
This is a special case of a theorem of Green and Lazarsfeld proven in \cite{GL} 

\begin{thm}[Green-Lazarsfeld]

  Suppose $L$ is very ample with $\deg(L) \ge 2g + 1 - 2\hi{L} -
  \cliff(C)$, then $L$ is projectively normal. 

\end{thm}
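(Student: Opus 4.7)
The plan is to split the argument into two pieces: first reduce projective normality to quadratic normality by a standard Castelnuovo--Mumford type bootstrap, then prove quadratic normality via the Clifford-index/Shiffer-variation machinery developed in Sections~\ref{S:GRR}--\ref{S:large_cliff}. For the bootstrap, once $\H{L}\otimes\H{L}\twoheadrightarrow\H{L^{\otimes 2}}$ and $\hi{L^{\otimes k}}=0$ for $k\geq 2$, the higher multiplication maps $\H{L}\otimes\H{L^{\otimes k}}\twoheadrightarrow\H{L^{\otimes(k+1)}}$ follow from the base-point-free pencil trick; the vanishing $\hi{L^{\otimes k}}=0$ for $k\geq 2$ is an easy separate degree check.

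By the Fact recorded in Section~\ref{S:GSV} just after the introduction of the map $A$, quadratic normality of $L$ is equivalent to injectivity of
\[ A \,:\, \Hi{K_C\otimes L^{-2}} \longrightarrow \Hom\bigl(\H{L},\Hi{K_C\otimes L^{-1}}\bigr). \]
I would show that $\cliff(C,L)\geq 1$ is enough to force $A$ injective. Given any nonzero $\xi\in\Hi{K_C\otimes L^{-2}}$, the secant-variety dimension count used in the proof of Theorem~\ref{T:mthm} realizes $\xi$ as a generalized Shiffer variation $\xi\in T_L^*(D)$ for some minimal divisor $D$. Theorem~\ref{T:rankcalc} gives $\rk(\xi)\geq d-2r_L(D)=\cliff(L,D)$; if $D$ is eligible to compute $\cliff(C,L)$ this is at least $\cliff(C,L)\geq 1$, and if $D$ is ineligible (either $r_L(D)=0$ or $\overline{D}$ is a hyperplane) the bound still gives $\rk(\xi)>0$ after a short case check in the style of the final paragraph of the proof of Theorem~\ref{T:mthm}. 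In every case $A(\xi)\neq 0$.

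The numerical step is to show the degree hypothesis implies $\cliff(C,L)\geq 1$. The case $\hi{L}=0$ is covered by Section~\ref{S:large_cliff}, so assume $\hi{L}=r+1\geq 1$ and write $K_C\otimes L^{-1}\cong\OO_C(P)$ with $P$ effective of degree $p=2g-2-\deg(L)$ and $\h{\OO_C(P)}=r+1$. A direct generalization of the opening lemma of Section~\ref{S:h1}, using $r_L(D)=r_{D+P}-r_P$, yields
\[ \cliff(L,D)=\cliff(D+P)-\cliff(P). \]
If $D$ is eligible to compute $\cliff(C,L)$ then $\h{\OO_C(D+P)}=r_L(D)+\hi{L}\geq 2$ and $\hi{\OO_C(D+P)}=\h{L(-D)}\geq 2$, so $D+P$ is eligible to compute $\cliff(C)$ and $\cliff(D+P)\geq c$. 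Combined with $\cliff(P)=p-2r$ and the fact that the hypothesis $\deg(L)\geq 2g+1-2\hi{L}-c$ is equivalent to $p\leq 2r+c-1$, one obtains
\[ \cliff(C,L)\geq c-(p-2r)\geq c-(2r+c-1-2r)=1. \]

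The main obstacle I expect is the exceptional Clifford-equality cases: if $P$ is a multiple of a $g^1_2$ on a hyperelliptic $C$, or equals $K_C$, then $\cliff(P)=0$ and the bound $\cliff(C,L)\geq c-\cliff(P)$ needs re-examination together with a check that $D+P$ really does achieve the minimum for $\cliff(C)$. These should be dispatchable in the spirit of the opening lemma of Section~\ref{S:h1}, but a careful case analysis is needed. A secondary point is confirming the Castelnuovo--Mumford bootstrap when $\hi{L}$ is large and $\deg(L)$ correspondingly small (so that $\deg(L^{\otimes 2})$ may fail to exceed $2g-2$), which may require a Koszul-cohomology argument in place of the naive $\hi{L^{\otimes k}}=0$ vanishing.
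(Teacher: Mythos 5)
Your proposal follows essentially the same route as the paper: reduce to quadratic normality via the base-point-free pencil trick, dualize to injectivity of $A$, realize a would-be kernel element as a Shiffer variation $\xi\in T_L^*(D)$ of bounded degree via the secant dimension count, and rule out rank zero using the relation between $\cliff(L,D)$ and the Clifford index of $D+P$ where $\OO_C(P)=K_C\otimes L^{-1}$. Your identity $\cliff(L,D)=\cliff(D+P)-\cliff(P)$ together with the observation that the degree hypothesis is exactly $\cliff(P)\le c-1$ is a clean packaging of the paper's computation (the paper only needs the case $\h{\OO_C(P)}=1$, since, as it remarks, the hypothesis forces $\hi{L}\le 1$), and your treatment of the ineligible-divisor case (where $\overline{D}$ spans a hyperplane, so that $\cliff(D+P)=\deg(K_C(-D-P))\ge\frac{g+1}{2}>c$ by the degree bound on $D$) is exactly the paper's.

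The one genuine gap is your disposal of the case $\hi{L}=0$. You defer it to Section~\ref{S:large_cliff}, but that section assumes $\deg(L)\ge 2g+1$, whereas the theorem's hypothesis with $\hi{L}=0$ only gives $\deg(L)\ge 2g+1-\cliff(C)$, leaving the range $2g+1-\cliff(C)\le\deg(L)\le 2g$ uncovered; moreover your identity $\cliff(L,D)=\cliff(D+P)-\cliff(P)$ is unavailable there because $K_C\otimes L^{-1}$ is not effective. The paper closes this case separately: from $\cliff(L,D)=0$ and $r_L(D)>0$ one writes $L(-D)=K_C(-E)$ with $E$ effective, computes $\deg(L)=2g-\cliff(E)$, and shows $\cliff(E)\ge\cliff(C)$ (using the degree bound on $D$ to handle the subcase $\hi{\OO_C(E)}\le 1$), contradicting $\deg(L)\ge 2g+1-\cliff(C)$. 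You would need to add this branch. Your remaining worries are minor: the exceptional Clifford-equality cases for $P$ do not arise (if $C$ is hyperelliptic then $c=0$ and the hypothesis forces $\hi{L}=0$ and $\deg(L)\ge 2g+1$), and $\hi{L^{\otimes k}}=0$ for $k\ge 2$ is immediate since $2\deg(L)\ge 2(2g-1-c)>2g-2$.
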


In our case $\hi{L} = 1$ and $\deg(L) = 2g - 2 - p > 2g - 2 - \cliff(C)$. We
will actually give a proof of the theorem, as the use of Shiffer deformation
and Clifford index gives (to us!) a conceptual proof of the theorem. As [G-L]
points out, cubic and higher normality follow from the base-point-free pencil
trick and the only issue is to prove quadratic normality.

A simple argument will show that the failure of quadratic normality implies
the existence of a divisor of Clifford index zero. From our point of view it
is natural to restate the inequality of the theorem as \[\deg(L) \ge (2g -
2\hi{L}) - (\cliff(C) - 1 ).\] This makes clear the basic idea, the only way
to get a divisor of Clifford index zero is as the ``projection'' from a plane
of dimension $(c-1)$ of a divisor of Clifford index $c$. This is the geometry behind the proof.
Incidentally one can check that the inequality of the theorem implies
$\hi{L} \le 1$.

\begin{proof}

  Firstly, if $L$ is not quadratically normal, then the map \[\H{L} \otimes
  \H{L} \to \H{L^{\otimes 2}}\] is not surjective; or dually the map \[\Hi{K_C
  \otimes L^{-2}} \to  \Hom(\H{L},\Hi{K_C \otimes
  L^{-1}})\] is not injective. If $\xi \in \Hi{K_C \otimes L^{-2}}$ is in the
  kernel of this map, then, viewing $\xi$ as a matrix in $\Hom(\H{L},\Hi{K_C \otimes L^{-1}})$
   we have  $\rk(\xi) = 0$. Viewing $\xi$ as a Shiffer variation, there is a divisor 
  $D$ on $C$ such that $\xi \in T_L(D)$ and setting $d
  = \deg(D)$, $r = r_L(D)$, we have $d-2r \le 0$. Adding points to $D$ if
  necessary we have a divisor $D$ such that $\cliff(L,D) = 0$.

We can bound the degree of $D$ such that $\xi \in T_L(D)$. Namely, if $L=K_C(-P)$  we have
$\deg(D) \le \frac{3g-5}{2} - p$. To prove this, we use the  same idea
as for $L = K_C$. Namely the embedding $C \hookrightarrow \PP(\Hd{C,L^{\otimes
2}})$  in a projective space of dimension $3g - 4 - 2p$. This is because
$p < \cliff(C) \le \frac{g-1}{2}$ implies $\deg(L^{\otimes 2}) = 4g - 4 - 2p
\ge 3g -3$, so $L^{\otimes 2}$ isn't special. Now $\dim(\Sec^j(C)) = 2j + 1$
implies that as long as $j \ge \frac{3g-5}{2} - p$, $\Sec^j(C) =
\PP(\Hd{C,L^{\otimes 2}})$, so any $\xi \in \Hi{K_C \otimes L^{-2}}$ is the
sum of at most $\frac{3g-5}{2} - p$ Shiffer variations.

We consider separately the cases of $\hi{L} = 0$ and $\hi{L} = 1$. If $\hi{L}
= 1$, $L = K_C(-P)$, where $P$ is a divisor of degree $p$ and our inequality
is that $p < c = \cliff(C)$. $\cliff(L,D) = 0$ means $d - 2r_L(D) = 0$ and
$r_L(D) = \h{\OO_C(D+P)} - 1 = \frac{d}{2}$, and hence $\cliff(K_C,\OO_C(D+P))
= d + p - 2\left( \frac{d}{2} \right) = p < c$. Thus $D+P$ cannot be used to
compute $\cliff(C)$ and hence must span a hyperplane. That is,
$\h{K_C(-D-P)} = 1$ and so $\cliff(K_C(-D-P) = \deg(K_C(-D-P)$
. But then \[\cliff(\OO_C(D+P)) = \cliff(K_C(-D-P)) =
\deg(K_C(-D-P)) = 2g - 2 - (d + p).\] But recall, $d \le \frac{3g-3}{2} - p$,
so \[\deg(K_C(-D-P)) \ge 2g - 2 - \frac{3g-5}{2} \ge \frac{g+1}{2} >
\cliff(C),\] and hence $\cliff(L,D) = \cliff(\OO_C(D+P) - p >\cliff(C)-p> 0$. 
This is a
contradiction.

Now assume $\hi{L} = 0$ and $\cliff(L,D) = 0$, $\hi{L(-D)} = r_L(D) > 0$ so
$L(-D) = K_C(-E)$. Let $\ell = \deg(L)$, $d = \deg(D)$, $e = \deg(E)$, and $r
= r_{K_C}(D)$, so $r + 1 = r_L(D)$. From $D = L \otimes K_C^{-1}(E)$ we get $d
= \ell + e - (2g - 2)$. Now $d - 2(r+1) = 0$ can be rewritten as $l + e -
(2g-2) - 2(r+1) = 0$, or $\ell + (e - 2r) = 2g$, or $\ell = 2g - \cliff(E)$.
If $E$ satisfies $r>0$ and $\hi{\OO_C(E)} \ge 2$, then $\cliff(E) \ge
\cliff(C)$, and $\ell \le 2g - \cliff(C)$ as desired. If $r=0$, then $d=2$ and
$L$ isn't very ample! The only case left to rule out is $\hi{\OO_C(E)} \le 1$.
That is $E = K_C(-P)$ with $P$ general.  Since $P$ is general, $ \cliff(E)= \cliff(P)=\deg(P)$.         
 We can bound $\deg(D)$ from above and
hence $\deg(P)$ from below. If $\deg(L) = \ell$, $\h{L^{\otimes 2}} = 2\ell -
g + 1$, and by the usual argument we can assume $\deg(D) \le \frac{2\ell - g
-1}{2}$. Since $K_C(-E) = L(-D)$, \[\deg(P) = \deg(K_C(-E)) = \deg(L(-D)) \ge \ell -
\frac{2\ell - g - 1}{2} \ge \frac{g+1}{2}.\] So $\cliff(E) \geq \frac{g+1}{2} \geq  \cliff(C)$
 and so \[ l = 2g - \cliff(E) \geq 2g - \cliff(C)\] \,
as desired. 

\end{proof}

\begin{cor}

  Suppose $L$ is very ample, $\deg(L) = 2g$ and $L$ is not arithmetically
  normal. Then $C$ is hyperelliptic.

\end{cor}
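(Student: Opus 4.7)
The plan is to derive this directly as a contrapositive of the Green--Lazarsfeld theorem proved just above.

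First I would compute $h^1(L)$ for our line bundle. Since $\deg(L) = 2g$ we have $\deg(K_C \otimes L^{-1}) = 2g - 2 - 2g = -2 < 0$, so by Serre duality $h^1(L) = h^0(K_C \otimes L^{-1}) = 0$. Substituting $h^1(L) = 0$ and $\deg(L) = 2g$ into the hypothesis of the Green--Lazarsfeld theorem, the sufficient condition $\deg(L) \geq 2g + 1 - 2h^1(L) - \cliff(C)$ becomes simply
\[ 2g \geq 2g + 1 - \cliff(C), \quad\text{i.e.,}\quad \cliff(C) \geq 1. \]

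Now I would take the contrapositive: if $L$ is very ample of degree $2g$ but fails to be arithmetically (equivalently, projectively) normal, then the sufficient condition of Green--Lazarsfeld must fail, which forces $\cliff(C) < 1$, hence $\cliff(C) = 0$. By the classical characterization recalled in Section \ref{S:CI} (Clifford's theorem together with its equality case), $\cliff(C) = 0$ if and only if $C$ is hyperelliptic.

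There is no real obstacle here: the entire content of the corollary is repackaging the Green--Lazarsfeld bound in the critical case $\deg(L) = 2g$, where the inequality is tight precisely when $\cliff(C) = 1$. The only thing to check is the arithmetic $h^1(L) = 0$, which is immediate from the degree hypothesis.
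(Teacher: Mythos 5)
Your argument is correct, and it reaches the conclusion by a slightly different (and more modular) route than the paper. You treat the Green--Lazarsfeld theorem as a black box: since $\deg(K_C\otimes L^{-1})=-2<0$ gives $\hi{L}=0$, the hypothesis of the theorem reads $2g\geq 2g+1-\cliff(C)$, i.e.\ $\cliff(C)\geq 1$; failure of projective normality therefore forces $\cliff(C)=0$, and the equality case of Clifford's theorem identifies $C$ as hyperelliptic. The paper instead re-enters the proof of the theorem: in the case $\hi{L}=0$ the failure of quadratic normality produces a divisor $E$ with $\ell=2g-\cliff(E)$, so $\deg(L)=2g$ forces $e-2r=0$, and the equality case of Clifford's theorem is then applied to that specific divisor $E$ (after discarding $E=\OO_C$ and $E=K_C$), which yields the extra information that $E$ is a multiple of the $g_2^1$. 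Both proofs ultimately rest on the same mechanism --- a Clifford-index-zero object forced by the tight degree $2g$ --- but yours buys brevity and independence from the internals of the proof, while the paper's identifies the offending divisor explicitly. The only points to keep in mind are that your contrapositive uses the theorem's conclusion of full projective normality (the paper reduces arithmetic normality to quadratic normality via the base-point-free pencil trick, so this is consistent), and that the characterization $\cliff(C)=0\Leftrightarrow C$ hyperelliptic is exactly the restatement of Clifford's theorem recalled in Section~\ref{S:CI}.
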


\begin{proof}

  From the proof of the theorem we see that $\deg(L) = 2g$ implies $e - 2r =
  0$. By Clifford's theorem, after  easily ruling out the cases of $E = \OO_C$
  or $K_C$, we see that $C$ is hyperelliptic and $E$ is a multiple of the
  $g^1_2$ on $C$.

\end{proof}

When $L = K_C$ we have seen that $\cliff(C)$ characterizes on the nose the
degree to which secant varieties are rank loci. For $L = K_C(-P)$ this is no
longer true. We can always guarantee the same bound, but unlike for $L$ with
$\hi{L} = 0$ or $L = K_C$, the existence of a divisor with $\cliff(L,D) = c$
does not seem to imply there exists a $\xi \in T_L(D)$ with $\rk(\xi) = \cliff(L,D)$.
Nonetheless, the more important lower bound always holds. Consider the standard
diagram:
\begin{displaymath}
  \xymatrix{
  & \PP \left( \Hi{C, K_C \otimes L^{-1}} \right) 
  \ar[dr]^{v} & \\ 
  C 
  \ar[ur]^{\Phi_L} 
  \ar[dr]^{\Phi_{2L}} &  
  & \PP \left( \sym^2 \left( \Hi{C, K_C \otimes L^{-1}} \right) \right) \\
  & \PP \left( \Hi{K_C \otimes L^{-2}} \right) 
  \ar@{^{(}->}[ur]^{i} & }
\end{displaymath}
As always, $v$ is the Veronese map, $i$ is an inclusion since $L$ is
quadratically normal, and $\Phi_L$, $\Phi_{2L}$ are the maps associated to the
linear systems $L$ and $L^{\otimes 2}$. We set $\Sec^j(C) =
\Sec^j(\Phi_{2L}(C)) \subset \PP(\Hi{K_C \otimes L^{-2}})$, $R^j(C) = R^j =
\Sec^j(v(C)) \cap \PP(\Hi{K_C \otimes L^{-2}})$, so $R^j$ is the rank $j$
locus.

\begin{thm}
  Suppose $j<\cliff(C,L)$; then $R^j(C) = \Sec^{j-1}(C)$ as sets.
\end{thm}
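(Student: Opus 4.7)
The plan is to adapt the proof of Theorem \ref{T:mainthm} (the $\hi{L}=0$ case) to the case $\hi{L}=1$, with the necessary adjustments coming from the fact that $\partial_D$ can fail to be injective once $r_L(D)>0$. The inclusion $\Sec^{j-1}(C)\subset R^j(C)$ is standard: each point of $C$ is a rank-one Shiffer variation $\sigma_p^2\in\PP(\Hi{K_C\otimes L^{-2}})$, and a sum of $j$ rank-one matrices has rank at most $j$ (Lemma \ref{L:rankj}).

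For the reverse inclusion, fix $\xi\in R^j(C)\setminus R^{j-1}(C)$. Since $\deg(L^{\otimes 2})=4g-4-2p>2g-2$ (as $p<c\le(g-1)/2$), $L^{\otimes 2}$ is non-special, and the secant dimension count of Theorem \ref{T:secdim} applied to the $L^{\otimes 2}$-embedding forces every element of $\Hi{K_C\otimes L^{-2}}$ to lie in some $T_L(D)$ with $\deg D\le\lceil(3g-3-2p)/2\rceil$. I would pick such a $D$ of minimal degree $d$, so that $\xi\in T_L^*(D)$, and then apply Theorem \ref{T:rankcalc} to obtain $\cliff(L,D)=d-2r_L(D)\le\rk(\xi)=j$.

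I would then split into three cases according to the geometry of $D$. In the case $r_L(D)=0$: the long-exact-sequence calculation shows $\dim\ker(\partial_D)=r_L(D)=0$, so $\partial_D$ is injective. Combined with the fact that $\tilde\xi$ from Theorem \ref{T:localcalc} is an isomorphism (since $\xi\in T_L^*(D)$) and that $\rho:\H{L}\to\H{L|_D}$ is surjective when $r_L(D)=0$, one gets $\ker\xi=\H{L(-D)}$ and hence $\rk(\xi)=d$, so $\xi\in T_L(D)\subset\Sec^{d-1}(C)=\Sec^{j-1}(C)$. In the case $r_L(D)>0$ with $\h{L(-D)}\ge 2$, the divisor $D$ is eligible to compute $\cliff(C,L)$, so $\cliff(L,D)\ge\cliff(C,L)>j$, contradicting $\cliff(L,D)\le j$. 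The remaining case is $r_L(D)>0$ with $\h{L(-D)}=1$, i.e.\ $\overline{D}$ is a hyperplane in $\PP(\Hd{L})$.

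In this last case, $L(-D)=K_C(-P-D)$ has a one-dimensional space of sections, so there is a unique effective $E\sim K_C-P-D$ with $\deg E=2g-2-p-d$. By Lemma \ref{L:2.2}(iii), $\cliff(L,D)=\cliff(\OO_C(E))-\cliff(L)=\cliff(\OO_C(E))-p$, using $\cliff(L)=p$ which follows from $\h{L}=g-p$. Since $r_L(D)>0$ implies $\h{\OO_C(P+D)}\ge 2$, Serre duality yields $\hi{\OO_C(E)}=\h{K_C-E}=\h{P+D}\ge 2$, placing $E$ in the range where Clifford's inequality applies. The minimality degree bound $d\le(3g-3-2p)/2$ forces $\deg E\ge(g-1)/2\ge c$. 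A sub-case analysis (whether $\h{\OO_C(E)}=1$, making $\cliff(\OO_C(E))=\deg E\ge c$, or $\h{\OO_C(E)}\ge 2$, making $E$ eligible for $\cliff(C)$ so $\cliff(\OO_C(E))\ge c$) yields $\cliff(L,D)\ge c-p\ge\cliff(C,L)>j$, the desired contradiction.

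The main obstacle will be this last case, specifically the exceptional situation identified in the lemma opening Section \ref{S:h1} where $\cliff(C,L)$ can reach $c$ rather than $c-p$ (when $c=\lfloor(g-1)/2\rfloor$ and $p=c-1$). In that regime the inequality $\cliff(L,D)\ge c-p$ alone falls short of forcing $\cliff(L,D)>j$, and a sharper analysis, exploiting the extra constraint $\h{\OO_C(P+D)}=r_L(D)+1$ together with the specific form $L=K_C(-P)$, will be needed to handle the corresponding $\xi$.
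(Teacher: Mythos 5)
Your proposal follows essentially the same route as the paper's proof: bound $\deg(D)$ by the secant dimension count so that $\xi\in T_L^*(D)$ with $\deg(D)$ roughly $\frac{3g-5}{2}-p$, then split into the cases $r_L(D)=0$ (where $\rk(\xi)=\deg(D)$ and $\xi\in\Sec^{j-1}(C)$), $D$ eligible (where $\cliff(L,D)\ge\cliff(C,L)>j$), and $\overline{D}$ a hyperplane (where $\cliff(L,D)=\cliff(K_C,D+P)-p$ and the degree bound forces $\cliff(K_C(-D-P))=\deg(K_C(-D-P))\ge\frac{g\pm1}{2}\ge c$). The residual difficulty you flag in the exceptional case $c=\lfloor\frac{g-1}{2}\rfloor$, $p=c-1$ is genuine, but it is equally unaddressed in the paper, whose concluding inequality $\cliff(L,D)\ge\frac{g+1}{2}-p>\cliff(C,L)$ tacitly relies on $\cliff(C,L)=c-p$; so your attempt matches the paper's argument both in substance and in its one soft spot.
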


\begin{proof}
  We use the same argument as before in the case $L=K_C$. As in the proof of
  the quadratic normality, any $\xi\in\Hi{K_C\otimes L^{-2}}$ can be written
  as $\xi\in T_L(D)$ where $\deg(D) \le (3 g-5)/2 -p$. If $D$ is eligible to
  compute $\cliff(C,L)$ then $\cliff(L,D) \ge \cliff(C,L)$. If $D$ is not
  eligible to compute $\cliff(C,L)$ then either $r_L(D) = 0$ or $\hi{L(-D)}
  \le 1$. In the first case, $\xi\in \sec^{d-1}(C) \setminus \Sec^{d-2}(C)$.
  In the later case we may assume $r_L(D)> 0$. Again exactly as in the proof
  of quadratic normality theorem and recalling $K_C\otimes L^{-1} = \OO_C(P)$
  we see that
  \[
    \cliff(L,D) = d - 2 \left( \h{\OO_C(D+P)-1} \right) = \cliff(K_C, D+P) - p.
  \]
  But
  \begin{multline*}
    \cliff(K_C, D+P) 
     = \cliff(K_C, K_C(-D-P)) \\
     \ge 2g-2 -\deg(D)-\deg(P) 
     \ge 2g-2-\frac{3g-5}{2}\ge\frac{g+1}{2}
  \end{multline*}
  Thus
  \[
    \cliff(L,D) \ge\frac{g+1}{2}-p > \cliff(L,C)
  \]
\end{proof}

\textbf{Remarks:}

\begin{enumerate}
  \item Scheme theoretic equality should follow exactly as in the cases of
    $L=K_C$ or $\hi{L} = 0$. I have not checked the details.

\end{enumerate}

2. In case $L=K_C$ the existence of $\xi$ with $\rk(\xi) = \cliff(C)$ was 
delicate. If $D$ computed $\cliff(C)$ one could find an $\eta\in
\H{K_C(-D}$   such that there was a $\tau\in T_{K_C}(D)$ with the property that $\tau$
vanished on $1\otimes\H{K_C(-D)}$ as well as 
vanishing on $f_i\otimes\eta_1$ for
$1< i\le n$ and so $\rk(\tau) = \cliff(D) = \cliff(C)$.

If $D$ computes $\cliff(C)$ then $K_C(-D)$ is base point free and 
that is the key point. 

In the case $L=K_C(-P)$ it is the twisted linear system, $K_C\otimes
L^{-1}(D)$, not $\OO_C(D)$ which comes into play. One needs the base point
freeness of $L^{\otimes 2}\otimes K_C^{-1}(-D) = L(-D-P)$.

In the case of the divisor used to compute $\cliff(C,L)$ being
 $D = L(-E)$ where $E$ computes $\cliff(C)$ and
$\deg(E)$ small, $L^{\otimes 2}\otimes K_C^{-1}(-D) = E - P$ which satisfies
$\h{\OO_C(E-P)} < 1$. I cannot provide a proof, but I think that these
divisors do not give rise to Shiffer variations 
$\tau$, satisfying $\rk(\tau)=
\cliff(L,D)$. I suspect that on a general line bundle $L = K_C(-P)$ that
$\sec^{j-1}(C) = R^j$ for $j \leq \cliff(C) -2$ as opposed to holding for
$ j \leq \cliff(C)-p-1$.
 The moral is that projecting from a general point
should not affect the Clifford index, but projection from special points
should. Here is a small positive result.

\begin{thm}
  Suppose \, $\deg(E)<(g-1)$,\,  $\h{\OO_C(E)}\ge2$. Suppose , $L(-E)$ is
 base point free, $\h{\OO_C(E-P)} = 1$  and  $\deg(P)<\cliff(C)$. Let $D=E-P$, $L=K_C(-P)$. Then
  \begin{enumerate}
    \item $\cliff(L,D) = \cliff(E) -p$.
    \item There exists a Shiffer variation $\xi$, of rank $ c = \cliff(L,D)$,
but such that $ \xi \notin \sec^{c-1}(C)$.   
  \end{enumerate}
\end{thm}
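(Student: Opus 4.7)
Part (1) is immediate from the lemma opening this section: for $L = K_C(-P)$ that lemma gives $\cliff(L,D) = \cliff(D+P) - p$, and substituting $D+P = E$ yields $\cliff(L,D) = \cliff(E) - p = c$. The substance of the theorem lies in part (2), and my plan is to adapt the construction from Theorem \ref{T:2}(ii) --- which produced in the canonical case a Shiffer variation of rank equal to the Clifford index from the base point freeness of $K_C(-D)$ --- to our setting. The translation is governed by Lemma \ref{L:2.2}(iv), which says $D$ and $D^* := L^{\otimes 2}\otimes K_C^{-1}(-D)$ play symmetric roles with respect to $\cliff(L,-)$; a direct calculation gives $D^* = L(-E)$, which is base point free by hypothesis.

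First I would choose $\omega \in \H{L(-E)}$ whose zero divisor inside $L$ is of the form $D + P + F'$ with simple zeros along $D$; base point freeness of $L(-E)$ makes such $\omega$ available. Let $f_1,\dots,f_{r_E}$ be a basis of $\H{\OO_C(E)}/k$, where $r_E = \h{\OO_C(E)}-1 = r_L(D)$. Because $\omega$ vanishes to order at least twice the multiplicity of each point of $P$ (once from being a section of $L$ itself, once from the factor $E \geq P$ in $L(-E)$), each product $\eta_i := f_i\omega$ is a genuine element of $\H{L} = \H{K_C(-P)}$. Setting $\xi = \sum_{j=1}^d \tau_{p_j} \in T_L^*(D)$ (using generalized Shiffer variations if $D$ is not reduced), a residue calculation at each $p_j \in D$, completely parallel to Lemma \ref{L:1.2} and Theorem \ref{T:2}, shows that $\xi(\eta_i) = 0$: the required relation in $\Hi{K_C\otimes L^{-1}}$ is witnessed by $f_i$ itself, viewed as a global section of $\H{\OO_C(E)} = \H{K_C\otimes L^{-1}(D)}$.

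To turn the equations $\xi(\eta_i) = 0$ into the rank bound $\rk(\xi) \leq c$, I must verify that the classes $[\eta_i]$ are linearly independent in $\H{L}/\H{L(-D)}$. Here the hypothesis $\deg(P) < \cliff(C)$ is decisive: it forces $\h{\OO_C(P)} = 1$, so any $\sum_i a_i f_i$ whose residues along $D$ all vanish lies in $\H{\OO_C(E-D)} = \H{\OO_C(P)} = k$, and hence $a_i = 0$ for all $i$. This yields $\dim\ker(\xi) \geq \h{L(-D)} + r_E$, hence $\rk(\xi) \leq d - 2r_E = c$, while the reverse inequality $\rk(\xi) \geq c$ is Theorem \ref{T:rankcalc} applied to $\xi \in T_L^*(D)$.

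Finally, to show $\xi \notin \Sec^{c-1}(C)$, I aim to exhibit $\im(\xi) \cap C = \emptyset$; this suffices because any element of $\Sec^{c-1}(C)$ has the form $\sum_k \mu_k\sigma_{q_k}^2$ with $q_k \in C$, and its image necessarily contains the points $q_k$. Now $\im(\xi)$ is a codimension-$r_E$ linear subspace of $S_L(D)$, while $S_L(D) \cap C$ is a finite set, equal to the support of the zero divisor $D + F$ of the unique section of $L(-D) = K_C(-E)$. The main obstacle is to confirm that, varying $\omega$ within the positive-dimensional linear system $|L(-E)|$ (positive-dimensional thanks to $\deg(E) < g-1$), we gain enough freedom in $\im(\xi)$ to miss this finite set; avoidance of a finite set by a codimension-$r_E$ subspace is a Zariski-open condition in the Grassmannian, and a dimension count within our family should close the argument.
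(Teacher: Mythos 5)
Parts (1) and the rank computation in part (2) of your proposal follow the paper's own route: the paper likewise reads off $\cliff(L,D)=\cliff(E)-p$ from $K_C\otimes L^{-1}(D)=\OO_C(E)$, and constructs $\xi=\sum_{p_i\in D}\tau_{p_i}$ killing the classes of $\eta_i=f_i\omega$ with $\omega\in\H{L(-E)}$, $f_i\in\H{\OO_C(E)}$. You actually supply details the paper elides (that $f_i$ witnesses the relation in $\Hi{K_C\otimes L^{-1}(D)}=\Hi{\OO_C(E)}$, and that the $[\eta_i]$ are independent in $\H{L}/\H{L(-D)}$ via $\h{\OO_C(P)}=1$); that part is sound.

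The gap is in your final step. You try to prove $\xi\notin\sec^{c-1}(C)$ by showing $\im(\xi)\cap C=\emptyset$, and you leave the key point --- that varying $\omega$ in $|L(-E)|$ moves $\im(\xi)$ enough to avoid the finite set $S_L(D)\cap C$ --- as an unexecuted ``dimension count.'' This is a real hole: the avoidance statement is precisely the delicate point in the $\hi{L}=1$ regime (it is the analogue of Lemma \ref{L:specialvar}, and the paper spends the rest of this section constructing examples where the corresponding twisted system $L^{\otimes 2}\otimes K_C^{-1}(-D)$ acquires base points and the mechanism breaks), so it cannot be waved through. Moreover it is far stronger than what the theorem asserts. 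The intended argument is a one-liner: your $\xi=\sum_{p_i\in D}\tau_{p_i}$ has all coefficients nonzero, i.e.\ $\xi\in T_L^{*}(D)$, hence $\xi\in\sec^{d-1}(C)\setminus\sec^{d-2}(C)$; since $r_L(D)=\h{\OO_C(E)}-1\ge 1$ gives $c=d-2r_L(D)\le d-2$, we have $\sec^{c-1}(C)\subseteq\sec^{d-2}(C)$ and therefore $\xi\notin\sec^{c-1}(C)$. (The stronger statement $\im(\xi)\cap C=\emptyset$ would be what one needs for the Koszul-cohomology applications via Theorem \ref{T:Hassett}, but it is not claimed here.)
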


\begin{proof}
  \begin{enumerate}
    \item Since $K_C\otimes L^{-1}(D) = E$, $\cliff(L,D) = \cliff(E) -p$. By
      degree considerations $D$ cannot span a hyperplane and $r_L(D) =
      r_{K_C}(E)$
    \item $L(-E)$ is base point free so the same argument as for $L=K_C$
      works. We can find $\eta\in \H{L(-E)}$ which vanishes on exactly $E$.
      If $\{ 1,f_1, \dots, f_n \}$ is a basis for $\H{\OO_C(E)}$ then since
      $E-D=P$ and any Shiffer variations in $T_L(D)$ kills $\H{L(-D)} \supset
      \H{L(-E)}$ and $\eta \otimes 
 f_i$ are not in $\H{L(-D)}$ and killed by some
      $\tau\in T_L(D)$, specifically by $\tau = \sum_{p_i \in D}\tau_{ p_i}$.
  \end{enumerate}
\end{proof}

\textbf{Remark:} I believe that one should have $L(-E)$ base point free
always when $\deg(E)<g-1$ and $E$ computes $\cliff(C)$. I do not have a proof.

In general the technique will fail since $L(-E)$ need not be base point free.
To produce this example, we also give an example of a divisor not in the Petri
 locus. That is a curve $C$ and a divisor $D$ such that $(r+1)(g-d+r) >
g$ but the Petri map is not surjective. After conversations with L. Ein and I.
Coskun it is clearly not hard to find such examples. I do not know of examples
in the literature though. Firstly we prove:

\begin{lemma}
  Suppose $D$ is base point free. The following two assertions are equivalent:
  \begin{enumerate}
    \item The Petri map $\H{\OO_C(D)}\otimes \H{K_C(-D)} \to \H{K_C}$ is not
      surjective.
    \item The natural map $\H{\OO_C(D)} \otimes \H{\OO_C(D)} \to
      \H{\OO_C(2D)}$ is not surjective.
  \end{enumerate}
\end{lemma}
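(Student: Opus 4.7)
The approach is to rephrase both surjectivity questions as the vanishing of a connecting map in cohomology, and then invoke Serre duality. Set $L = \OO_C(D)$, $V = \H{L}$, and let $M$ denote the kernel of the evaluation $V \otimes \OO_C \twoheadrightarrow L$. Tensoring the resulting short exact sequence by $L$ yields $0 \to M \otimes L \to V \otimes L \to L^{\otimes 2} \to 0$, so on cohomology the map $\mu_2 : V \otimes V \to \H{L^{\otimes 2}}$ is surjective iff the connecting map $\delta_2 : \H{L^{\otimes 2}} \to \Hi{M \otimes L}$ vanishes. Tensoring instead by $K_C \otimes L^{-1}$ yields $0 \to M \otimes K_C \otimes L^{-1} \to V \otimes K_C \otimes L^{-1} \to K_C \to 0$, and similarly the Petri map $\mu_P$ is surjective iff $\delta_P : \H{K_C} \to \Hi{M \otimes K_C \otimes L^{-1}}$ vanishes.

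In the base-point-free pencil case ($\dim V = 2$), $M$ is a line bundle canonically isomorphic to $L^{-1}$, so the two twisted sequences become $0 \to \OO_C \to V \otimes L \to L^{\otimes 2} \to 0$ and $0 \to K_C \otimes L^{-2} \to V \otimes K_C \otimes L^{-1} \to K_C \to 0$. These are Serre dual to one another, and under the Serre-duality pairings $\Hi{\OO_C} \cong \H{K_C}^{\vee}$ and $\Hi{K_C \otimes L^{-2}} \cong \H{L^{\otimes 2}}^{\vee}$, naturality of Serre duality with respect to connecting homomorphisms identifies $\delta_P$ with the transpose $\delta_2^{\vee}$. Consequently $\delta_P = 0$ iff $\delta_2 = 0$, which is the stated equivalence.

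For higher-rank $V$ this direct symmetry is lost, since $\Hi{M \otimes L}^{\vee} \cong \H{M^{\vee} \otimes K_C \otimes L^{-1}}$ and $\Hi{M \otimes K_C \otimes L^{-1}}^{\vee} \cong \H{M^{\vee} \otimes L}$ are cohomologies of genuinely different bundles. To bridge this, I would use the dual evaluation sequence $0 \to L^{-1} \to V^{\vee} \otimes \OO_C \to M^{\vee} \to 0$; dualizing the calculations of the previous paragraph shows that $\mu_P$ is surjective iff $V^{\vee} \otimes V \to \H{M^{\vee} \otimes L}$ is surjective, and $\mu_2$ is surjective iff $V^{\vee} \otimes \H{K_C \otimes L^{-1}} \to \H{M^{\vee} \otimes K_C \otimes L^{-1}}$ is surjective. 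Both are multiplication maps for the single vector bundle $M^{\vee}$ into line-bundle twists differing by $K_C \otimes L^{-2}$, and the problem becomes showing these surject simultaneously.

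\textbf{Main obstacle.} The decisive step is this last equivalence in arbitrary rank, where Serre duality no longer directly relates the two exact sequences. Some additional hypothesis from the surrounding discussion, such as $(r+1)(g - d + r) > g$, will evidently be needed to rule out degenerate cases like $D = K_C$ on a hyperelliptic curve, where $\mu_P$ is trivially surjective while $\mu_2$ is Max Noether's map and can fail. Under such a hypothesis the two multiplication maps for $M^{\vee}$ should be governed by the same cohomological obstruction, finishing the proof.
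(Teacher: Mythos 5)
Your argument for the pencil case is correct and is essentially the paper's own proof: taking the kernel bundle $M$ of $V\otimes\OO_C \to \OO_C(D)$ with $\dim V = 2$ and observing $M \cong \OO_C(-D)$ is exactly the base point free pencil trick, and the paper likewise twists the resulting sequence by $K_C(-D)$ and applies Serre duality to the connecting map (phrased there as: the Petri map fails to surject on global sections iff $\Hi{K_C(-2D)} \to \Hi{K_C(-D)}^{\oplus 2}$ fails to inject, which is Serre-dual to the failure of surjectivity of $\H{\OO_C(D)}^{\oplus 2} \to \H{\OO_C(2D)}$). The ``main obstacle'' you describe for higher-rank $V$ is not part of what the paper actually proves: although the hypothesis reads only ``$D$ is base point free,'' the displayed sequence $0\to\OO_C(-D)\to\OO_C^{\oplus 2}\to\OO_C(D)\to 0$ shows the paper is tacitly assuming $\h{\OO_C(D)}=2$, i.e.\ that $D$ is a base point free \emph{pencil}, which is the only case needed in the surrounding discussion of $g^1_n$'s. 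Your observation that the unrestricted statement can fail (e.g.\ $D=K_C$ on a hyperelliptic curve, where the Petri map is trivially surjective while Max Noether's map is not) is correct and shows that the pencil hypothesis cannot be dropped; no additional argument will close that ``gap,'' because the general statement is false as written.
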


\begin{proof}
  Now by the base point free pencil trick c.f. [ACGH, p126] we have an exact
  sequence:
  \[
  0 \longrightarrow \OO_C(-D) \longrightarrow \OO_C^{\oplus 2}
  \longrightarrow \OO_C(D) \longrightarrow 0
  \]
   Twisting by $K_C(-D)$ we get:
  \[
  0 \longrightarrow K_C(-2D)
  \stackrel{\iota}{\longrightarrow} K_C(-D)^{\oplus 2}
  \stackrel{m}{\longrightarrow} K_C
  \longrightarrow 0
  \]
  and $m$ is the Petri map. So $\h{m}$ is not surjective if and only if
  $\hi{\iota}$ is not injective if and only if (by Serre duality)
  $\H{\OO_C(D)}^{\oplus 2} \longrightarrow \H{\OO_C(2D)}$ is not surjective.
\end{proof}

I speculate that the curves carrying a $g^1_n$ such that the Petri map is not
surjective will be represented by a cohomology class in $M_{g,n}$ that is not
an intersection of divisors or in the cohomology ring of ordinary
Brill-Noether loci.

To construct such curves recall that if $f:C\to \PP^1$ is a $n$-gonal map then
$f^*(\OO_{\PP^1}(1)) = \OO_C(D)$ where $D$ is the $g^1_n$. If $\OO_C(2D)$ is
to have extra sections we need $\h{\OO_C(2D)} \ne \h{\OO_{\PP^1}(2)}$. But by
the projection formula $\H{C, \OO_C(2D)} = \H{C, g^*(\OO_{\PP^1}(2))}$\,$ =
\H{\PP^1, \OO_{\PP^1}(2)\otimes g_*(\OO_C)}$ \, and we can write \, $g_*(\OO_C)$\\$ =
\bigoplus_{i=0}^{n-1} \OO_{\PP^1}(-a_i)$, with $a_0=0$ and $a_i >0$ for
$i\ge 1$. For $\h{\OO_C(2D)} > 3$ we need some $a_i\le 2$. In fact $a_i>1$
because otherwise using the projection formula we see $\h{\OO_C(D)} \geq 3 $. Hence
$\h{\OO_C(2D)} = 3 + \#\left\{ i | a_i = 2 \right\}$. The simplest example of
this phenomena is to take a cyclic cover of $\PP^1$ of degree 4 branched at 8
points. By construction $g_*(\OO_C) \cong \bigoplus_{i=0}^{3}
\OO_{\PP^1}(-2i)$ (as an $\OO_{\PP^1}$ module). A simple computation with
Hurwitz's formula shows $g(C)=9$ and $\OO_C(F) = g^*(\OO_{\PP^1}(1))$
is by construction  a $g_4^1$ on $C$. That is, 
$h{\OO_C(2F)} = 4$. This already
provides an example of a curve $C$ and line bundle $F$ such that the Petri map
$\H{\OO_C(F)} \otimes \H{\OO_C(K_C - F)} \to \H{K_C}$ is not surjective.
Further $\cliff(F)=2$ and we can check that $C$ is not hyperelliptic so that
$\cliff(C) =2$. Let $F=P_1+P_2+P_3+P_4$. Then since $F$ (by construction as a
cyclic cover) moves in a base point free pencil,  $\h{\OO_C(F-P_1)}=1$ and
hence any 3 points of a fiber span a plane, hence any two points span a line
and so $\h{K_C(-P_1-P_2)} = g-2 = 7$. If $P_1$ and $P_2$ are two points in
separate fibers they clearly span a line in $\PP(\Hd{C,K_C})$ as they map to
separate points of $\PP^1$. This means that $C$ is not hyperelliptic.
For this example it is not clear how to construct a
$D$ such that $L^2\otimes K_C^{-1}(-D)$ has base points. Here is a procedure to
do this in general.

Suppose $C$ is any curve with a $g^1_n$ ($n \geq 4$), call it $F$, and such that $\h{\OO_C(2F)}
= 5$. Let $\sum_{i=1}^n P_i \in |F|$, and set $P = \sum_{i=1}^{n-3} P_i$, $D =
\sum_{i = n-2}^n P_i$, and $L = K_C(-P)$. Then in general $L$ will be very
ample and if $L$ is very ample, $\h{L} = g - n + 3$, $\h{L(-D)} = g - n + 1$,
so $D$ spans a 3-secant line. \[L^{\otimes 2} \otimes K_C^{-1}(-D) = L(-P-D) =
K_C(-F-P).\] By construction $\h{K_C(-F-P)} = g - 2n + 4 = \h{K_C(-2E)}$, so
$L(-F)$ has base points on $D$, and hence the  map 
\[\H{L(-E)} \otimes
\H{\OO_C(E)} \to \H{L}\] 
lands in $\H{L(-D)}$. For such a curve and linear
system, the method of construction of  Shiffer variations used in the case of $L = K_C$  won't work!

We now construct such $C$ and $F$. We merely iterate the previous
construction. Namely on $C$ we have the linear system $g^*(\OO(1))$, which is
of degree 32. We take the 4-fold cyclic cover of $C$ branched on $g^*(\OO(1))$
, call this $C_2$. By Hurwitz formula we calculate $g(C_2) =
\frac{4(16) + 3(32)}{2} + 1 = 81$. Let $f$ be the composed map,
\begin{displaymath}
  \xymatrix{
  & C_1 \ar[dr]^{g} & \\ 
  C_2 \ar[ur]^{h} \ar[rr]_{f} &  
  & \PP^1 }
\end{displaymath}
then $h_*(\OO_{C_2}) = \bigoplus_{i=0}^3 g^*(\OO_{\PP^1}(-2i))$, and so
\begin{multline*} f_*(\OO_{C_2}) = g_*h_*(\OO_{C_2}) = g_*\left( \bigoplus_{i=0}^3
  g^*(\OO_{\PP^1}(-2i)) \right) \\ = g_*(\OO_{C_1}) \otimes
  \bigoplus_{i=0}^3 \OO_{\PP^1}(-2i) = \OO_{\PP_1} \oplus
  \OO_{\PP^1}(-2)^{\oplus 2} \oplus \OO_{\PP^1}(-4)^{\oplus 3} \oplus
  \ldots
\end{multline*}
so setting $F = f^*(\OO(1))$ we see that $\h{\OO_C(F)} = 2$, $\h{\OO_C(2F)} =
5$. Since $F$ is a $g^1_{16}$ we get $\H{K_C(-F)} = 81 - 16 + 1 = 66$, and
$\h{K_C(-2F)} \linebreak = 81 - 32 + 4 = 53$. In particular, $F$ imposes 13 conditions on
the linear system $K_C(-F)$. Take $F = \sum_{i=1}^{16} P_i$ and $P =
\sum_{i=1}^{13} P_i$, such that $P$ imposes independent conditions on
$K_C(-F)$. Let $L = K_C(-P)$. $L$ is very ample because if not there would be
a divisor $A$ of degree 2, such that $\h{\OO_C(F-P+A)} = 2$. This would mean
these 15 points are linearly dependent. Exactly as in the case of showing
$K_C$ is very ample, the 15 points cannot lie on one fiber, and if they lie on
different fibers, they clearly are linearly independent. This completes the
proof. 

While pathological behavior occurs, the generic case is fine. For example, we
have seen that if $r_L(D) = 1$, for simple linear algebra reasons we can
construct a $\tau \in T_L(D)$ with $\rk(\tau) = \cliff(L,D) = d - 2$. So if D
computes $\cliff(C,L)$ we can construct a Shiffer variation of rank equal to 
$\cliff(C,L)$. 

\section[Connections with Koszul Cohomology]{Connections with Koszul Cohomology and  Green's Conjecture}\label{S:gc}

The standard reference for  Koszul cohomology is \cite{G1}.  However I have also profited from reading 
\cite{G2}, \cite{V1},
and \cite{E1}.  We will not define these groups in their greatest generality. We begin by reviewing with
brief proofs some of the basic facts about Koszul cohomology.
We will assume throughout that k
is an algebraically closed field of characteristic $ \ne 2,3$.

Let W be a vector space over k  and let $S= \bigoplus_{k=0}^{\infty} \sym^k(W)$ be the symmetric algebra.
Thus k is the quotient of S by the irrelevant ideal.  Let $M= \bigoplus_{q=o}^{\infty} M^q$ be a graded S module.  Define 
\begin{equation}
 \mathcal{K}_{p,q} = \bigwedge^p(W)\otimes M^q 
\end{equation}

and let 
\begin{equation}
 \delta : \mathcal{K}_{p,q} \to \mathcal{K}_{p-1,q+1}
\end{equation}
be defined by 
\begin{equation}\label{E:delta}
\delta(w_1\wedge\dots \wedge w_p \otimes m) = \sum_{i=1}^p (-1)^p (w_1 \wedge \dots
\wedge w_{i-1} \wedge w_{i+1} \dots \wedge w_p)\otimes (w_im)
\end{equation}
One checks that $\delta^2 = 0$ by  a direct computation.  Therefore we get a complex and we can compute
cohomology.

\begin{defn}

The Koszul cohomology  group  $K_{p,q}(S,M) $ is the cohomology of the complex:
\[ \mathcal{K}_{p+1,q-1} \overset{\delta}{\longrightarrow}\mathcal{K}_{p,q} \overset{\delta}{\longrightarrow} 
\mathcal{K}_{p-1,q+1} \]

\end{defn}

The most common case is when $W= \H{C,L}$ and $ M^q = \H{C,L^{\otimes q}}$.  In that case we will denote
the Koszul cohomology group as $K_{p,q}(C,L)$.   Koszul cohomology 
groups are useful because they 
 can be used to compute free resolutions of graded modules over S.
For completeness we include a brief description of this phenomena.  The information can be found
in any of the sources mentioned above, as well as Eisenbuds book,'Commutative Algebra'  \cite{E2}. 

\begin{defn} A free resolution $ F^{\mbox \textbullet}$  of M is an exact sequence of the form: 

\[ 0 \to F_n \to F_{n-1} \to \dots \to F_0 \to M \to 0 \]

with the $F_i$ free S modules.

\end {defn}

The maps $F_{i+1} \to F_i $ are given by a matrix of homogenous polynomials, call it $f_i$.  The most important case is 
when the maps are given by non constant polynomials.

\begin{defn}
 
A free resolution is said to be minimal if all the matrices $f_i$ contain no non-zero constant terms.

\end{defn}

The two basic facts are that free resolutions always exist and are (essentially unique).  Many
important properties of a module M, and geometry of the curve embedded in $\PP(W^*)$ can be read
off from the minimal free resolution.  For example, $M_0 = S$ is equivalent to the curve being
linearly normal.  If we write $M_1 = \sum_{i=1}^{i=n} S(-a_i) $ then the $a_i$'s represent the degrees
of the minimal generators of the ideal of $C$ in the projective embedding.  The connection between
the Koszul complexes and the minimal free resolution  comes as follows.

 Let $M= \sum_{i=0}^{i=\infty}M_i$ be a graded S module with a free resolution $F^{\mbox \textbullet}$ 
with $F_p = \sum_q(V_{p,q}\otimes S(-q)) $: ie $V_{p,q}$ is a vector space that keeps track of how many 
$S(-q)$'s appear in $F_p$.  

\begin{thm} $dim(K_{p,q}(S,M)) =   dim(V_{p,p+q})$  

\end{thm}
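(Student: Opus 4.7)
The plan is to compute $\mathrm{Tor}^S_p(M,k)$ in two different ways and match graded pieces. The first computation identifies $\mathrm{Tor}$ with Koszul cohomology via a canonical resolution of $k$; the second identifies $\mathrm{Tor}$ with the vector spaces $V_{p,q}$ via minimality of $F^{\bullet}$.

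First I would invoke the fact that the Koszul complex on $W$, namely
\[ \cdots \to \bigwedge^{p}(W) \otimes S(-p) \to \bigwedge^{p-1}(W) \otimes S(-p+1) \to \cdots \to S \to k \to 0, \]
with differential formally identical to $\delta$ of equation \ref{E:delta}, is a free resolution of $k = S/S_+$ as a graded $S$-module. Tensoring this resolution with $M$ over $S$ produces a complex whose $p$-th term is $\bigwedge^{p}(W) \otimes M(-p)$. Taking the graded piece in degree $p+q$ gives $\bigwedge^{p}(W) \otimes M^{q}$, which is precisely $\mathcal{K}_{p,q}$ in the notation introduced above, and the induced differential is exactly the $\delta$ of equation \ref{E:delta}. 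Therefore
\[ \mathrm{Tor}^S_p(M,k)_{p+q} \;=\; K_{p,q}(S,M). \]

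Next I would compute the same $\mathrm{Tor}$ group using the given minimal free resolution $F^{\bullet}$. Tensoring over $S$ with $k$ kills everything in positive degrees of each $S(-a)$, leaving $F_p \otimes_S k = \bigoplus_q V_{p,q}$ with $V_{p,q}$ placed in degree $q$. The essential point—and the only content of the argument beyond homological bookkeeping—is that minimality forces the induced differentials $F_{p+1}\otimes_S k \to F_p \otimes_S k$ to vanish: by definition the matrices $f_i$ contain no nonzero constant entries, so after reduction modulo $S_+$ they become zero. Consequently $\mathrm{Tor}^S_p(M,k)_{p+q} = V_{p,p+q}$.

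Matching the two computations in degree $p+q$ gives $\dim K_{p,q}(S,M) = \dim V_{p,p+q}$, which is the theorem. The main (and really only) obstacle is justifying the two structural inputs: that the Koszul complex on $W$ resolves $k$ over $S$, and that minimality of $F^{\bullet}$ translates into vanishing of the differentials after reduction modulo $S_+$. Both are standard in commutative algebra (see \cite{E2}); neither uses the curve $C$, the line bundle $L$, or any of the geometric content developed earlier in the thesis, so this is a purely formal calculation once those facts are in hand.
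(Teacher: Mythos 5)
Your proposal is correct and follows essentially the same route as the paper: both compute $\mathrm{Tor}^S_p(M,k)$ once via the Koszul resolution of $k$ (recovering the Koszul cohomology in the appropriate graded piece) and once via the minimal free resolution of $M$ (where minimality kills the differentials after reduction modulo $S_+$), then compare. Your version is a bit more explicit about the twists and the degree bookkeeping, but there is no substantive difference.
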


\begin{proof}

The proof of this is  well known and in all the above sources.  I will sketch it for the 
sake of completeness.  Firstly one has the Koszul resolution of k:
$$ 0\to \bigwedge^n(V) \otimes S \to \dots \to V\otimes S \to S \to k \to 0 $$
One can tensor this resolution with M and one sees by inspection that the maps are the boundary 
maps $\delta$ from equation \ref{E:delta}.  Hence one has $K_{p,q}(S,M)$ = $Tor_p^{p+q}(M,k)$.  On the other hand, we can 
take the free resolution $F^{\mbox \textbullet}$
   of $M$ and tensor it with k (viewed as the quotient of S by its 
maximal ideal).  By the commutivity of Tor, this gives the same answer as above.  On the other hand,
since $F^{\mbox \textbullet}$ 
 is minimal, when we tensor with k all the boundary maps are zero and hence we get
$V_{p,p+q}\otimes S$  is the degree $p+q$  piece of $F_p$.
\end{proof}

To relate our work to Koszul cohomology we introduce some new notation. This
is needed as we are going to work with the 
dual of the Koszul complex. This is the complex which, term by term, is the 
vector space dual of the Koszul complex.  There is another notion of a 
dual Koszul complex which involves divided powers (see the appendix to 
\cite{E2} for details).
   As long as char(k) is sufficiently large, these are the same.  
The  reason we do this is that our Shiffer variations live naturally  in the dual of the 
Koszul complex we describe .  

Let $V= W^*$, $ W = \H{C,L}$ where L is a very ample line bundle. We denote by $M_i$ the dual of 
$\H{C,L^{\otimes i}}$ so that $ M_1 = V. $ 

We also assume that the line bundle  L is quadratically normal which means that
$M_2 \hookrightarrow \sym^2(V)$ is injective.  We frequently consider $\varphi \in M_2$
 as an element of $ \sym^2(V)$ or equivalently as an element of $\Hom^{\mbox sym}(W,V)$.

The Koszul cohomology group can be calculated as  the cohomology of:
\[\bigwedge^{p+1}(W)\otimes \H{C,L} \to \bigwedge^p(W)\otimes \H{C,L^2} \to 
\bigwedge^{p-1}(W) \otimes \H{C,L^3} \]
and so the dual cohomology groups are calculated by the complex:
\[M_3\otimes \bigwedge^{p-1}(V) \stackrel{\delta_{p-1}}{\to} M_2\otimes \bigwedge^p(V) \stackrel{\delta_p=\delta}{\to} M_1\otimes \bigwedge^{p+1}(V) \]

We are generally only interested in determining if these groups are 
zero or non-zero, so
 calculating
the dual group is good enough. 
Any $ v=\varphi\otimes\lambda \in M_2\otimes\bigwedge^p(V) $ can be viewed as an element of
$\Hom(\bigwedge^p(W),M_2)$ and $\delta(v)$ as an element of $\Hom(\bigwedge^{p+1}(W),V)$. 
If $w \in \bigwedge^{p+1}(W) $ then $\delta(v)(w)$ is calculated by first contracting 
$\lambda \wedge w$ via the standard map $\bigwedge^p(V)\otimes \bigwedge^{p+1}(W) \to W$
(recall that W and V are dual) and then letting $\varphi$ act on $\lambda\wedge\nu$. 

We start out with the basic observation:

\begin{lemma}

Let $\varphi \in M_2$ and let $\lambda \in \bigwedge^p(V) $ be decomposable.  If $Im(\varphi) \subset \lambda$,
then $\delta(\varphi\otimes\lambda)=0$.

\end{lemma}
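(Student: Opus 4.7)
The plan is to exploit the explicit formula for the Koszul differential $\delta$ recorded in equation~(2) of the paper, together with the structural observation that, under the hypothesis, $\varphi$ must lie in the symmetric square of the span of the factors of $\lambda$.

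Write $\lambda = v_1 \wedge \cdots \wedge v_p$ with $v_i \in V$ linearly independent, and set $U = \langle v_1, \ldots, v_p \rangle \subset V$. By quadratic normality of $L$ I may view $\varphi \in M_2$ as a symmetric element of $V \otimes V$, equivalently as a symmetric map $W \to V$. The hypothesis $\im(\varphi) \subset \lambda$ asserts precisely that this map lands in $U$, so $\varphi \in U \otimes V$; by symmetry $\varphi \in V \otimes U$ as well, and therefore $\varphi \in (U \otimes V) \cap (V \otimes U) \cap \sym^2(V) = \sym^2(U)$. Hence I can expand $\varphi = \sum_{i,j=1}^p a_{ij}\, v_i \cdot v_j$ with $a_{ij} = a_{ji}$.

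With this expansion in hand, I invoke the explicit formula for $\delta$ from equation~(2):
\[
\delta(\varphi \otimes \lambda) \;=\; \sum_{i,j} a_{ij}\bigl(v_i \otimes (v_j \wedge \lambda) + v_j \otimes (v_i \wedge \lambda)\bigr).
\]
Each vector $v_i$ is already one of the wedge factors of $\lambda$, so $v_i \wedge \lambda = 0$ by antisymmetry of the exterior product. Every summand therefore vanishes, and $\delta(\varphi \otimes \lambda) = 0$ as desired.

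The only point requiring any real care is to verify that the dual Koszul differential on $M_2 \otimes \bigwedge^p(V)$ truly agrees, under the inclusion $M_2 \hookrightarrow \sym^2(V)$, with the symmetric-algebra Koszul map of equation~(2). This is a routine naturality check, relying on $\ch(k) \neq 2$ so that $\sym^2(V)$ is identified with the symmetric part of $V \otimes V$; that hypothesis is already in force throughout the section. I do not anticipate any genuine obstacle.
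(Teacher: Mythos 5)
Your proof is correct and is essentially the paper's argument: both hinge on the observation that symmetry of $\varphi$ together with $\im(\varphi)\subset\langle v_1,\dots,v_p\rangle$ forces $\varphi\in\sym^2(\langle v_1,\dots,v_p\rangle)$ (equivalently, $\ker(\varphi)$ contains the annihilator of that span), after which the vanishing is immediate. The only cosmetic difference is that you carry out the final computation on the $\bigwedge(V)$ side via the explicit formula of equation (2), where each $v_i\wedge\lambda=0$, while the paper works on the dual $\bigwedge(W)$ side, showing the contraction of $\lambda$ against $\bigwedge^{p+1}(W)$ lands in $\ker(\varphi)$ — these are the same computation read through the duality.
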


\begin{proof}

By $ \lambda $ decomposable we mean that $\lambda = v_1\wedge\dots \wedge v_p$ and hence defines a 
subspace $V_p \subset V $.  We mean that viewing $\varphi$ as an element of $\Hom(W,V)$, that 
$\varphi(W) \subset V_p$.  
 First extend $v_1 \dots , v_p$ to a basis
$\langle v_1 \dots v_n \rangle$,
 of $V$.  Let $ w_1, \dots,w_p,
\dots , w_n$ be a dual basis so that $v_i( w_j)= \delta_{ij}$.
First note that because  $\varphi$ is symmetric, $\ker(\varphi) \supset
\langle w_{p+1} \dots w_n  \rangle$.
Further, one has
$v_1 \wedge \dots v_p\wedge w_j = 0$(under the standard
contraction) if $j > p$, and so $\lambda \wedge w_{i_1} \dots
\wedge w_{i_n} = 0$ unless $\langle w_{i_1} \dots w_{i_n} \rangle 
= \langle w_1 \dots w_p,w_j \rangle$ with $j>p$.
  Then $\lambda 
\wedge \bigwedge^{p+1}(W ) 
\subset W_{>p}$, the subspace of $W$ generated by
$\langle w_{p+1} \dots w_n \rangle$ and hence is killed by $\varphi$.

\end{proof}

Since $C \subset \PP(M_3)$ is non degenerate, $C$ generates $M_3$.  That means we can
construct a basis of $M_3$ of the form $\sigma^3_{p_1} \dots \sigma^3_{p_m}$ where
the $p_i$ are points of $C$ and $m = \h{M^{\otimes 3}}$.
From this it follows that $\partial (M_3 \otimes \bigwedge^{p-1}(V))$ is generated by
elements of the form $\sigma_{p_i}^2 \otimes \lambda \wedge \sigma_{p_i}$ with $\lambda \in \bigwedge^{p-1}(V)$
Any element of $\partial (M_3 \otimes \bigwedge^{p-1}(V))$ is then a sum of elements of the form
$(\sum_{i=1}^{k \leq p} (a_i\sigma^2_{p_i}) \otimes \sigma_{p_1} \wedge \dots \wedge \sigma_{p_k} \wedge
 \lambda $ where $k \leq p$ and $ \lambda \in \bigwedge^{p-k}(V)$.
Thus any element of $\partial (M_3 \otimes \bigwedge^{p-1}(V))$ can be written as a sum,
$\sum \varphi_i \otimes \lambda_i$ where not only $\rk(\varphi) \leq p$ but
in fact $\varphi \in \sec^{p-1}(C)$.  We do not claim that this representation is unique, that is to say
the $\lambda_i$ are not necessarily a basis for $\bigwedge^P(V)$.  Nonetheless.
this means that Shiffer variations that live in $R^d $ but not $Sec^{d-1}(C)$ are candidates to give 
non-trivial Koszul cohomology classes.  To fix ideas we consider the case of $L=K_C$.

\begin{thm}\label{T:counterexample}

Let D compute Cliff(C) and let $\varphi \in T_{K_C}(D)$ be a Shiffer variation in $\Sec^{d-1}(C) / \Sec^{d-2}(C)$
with $\rk(\tau) = \cliff(C)$.   Let $Im(\varphi) = \langle \sigma_1,\dots,\sigma_c \rangle \subset V$ and set
$\sigma = \sigma_1 \wedge \dots \wedge \sigma_c $. Then $\varphi\otimes\sigma $ represents a non-trivial Koszul
cohomology class in $K_{p,2}(C,K_C)$.
\end{thm}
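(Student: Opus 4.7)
The first claim is that $\varphi\otimes\sigma$ is a Koszul cocycle, i.e.\ $\delta(\varphi\otimes\sigma)=0$. This is immediate from the preceding lemma: $\sigma=\sigma_1\wedge\cdots\wedge\sigma_c$ is a decomposable $c$-form whose associated subspace of $V$ is exactly $\langle\sigma_1,\ldots,\sigma_c\rangle=\im(\varphi)$, so the hypothesis of that lemma applies and $\delta(\varphi\otimes\sigma)=0$ in $M_1\otimes\bigwedge^{c+1}(V)$.

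For non-triviality, first observe that $\varphi\notin\Sec^{c-1}(C)$. Indeed, since $D$ computes $\cliff(C)$ we have $d=c+2r$ with $r\ge 1$, hence $d\ge c+2$, and by hypothesis $\varphi\in\Sec^{d-1}(C)\setminus\Sec^{d-2}(C)$, which is disjoint from $\Sec^{c-1}(C)\subset\Sec^{d-3}(C)$. Suppose for contradiction that $\varphi\otimes\sigma=\delta(\eta)$ for some $\eta\in M_3\otimes\bigwedge^{c-1}(V)$. Using the basis of $M_3$ given by cubes of Shiffer variations $\sigma_q^3$ at points $q\in C$---valid because $\phi_{3K_C}(C)$ is nondegenerate in $\PP(M_3^\vee)$---we write $\eta=\sum_k a_k\,\sigma_{q_k}^3\otimes\mu_k$, and the explicit formula for $\delta$ yields
\[
\varphi\otimes\sigma \;=\; \sum_k \pm a_k\,\sigma_{q_k}^2\otimes(\sigma_{q_k}\wedge\mu_k).
\]

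To derive the desired contradiction, one would like to project the right-hand side onto the $\sigma$-component of $\bigwedge^c(V)$, obtaining a presentation $\varphi=\sum_k\beta_k\sigma_{q_k}^2$, and then bound the number of nonzero $\beta_k$ by $c$, which would force $\varphi\in\Sec^{c-1}(C)$. The main obstacle is that a naive projection does not by itself bound the support---many points $q_k$ can a priori contribute nontrivially. My plan is to refine the argument using Hassett's criterion (Theorem~\ref{T:Hassett}) applied to the $(c-1)$-plane $\Lambda=\PP(\im\varphi)$: since $\varphi\in R^c\setminus\Sec^{c-1}(C)$, this plane is disjoint from $C$ and the restriction $\H{\PP^{g-1},\mathcal{I}_C(2)}\to\H{\Lambda,\cO_\Lambda(2)}$ fails to be surjective. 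A quadric $Q$ witnessing this failure then determines a linear functional on $M_2\otimes\bigwedge^c(V)$ which vanishes on $\delta(M_3\otimes\bigwedge^{c-1}(V))$ but pairs nontrivially with $\varphi\otimes\sigma$. Equivalently, Green's duality $K_{c,2}(C,K_C)\cong K_{g-2-c,1}(C,K_C)^\vee$ lets one dualize to an explicit linear syzygy class built from $D$, whose pairing with $\varphi\otimes\sigma$ is again detected by the same quadric $Q$.
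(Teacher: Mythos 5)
Your first half is fine and is exactly the paper's: $\delta(\varphi\otimes\sigma)=0$ follows from the preceding lemma on decomposable $\lambda$ with $\im(\varphi)\subset\lambda$. The second half, however, contains a genuine gap, and you flag it yourself: after expanding a hypothetical preimage $\eta=\sum_k a_k\,\sigma_{q_k}^3\otimes\mu_k$ you concede that projecting onto the $\sigma$-component does not bound the support of the resulting presentation of $\varphi$, and everything after that is announced as a ``plan'' rather than carried out. As stated the plan is not an argument: Hassett's criterion produces a quadratic form on $\Lambda=\PP(\im\varphi)$ that is \emph{not} the restriction of anything in $\H{I_C(2)}$, and you do not explain how such a form yields a linear functional on $M_2\otimes\bigwedge^c(V)$ that annihilates $\delta\bigl(M_3\otimes\bigwedge^{c-1}(V)\bigr)$ yet pairs nontrivially with $\varphi\otimes\sigma$; the appeal to duality with $K_{g-2-c,1}$ is left entirely schematic. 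So non-triviality is not established.

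The paper closes this gap by a different (if also terse) mechanism, and the geometric input you isolated --- that $\varphi\in R^c\setminus\Sec^{c-1}(C)$ forces $\im(\varphi)\cap C=\emptyset$ via Hassett's criterion --- is precisely the right one; it just has to be fed into a different step. Work in the ambient Koszul complex of the polynomial ring $\sym^{\bullet}(V)$, where $\varphi\otimes\sigma$ \emph{is} exact: up to a nonzero scalar (characteristic $\ne 3$) it equals $\delta\bigl(\sum_{i=1}^c(-1)^i\,\sigma_i^3\otimes\sigma_1\wedge\dots\wedge\sigma_{i-1}\wedge\sigma_{i+1}\wedge\dots\wedge\sigma_c\bigr)$, and by exactness of the ambient complex this preimage is determined modulo coboundaries from one step further back. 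The paper then argues that this canonical preimage cannot be moved into $M_3\otimes\bigwedge^{c-1}(V)$, because the only perfect cubes lying in $M_3$ are the $\sigma_p^3$ with $p\in C$ (Hassett's criterion for the cubic Veronese), while each $\sigma_i$ spans a point of $\im(\varphi)$, which misses $C$. That replacement of your support-counting/duality scheme by the ``unique preimage not in $M_3$'' argument is the missing idea.
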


\begin{proof}

By construction $\delta(\varphi\otimes\sigma)=0$ since $\sigma = Im(\varphi)$.
The incoming boundary map is from $M_3\otimes\bigwedge^{p-1}(V)$.  Since $C$
is embedded in $\PP(M_3)$ by the complete linear system $|3K_C|$, the points
of $C$ span $\PP(M_3)$ and hence we can find a basis of $M_3$ 
consisting of 
$\sigma_{p_i}^3 ,\,$with $ p_i\in C$. That is to say $\sigma_{p_i}^3$ 
represents the point $p_i \in C$.
 Up to a scalar (we are not in characteristic 3!)\, $\varphi\otimes\sigma
= \delta( \sum_{i=1}^c (-1)^i\sigma_i^3 \otimes\sigma_1\wedge\dots\sigma_{i-1}\wedge\sigma_{i+1} \dots
\wedge\sigma_c)$.  This element cannot lie in $M_3\otimes\bigwedge^{p-1}(V)$ as  $\sigma_i^3 \notin M_3$
because by Hassett's criteria (see Theorem \ref{T:Hassett}), the only rank one elements
of $\PP(M_3)$ are in $C$ and
$C \cap\, \langle \sigma_1 \dots\sigma_c \rangle
 = \varnothing$. Adding a coboundary in $M_4\otimes\bigwedge^{p-2}(V)$ cannot produce
an element that lives  in $M_3$. 
\end{proof} 

We comment on how this compares to the construction of Green and Lazarsfeld in the appendix to \cite{G1}.  
  Their  idea is 
that if one can factor a line bundle L  as $L_1\otimes L_2$ ,
where $\h{L_1}= r_1+1$ and $\h{L_2}= r_2+1$ with $r_i\ge1$ then one can produce
a non-trivial class in $ K_{r_1+r_2-1,1}(C,L)$ .

 There is a duality of Koszul cohomology groups
and for $L=K_C$ the  group $K_{g-p-2,1}$ is dual to $ K_{p,2}$.  
If  $L_1= \OO_C(D)$,
then $L_2 = K_C(-D)$ and by Riemann-Roch $r_2 = g-d+r_1-1$ so that $r_1 + r_2 - 1 = g-d +2r_1 -2=
g - \cliff(D)-2$. If $D$ computes $\cliff(C)=c$ the cohomology class lives in $K_{g-c-2,1}(C,L)$
which is dual to $K_{c,2}(C,L)$.
  Their construction  amounts to the following: if $s_i \in
\H{C,L_i} $ corresponds to a divisor $D_i$ then  the linear space corresponding to $D_1 \cap D_2$
is used to construct the cohomology class in $ K_{r_1+r_2-1,1}(C,L)$.  Eisenbud (\cite{G2} ch.8)
has given a different version of their construction.  

The class I have constructed also lies
in a group dual to $ K_{c,2}$.
This is the situation of Theorem \ref{T:counterexample}.
Let $D$ be a divisor used to compute Cliff(C). 
If $\H{\OO_C(D)}= \{f_0,\dots, f_r\}$ and $\H{K_C(_D)} = \{\eta_1,\dots,\eta_{g-d+r}\}$ then we constucted
a Shiffer variation which vanished on $\{f_0\otimes\eta_i\}\, 1\leq i\leq (g-d+r)$ and$ \{f_i\otimes\eta_1\}$
$1\leq i \leq r$ which is exactly the intersection of the linear spaces spanned by $D$ and $K_C(-D)$. By
this I mean that the $ \{f_i\}$ generate the ideal of the linear space corresponding to $K_C(-D)$ and the 
$\{\eta_i \} $ generate the ideal of the linear space corresponding to $D$.

\textbf{Remark 1 } Suppose $C$ is a generic curve of genus $5$ so that $\cliff(C) = 2$ and 
 $C$ has a $g_4^1$ which computes 
which computes $\cliff(C)$, call it $D$.  By Riemann-Roch, $\h{\OO(D)} = \h{K_C(-D)} = 2$.  Let
$ \{1,f\}$  be a basis for $\H{\OO(D)}$ and let $\{\eta, \omega \}$ be a basis for $\H{K_C(-D)}$.
By a slight abuse of notation (supressing the tensor signs) we get four elements of $\H{K_C}$ which
we denote by $\{1,f,\eta, \omega  \}$ and further we have that $ \{1,f, \eta \}$ are linearly
independent in $\H{K_C}$ .  Let $p_1, \dots p_4 \in C$ be the elements of $D$ and denote 
by $\tau_i$ the corresponding Shiffer variations.  Then by Theorem \ref{T:2} some sum,
$\tau = \sum a_i\tau_i $ with $a_i \ne 0$ contains $\{ 1,f,\eta\}$ in its kernel, which is to say 
it is of rank one. Hassett's criteria (see \ref{T:Hassett}) says such a $\tau$  corresponds to a line $L
\subset \PP^4$ such that $C \cap L = \emptyset$ and $\H{I_C(2)} \to  \H{\OO_L(2)}$ is not surjective.
  Notice that  $L = I(1,f,\eta)$ and the extra quadric in $\H{I_C(2)}$ that vanishes on $L$ is 
$1\times \omega - f \times \eta$.  Eisenbud's version of the Green-Lazarsfeld construction produces a
class in $K_{1,1}(C,K_C) = \H{I_C(2)}$ and this class is by construction the quadric, 
$1\times \omega - f \times \eta$.  It appears that the two constructions are connected.

\textbf{Remark 2} Notice how  the formula: $\h{\OO_C(D)} + \h{K_C(-D)} = g+1 - \cliff(D)$ comes into play in these constructions.  
In essence, whenever we factor $K_C$ as $\OO_C(D) \otimes K_C(-D)$,
we should expect to find a linear subspace of dimension = Cliff(D) and a Shiffer variation supported on this subspace.  Our 
construction will produce such a Shiffer variation and hence such a cohomology class as long as one of the line bundles is base 
point free.  We have discussed this more extensivly in the section line bundles with $\hi{L}=1$. As long as that is the case,
we  can  non-trivial Koszul cohomology  classes. They should be essentially the same as the classes constructed
by Green and Lazarsfeld.

\textbf{Remark 3} The construction of \cite{G1} is not the only way to construct non-trivial classes.  For example  \cite{AN}  
a more general construction is presented. 
 In all these cases the natural location of these classes is in a group of the form $K_{p,1}$.  Our classes 
always live in $K_{p,2}^{*}. $ Thus all previous methods speak to the lenght of the linear strand of the 
minimal free resolution of $S_C$, whereas our methods possibly give information on where the quadratic strand
may start. Recall that the linear strand of a variety (defined by quadrics) is the piece in degree $p$ composed 
of $\OO(-p-1)$'s and the quadratic strand is the piece composed of $\OO(-p-2)$'s In the special 
case $L= K_C$ there is duality
between $K{p,2}$ and $K_{g-p-2,1}$ which doesn't exist for other line bundles allows one to translate results about
the linear strand to results about the quadratic strand.

We also do not always require that the bundle factor as the tensor product of two bundles, both of which
have at least 2 sections. This is necessary for the construction of Green and Lazarsfeld.
 For example, let $L= K_C(D)$ where D is effective.  Then by Theorem \ref{T:clifford_calc_1} $\cliff(C,L)=
d-2$ and by \ref{L:d-2} there exists a $\tau \in T_L(D)$ of rank $d-2$ such that $\tau \in \Sec^{d-1} $ but $\tau \notin \Sec^{d-2}$.
If D is general of with $\deg(D) <g$, then $\H{\OO_C(D)} = 0$ so the method of Green and Lazarsfeld doesn't produce anything interesting.
However:

\begin{thm}

The class $\tau$ produced above gives a nontrivial cohomology class in $K_{d-2,2}$.

\end{thm}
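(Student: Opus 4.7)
The plan is to adapt the proof of Theorem \ref{T:counterexample} almost verbatim, replacing $K_C$ by $L = K_C(D)$ throughout. Using Lemma \ref{L:d-2}, after diagonalization we may write $\tau = \sum_{i=1}^{d-2} \beta_i w_i^2$ with $\beta_i \ne 0$, where $w_1, \dots, w_{d-2}$ is a basis of $W_0 := \im(\tau) \subset V$. Set $\sigma := w_1 \wedge \cdots \wedge w_{d-2} \in \bigwedge^{d-2}(V)$ and consider the cochain $\tau \otimes \sigma \in M_2 \otimes \bigwedge^{d-2}(V)$. This will be my candidate for a nontrivial class in the dual Koszul group $K_{d-2,2}(C,L)^{\vee}$. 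The vanishing $\delta(\tau \otimes \sigma) = 0$ is immediate from the lemma at the start of this section, since $\im(\tau) = W_0$ is contained in the span of the $w_i$.

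Next, up to a scalar (using $\ch(k) \ne 2, 3$) one has
\[
\tau \otimes \sigma \; = \; \delta\!\left(\sum_{i=1}^{d-2} (-1)^i \beta_i\, w_i^3 \otimes (w_1 \wedge \cdots \wedge \widehat{w_i} \wedge \cdots \wedge w_{d-2})\right),
\]
which exhibits $\tau \otimes \sigma$ as a $\delta$-boundary in $\sym^3(V) \otimes \bigwedge^{d-3}(V)$. The heart of the proof is to show this boundary cannot be taken inside the subspace $M_3 \otimes \bigwedge^{d-3}(V)$. For this I would apply Hassett's criterion (Theorem \ref{T:Hassett}) twice. First, since $\deg(L^{\otimes 3})$ is large one has $\cliff(C, L^{\otimes 3}) \ge 1$, so $C$ is cut out by cubics in $\PP(M_3)$ and therefore the only rank one elements of $\PP(\sym^3(V))$ lying in $\PP(M_3)$ are the points of $\varphi_{3L}(C)$. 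Second, since $\rk(\tau) = d-2 = \cliff(C, L)$ and $\tau \notin \Sec^{d-2}(C)$, Hassett's criterion at the quadric level gives that the $(d-3)$-plane $\PP(W_0)$ misses $C$. Combining these, no $w_i$ represents a point of $C$, so no $w_i^3$ lies in $M_3$, and the natural lift above fails to lie in $M_3 \otimes \bigwedge^{d-3}(V)$.

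The main obstacle is to rule out that some other lift, differing from the natural one by a Koszul coboundary out of $M_4 \otimes \bigwedge^{d-4}(V)$, could land in $M_3 \otimes \bigwedge^{d-3}(V)$. This is handled exactly as in Theorem \ref{T:counterexample}: such a coboundary modifies the natural lift by terms whose $\sym^3(V)$-component is again a sum of rank one cubes $w^3$ with $w \in W_0$, and since no such cube lies in $M_3$ by the argument above, no modification can produce an element of $M_3 \otimes \bigwedge^{d-3}(V)$. Combined with $\delta(\tau \otimes \sigma) = 0$, this shows $\tau \otimes \sigma$ defines a nontrivial class in $K_{d-2,2}(C,L)^{\vee}$, and hence $K_{d-2,2}(C,L) \ne 0$ as claimed.
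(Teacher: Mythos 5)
Your proposal is correct and follows essentially the same route as the paper: the paper's proof simply says the argument of Theorem \ref{T:counterexample} carries over, i.e.\ one takes $\tau\otimes\sigma$ with $\sigma$ the wedge of a basis of $\im(\tau)$, notes it is $\delta$-closed, and observes that since $\im(\tau)\cap C=\emptyset$ (Lemma \ref{L:specialvar}) and the only rank-one elements of $\PP(M_3)$ are points of $C$ (Hassett's criterion), the cubes $t_i^3$ in any lift cannot lie in $M_3$, so $\tau\otimes\sigma$ is not a boundary. You have merely written out the diagonalization and the coboundary-modification step more explicitly than the paper does.
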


\begin{proof}

The proof goes exactly as in Theorem \ref{T:counterexample}.  Namely by Hassett's criteria, the $d-2$ plane, $Im(\tau)$
cannot meet C.  This means for any expression  $\tau = \sum_{=1}^{i=d-2} t_i^2$, the $t_i^3$ cannot lie in $M_3$ and hence 
$\tau$  cannot be a boundary.

\end{proof}

The final issue to be discussed in this thesis is the relationship between our theorems and  Green's conjecture.
Because $K_{0,2}(C,K_C)= 0$ if and only if $\cliff(C)>0$ and if $K_{0,2}(C,K_C)= 0$, then $K_{1,2}(C,K_C)=0$ iff $\cliff(C) >1$,  
Green conjectured that $K_{p,2}(C,K_C)=0$ for $p< \cliff(C)$.  
There has been a lot of progress on this issue.  After partial results mainly by Schreyer (see \cite{Schr1} and \cite{Schr2}),
Voisin  proved Green's conjecture for a generic curve in \cite{V1} and \cite{V3}. 
 Needless to say her techniques are very different from the ones of this thesis.  In particular she uses the duality between
$K_{p,2}$ and $K_{g-p-2,1}$ which is particular to the case of $L= K_C$.  She then proves the vanishing on a specific curve.
Generic vanishing follows by semi-continuity.  The techniques of this thesis work for a larger class of line bundles and prove results
that hold for all curves carrying such a line bundle.  However the results given here do not seem to prove Green's conjecture on the nose.

We do have a positive result.  Suppose $p< \cliff(C,L)$. We consider the dual of $K_{p,2}(C,L)=0$.  Recall that the dual cohomology 
group is the cohomology of the complex:
\[M_3\otimes \bigwedge^{p-1}(V) \to M_2\otimes \bigwedge^P(V) \to M_1\otimes \bigwedge^{p+1}(V) \]

\begin{thm} 

Suppose that $\varphi\otimes\lambda \in  M_2\otimes \bigwedge^P(V)$ with $\lambda$ decomposable
and that $\delta(\varphi\otimes\lambda)=0$
, then $\varphi\otimes\lambda = \delta(\mu) $ for some $\mu \in M_3\otimes\bigwedge^{p-1}(V)$.  That is to say, in the group dual to
the Koszul cohomology group, any decomposable element is trivial.

\end{thm}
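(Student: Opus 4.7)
The plan is to interpret the hypothesis $\delta(\varphi\otimes\lambda)=0$ geometrically via rank, invoke the scheme-theoretic equality $R^p=\sec^{p-1}(C)$ from Sections \ref{S:SS} and \ref{S:eks}, and then construct $\mu$ explicitly by a Koszul-level calculation.

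First I would establish the converse of the lemma preceding Theorem \ref{T:counterexample}: for $\lambda=v_1\wedge\cdots\wedge v_p$ decomposable with span $\Lambda\subset V$, the hypothesis $\delta(\varphi\otimes\lambda)=0$ is equivalent to $\im(\varphi)\subset\Lambda$, i.e., $\varphi\in\sym^2(\Lambda)$. Using the contraction description of $\delta$ recorded in the paper, extend $v_1,\dots,v_p$ to a basis $v_1,\dots,v_n$ of $V$ with dual basis $w_1,\dots,w_n$ of $W$, and observe that $\delta(\varphi\otimes\lambda)(w_1\wedge\cdots\wedge w_p\wedge w_k)=\pm\varphi(w_k)$ for $k>p$. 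Vanishing for all such $k$ forces $\varphi(w_k)=0$, and the symmetry of $\varphi\in\sym^2(V)$ then forces $\varphi_{ij}=0$ whenever $i>p$ or $j>p$, so $\varphi\in\sym^2(\Lambda)$. In particular $\varphi\in M_2$ has rank at most $p$, so $\varphi\in R^p$, and since $p<\cliff(C,L)$ the main theorem of Section \ref{S:SS} gives $\varphi\in\sec^{p-1}(C)$.

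Next I would use the Shiffer decomposition. On the dense open locus of reduced presentations, $\varphi=\sum_{i=1}^{k}a_i\sigma_{q_i}^2$ with distinct $q_i\in C$ and $k\leq p$, and $\im(\varphi)\subset\Lambda$ forces $\sigma_{q_i}\in\Lambda$ for each $i$. After relabeling, choose $v_i=\sigma_{q_i}$ for $i\leq k$ and complete to a basis $v_1,\dots,v_p$ of $\Lambda$, so $\lambda=v_1\wedge\cdots\wedge v_p$ up to a scalar. Using the identity $\delta(\sigma_q^3\otimes\nu)=\sigma_q^2\otimes(\sigma_q\wedge\nu)$ for $\nu\in\bigwedge^{p-1}V$, valid in $\ch(k)\neq 2,3$ and recorded in the discussion preceding Theorem \ref{T:counterexample}, set
\[
\mu=\sum_{i=1}^{k}(-1)^{i-1}a_i\,\sigma_{q_i}^3\otimes\bigl(v_1\wedge\cdots\widehat{v_i}\cdots\wedge v_p\bigr)\in M_3\otimes\bigwedge^{p-1}V.
\]
A direct computation then gives
\[
\delta(\mu)=\sum_{i=1}^{k}(-1)^{i-1}a_i\,\sigma_{q_i}^2\otimes\bigl(\sigma_{q_i}\wedge v_1\wedge\cdots\widehat{v_i}\cdots\wedge v_p\bigr)=\sum_{i=1}^{k}a_i\,\sigma_{q_i}^2\otimes\lambda=\varphi\otimes\lambda,
\]
the two $(-1)^{i-1}$ signs cancelling when $\sigma_{q_i}=v_i$ is moved back into its original position in the wedge.

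The main obstacle is extending this explicit construction beyond the reduced locus to those $\varphi$ whose only Shiffer presentations use generalized variations $\sigma_q^{(j),2}$ supported on non-reduced divisors. The natural remedy is to replace $\sigma_q^3$ by the corresponding generalized class $\sigma_q^{(j),3}\in M_3$ and verify the analogous Koszul identity, which requires more delicate combinatorial bookkeeping but follows the same pattern. Alternatively, since $\im(\delta)\subset M_2\otimes\bigwedge^p V$ is a closed linear subspace of a finite-dimensional vector space, and the reduced decompositions form an open dense subset of $\sec^{p-1}(C)\cap\sym^2(\Lambda)$ whenever this intersection is nonzero (i.e., whenever $\Lambda\cap C\neq\emptyset$, the only case where the conclusion is nontrivial), the result extends from the generic case by continuity.
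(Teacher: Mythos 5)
Your proposal follows essentially the same route as the paper's own proof: deduce $\im(\varphi)\subset\Lambda$ from $\delta(\varphi\otimes\lambda)=0$, use the rank-locus/secant-variety equality to write $\varphi=\sum a_i\sigma_{q_i}^2$, and exhibit $\mu$ explicitly via the identity $\delta(\sigma^3\otimes\nu)=c\,\sigma^2\otimes(\sigma\wedge\nu)$. You are in fact somewhat more careful than the paper at two points it glosses over — completing $\sigma_{q_1},\dots,\sigma_{q_k}$ to a basis of $\Lambda$ when $k<p$ so that $\mu$ genuinely lands in $M_3\otimes\bigwedge^{p-1}(V)$, and the issue of non-reduced supporting divisors — but these are refinements of the same argument, not a different one.
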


\begin{proof}

Write $\lambda = \sigma_1 \wedge \dots \wedge \sigma_p$.  We first claim that $\im(\varphi) \subset 
\langle \sigma_1 \dots \sigma_p \rangle$. If not let $\tau \in \im(\varphi)$ be such that $\tau \wedge \lambda
\neq 0$ and let $\tau^{\vee} \wedge \lambda^{\vee}$ be dual to this element.  Then $\delta(\varphi \otimes \lambda)(  
\tau^{\vee} \wedge \lambda^{\vee}) = \varphi(\tau^{\vee}) \neq 0$.

Now since $\rk(\varphi)\leq p$ we can write $\varphi = \sum_{i=1}^k \sigma_{p_i}^2 $ where $k \leq p$ and 
$\sigma_{p_i}^2 $ is a Shiffer variation associated to the point $p_i \in C$.  Set $\mu = \frac{1}{3}\sum_{i=1}^k
(-1)^{k-1} \sigma_i^3 \otimes \sigma_1  \wedge \dots \sigma_{i-1} \wedge \sigma_{i+1} \dots \wedge \sigma_k$.  Then
$\mu \in M_3 \otimes \bigwedge^{p-1}(V)$ and 
$\delta(\mu) = \varphi \otimes \lambda$.
\end{proof}

At this point we are left with the words of Ludwig Bemelmans, ``And thats all there is-- there isn't anymore".

\pagebreak

\end{document}